\newcommand{\fka}{\ensuremath{\mathfrak{a}}\xspace}
\newcommand{\fkc}{\ensuremath{\mathfrak{c}}\xspace}
\newcommand{\fkg}{\ensuremath{\mathfrak{g}}\xspace}
\newcommand{\fkk}{\ensuremath{\mathfrak{k}}\xspace}
\newcommand{\fks}{\ensuremath{\mathfrak{s}}\xspace}
\newcommand{\BC}{\ensuremath{\mathbb{C}}\xspace}
\newcommand{\BE}{\ensuremath{\mathbb{E}}\xspace}
\newcommand{\BF}{\ensuremath{\mathbb{F}}\xspace}
\newcommand{\BL}{\ensuremath{\mathbb{L}}\xspace}
\newcommand{\BM}{\ensuremath{\mathbb{M}}\xspace}
\newcommand{\BN}{\ensuremath{\mathbb{N}}\xspace}
\newcommand{\BP}{\ensuremath{\mathbb{P}}\xspace}
\newcommand{\BQ}{\ensuremath{\mathbb{Q}}\xspace}
\newcommand{\BV}{\ensuremath{\mathbb{V}}\xspace}
\newcommand{\BX}{\ensuremath{\mathbb{X}}\xspace}
\newcommand{\BZ}{\ensuremath{\mathbb{Z}}\xspace}
\newcommand{\CE}{\ensuremath{\mathcal{E}}\xspace}
\newcommand{\CF}{\ensuremath{\mathcal{F}}\xspace}
\newcommand{\CL}{\ensuremath{\mathcal{L}}\xspace}
\newcommand{\CM}{\ensuremath{\mathcal{M}}\xspace}
\newcommand{\CN}{\ensuremath{\mathcal{N}}\xspace}
\newcommand{\CO}{\ensuremath{\mathcal{O}}\xspace}
\newcommand{\CP}{\ensuremath{\mathcal{P}}\xspace}
\newcommand{\CS}{\ensuremath{\mathcal{S}}\xspace}
\newcommand{\CZ}{\ensuremath{\mathcal{Z}}\xspace}
\newcommand{\RM}{\ensuremath{\mathrm{M}}\xspace}
\newcommand{\RN}{\ensuremath{\mathrm{N}}\xspace}
\newcommand{\RU}{\ensuremath{\mathrm{U}}\xspace}
\newcommand{\nat}{{\natural}}
\newcommand{\ad}{\mathrm{ad}}
\DeclareMathOperator{\Aut}{Aut}
\DeclareMathOperator{\charac}{char}
\DeclareMathOperator{\colspan}{colspan}
\newcommand{\corr}{\mathrm{corr}}
\newcommand{\del}{\operatorname{\partial Orb}}
\DeclareMathOperator{\diag}{diag}
\newcommand{\Dr}{\mathrm{Dr}}
\DeclareMathOperator{\End}{End}
\newcommand{\Fil}{\ensuremath{\mathrm{Fil}}\xspace}
\DeclareMathOperator{\Frob}{Frob}
\DeclareMathOperator{\Gal}{Gal}
\newcommand{\GL}{\mathrm{GL}}
\newcommand{\gl}{\frak{gl}}
\DeclareMathOperator{\Gr}{Gr}
\newcommand{\GU}{\mathrm{GU}}
\DeclareMathOperator{\Hom}{Hom}
\newcommand{\id}{\ensuremath{\mathrm{id}}\xspace}
\DeclareMathOperator{\im}{im}
\renewcommand{\i}{^{-1}}
\newcommand{\inj}{\hookrightarrow}
\DeclareMathOperator{\Int}{\ensuremath{\mathrm{Int}}\xspace}
\DeclareMathOperator{\length}{length}
\DeclareMathOperator{\Lie}{Lie}
\newcommand{\lInt}{\ensuremath{\text{$\ell$-}\mathrm{Int}}\xspace}
\newcommand{\loc}{\ensuremath{\mathrm{loc}}\xspace}
\newcommand{\naive}{\ensuremath{\mathrm{naive}}\xspace}
\newcommand{\OGr}{\mathrm{OGr}}
\DeclareMathOperator{\Orb}{Orb}
\DeclareMathOperator{\ord}{ord}
\DeclareMathOperator{\Ros}{Ros}
\newcommand{\red}{\ensuremath{\mathrm{red}}\xspace}
\DeclareMathOperator{\Res}{Res}
\newcommand{\rs}{\ensuremath{\mathrm{rs}}\xspace}
\newcommand{\SL}{{\mathrm{SL}}}
\DeclareMathOperator{\spann}{span}
\DeclareMathOperator{\Spec}{Spec}
\DeclareMathOperator{\Spf}{Spf}
\newcommand{\SO}{{\mathrm{SO}}}
\newcommand{\SU}{{\mathrm{SU}}}
\DeclareMathOperator{\sgn}{sgn}
\DeclareMathOperator{\tr}{tr}
\newcommand{\triv}{\mathrm{triv}}
\newcommand{\U}{\mathrm{U}}
\DeclareMathOperator{\uAut}{\underline{Aut}}
\DeclareMathOperator{\vol}{vol}
\DeclareMathOperator{\WT}{WT}
\newcommand{\wt}{\widetilde}
\newcommand{\wh}{\widehat}
\newcommand{\ov}{\overline}
\newcommand{\incl}{\hookrightarrow}
\newcommand{\lra}{\longrightarrow}
\newcommand{\bs}{\backslash}
\newcommand{\la}{\langle}
\newcommand{\ra}{\rangle}
\newcommand{\lv}{\lvert}
\newcommand{\rv}{\rvert}
\newcommand{\sidewedge}[1]{\sideset{}{#1}\bigwedge}
\newcommand{\ep}{\varepsilon}
\newtheorem{theorem}{Theorem}
\newtheorem{proposition}[theorem]{Proposition}
\newtheorem{lemma}[theorem]{Lemma}
\newtheorem{conjecture}[theorem]{Conjecture}
\newtheorem{`conjecture'}[theorem]{``Conjecture''}
\newtheorem{corollary}[theorem]{Corollary}
\theoremstyle{definition}
\newtheorem{definition}[theorem]{Definition}
\newtheorem{example}[theorem]{Example}
\newtheorem{remark}[theorem]{Remark}
\newenvironment{altenumerate}
   {\begin{list}
      {(\theenumi) }
      {\usecounter{enumi}
       \setlength{\labelwidth}{0pt}
       \setlength{\labelsep}{0pt}
       \setlength{\leftmargin}{0pt}
       \setlength{\itemsep}{\the\smallskipamount}
       \renewcommand{\theenumi}{\roman{enumi}}
      }}
   {\end{list}}
\newenvironment{altitemize}
   {\begin{list}
      {$\bullet$}
      {\setlength{\labelwidth}{0pt}
	   \setlength{\itemindent}{5pt}
       \setlength{\labelsep}{5pt}
       \setlength{\leftmargin}{0pt}
       \setlength{\itemsep}{\the\smallskipamount}
      }}
   {\end{list}}
\numberwithin{equation}{section}
\numberwithin{theorem}{section}
\newcommand{\aform}{\ensuremath{\langle\text{~,~}\rangle}\xspace}
\newcommand{\sform}{\ensuremath{(\text{~,~})}\xspace}
\newcounter{filler}
\renewcommand{\to}{%
   \ifbool{@display}{\longrightarrow}{\rightarrow}%
   }
\let\shortmapsto\mapsto
\renewcommand{\mapsto}{%
   \ifbool{@display}{\longmapsto}{\shortmapsto}%
   }
\newlength{\olen}
\newlength{\ulen}
\newlength{\xlen}
\newcommand{\xra}[2][]{%
   \ifbool{@display}%
      {\settowidth{\olen}{$\overset{#2}{\longrightarrow}$}%
       \settowidth{\ulen}{$\underset{#1}{\longrightarrow}$}%
       \settowidth{\xlen}{$\xrightarrow[#1]{#2}$}%
       \ifdimgreater{\olen}{\xlen}%
          {\underset{#1}{\overset{#2}{\longrightarrow}}}%
          {\ifdimgreater{\ulen}{\xlen}%
             {\underset{#1}{\overset{#2}{\longrightarrow}}}
             {\xrightarrow[#1]{#2}}}}%
      {\xrightarrow[#1]{#2}}
   }
\newcommand{\xyra}[2][]{%
   \settowidth{\xlen}{$\xrightarrow[#1]{#2}$}%
   \ifbool{@display}%
      {\settowidth{\olen}{$\overset{#2}{\longrightarrow}$}%
       \settowidth{\ulen}{$\underset{#1}{\longrightarrow}$}%
       \ifdimgreater{\olen}{\xlen}%
          {\mathrel{\xymatrix@M=.12ex@C=3.2ex{\ar[r]^-{#2}_-{#1} &}}}%
          {\ifdimgreater{\ulen}{\xlen}%
             {\mathrel{\xymatrix@M=.12ex@C=3.2ex{\ar[r]^-{#2}_-{#1} &}}}
             {\mathrel{\xymatrix@M=.12ex@C=\the\xlen{\ar[r]^-{#2}_-{#1} &}}}}}%
      {\mathrel{\xymatrix@M=.12ex@C=\the\xlen{\ar[r]^-{#2}_-{#1} &}}}%
   }
\newcommand{\xla}[2][]{%
   \ifbool{@display}%
      {\settowidth{\olen}{$\overset{#2}{\longleftarrow}$}%
       \settowidth{\ulen}{$\underset{#1}{\longleftarrow}$}%
       \settowidth{\xlen}{$\xleftarrow[#1]{#2}$}%
       \ifdimgreater{\olen}{\xlen}%
          {\underset{#1}{\overset{#2}{\longleftarrow}}}%
          {\ifdimgreater{\ulen}{\xlen}%
             {\underset{#1}{\overset{#2}{\longleftarrow}}}
             {\xleftarrow[#1]{#2}}}}%
      {\xleftarrow[#1]{#2}}
   }
\newcommand{\isoarrow}{%
   \ifbool{@display}{\overset{\sim}{\longrightarrow}}{\xrightarrow\sim}%
   }
\renewcommand{\lra}{%
   \ifbool{@display}{\longleftrightarrow}{\leftrightarrow}%
   }
\newcommand{\undertilde}{\raisebox{0.4ex}{\smash[t]{$\scriptstyle\sim$}}}
\begin{document}

\title[Regular formal moduli spaces and arithmetic transfer conjectures]{Regular formal moduli spaces and \\ arithmetic transfer conjectures}
\author{M. Rapoport}
\address{Mathematisches Institut der Universit\"at Bonn, Endenicher Allee 60, 53115 Bonn, Germany}
\email{rapoport@math.uni-bonn.de}
\author{B. Smithling}
\address{Johns Hopkins University, Department of Mathematics, 3400 N.\ Charles St.,\ Baltimore, MD  21218, USA}
\email{bds@math.jhu.edu}
\author{W. Zhang}
\address{Columbia University, Department of Mathematics, 2990 Broadway, New York, NY 10027, USA}
\email{wzhang@math.columbia.edu}


\begin{abstract}
We define various formal moduli spaces of $p$-divisible groups which are regular, and morphisms between them. We formulate arithmetic transfer conjectures, which are variants of the arithmetic fundamental lemma conjecture of \cite{Z12} in the presence of ramification. These conjectures include the AT conjecture of \cite{RSZ}. We prove these conjectures in low-dimensional cases. 

\end{abstract}

\maketitle

\tableofcontents

\section{Introduction}\label{Intro section}

In \cite{Z12} the third-named author proposed a relative trace formula approach to the arithmetic Gan--Gross--Prasad conjecture. In this context, he formulated the \emph{arithmetic fundamental lemma} (AFL) conjecture, cf.~\cite{Z12,RTZ}. The AFL conjecturally relates the special value of the derivative an orbital integral to an arithmetic intersection number on a \emph{Rapoport--Zink formal moduli space of $p$-divisible groups} (RZ space) attached to a unitary group. It is essential here that one is dealing with a situation that is unramified in every possible sense (the quadratic extension $F/F_0$ defining the unitary group is unramified, and the hermitian space is split; the function appearing in the derivative of the orbital integral is the characteristic function of a hyperspecial maximal compact subgroup, etc.). The AFL is proved for low ranks of the unitary group ($n=2$ and $3$) in \cite{Z12}, and for arbitrary rank $n$ and \emph{minuscule} group elements in \cite{RTZ}. 
A simplified proof for $n=3$ is given by Mihatsch in \cite{M-AFL}.
At present, the general case of the AFL seems out of reach, even though Yun has obtained interesting results concerning the function field analog \cite{Y12}. 

The present series of papers is devoted to investigating how the statement of the AFL has to be modified when the various unramifiedness hypotheses are dropped. In the context of the \emph{fundamental lemma} (FL) conjecture of Jacquet--Rallis, this question leads naturally to their \emph{smooth transfer} (ST) conjecture, cf.  \cite{JR}. In the arithmetic context, this question naturally leads  to the problem of formulating  \emph{arithmetic transfer} (AT) conjectures.  One goal of the present paper is to formulate such AT conjectures in as many cases as possible and to prove these conjectures in low dimension.

The search for such AT conjectures motivates the problem of defining RZ spaces with good properties and to construct morphisms between them beyond the unramified case.  The construction of such spaces and morphisms is the second goal of this paper.   The ground work for the construction of such RZ spaces has been laid in earlier papers on \emph{local models}. To be more specific, let us consider the case when the quadratic extension $F/F_0$ is ramified. In our previous paper \cite{RSZ}, we considered the problem of defining RZ spaces attached to a unitary group of signature $(1, n-1)$.  When $n$ is even, based on work on local models of Pappas and the first author \cite{PR}, we constructed such RZ spaces which are formally smooth. We termed this phenomenon \emph{exotic smoothness}, since smoothness is unexpected in the presence of ramification.  In the present paper, we complete the picture by again constructing such RZ spaces which are formally smooth when $n$ is odd, this time based on work on local models  of the second author \cite{S} (in our previous paper \cite{RSZ}, we only constructed an open subspace of these RZ spaces when $n$ is odd).

We stress that for applications to AT conjectures, it seems essential to have a functor description of the relevant RZ spaces. Correspondingly, it is essential to have a functor description of the relevant local models; the Pappas--Zhu definition through a closure operation in a mixed characteristic Beilinson--Gaitsgory degeneration of an affine Grassmannian \cite{PZ} (even though much more general) is not useful in this context. In fact, to define the morphisms between RZ spaces required for the AT conjecture in the case that $F/F_0$ is ramified and $n$ is even, we have to dig even deeper  into the theory of local models, and this constitutes the most difficult part of the present paper. The case when $F/F_0$ is unramified is much easier and is based on the work of Drinfeld \cite{D} and G\"ortz \cite{Goertz} on local models for $\GL_n$.  

Let us now describe the contents of the paper in more detail. 
\smallskip

Let $p$ be an odd prime number, and let $F_0$ be a finite extension of $\BQ_p$, with residue field $k$. Let $F/F_0$ be a quadratic field extension. We denote by $a\mapsto \ov a$ the non-trivial automorphism of $F/F_0$, and by $\eta=\eta_{F/F_0}$ the corresponding quadratic character on $F_0^\times$. Let $e:=(0,\dotsc,0,1)\in F_0^n$, and let $\GL_{n-1}\incl \GL_n$ be the natural embedding that identifies $\GL_{n-1}$ with the subgroup fixing $e$ under left multiplication, and fixing the transposed vector $\tensor[^t]{e}{}$ under right multiplication. Let 
\[
   S_n := \{\, s\in \Res_{F/F_0}\GL_n \mid s\ov s=1 \,\},
\]
which is acted on by $\GL_{n-1}$ by conjugation.
On the other hand, let $W_0$ and $W_1$ be the respective split and non-split $F/F_0$-hermitian spaces of dimension $n$. For $i \in \{0,1\}$, fix anisotropic vectors $u_i\in W_i$ of the same length, and denote by $W_i^\flat \subset W_i$ the orthogonal complement of the line spanned by $u_i$. The unitary group $\U(W_i^\flat)$ acts by conjugation on $\U(W_i)$.

Let us recall the matching relation between regular semi-simple elements of $S_n(F_0)$ and of $\U(W_0)(F_0)$ and $\U(W_1)(F_0)$. Here an element of  $S_n(F_0)$, resp.~of $\U(W_i)(F_0)$, is called \emph{regular semi-simple} (rs) if its orbit under $\GL_{n-1}$, resp.~$\U(W_i^\flat)$, is Zariski-closed of maximal dimension. We denote by $S_n(F_0)_\rs$ and $\U(W_i)(F_0)_\rs$ the subsets of regular semi-simple elements. 
For each $i$, choose a basis of $W_i$ by first choosing a basis of $W_i^\flat$ and then appending $u_i$ to it. This identifies $\U(W^\flat_i)(F_0)$ with a subgroup of $\GL_{n-1}(F)$ and  $\U(W_i)(F_0)$ with a subgroup of $\GL_{n}(F)$. An element $\gamma\in S_n(F_0)_\rs$ is then said to \emph{match} an element $g\in \U(W_i)(F_0)_\rs$ if these elements are conjugate under $\GL_{n-1}(F)$ when considered as elements in $\GL_n(F)$. The matching relation induces a bijection 
\[
   \big[S_n(F_0)_\rs\big] \simeq \big[ \U(W_0)(F_0)_\rs\big]\amalg \big[\U(W_1)(F_0)_\rs\big],
\]
cf.~\cite[\S2]{Z12}, where the brackets indicate the sets of orbits under $\GL_{n-1}(F_0)$, $\U(W_0^\flat)(F_0)$, and $\U(W_1^\flat)(F_0)$, respectively.

Dual to the matching of elements is the transfer of functions, which is defined through orbital integrals. For a function $f'\in C_c^\infty(S_n(F_0))$, an element $\gamma\in S_n(F_0)_\rs$, and a complex parameter $s\in\BC$, we define the weighted orbital integral
\[
   \Orb(\gamma,f', s) := \int_{\GL_{n-1}(F_0)}f'(h^{-1}\gamma h)\lvert\det h \rvert^s\eta(\det h) \, dh,
\]
as well as its special value 
\[
   \Orb(\gamma,f') := \Orb(\gamma,f', 0) .
\]
For later use in the arithmetic situation, we also introduce the special value of its derivative,
\[
   \del(\gamma,f') := \frac d{ds} \Big|_{s=0} \Orb(\gamma,f', s).
\]
Here the Haar measure on $\GL_{n-1}(F_0)$ is normalized so that $\vol(\GL_{n-1}(O_{F_0}))=1$. 
For a function $f_i\in C_c^\infty(\U(W_i)(F_0))$ and an element $g\in  \U(W_i)(F_0)_\rs$, we define the orbital integral
\[
   \Orb(g,f_i) := \int_{\U(W^\flat_i)(F_0)}f_i(h^{-1} g h)\, dh .
\]
Then the function $f'\in C_c^\infty(S_n(F_0))$ is said to \emph{transfer} to the pair of functions $(f_0,f_1)$ in $ C_c^\infty(\U(W_0)(F_0))\times  C_c^\infty(\U(W_1)(F_0))$
if, whenever $\gamma\in S_n(F_0)_\rs$ matches $g\in \U(W_i)(F_0)_\rs$,
\[
   \omega(\gamma)\Orb(\gamma,f')=\Orb(g,f_i).
\]
Here 
\[
   \omega\colon S_n(F_0)_\rs\to \BC^\times
\]
is a fixed transfer factor \cite[p.~988]{Z14}, and the Haar measures on $\U(W_i^\flat)(F_0)$ are fixed. For the particular transfer factor we will take in this paper, see \S\ref{trans factor}. 

The FL conjecture asserts a specific transfer relation in a completely unramified situation. Namely, assume that $F/F_0$ is unramified and that the special vectors $u_i$ have norm one, and normalize the Haar measure on $\U(W_0^\flat)(F_0)$ by giving a hyperspecial maximal compact subgroup volume one.  Then the FL conjecture asserts that, with respect to the ``natural'' transfer factor (see \eqref{sign} below), the characteristic function $\mathbf{1}_{S_n(O_{F_0})}$  transfers to $(\mathbf{1}_{K_0},0)$, where $K_0 \subset \U(W_0)(F_0)$ denotes a hyperspecial maximal open subgroup. 

By contrast, when $F/F_0$ is ramified, such natural choices do not exist. The ST conjecture asserts that for any $f'$, a transfer $(f_0,f_1)$ exists (non-uniquely), and that any pair $(f_0,f_1)$ arises as a transfer from some (non-unique) $f'$.  It is known to hold by \cite{Z14}. 

\smallskip

The arithmetic situation is analogous. For the AFL conjecture we assume, just as in the FL conjecture, that $F/F_0$ is unramified and that the special vectors $u_i$ have norm one. We take the same Haar measure on $\U(W^\flat_0)(F_0)$, and the same transfer factor $\omega$. Then the AFL conjecture asserts that 
\begin{equation}\label{IntroAFL}
   \omega(\gamma)\del \bigl(\gamma, \mathbf{1}_{S_n(O_{F_0})}\bigr) = -\Int(g)\cdot\log q
\end{equation}
whenever $\gamma\in S_n(F_0)_\rs$ matches $g\in \U(W_1)(F_0)_\rs$ (note that the FL conjecture asserts that $\Orb(\gamma, \mathbf{1}_{S_n(O_{F_0})}) = 0$ for such $\gamma$). Here $q$ denotes the number of elements in the residue field of $F_0$. 

The term $\Int(g)$ that appears in \eqref{IntroAFL} is an intersection number on an unramified RZ space. Let us recall its definition, cf.\ \cite{RSZ}.  Let $\breve F$ be the completion of a maximal unramified extension of $F$, and $O_{\breve F}$ its ring of integers.  For each $\Spf O_{\breve F}$-scheme $S$, we consider triples $(X,\iota,\lambda)$ over $S$, where $X$ is a formal $p$-divisible $O_{F_0}$-module of relative height $2n$ and dimension $n$, where $\iota \colon O_F\to \End(X)$ is an action of $O_F$ extending the $O_{F_0}$-action and satisfying the Kottwitz condition of signature $(1,n-1)$ on $\Lie(X)$ (cf.~\cite[\S 2]{KR-U1}), and where $\lambda$ is a principal polarization on $X$ whose Rosati involution induces the automorphism $a\mapsto \ov a$ on $O_F$ via $\iota$.  Over $S = \Spec \ov k$, there is a unique such triple $(\BX_n,\iota_{\BX_n},\lambda_{\BX_n})$ up to $O_F$-linear quasi-isogeny compatible with the polarizations.  We then denote by $\CN_n=\CN_{F/F_0,n}$ the formal scheme over $\Spf O_{\breve{F}}$ which represents the functor that associates to each $S$ the set of isomorphism classes of tuples $(X,\iota,\lambda,\rho)$, where the first three entries are as just given, and where $\rho\colon X\times_S \ov S\to \BX_n\times_{\Spec\ov k} \ov S$ is a \emph{framing} of the restriction of $X$ to the special fiber $\ov S$ of $S$, compatible with $\iota$ and $\lambda$ in a certain sense, cf.~\cite[\S2]{RTZ}. Then $\CN_n$ is formally smooth of relative formal dimension $n-1$ over $\Spf O_{\breve F}$. The automorphism group (in a certain sense) of the framing object $\BX_n$ can be identified with $\U(W_1)(F_0)$; it acts on $\CN_n$ by changing the framing. Let $\CE$ be the canonical lifting of the formal $O_F$-module of relative height one and dimension one over $\Spf O_{\breve F}$, with its canonical $O_F$-action $\iota_\CE$ and its natural polarization $\lambda_\CE$. There is a natural closed embedding of $\CN_{n-1}$ into $\CN_n$,
\[
   \delta_\CN\colon
	\xymatrix@R=0ex{
	   \CN_{n-1} \ar[r]  &  \CN_n\\
		(Y, \iota, \lambda) \ar@{|->}[r]  &  (Y \times \CE, \iota\times\ov{\iota}_\CE, \lambda\times\lambda_\CE).
	}
\]
Here $\ov{\iota}_\CE$ denotes the precomposition of $\iota_\CE$ with the nontrivial Galois automorphism on $O_F$. 
Let $\CN_{n-1,n} := \CN_{n-1}\times_{\Spf O_{\breve F}} \CN_n$, and let
$$
   \Delta\subset \CN_{n-1,n}
$$
denote the graph of $\delta_\CN$. Then $\Int(g)$ is defined as the intersection number of $\Delta$ with its translate under the automorphism $1\times g$ of $\CN_{n-1,n}$,
\[
   \Int(g) := \chi\bigl(\CN_{n-1,n}, \CO_\Delta\otimes^\BL \CO_{(1 \times g)\Delta} \bigr) .
\]
We also define \emph{homogeneous} and \emph{Lie algebra} versions of this intersection number, cf.\ \eqref{defintprod} and \eqref{lInt}.

It should be true in the situation of the AFL that for \emph{any} $f'\in C_c^\infty(S_n(F_0))$ with transfer  $(\mathbf{1}_{K_0},0)$, there exists a function $f'_{\corr}\in C_c^\infty(S_n(F_0))$ such that
\begin{equation*}
   \omega(\gamma)\del (\gamma, f') = -\Int(g)\cdot\log q+\omega(\gamma)\Orb (\gamma,f'_{\corr})
\end{equation*}
whenever $\gamma\in S_n(F_0)_\rs$ matches $g\in \U(W_1)(F_0)_\rs$.
This would follow from the AFL and a con\-jectural \emph{density principle} on weighted orbital integrals, cf.\ \cite[Conj.\ 5.16, Lem.\ 5.18]{RSZ}. 
This remark 
indicates
how
one should
formulate AT conjectures.

\smallskip

The problem of formulating an AT conjecture arises when one drops the unramifiedness assumptions made in the AFL conjecture. We want to replace $\CN_n$ by variants where the quadratic extension $F/F_0$ is allowed to be ramified, and where separately the polarization is allowed to be non-principal.  However, the scope of the conjecture is limited by the fact that we want to keep the definition of $\Int(g)$ as an Euler--Poincar\'e characteristic of a derived tensor product. This forces on us the condition that the analog of the product $\CN_{n-1}\times_{\Spf O_{\breve F}} \CN_n$ is \emph{regular}. We do not know a systematic way of singling out such cases; in the present paper, we discuss those we have found. The principle underlying our examples is that when $F/F_0$ is unramified, we take the polarization to be \emph{principal} or \emph{almost principal} (or equivalently, \emph{$\varpi$-modular} or \emph{almost $\varpi$-modular}, where $\varpi$ denotes a uniformizer in $F_0$; cf.\ Remarks \ref{pi-modular unram} and \ref{almost pi-modular unram}). 
When $F/F_0$ is ramified, we take the polarization to be \emph{$\pi$-modular} or \emph{almost $\pi$-modular}, where $\pi$ denotes a uniformizer in $F$.

Let us now review case-by-case the variants of the AFL conjecture that we propose in this paper. 

We start with the case when $F/F_0$ is unramified.  Again let $\varpi$ denote a uniformizer in $F_0$.  To define a variant of $\CN_n$, we impose that the polarization $\lambda$ is \emph{almost principal}, i.e.\ that 
$$
\ker \lambda \subset X[\iota(\varpi)]\ \text{\emph{is of rank $q^2$.}}
$$
The corresponding formal scheme $\wt{\CN}_n$ has semi-stable reduction over $\Spf O_{\breve {F}}$ of relative formal dimension $n-1$ (cf.\ Theorem \ref{wtCN_n semistable}). For $n=2$, there is an isomorphism \cite{KR-alt}
\begin{equation*}
   \wt\CN_2 \simeq \wh\Omega^2_{F_0}\times_{\Spf O_{F_0}}\Spf O_{\breve {F}} ,
\end{equation*}
where $\wh\Omega^2_{F_0}$ denotes the formal scheme version of the Drinfeld halfspace corresponding to the local field $F_0$. Note that, in contrast to $\CN_n$, the automorphism group of the framing object for $\wt\CN_n$ identifies with $\U(W_0)(F_0)$, which therefore acts on $\wt\CN_n$.

 We define a closed embedding of formal schemes
\begin{equation*}
   \wt\delta_\CN \colon
	\xymatrix@R=0ex{
	   \CN_{n-1} \ar[r]  &  \wt\CN_n\\
		(X, \iota, \lambda) \ar@{|->}[r]
		   &  \bigl(X \times \ov\CE, \iota \times \ov\iota_{\CE},
                 \lambda \times \varpi\lambda_{\CE} \bigr) .
   }
\end{equation*}
We therefore obtain, as in the case of the AFL, a closed embedding 
\begin{equation*}
   \wt\Delta_\CN \colon \CN_{n-1} \xra{(\id_{\CN_{n-1}},\wt\delta_\CN)} \wt\CN_{n-1,n} := \CN_{n-1}\times_{\Spf O_{\breve F}}\wt\CN_n,
\end{equation*}
whose image we denote by
\[
   \wt\Delta := \wt\Delta_\CN (\CN_{n-1}).
\]
Here we point out that $\wt{\CN}_{n-1, n}$ is regular of formal dimension $2(n-1)$. Hence for $g \in \U(W_0)(F_0)_\rs$ we may set
\[
   \Int(g) := \chi\bigl(\wt\CN_{n-1,n}, \CO_{\wt\Delta} \otimes^\BL \CO_{(1 \times g)\wt\Delta} \bigr).
\]
 
In this case, we take the special vectors $u_i \in W_i$ to have norm $\varpi$. It follows that $W_1^\flat$ is split, and therefore contains self-dual lattices. We normalize the Haar measure on $\U(W_1^\flat)(F_0)$ by giving the stabilizer of such a self-dual lattice volume one. Consider the characteristic function $\mathbf{1}_{K'}\in C_c^\infty(S_n(F_0))$, where
  \begin{align*}
   K' := S_n(O_{F_0})\cap K_0(\varpi) ,
\end{align*}
where, in turn, $K_0(\varpi)$ denotes the subgroup of matrices in $\GL_n(O_F)$ which are congruent modulo $\varpi$ to a lower triangular block matrix with respect to the decomposition $O_F^n=O_F^{n-1}\oplus O_F e$. 
We conjecture that  $(-1)^{n-1}\mathbf{1}_{K'}$ transfers to $(0, \mathbf{1}_{K_1})$, where the open compact subgroup $K_1$ of $\U(W_1)(F_0)$ is the stabilizer of an almost self-dual lattice in $W_1$, cf.\ Conjecture \ref{FL almost self-dual}. We prove that this conjecture holds, provided that $q\geq n$ and that the Lie algebra version of the FL conjecture holds (cf.\ Theorem \ref{unram variant FL}). Hence this transfer relation holds for $p \gg n$ by \cite{Go,Y}, and for any odd $p$ when $n=2$ or $n=3$ by \cite{Z12}.  The 
AT conjecture  in the present situation is now as follows. 

\begin{conjecture}\label{Intro-ConjATunr} Let $F/F_0$ be unramified. 
\begin{altenumerate}
\renewcommand{\theenumi}{\alph{enumi}}
\item
For \emph{any} $f'\in C_c^\infty(S_n(F_0))$ with transfer  $(0, \mathbf{1}_{K_1})$, there exists a function $f'_{\corr}\in C_c^\infty(S_n(F_0))$ such that
\begin{equation*}
   \omega_S(\gamma)\del (\gamma, f') = -\Int(g)\cdot\log q+\omega_S(\gamma)\Orb (\gamma,f'_{\corr})
\end{equation*}
for any $\gamma\in S_n(F_0)_\rs$ matching an element $g\in \U(W_0)(F_0)_\rs$.
\item\label{Intro-ConjATunr AFL}
Suppose that $\gamma \in S(F_0)_\rs$ matches $g \in \U(W_0)(F_0)_\rs$. Then
\begin{equation}\label{IntroATunr}
   \omega_S (\gamma)\del\bigl(\gamma, (-1)^{n-1}\mathbf{1}_{K'}\bigr) = - \Int(g)\cdot \log q .
\end{equation}
\end{altenumerate}
\end{conjecture}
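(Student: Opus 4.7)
I take the two parts in reverse.  Part (a) follows from part (b) once one grants the density principle of \cite[Conj.~5.16, Lem.~5.18]{RSZ} in the manner indicated just before the conjecture.  Indeed, given $f'$ with transfer $(0,\mathbf{1}_{K_1})$, the difference $\phi := f' - (-1)^{n-1}\mathbf{1}_{K'}$ also transfers to $(0,\mathbf{1}_{K_1})$ by the variant FL of Conjecture \ref{FL almost self-dual}, so all of its orbital integrals on $\U(W_0)(F_0)_\rs$ and $\U(W_1)(F_0)_\rs$ vanish.  The density principle then supplies $f'_{\corr}\in C_c^\infty(S_n(F_0))$ with
\[
   \omega_S(\gamma)\del(\gamma,\phi) = \omega_S(\gamma)\Orb(\gamma,f'_{\corr})
   \quad\text{for all }\gamma\in S_n(F_0)_\rs,
\]
and adding this to \eqref{IntroATunr} for $(-1)^{n-1}\mathbf{1}_{K'}$ yields part (a).

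For part (b) the plan is to adapt the framework of \cite{Z12,RTZ} to the present almost-principal situation.  The preparatory steps are: (i) a Cayley/homogenization reduction passing from the group-theoretic quantities $\del(\gamma,(-1)^{n-1}\mathbf{1}_{K'})$ and $\Int(g)$ to their homogeneous and Lie-algebra counterparts (cf.\ \eqref{defintprod}, \eqref{lInt}), where transfer factors and matching are linear; (ii) use of the semi-stable description of $\wt\CN_n$ from Theorem \ref{wtCN_n semistable} to identify the framing automorphism group of $\wt\CN_n$ with $\U(W_0)(F_0)$, and to realize $\wt\delta_\CN$ as the embedding that attaches to a point of $\CN_{n-1}$ the orthogonal complement of a distinguished line spanned by a vector of norm $\varpi$ (matching the norm of $u_0$); (iii) the identification of the hermitian module of special homomorphisms $\Hom_{O_F}(\ov\CE,X_{\mathrm{univ}})$ with $W_0$, which is forced to be the split hermitian space precisely because the polarization on $\wt\CN_n$ is \emph{almost} principal rather than principal.

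With these identifications in place, both sides of \eqref{IntroATunr} become lattice sums.  On the analytic side, expanding $|{\det h}|^s\eta(\det h)$ around $s=0$ reduces $\del(\gamma,(-1)^{n-1}\mathbf{1}_{K'})$ to a finite sum of terms proportional to $\ord_{F_0}(\det h)\log q$, indexed by cosets of the stabilizer of $\gamma$ in $\GL_{n-1}(F_0)$ meeting $K'$, where the Bruhat-style condition defining $K_0(\varpi)\cap S_n(O_{F_0})$ cuts out pairs of $O_{F_0}$-lattices satisfying an almost self-duality condition.  On the geometric side, semi-stability of $\wt\CN_n$ together with regularity of $\wt\CN_{n-1,n}$ permits one to evaluate the derived Euler characteristic $\chi(\wt\CN_{n-1,n},\CO_{\wt\Delta}\otimes^\BL \CO_{(1\times g)\wt\Delta})$ orbit by orbit via Gross's quasi-canonical lifting formalism, exactly as in the AFL proof of \cite{RTZ}, indexed by the \emph{same} hermitian lattice data.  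The identity \eqref{IntroATunr} then reduces to an orbit-by-orbit matching, with the sign $(-1)^{n-1}$ absorbed by the switch from $W_1$ (the nonsplit side in the AFL) to $W_0$ (the split side relevant here).  For $n=2$ the isomorphism $\wt\CN_2 \simeq \wh\Omega^2_{F_0}\times_{\Spf O_{F_0}}\Spf O_{\breve F}$ of \cite{KR-alt} collapses the geometric side to intersection theory on the Drinfeld half-plane, so direct verification is feasible.

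\textbf{Main obstacle.}  The serious difficulty lies on the geometric side.  In contrast to the AFL, where $\CN_n$ is formally smooth, here $\wt\CN_n$ is only semi-stable, so $\wt\Delta$ is typically not regularly embedded in $\wt\CN_{n-1,n}$ and the derived intersection acquires genuine higher-$\mathrm{Tor}$ contributions supported on the non-smooth locus of $\wt\CN_n$.  Controlling these contributions demands an explicit local-model analysis around each worst point—precisely the kind of deep input into local models that the paper develops.  In low rank I expect this to be manageable: $n=2$ by the Drinfeld model, and $n=3$ by combining the minuscule argument of \cite{RTZ} with a Mihatsch-type blow-up \cite{M-AFL}.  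A uniform proof for all $n$ appears to require substantially new geometric input, of depth comparable to the open AFL itself.
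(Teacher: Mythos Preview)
Your treatment of part (a) via the density principle matches the paper.  For part (b), however, you miss the paper's central simplification and end up attacking a much harder problem than is needed.

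Your plan is to compute $\Int(g)$ directly on the semi-stable space $\wt\CN_n$, and you correctly identify that this would require controlling higher $\mathrm{Tor}$ contributions along the non-smooth locus.  The paper bypasses this entirely.  The key observation (Theorem~\ref{AFLtoAT1}) is that the scheme $\wt\Delta\cap\wt\Delta_x$, viewed inside $\CN_{n-1}$, is defined purely by the condition that $x$ lifts from a quasi-endomorphism of $\wt\BX_n$ to an endomorphism of $Y\times\ov\CE'$.  This condition sees only the $O_F$-module structure, not the polarization.  Since $\wt\BX_n=\BX_n$ and $\ov\CE'=\ov\CE$ as $O_F$-modules, one gets an equality of closed formal subschemes $\wt\Delta\cap\wt\Delta_x=\Delta\cap\Delta_x$ inside $\CN_{n-1}$, where the right-hand side is the AFL intersection on the \emph{smooth} space $\CN_n$.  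No semi-stable analysis is needed.

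The second step is an explicit $\GL_{n-1}$-equivariant map $\theta\colon\RM_n(F)\to\RM_n(F)$ rescaling the upper-right block by $\varpi^{-1}$ (Lemma~\ref{lemtheta}).  This carries $\fkg_0(F_0)$ bijectively to the Lie algebra $\wt\fkg_1(F_0)$ appearing in the self-dual AFL (for the hermitian space of opposite type in dimension $n-1$), with $\theta(\fkk_1)=\wt\fkk_0$ and $\theta(\fkk')=\fks(O_{F_0})$.  A short computation shows $\Delta\cap\Delta_x=\Delta\cap\Delta_{\theta(x)}$, so $\lInt(x)=\lInt(\theta(x))$; and on the analytic side $\omega_\fks(y)\del(y,(-1)^{n-1}\mathbf{1}_{\fkk'})=\omega_\fks(\theta(y))\del(\theta(y),\mathbf{1}_{\fks(O_{F_0})})$.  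Thus the almost-self-dual Lie algebra AFL is \emph{equivalent} to the self-dual one, term by term.  The group identity then follows by Cayley transform (for $q\ge n$, non-degenerate intersection), and the full conjecture for $n=2,3$ follows from the known AFL plus density.

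So your ``main obstacle'' is a phantom: the paper never intersects on $\wt\CN_n$ at all.  What your approach would buy, if it could be carried out, is a proof independent of the self-dual AFL; but the paper's reduction shows the two conjectures are equivalent (in the Lie algebra/non-degenerate regime), so no such independent route exists short of proving the AFL itself.
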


We also give a \emph{homogeneous version} of this conjecture, as well as a \emph{Lie algebra version}, cf.\ Conjecture \ref{conjunramified}.
Our main result on this conjecture is the following result (cf.\ \S\ref{results atc afl}).

\begin{theorem}\label{Intro-Thmunr}
Conjecture \ref{Intro-ConjATunr}(\ref{Intro-ConjATunr AFL}) holds true in the non-degenerate case if the AFL conjecture in its Lie algebra version (cf.\ Conjecture \ref{AFLconj}(\ref{AFLconj lie})) holds, provided that $q\geq n$. 

Let $F_0 = \BQ_p$. Then   Conjecture \ref{Intro-ConjATunr} holds true  for $n=2$ and $n=3$. 
\end{theorem}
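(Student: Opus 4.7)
The plan is to deduce the AT identity \eqref{IntroATunr} from the Lie-algebra version of the AFL by expressing both sides of \eqref{IntroATunr} as finite signed sums involving their AFL analogues, and matching these sums term by term. The rough picture is that $(-1)^{n-1}\mathbf{1}_{K'}$ differs from $\mathbf{1}_{S_n(O_{F_0})}$ only by a level-$\varpi$ congruence adapted to the decomposition $O_F^n = O_F^{n-1}\oplus O_F e$; and the almost principal polarization parametrized by $\widetilde\CN_n$ (with framing group $\U(W_0)$) differs from the principal polarization parametrized by $\CN_n$ (with framing group $\U(W_1)$) by a rank-$q^2$ kernel inside $X[\iota(\varpi)]$. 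Both modifications are amenable to lattice-chain bookkeeping, and I would pass through the Lie-algebra version of both conjectures, via a Cayley transform as in \cite{Z14}, to make the combinatorics more transparent.

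On the analytic side, I would decompose the Lie-algebra avatar of $\mathbf{1}_{K'}$ as a finite signed sum of translates of the Lie-algebra avatar of $\mathbf{1}_{S_n(O_{F_0})}$, modulo functions whose orbital integrals vanish on the relevant orbits (these contribute the correction function $f'_\corr$). The hypothesis $q \geq n$ is what prevents unwanted collisions of lattice indices in this decomposition. Taking $\del(\gamma, \cdot)$ of the identity converts the analytic side of \eqref{IntroATunr} into a corresponding signed sum of $\del(\gamma',\mathbf{1}_{S_n(O_{F_0})})$, which by the Lie-algebra AFL equals a signed sum of intersection numbers on $\CN_{n-1,n}$ attached to elements $g' \in \U(W_1)(F_0)_\rs$ matching the $\gamma'$.

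On the geometric side, I would establish a parallel signed-sum identity: $\Int_{\widetilde\CN_{n-1,n}}(g)$ for $g \in \U(W_0)(F_0)_\rs$ equals the same signed sum of intersection numbers on $\CN_{n-1,n}$. The almost self-dual lattice structure in $\widetilde\CN_n$ sits inside a short chain of lattices, so that via the local model for the almost $\varpi$-modular case (which underlies the semistable reduction result of Theorem \ref{wtCN_n semistable}) one can express the class $\CO_{\wt\Delta}\otimes^\BL \CO_{(1\times g)\wt\Delta}$ as a virtual sum of classes pulled back from $\CN_{n-1,n}$ via explicit correspondences. The sign $(-1)^{n-1}$ emerges from the alternating character of this decomposition, and the restriction to the non-degenerate case excludes the $\gamma$ for which additional boundary \emph{Tor}-terms would appear. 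Matching the two signed sums and applying the Lie-algebra AFL term by term yields the conditional statement.

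For the unconditional statement: when $F_0 = \BQ_p$ and $n \in \{2,3\}$, the Lie-algebra AFL is known by \cite{Z12,M-AFL}, and the hypothesis $q = p \geq n$ is automatic for odd $p$ and $n\leq 3$; hence Conjecture \ref{Intro-ConjATunr}(\ref{Intro-ConjATunr AFL}) follows from the conditional part. For part (a), I would upgrade the single-function identity (b) to arbitrary $f'$ with transfer $(0,\mathbf{1}_{K_1})$ via the density principle for weighted orbital integrals \cite[Lem.~5.18]{RSZ}, which also constructs the correction $f'_\corr$ uniformly in $f'$. In the case $n = 2$, the explicit description $\widetilde\CN_2 \simeq \wh\Omega^2_{F_0}\times \Spf O_{\breve F}$ permits a direct geometric computation as an independent verification. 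The main obstacle throughout is the geometric matching: showing that the signed-sum decomposition of $\Int_{\widetilde\CN_{n-1,n}}(g)$ into contributions from $\CN_{n-1,n}$ holds exactly and matches, term by term, the analytic decomposition, which requires precise control over the almost $\varpi$-modular local model and over the derived \emph{Tor}-contributions in the intersection.
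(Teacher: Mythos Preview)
Your proposal has a real gap on the geometric side, and misses a much simpler mechanism that the paper exploits.

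The signed-sum decomposition you sketch is vague, and it is not clear how to make it work. The automorphism group of the framing object for $\wt\CN_n$ is $\U(W_0)(F_0)$, while for $\CN_n$ it is $\U(W_1)(F_0)$; these are genuinely different groups (one quasi-split, the other not), so there is no direct way to compare $\Int_{\wt\CN_{n-1,n}}(g)$ for $g\in\U(W_0)$ with intersection numbers on $\CN_{n-1,n}$ indexed by elements of $\U(W_1)$. Your ``explicit correspondences'' between the two spaces are never specified, and the claim that $\CO_{\wt\Delta}\otimes^\BL\CO_{(1\times g)\wt\Delta}$ decomposes as a virtual sum of pullbacks from $\CN_{n-1,n}$ has no obvious reason to hold. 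You are also wrong about where $q\ge n$ enters: it is used only in the Cayley transform step (to guarantee enough $\xi\in F^1$ with distinct residues so that every integral element is $\xi$-strongly integral for some $\xi$), not to avoid ``lattice index collisions''.

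The paper's approach is direct and avoids all of this. The key observation is that although the \emph{groups} $\U(W_0)$ and $\U(W_1)$ are unrelated, their \emph{Lie algebras} are identified by the explicit $\GL_{n-1}$-equivariant map
\[
   \theta\colon \begin{bmatrix} A & b \\ c & d \end{bmatrix} \longmapsto \begin{bmatrix} A & \varpi^{-1}b \\ c & d \end{bmatrix},
\]
which sends $\fkg_0(F_0)\isoarrow\wt\fkg_1(F_0)$ and $\fkk'\isoarrow\fks(O_{F_0})$ (here $\wt\fkg_1$ is the Lie algebra for the self-dual setup with unit-norm special vector). On the analytic side this gives immediately $\omega_\fks(y)\del(y,(-1)^{n-1}\mathbf{1}_{\fkk'})=\omega_\fks(\theta(y))\del(\theta(y),\mathbf{1}_{\fks(O_{F_0})})$; the sign $(-1)^{n-1}$ is simply the effect of $\theta$ on the transfer factor. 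On the geometric side, the point is that \emph{forgetting the polarizations}, the framing objects $\BX_n$ and $\wt\BX_n$ are identical, as are $\ov\CE$ and $\ov\CE'$; hence for $x\in\fkg_0(F_0)$ one has literally $\wt\Delta\cap\wt\Delta_x=\Delta\cap\Delta_x$ as closed formal subschemes of $\CN_{n-1}$, and a short computation using $b=-\varpi\,{}^t c$ shows $\Delta\cap\Delta_x=\Delta\cap\Delta_{\theta(x)}$. Thus the Lie algebra AT identity is \emph{equivalent} to the Lie algebra AFL, with no decomposition needed. The Cayley transform then reduces the group statement to the Lie algebra one when $q\ge n$, and the density principle handles part~(a) for $n=2,3$ as you say.
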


Here by the ``non-degenerate case'' we mean that the intersection $\wt\Delta \cap (1\times g)\wt\Delta$ is an artinian scheme.  Let us also remark on the restriction to $F_0 = \BQ_p$ at the end of the theorem. Indeed, strictly speaking the RZ spaces $\CN_n$ and $\wt \CN_n$ introduced above only fall under the framework of \cite{RZ} when $F_0 = \BQ_p$.  Therefore, when working in contexts where formal schemes are present, we will tacitly take $q = p$ and $F_0 = \BQ_p$ throughout this paper (including the setting of ramified $F/F_0$ to be discussed below).  On the other hand, this restriction on $F_0$ should be removable due to recent work of Ahsendorf--Cheng--Zink \cite{ACZ} and Mihatsch \cite{M-Th}. In anticipation of this, we will continue to use the general notation $q$ and $F_0$, and we often refer to $O_{F_0}$-relative polarizations, relative Dieudonn\'e modules, etc.

Now  assume  that $F/F_0$ is \emph{ramified}, again with $\pi \in F$ a uniformizer. In this case we modify the definition of the formal moduli space $\CN_{F/F_0,n}=\CN_n$ by imposing  on the polarization $\lambda$ the condition
\[
   \ker(\lambda) \subset X[\iota(\pi)]
	\text{\emph{ is of rank }} q^{2\lfloor n/2 \rfloor}.
\]
In order to obtain formal schemes with reasonable properties, we impose additional conditions on the action of $O_{F}$ induced by $\iota$  on the Lie algebra of the $p$-divisible groups involved (the \emph{Pappas wedge condition} and (variants of) the \emph{spin condition}). When $n$ is odd, there is no longer a unique framing object over $\ov k$; however, the two possible choices lead to isomorphic moduli spaces, and we will therefore ignore this issue. In both the even and the odd case,  $\CN_n$ is formally smooth of relative formal dimension $n-1$ over $\Spf O_{\breve F}$ and  essentially proper (\emph{exotic smoothness}). In the case $n=2$, the formal scheme $\CN_2$ can be identified with the base change from $\Spf O_{\breve F_0}$ to $\Spf O_{\breve F}$ of the disjoint sum of two copies of the Lubin--Tate deformation space $\CM$, cf.\ Example \ref{CN_2}. 

In the case when $n$ is odd, the morphism $\delta_\CN\colon \CN_{n-1}\to\CN_n$ can be defined exactly as in the AFL case,  since then $2\lfloor\frac{n-1}{2}\rfloor = 2\lfloor \frac{n}{2}\rfloor$. As in the AFL, the group $\U(W_1)(F_0)$ acts on $\CN_n$, and we define $\Int(g)$ as before for $g\in \U(W_1)(F_0)_\rs$.  We take the special vectors to have norm one. We recall from \cite{RSZ} our AT conjecture in this case (cf.\ \S\ref{s:ATCodd}). 

\begin{conjecture}\label{Intro-Conjramodd}
Let $F/F_0$ be ramified, and let $n\geq 3$ be odd. 
\begin{altenumerate}
\renewcommand{\theenumi}{\alph{enumi}}
\item
For any $f'\in C^\infty_c(S_n(F_0))$  with transfer $(\mathbf{1}_{K_0}, 0)$, there exists a function  $f'_\corr\in C^\infty_c(S_n(F_0))$ such that 
$$
2 \omega(\gamma)\del(\gamma,f') = -\Int(g)\cdot\log q + \omega(\gamma)\Orb(\gamma,f'_\corr)
$$ for any $\gamma\in S_n(F_0)_\rs$ matching an element $g\in \U(W_1)(F_0)_\rs$.
\item
There exists a function $f'\in C^\infty_c(S_n(F_0))$ with transfer $(\mathbf{1}_{K_0}, 0)$ such that 
\begin{equation}\label{IntroATramodd}
   2 \omega(\gamma)\del(\gamma,f')=-\Int(g)\cdot\log q 
\end{equation}
for any $\gamma\in S_n(F_0)_\rs$ matching an element $g\in \U(W_1)(F_0)_\rs$.
\end{altenumerate}
\end{conjecture}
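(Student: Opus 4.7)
The plan is to reduce part (a) to part (b) via a density principle for orbital integrals, and to tackle part (b) by constructing $f'$ explicitly, using the exotic smoothness of $\CN_n$ to control the arithmetic side. Existence of at least one $f'$ with transfer $(\mathbf{1}_{K_0}, 0)$ is already supplied by the ramified smooth transfer theorem of \cite{Z14}, so the difficulty lies entirely in matching derivatives and intersections.

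For the reduction of (a) to (b), suppose a witness $f_0'$ for (b) has been produced, and let $f' \in C_c^\infty(S_n(F_0))$ be any other function with the same transfer $(\mathbf{1}_{K_0}, 0)$. Then $f' - f_0'$ has vanishing matched orbital integrals on both $\U(W_0)(F_0)_\rs$ and $\U(W_1)(F_0)_\rs$. Applying the density principle for orbital integrals in the style of \cite[Conj.~5.16, Lem.~5.18]{RSZ}, one obtains $f_\corr'$ such that $\omega(\gamma)\Orb(\gamma, f_\corr') = 2\omega(\gamma)\del(\gamma, f' - f_0')$ for all matching $\gamma$, which is exactly the identity in (a).

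For part (b) itself I would first pass to the homogeneous and Lie algebra versions of the conjecture (the ramified analog of Conjecture~\ref{conjunramified}), which linearize the matching relation and simplify the orbital integral computations. The paper has already established that $\CN_n$ is formally smooth of relative formal dimension $n-1$ over $\Spf O_{\breve F}$, so $\CN_{n-1,n}$ is regular of formal dimension $2(n-1)$ and $\Int(g) = \chi(\CN_{n-1,n}, \CO_\Delta \otimes^\BL \CO_{(1 \times g)\Delta})$ is well-defined. For $n = 3$ I would exploit Example~\ref{CN_2}, which identifies $\CN_2$ with the disjoint union of two copies of the Lubin--Tate deformation space base-changed to $\Spf O_{\breve F}$, thereby reducing $\Int(g)$ to a concrete deformation-theoretic length computation; the candidate $f'$ would be sought among characteristic functions of $S_n$-intersections of parahoric subgroups of $\GL_n(O_F)$ adapted to the uniformizer $\pi$ of $F$, and its weighted orbital integral would be expanded and differentiated at $s = 0$ in explicit invariant coordinates on $[S_n(F_0)_\rs]$.

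The principal obstacle is the construction and verification of $f'$: unlike in the unramified AFL, where $\mathbf{1}_{S_n(O_{F_0})}$ is dictated by the hyperspecial integral structures on both sides, ramification leaves no canonical integral structure on $S_n$, so $f'$ must be guessed from the shape of the arithmetic side. Explicit evaluation of $\Int(g)$ for general odd $n$ is itself a serious problem, likely requiring either a ramified version of the minuscule-orbit analysis of \cite{RTZ} or a global input via integral models of unitary Shimura varieties in the ramified setting, neither of which is currently available in the generality needed. The factor of $2$ on the left of \eqref{IntroATramodd} must be traced carefully through the $\eta$-twisted orbital integral and the length normalization on $O_F$-Dieudonn\'e modules in the ramified case; I expect it to emerge naturally from the bookkeeping, but it will not be invisible.
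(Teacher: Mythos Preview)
This statement is a \emph{conjecture} in the paper, not a theorem; the paper contains no proof of it. The only remark the paper makes is that the case $F_0=\BQ_p$, $n=3$ is proved in \cite{RSZ}, and it refers the reader there. So there is no ``paper's own proof'' to compare against.

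Your proposal is accordingly not a proof but a strategy outline, and you essentially acknowledge this yourself in the final paragraph. A few comments on the strategy. Your reduction of (a) to (b) is the same mechanism the paper alludes to in the Introduction and that \cite{RSZ} uses: if a witness $f_0'$ for (b) exists, then any other $f'$ with the same transfer differs from $f_0'$ by a function with vanishing orbital integrals, and one wants to write the derivative of that difference as an orbital integral of some $f'_\corr$. But note that this step rests on the density principle \cite[Conj.~5.16]{RSZ}, which is itself \emph{conjectural} for general $n$ (it is known only for $n\le 3$). So even granting (b), your argument for (a) is conditional.

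For (b), you correctly identify the genuine obstruction: in the ramified setting there is no canonical candidate for $f'$, and computing $\Int(g)$ for general odd $n$ is wide open. Your suggestion to pass to the Lie algebra and, for $n=3$, to exploit the Lubin--Tate description of $\CN_2$ is exactly the route taken in \cite{RSZ}; but that argument is specific to $n=3$ and does not scale. In short, your proposal reproduces the known strategy and its known limitations, but does not close the gap---which is consistent with the fact that the statement remains a conjecture.
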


Here $K_0$ denotes the maximal compact subgroup stabilizing an \emph{almost $\pi$-modular lattice} $\Lambda_0$ in $W_0$. The Haar measure on $\U(W^\flat_0)(F_0)$ is defined by $\vol K^\flat_0=1$ for a special maximal compact subgroup $K^\flat_0$ of $\U(W^\flat_0)(F_0)$. 

In \cite{RSZ}, we also formulate homogeneous and Lie algebra versions of this conjecture, comp.\ \S\ref{s:ATCodd}; and we prove this conjecture when $F_0 = \BQ_p$ and $n=3$.

In the case when $n$ is even, it is much less obvious how to define the morphism $\delta_\CN\colon \CN_{n-1}\to\CN_n$.  Indeed, the definition of $\delta_\CN$ in the AFL and odd ramified setting, transposed to the present case, produces $p$-divisible groups with the wrong polarization type to give points on $\CN_n$.  More precisely, consider the moduli problem of triples $(X,\iota,\lambda)$ as in the definition of $\CN_n$, except where the polarization $\lambda$ satisfies
\[
   \ker(\lambda) \subset X[\iota(\pi)]
	\text{\emph{ is of rank }} q^{n-2}.
\]
As in the case of $\CN_n$ when $n$ is odd, up to isogeny there are two such triples over $\ov k$, only one of which is isogenous ($O_F$-linearly and compatibly with the polarizations) to the framing object for $\CN_n$. Taking this as the framing object, we obtain a formal moduli space $\CP_n$ as before. The formula defining $\delta_\CN$ in the AFL and odd ramified settings then defines a morphism
\[
   \wt\delta_\CN\colon \CN_{n-1} \to \CP_n.
\]
We obtain a morphism from $\CP_n$ to $\CN_n$ --- in fact, two of them --- as follows. Let $\CP_n'$ denote the moduli space over $\Spf O_{\breve F}$ of tuples $(X,\iota,\lambda,\rho,X',\iota',\lambda',\rho',\phi)$,
where first four entries are a point on $\CN_n$, the second four entries are a point on $\CP_n$, and $\phi\colon X \to X'$ is an isogeny of degree $q$ lifting a fixed isogeny between the framing objects of $\CN_n$ and $\CP_n$.  Then $\CP_n'$ is again an RZ space, and there are evident projections
\begin{equation*}
   \CP_n' \to \CN_n 
   \quad\text{and}\quad
	\CP_n' \to \CP_n.
\end{equation*}
We show in Proposition \ref{even n decomp lem} that $\CN_n$ naturally decomposes into a disjoint union $\CN_n^+ \amalg \CN_n^-$ of open and closed formal subschemes (this generalizes the case $n = 2$ mentioned above).  Pulling back along the projection $\CP_n' \to \CN_n$, we obtain a decomposition $\CP_n' = (\CP_n')^+ \amalg (\CP_n')^-$.  The key geometric result is now that this decomposition presents $\CP_n'$ as a trivial double cover of $\CP_n$, which in turn gives rise to two embeddings of $\CN_{n-1}$ into $\CN_n$, cf.\ Theorem \ref{wtCN'pm isom} and Proposition \ref{delta^pm closed emb}.

\begin{theorem}\label{intro CP' isom}
The projection $\CP_n' \to \CP_n$ maps each of the summands $(\CP_n')^\pm$ isomorphically to $\CP_n$.  Denoting the inverse by $\psi^\pm \colon \CP_n \isoarrow (\CP_n')^\pm$, the composite
\[
   \delta_\CN^\pm \colon \CN_{n-1} \xra{\wt\delta_\CN} \CP_n \xra{\psi^\pm} (\CP_n')^\pm \to \CN_n^\pm
\]
is a closed embedding.
\end{theorem}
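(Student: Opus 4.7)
My plan splits into two parts: (1) establishing the isomorphism $(\CP_n')^\pm\isoarrow\CP_n$ for each sign, and (2) deducing that $\delta_\CN^\pm$ is a closed embedding.

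For part (1), since $\CP_n'$ is an RZ space, a local-model analysis (analogous to the cited results for $\CN_n$ and $\CP_n$) shows it is formally smooth over $\Spf O_{\breve F}$ of relative dimension $n-1$, matching that of $\CP_n$. Thus $\CP_n'\to\CP_n$ is a morphism between formally smooth formal schemes of the same dimension, and it suffices to verify that each restriction $(\CP_n')^\pm\to\CP_n$ is bijective on geometric points and formally unramified; a Jacobian-criterion-type argument then gives the isomorphism. The bijection on points reduces, via relative Dieudonn\'e theory, to the following hermitian-lattice computation: given an almost $\pi$-modular $O_F$-lattice $M'$ of rank $n$, classify the $\pi$-modular $O_F$-lattices $M$ containing $M'$ with $M/M'\cong O_F/\pi$, subject to the wedge and spin conditions. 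A direct analysis of the length-two quotient $M^{\vee}/M^{\prime\vee}$ with its induced hermitian structure shows there are exactly two such $M$, and (the key point) they are distinguished precisely by the sign invariant defining the decomposition $\CN_n=\CN_n^+\amalg\CN_n^-$; this matches the a priori decomposition of $\CP_n'$ obtained by pullback. Formal unramifiedness --- i.e., injectivity on tangent spaces --- is a Grothendieck--Messing computation: over an infinitesimal thickening, a lift of $X'$ together with the choice of $\pm$ component determines the $O_F$-stable rank-$q$ subgroup $\ker(\phi)\subset X$ uniquely.

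For part (2), having built $\psi^\pm$, the composite $\psi^\pm\circ\wt\delta_\CN\colon\CN_{n-1}\hookrightarrow(\CP_n')^\pm$ is automatically a closed embedding, being the composition of the known closed embedding $\wt\delta_\CN$ with an isomorphism. To conclude that the further projection to $\CN_n^\pm$ yields a closed embedding, I would give an explicit moduli-functorial description of $\delta_\CN^\pm$: for $(Y,\iota_Y,\lambda_Y)\in\CN_{n-1}(S)$, the point $\delta_\CN^\pm(Y)$ is the unique $\pi$-modular $X\in\CN_n^\pm(S)$ admitting an $O_F$-linear degree-$q$ isogeny onto $Y\times\ov\CE$ (equipped with the natural product polarization) that lies in the $\pm$ component. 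Injectivity on points and on tangent spaces is then clear because $(Y,\iota_Y,\lambda_Y)$ is recovered from $X$ via the fixed polarization-compatible quotient; the closed-embedding property follows from the standard criterion (injective on points, formally unramified, and closed image, the latter being inherited from $\wt\delta_\CN$).

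The main obstacle is the fiber analysis in step (1) --- specifically, correctly matching the two $\pi$-modular lifts of an almost $\pi$-modular hermitian lattice with the two components $\CN_n^\pm$. This requires the delicate local-model theory developed earlier in the paper, particularly the spin condition in the ramified even setting, which the excerpt itself identifies as the most difficult ingredient.
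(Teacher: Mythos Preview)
Your overall architecture—bijection on geometric points via Dieudonn\'e theory, then formal \'etaleness/unramifiedness via local models—matches the paper's strategy for both parts. But several specifics are wrong, and one of them undermines the argument.

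First, $\CP_n$ and $\CP_n'$ are \emph{not} formally smooth; the paper states explicitly that these auxiliary spaces are not even regular (see the opening of \S\ref{aux spaces} and Remark~\ref{kottwitzonP2}). So your ``both sides formally smooth of the same dimension, apply a Jacobian-type criterion'' framing does not apply. The paper instead proves directly that the forgetful map on local models $M_{\{m-1,m\}}\to M_{\{m-1\}}$ is an isomorphism (Proposition~\ref{lm isom}), and it is \emph{this} step—not the point count—where the real difficulty lies and where the refined spin condition of \cite{S} is essential (Remark~\ref{not flat} shows the weaker spin condition is insufficient).

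Second, your Dieudonn\'e description is garbled. The polarization on $\CP_n$ has kernel of rank $q^{n-2}$, so the lattice $L'$ is a vertex lattice of type $n-2$, not almost $\pi$-modular; and the $\pi$-modular lattice $L$ corresponding to the $\CN_n$-point satisfies $L\subset^1 L'$, not $L\supset M'$. The two candidates for $L$ are the isotropic lines in the $2$-dimensional space $L'/\pi(L')^\vee$ under the induced \emph{symmetric} form. Moreover, a nontrivial part of the paper's point-level argument is verifying that any such $\pi$-modular $L$ automatically satisfies the Dieudonn\'e constraints $\varpi L\subset VL\subset L$ and $VL\subset^1 VL+\pi L$; you assert this but it requires an argument.

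Third, for formal \'etaleness the paper does not use Grothendieck--Messing directly. It constructs a morphism $\nu\colon C_{\{m-1\}}\to C_{\{m\}}$ (Lemma~\ref{LM mapping lem}) which, given $\CF_{m-1}$, produces the \emph{unique} Lagrangian $\CF_m$ containing its image and satisfying the spin condition, then checks via an explicit coordinate computation that $\nu$ carries $M_{\{m-1\}}$ into $M_{\{m\}}$. This is the content of the long proof of Proposition~\ref{lm isom}(\ref{lm isom ii}).

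For part~(2), the paper's argument is close in spirit to yours: universal injectivity on points (via the observation that $M+O_{\breve F}e=L\oplus O_{\breve F}e$, so $L$ is recoverable from $M$) plus formal unramifiedness via showing the composite map on local models~\eqref{LM delta_CN^pm} is a monomorphism between proper schemes. But ``closed image inherited from $\wt\delta_\CN$'' is not valid: the image of a closed embedding under a further morphism need not be closed. The paper instead invokes essential properness of $\CN_{n-1}$ over $\Spf O_{\breve F}$ to pass from universally injective and formally unramified to closed embedding.
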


In other words, the first assertion in the theorem says that, loosely speaking, given any point $(X',\iota',\lambda',\rho')$ on $\CP_n$ over a connected base, there are exactly two ways to extend it to an isogeny chain $X \xra\phi X'$ such that the composite $X \xra\phi X' \xra{\lambda'} (X')^\vee \xra{\phi^\vee} X^\vee$ is a $\pi$-modular polarization $\lambda$ on $X$.  This can be viewed as an analog of the linear algebra fact that in a split, even-dimensional $F/F_0$-hermitian space, every $O_F$-lattice $\Lambda'$ such that $\Lambda' \subset (\Lambda')^\vee \subset^2 \pi\i\Lambda'$ contains exactly two $\pi$-modular lattices $\Lambda$ (and, furthermore, this remark essentially \emph{is} the proof that $\CP_n' \to \CP_n$ is a double cover on $\ov k$-points, via the interpretation of both sides in terms of Dieudonn\'e modules).  The fact that the ``simple'' way of defining $\delta_\CN$ from before does not work in the even ramified case can also be understood in terms of the group theory of lattice stabilizers; see Remark \ref{group theoretic}.

Using Theorem \ref{intro CP' isom}, we define $\Delta$ to be the union of the graph subschemes of $\delta^+_\CN$ and  $\delta^-_\CN$ inside $\CN_{n-1, n}=\CN_{n-1}\times_{\Spf O_{\breve F}}\CN_n$.
As in the case of odd $n$, the group $\U(W_1)(F_0)$ acts on $\CN_n$, and we then define $\Int(g)$ as before, for $g\in \U(W_1)(F_0)_\rs$. We take the special vectors $u_i$ of norm $-1$.  We normalize the Haar measure on $\U(W_0^\flat)(F_0)$ by giving the stabilizer $K_0^\flat$ of an almost $\pi$-modular lattice $\Lambda_0^\flat$ volume one. Consider the two $\pi$-modular lattices $\Lambda_0^\pm$ contained in $\Lambda_0^\flat\oplus O_F u_0$, and denote by $K_0^\pm$ their stabilizers in $\U(W_0)(F_0)$. We consider $K_0^\flat$ as a subgroup of $\U(W_0)(F_0)$. 
\begin{conjecture}\label{Intro-Conjrameven}
Let $F/F_0$ be ramified, and let $n\geq 2$ be even. 
\begin{altenumerate}
\renewcommand{\theenumi}{\alph{enumi}}
\item
For any $f'\in C^\infty_c(S_n(F_0))$  with transfer $(\mathbf{1}_{K_0^\flat K_0^+}+\mathbf{1}_{K_0^\flat K_0^-}, 0)$, there exists a function  $f'_\corr\in C^\infty_c(S_n(F_0))$ such that 
$$
2 \omega(\gamma)\del(\gamma,f') = -\Int(g)\cdot\log q + \omega(\gamma)\Orb(\gamma,f'_\corr)
$$ for any $\gamma\in S_n(F_0)_\rs$ matching an element $g\in \U(W_1)(F_0)_\rs$.
\item
There exists a function $f'\in C^\infty_c(S_n(F_0))$ with transfer $(\mathbf{1}_{K_0^\flat K_0^+}+\mathbf{1}_{K_0^\flat K_0^-}, 0)$ such that 
\begin{equation}\label{IntroATrameven}
   2 \omega(\gamma)\del(\gamma,f')=-\Int(g)\cdot\log q 
\end{equation}
for any $\gamma\in S_n(F_0)_\rs$ matching an element $g\in \U(W_1)(F_0)_\rs$.
\end{altenumerate}
\end{conjecture}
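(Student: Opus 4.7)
The strategy parallels the approach to Conjecture \ref{Intro-Conjramodd} for the ramified odd case, complicated by the fact that $\Delta$ is now a union of two components $\Delta^\pm$, the graphs of $\delta_\CN^\pm$. I would first deduce part (a) from part (b) via a density principle for weighted orbital integrals: given two functions $f'_1, f'_2 \in C_c^\infty(S_n(F_0))$ sharing the transfer $(\mathbf{1}_{K_0^\flat K_0^+}+\mathbf{1}_{K_0^\flat K_0^-}, 0)$, the difference $f'_1 - f'_2$ has vanishing orbital integrals at every $\gamma \in S_n(F_0)_\rs$ matching an element of $\U(W_1)(F_0)_\rs$. By an argument as in \cite[Lem.\ 5.18]{RSZ}, the $s$-derivatives $\del(\gamma, f'_1 - f'_2)$ at such $\gamma$ are themselves representable as ordinary orbital integrals of some $f'_\corr \in C_c^\infty(S_n(F_0))$, so once the identity holds for a single specific $f'$, it propagates to every $f'$ with the prescribed transfer.

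For part (b), I would analyze the intersection number via the decomposition
\begin{equation*}
\Int(g) = \sum_{\ep, \ep' \in \{+,-\}} \chi\bigl(\CN_{n-1,n},\, \CO_{\Delta^\ep} \otimes^\BL \CO_{(1\times g)\Delta^{\ep'}} \bigr).
\end{equation*}
The double-cover description of Theorem \ref{intro CP' isom} handles the diagonal terms $(\ep = \ep')$ uniformly: both $\delta_\CN^\pm$ factor through $\CN_{n-1} \xra{\wt\delta_\CN} \CP_n$ composed with a section $\psi^\pm$ of $\CP_n' \to \CP_n$, so pulling back along the projection $\CP_n' \to \CN_n$ reduces each diagonal contribution to an intersection on $\CN_{n-1} \times_{\Spf O_{\breve F}} \CP_n$. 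The cross terms $(\ep \ne \ep')$ vanish unless $g$ exchanges the two components $\CN_n^\pm$; when they do not vanish, they are governed by the same computation after composing with a component-switching automorphism of $\CN_n$.

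I would then verify the identity in the base case $n = 2$, where $\CN_2$ is a disjoint union of two copies of a Lubin--Tate deformation space (Example \ref{CN_2}), $\CN_1$ is essentially a point, and each $\delta_\CN^\pm$ lands at the canonical lift in one component. The intersection multiplicities here follow from Gross's quasi-canonical lifting theory, while the weighted orbital integrals on $S_2(F_0)$ and $\U(W_1)(F_0)$ can be evaluated by direct computation. A candidate $f'$ should be obtainable by symmetrizing test functions adapted to the two $\pi$-modular lattices $\Lambda_0^\pm \subset W_0$, with signs prescribed by the transfer factor $\omega$.

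The main obstacle is that the embeddings $\delta_\CN^\pm$ have no intrinsic counterpart on the group side: $\U(W_1)$ does not distinguish the two orientations $\pm$, so the sign bookkeeping responsible for the factor of $2$ in \eqref{IntroATrameven} must be encoded on the $S_n$ side through $\omega$ and the sum $\mathbf{1}_{K_0^\flat K_0^+} + \mathbf{1}_{K_0^\flat K_0^-}$. Matching this against the arithmetic intersection number for general even $n$ requires a delicate analysis of the spin and wedge conditions defining $\CN_n$, and it is a priori unclear that a single $f'$ realizes the identity for every matching pair; this is ultimately what forces the weak existence formulation of (b), in contrast to the clean statement \eqref{IntroAFL} of the original AFL.
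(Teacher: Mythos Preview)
Your proposal reverses the logical flow relative to the paper and omits its central analytic ingredient.

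\textbf{Direction of implication.} The paper proves (a) first and deduces (b) from it, not the other way around. Concretely (for $n=2$, which is the only case proved), the paper shows directly that for any $f'$ with the prescribed transfer, the quantity $2\omega_S(\gamma)\del(\gamma,f')+\Int(g)\log q$ is locally constant near each irregular $\gamma_0\in A_S$; by Mihatsch's characterization of orbital integral functions this yields (a). The existence statement (b) then follows from (a) by the abstract argument of \cite[Prop.~5.14]{RSZ}. Your route---construct a specific $f'$ realizing (b), then derive (a) via density---would require producing that $f'$ explicitly, which is harder than what is actually needed.

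\textbf{Missing germ expansion.} The heart of the paper's argument for (a) is a germ expansion of $\omega_S(\gamma)\del(\gamma,f')$ near irregular $\gamma_0=\diag(a_0,d_0)$ (Theorem~\ref{del germ s}): for $\gamma=\gamma(a,b)\in S_{\rs,1}$ near $\gamma_0$,
\[
\omega_S(\gamma)\del(\gamma,f')=\tfrac12\,\Orb(\gamma_0,f_0)\log\lv 1-\RN a\rv+C.
\]
Combined with the computation $\Orb(\gamma_0,2\cdot\mathbf{1}_{K_0^\flat K_0})=2$ (Lemma~\ref{lem nil orb int u}) and the explicit intersection formula $\Int(g)=2v(\RN b)+2$ (Proposition~\ref{prop int(g) int(x)}), this gives local constancy. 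Your proposal does not mention this germ analysis at all; without it there is no mechanism to control $\del(\gamma,f')$ for a general $f'$ with the given transfer.

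\textbf{Geometric side.} Pulling back along $\CP_n'\to\CN_n$ to reduce to an intersection on $\CN_{n-1}\times\CP_n$ is not what the paper does and is unnecessary here. For $n=2$ the paper works directly on $\CN_2\cong\CM_{\Spf O_{\breve F}}\amalg\CM_{\Spf O_{\breve F}}$, identifies $\Delta\cap\CN_2^+$ with the canonical divisor, and computes lengths via Gross's formula. Your observation that the cross terms are governed by whether $g$ switches the components $\CN_2^\pm$ is correct and does appear implicitly (via the Kottwitz invariant of $\alpha$), but the $\CP_n$ machinery plays no role in the length computation.

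\textbf{No induction.} You treat $n=2$ as a ``base case'', but the paper makes no inductive claim; general even $n$ remains a conjecture.
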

 We also formulate homogeneous and Lie algebra versions of this conjecture (cf.\ Conjecture \ref{conjram v3}), and we prove them all in the case of minimal rank (cf.\ \S\ref{even ram proof}).
 
\begin{theorem}
Let $F_0 = \BQ_p$.  Then Conjecture \ref{Intro-Conjrameven} holds true for   $n = 2$.
\end{theorem}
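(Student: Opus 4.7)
The plan is to reduce Conjecture \ref{Intro-Conjrameven} to its Lie algebra formulation (part of Conjecture \ref{conjram v3}) and then verify the latter by direct computation on both sides. The reduction from the group to the homogeneous and then to the Lie algebra version proceeds along the same lines as in \cite{RSZ} for the odd ramified case: a Cayley-type map identifies a neighborhood of the identity in $S_2$ with a neighborhood of the origin in $\fks_2$, and a standard scaling/approximation argument propagates the identity from there to all regular semi-simple elements. Once we are working in the Lie algebra setting, the problem becomes low-dimensional enough to handle by direct calculation.

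On the geometric side, by Example \ref{CN_2} we have $\CN_2 \cong (\CM \amalg \CM) \times_{\Spf O_{\breve F_0}} \Spf O_{\breve F}$, where $\CM$ is the classical Lubin--Tate deformation space of a supersingular formal $O_{F_0}$-module of height $2$. Since $\CN_1 \cong \Spf O_{\breve F}$, each closed embedding $\delta_\CN^\pm \colon \CN_1 \hookrightarrow \CN_2^\pm$ cuts out a relative divisor, namely the canonical lift section of the corresponding Lubin--Tate copy. Writing $\Delta = \delta_\CN^+(\CN_1) \amalg \delta_\CN^-(\CN_1)$, the intersection number $\Int(g)$ then decomposes, via the splitting $\CN_2 = \CN_2^+ \amalg \CN_2^-$, into a sum of intersection multiplicities of canonical and $g$-translated (quasi-)canonical divisors on $\CM$, which can be evaluated by a Keating/Gross--Keating-type formula as an explicit function of an invariant attached to $g \in \U(W_1)(F_0)_\rs$.

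On the analytic side, $\fks_2(F_0)$ has $F_0$-dimension $4$ and its regular semi-simple orbits are parametrized by two numerical invariants. One chooses a test function $\phi' \in C_c^\infty(\fks_2(F_0))$ transferring to the Lie algebra analog of $(\mathbf{1}_{K_0^\flat K_0^+} + \mathbf{1}_{K_0^\flat K_0^-}, 0)$, and computes $\Orb(Y,\phi',s)$ and $\del(Y,\phi')$ directly in terms of these invariants, using the low rank of the group. Comparing with the geometric formula yields part (b) of the Lie algebra conjecture, and the density/existence statement (a) then follows by absorbing any residual discrepancy into a correction function $\phi'_\corr$.

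The main obstacle is the explicit dictionary between the $\U(W_1)(F_0)$-action on $\CN_2$ and the matching invariants: one needs to determine, for a given $g \in \U(W_1)(F_0)_\rs$ matching $\gamma \in S_2(F_0)_\rs$, whether $g$ preserves or swaps the components $\CN_2^\pm$ (this is controlled by $\det g$) and how $g$ translates each canonical divisor $\delta_\CN^\pm(\CN_1)$ into a quasi-canonical divisor whose invariant is determined by $\gamma$. Only after establishing this dictionary do both sides of the conjectured equality become comparable explicit functions of the invariants of $\gamma$, at which point the identity reduces to a direct case-by-case verification.
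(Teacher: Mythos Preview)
Your geometric side is essentially what the paper does: Proposition~\ref{prop int(g) int(x)} computes $\Int(g)$ (and $\lInt(x)$) via the identification $\CN_2 \cong (\CM \amalg \CM)_{\Spf O_{\breve F}}$ and Gross's length formula on the Lubin--Tate space, after tracking how $g$ acts relative to the two copies.

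The analytic side and the overall reduction structure, however, differ from the paper in a way that leaves a gap in your argument for part~(a).

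First, the paper does \emph{not} reduce the group version to the Lie algebra version by a Cayley transform. It reduces the homogeneous version to the inhomogeneous one by the integration identity of Lemma~\ref{homog to inhomog}, and then proves the inhomogeneous group version and the Lie algebra version \emph{in parallel}, by the same method. Your ``propagate from a neighborhood of the identity to all regular semi-simple elements by scaling/approximation'' is not how the paper proceeds, and it is not clear how such an argument would handle the correction term.

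Second, and more substantively, the paper's analytic input is not a direct computation of orbital integrals for a specific test function. It is a \emph{germ expansion} (Theorem~\ref{del germ s}, based on Mihatsch \cite{M-ATC}): for \emph{any} $f'$ transferring to $(f_0,0)$, one has near each irregular $\gamma_0 = \diag(a_0,d_0)$
\[
   \omega_S(\gamma)\del(\gamma,f') = \tfrac{1}{2}\,\Orb(\gamma_0,f_0)\,\log|1-\RN a| + C,
\]
with $C$ locally constant. Combining this with the elementary computation of the irregular orbital integrals of $f_0 = \mathbf{1}_{K_0^\flat K_0^+} + \mathbf{1}_{K_0^\flat K_0^-}$ (Lemma~\ref{lem nil orb int u}) and with $\Int(g) = 2v(\RN b)+2$ shows that $2\omega_S(\gamma)\del(\gamma,f') + \Int(g)\log q$ is locally constant near every irregular point, hence (by \cite[Cor.~3.8]{M-ATC}) is an orbital integral function. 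This proves part~(a) for \emph{all} transfers $f'$ at once; part~(b) then follows by the argument of \cite[Prop.~5.14]{RSZ}.

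Your proposal inverts this: compute for one specific $\phi'$, then ``absorb any residual discrepancy into $\phi'_\corr$'' to get~(a). But that last step is exactly the content of~(a): one must show the discrepancy is an orbital integral function, not merely a function, and this requires either the germ-expansion argument or a density principle that you have not supplied.
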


Note that there is a discrepancy of a factor of $2$ in the statements of the AFL conjecture \eqref{IntroAFL} and the AT conjecture \eqref{IntroATunr}  on the one hand, and of the AT conjectures \eqref{IntroATramodd} and \eqref{IntroATrameven} on the other.  This is a genuine difference between the unramified and ramified cases, which finds its justification in the global comparison between the height pairing and the derivative of a relative trace formula, cf.\ \cite{RSZglob}. 

There are two further cases of AT statements, which we formulate in \S\ref{s:ATC ram n=2}. In both of these, $F/F_0$ is ramified and $n=2$; and, in fact, we prove them in \S\ref{s:ramifiedn=2}. In these cases we impose that the polarization $\lambda$ in the moduli problem is \emph{principal}. Then there are two possibilities for the framing objects, and the corresponding formal schemes $\wt\CN_2^{(0)}$ and $\wt\CN_2^{(1)}$ (which are defined over $\Spf O_{\breve F_0}$) are genuinely different. In fact, there are natural identifications 
\[
   \wt\CN_2^{(0)}\simeq \wh{\Omega}^2_{F_0}\times_{\Spf O_{F_0}}\Spf O_{\breve F_0}
   \quad\text{and}\quad
	\wt\CN_2^{(1)}\simeq \CM_{\Gamma_0(\varpi)} .
\]
Here $\wh\Omega^2_{F_0}$ again denotes the formal scheme version of the Drinfeld halfspace corresponding to the local field $F_0$, and $\CM_{\Gamma_0(\varpi)}$ denotes the formal deformation space of a $\varpi$-isogeny between Lubin--Tate formal groups (after base change to $\Spf O_{\breve F}$, the latter case recovers the formal scheme $\CP_2$ from above). We refer to the body of the text for the precise AT statements in these cases (they fall formally somewhat outside the framework of the general conjectures we make in Conjectures \ref{Intro-ConjATunr}, \ref{Intro-Conjramodd}, and \ref{Intro-Conjrameven}). 

\smallskip
We now comment on the proofs of our results.

In the situation of Theorem \ref{Intro-Thmunr}, the field extension $F/F_0$ is unramified, while the polarization in the moduli problem for $\wt\CN_n$ is taken to be almost principal, reflecting the fact the compact open subgroup $K_1$ is  the stabilizer of an almost self-dual lattice in $W_1$. The key observation of our proof is that, although the automorphism groups of the framing objects for $\CN_n$ and $\wt\CN_n$ are not directly related, their Lie algebras are. Therefore we first use the Cayley transform to reduce the group version of the AT conjecture to the corresponding Lie algebra version, and then to the Lie algebra version of the AFL conjecture. 

In the situation of Conjecture \ref{Intro-Conjrameven} when $n=2$, the field extension $F/F_0$ is ramified, and the moduli space $\CN_2$ can be identified with the base change from $\Spf O_{\breve F_0}$ to $\Spf O_{\breve F}$ of the disjoint union of two copies of the Lubin--Tate deformation space $\CM$. On the geometric side, Gross's theory of the canonical lifting allows us to compute explicitly the intersection number $\Int(g)$. On the analytic side, we develop a germ expansion of the derivative of the orbital integrals around every semi-simple (but irregular) point. The irregular orbital integrals of the test function on the unitary side completely determine the derivative of the orbital integrals, up to an orbital integral function. 

The proofs of the two sporadic cases of AT theorems stated in  \S\ref{s:ATC ram n=2} are parallel to those of Conjecture \ref{Intro-Conjrameven} for $n=2$. 

Let us also comment on the proof of Theorem \ref{intro CP' isom}, which holds for any even $n$.  The proof of the first assertion consists of two steps, the first of which is to show, via a straightforward Dieudonn\'e module argument, that the morphism in question induces a bijection on the underlying point sets of the formal schemes.  The remaining step, which is more difficult, is to show the the corresponding morphism on local models is an isomorphism, and hence the two sides have the same deformation theories.  Here the ``refined'' spin condition introduced in \cite{S} plays an important role, insofar as it is necessary for flatness of the local models, cf.\ Remarks \ref{Yu flatness} and \ref{not flat}.  We also remark that this isomorphism between local models can be viewed as a reflection of the group-theoretic fact that, for an $O_F$-lattice $\Lambda'$ in a split, even-dimensional $F/F_0$-hermitian space such that $\Lambda' \subset (\Lambda')^\vee \subset^2 \pi\i\Lambda'$, and for $\Lambda$ one of the two $\pi$-modular lattices contained in $\Lambda'$, the connected stabilizer of $\Lambda'$ in the unitary group is the same as the common stabilizer group of $\Lambda$ and $\Lambda'$.
We prove the second assertion in Theorem \ref{intro CP' isom} in a similar way, via a Dieudonn\'e module step followed by a local model step.

\smallskip
We finally give an overview of the contents of this paper, which consists of four parts. 

In Part 1, we give the group-theoretic setup (in its homogeneous, its inhomogeneous, and its Lie algebra versions), and we review the statements of the FL conjecture and the AFL conjecture. 

In Part 2, we introduce the RZ spaces that will appear in the various AT conjectures. Here the auxiliary spaces in \S\ref{aux spaces} are introduced because they are needed to construct the morphisms $\delta^\pm_\CN$ mentioned above. 

In Part 3, we formulate case by case the AT conjectures mentioned above. 

In Part 4, we prove the AT conjectures in some instances. Besides the general result of Theorem \ref{Intro-Thmunr}, and after the result of \cite{RSZ} concerning the case when $F/F_0$ is ramified and $n=3$, these instances all occur for $n=2$.

\subsection*{Acknowledgements} We are grateful to U.~G\"ortz, X.~He, and S.~Yu for helpful discussions.   We also acknowledge the hospitality of the ESI (Vienna) and the MFO (Oberwolfach), where part of this work was carried out.  We finally thank the referee for his/her remarks on the text.

M.R. is supported by a grant from  the Deutsche Forschungsgemeinschaft through the grant SFB/TR 45. B.S. is supported by a Simons Foundation grant \#359425 and an NSA grant H98230-16-1-0024. W.Z. is supported by NSF
grants DMS \#1301848 and \#1601144, and by a Sloan research fellowship.

\subsection*{Notation}
Throughout the paper we fix an odd prime number $p$.  We let $F_0$ be a finite extension of $\BQ_p$, and we denote by $O_{F_0}$ its ring of integers, by $\varpi$ a uniformizer, and by $k$ its residue field. We set $q := \# k$ (a power of $p$), and we fix an algebraic closure $\ov k$ of $k$.  We write $v$ for the normalized (i.e.~$\varpi$-adic) valuation on $F_0$.  
We let $F$ be a quadratic extension of $F_0$, with ring of integers $O_F$ and residue field $k_F$.
When $F/F_0$ is unramified, we take $\pi := \varpi$ as a uniformizer for $F$; and when $F/F_0$ is ramified, since $p \neq 2$, we may and will choose a uniformizer $\pi$ for $F$ such that (after possibly changing $\varpi$) $\pi^2=\varpi$.  We denote by $a\mapsto \ov a$ the nontrivial automorphism of $F/F_0$, and by  $\eta = \eta_{F/F_0}$ the corresponding  quadratic character on $F_0^\times$.  
We denote the group of norm one elements in $F^\times$ by
\[
   F^1 := \{\, a \in F \mid a \ov a = 1 \,\}.
\]
We denote by $\breve F_0$ the completion of a maximal unramified extension of $F_0$, and by $\breve F$ the analogous object for $F$; thus $\breve F/\breve F_0$ is an extension of degree $1$ or $2$ according as $F/F_0$ is unramified or ramified.  Given a scheme $S$ over $\Spec O_{\breve F}$ (or more commonly for us, over $\Spf O_{\breve F}$), we denote its special fiber by
\[
   \ov S := S \times_{\Spec O_{\breve F}} \Spec \ov k.
\]

As stated above, when working in an algebro-geometric context where formal schemes are present, we always understand that $q = p$ and $F_0 = \BQ_p$.

A polarization on a $p$-divisible group $X$ is an anti-symmetric isogeny $X \to X^\vee$, where $X^\vee$ denotes the dual.  For any polarization $\lambda$ on $X$, we denote the Rosati involution on $\End^\circ(X)$ by
\[
   \Ros_\lambda\colon f \mapsto \lambda^{-1} \circ f^\vee \circ \lambda;
\]
here and elsewhere the superscript $\circ$ denotes the operation $-\otimes_\BZ \BQ$.  For any quasi-isogeny $\rho \colon X \to Y$ and polarization $\lambda$ on $Y$, we define the pullback quasi-polarization
\[
   \rho^* (\lambda) := \rho^\vee \circ \lambda \circ \rho
\]
on $X$.

We denote by $\BE$ ``the'' formal $O_{F_0}$-module of relative height $2$ and dimension $1$ over $\Spec \ov k$.  We set
\[
   O_D := \End_{O_{F_0}}(\BE)
	\quad\text{and}\quad
	D := O_D \otimes_{O_{F_0}} F_0.
\]
Thus $D$ is ``the'' quaternion division algebra over $F_0$, and $O_D$ is its maximal order.  We make $\BE$ into a formal $O_F$-module of relative height $1$ by fixing any $F_0$-embedding
\begin{equation*}
   \iota_\BE\colon F \to D.
\end{equation*}
When $F/F_0$ is unramified, the requirement that $O_F$ acts on $\Lie\BE$ via the structure map dictates an embedding of $k_F$ into $\ov k$, and hence an embedding $F \to \breve F_0$, via which we take $\breve F = \breve F_0$. We denote the main involution on $D$ by $c \mapsto \ov c$, and the reduced norm by $\RN$.  We also write $\RN$ for the norm map $F^\times \to F_0^\times$; of course, all of this notation is compatible with the embedding $\iota_\BE\colon F \to D$.  
When $F/F_0$ is ramified, we write $D^-$ for the $-1$-eigenspace in $D$ of the conjugation action of $\iota_\BE(\pi)$, so that $D = F \oplus D^-$.
  We fix an $O_{F_0}$-linear principal polarization
(any two of which differ by an $O_{F_0}^\times$-multiple)
\[
   \lambda_\BE\colon \BE \isoarrow \BE^\vee.
\] 
The Rosati involution $\Ros_{\lambda_\BE}$ induces the main involution on $D$, and hence the nontrivial Galois involution on $F$ via $\iota_\BE$.

We denote by $\CE$ the canonical lift of $\BE$ over $\Spf O_{\breve F}$ with respect to $\iota_\BE$, equipped with its $O_F$-action $\iota_\CE$, $O_F$-linear framing isomorphism $\rho_\CE\colon \CE_{\ov k} \isoarrow \BE$, and principal polarization $\lambda_\CE$ lifting $\rho_\CE^*(\lambda_\BE)$.  We denote by $\ov\BE$ the same object as $\BE$, except where the $O_F$-action $\iota_{\ov\BE}$ is equal to the precomposition of $\iota_\BE$ by the nontrivial automorphism of $F/F_0$; and ditto for $\ov\CE$ and $\iota_{\ov\CE}$ in relation to $\CE$ and $\iota_\CE$.  On the level of $O_{F_0}$-modules, we set
\[
   \lambda_{\ov\BE} := \lambda_\BE,
	\quad
	\lambda_{\ov\CE} := \lambda_\CE,
	\quad\text{and}\quad
	\rho_{\ov\CE} := \rho_\CE.
\]
Note that $\ov\BE$ is \emph{not} a formal $O_{F_0}$-module when $F/F_0$ is unramified, provided we keep the same map $O_F \to \ov k$ dictated above; and $\ov\CE$ is never a formal $O_{F_0}$-module.

Given a $p$-divisible group $X$ over $\Spec \ov k$ equipped with an $O_F$-action $\iota$, we define
\begin{equation}\label{defV}
	\BV(X) := \Hom_{O_F}^\circ\bigl(\ov \BE, X\bigr).
\end{equation}
When $X$ is equipped with a polarization $\lambda$ such that $\Ros_\lambda$ induces the nontrivial Galois involution on $O_F$ (via $\iota$), $\BV(X)$ carries a natural $F/F_0$-hermitian form, cf.~e.g.~\cite[Def.~3.1]{KR-U1}: for $x,y \in \BV(X)$, the composition
\[
   \ov\BE \xra x X \xra\lambda X^\vee \xra{y^\vee} \ov\BE^\vee \xra{\lambda_{\ov\BE}^{-1}} \ov\BE
\]
lies in $\End_{O_F}^\circ(\ov\BE)$, and hence identifies via $\iota_{\ov\BE}$ with an element in $F$, which we define to be the pairing of $x$ and $y$.

Given modules $M$ and $N$ over a ring $R$, we write $M \subset^r N$ to indicate that $M$ is an $R$-submodule of $N$ of finite colength $r$.  Typically $R$ will be $O_F$ or $O_{\breve F}$, and the quotient $N/M$ will be a vector space over the residue field of dimension $r$.  When $\Lambda$ is an $O_F$-lattice in an $F/F_0$-hermitian space, we denote the dual lattice with respect to the hermitian form by $\Lambda^\vee$, and we call $\Lambda$ a \emph{vertex lattice of type $r$} if $\Lambda \subset^r \Lambda^\vee \subset \pi^{-1} \Lambda$.  Note that this terminology differs slightly from e.g.~\cite{KR-U1,RTW}.  Of course, a \emph{vertex lattice} is a vertex lattice of type $r$ for some $r$.  Let us single out the following special cases.  A \emph{self-dual} lattice is, of course, a vertex lattice of type $0$.  An \emph{almost self-dual} lattice is a vertex lattice of type $1$.  At the other extreme, a vertex lattice $\Lambda$ is \emph{$\pi$-modular} if $\Lambda^\vee = \pi^{-1} \Lambda$, and \emph{almost $\pi$-modular} if $\Lambda \subset \Lambda^\vee \subset^1 \pi^{-1}\Lambda$.

Given a variety $V$ over $\Spec F_0$, we denote by $C_c^\infty(V)$ the set of locally constant, compactly supported functions on the space $V(F_0)$ relative to the $\varpi$-adic topology. 

We write $1_n$ for the $n \times n$ identity matrix.  We use a subscript $S$ to denote base change to a scheme (or other object) $S$, and when $S = \Spec A$, we often use a subscript $A$ instead.

\part{Setup and background}

In this first part of the paper we describe the group-theoretic setup involved in our various AT conjectures, and we review the AFL conjecture of the third author \cite{Z12}.

\section{Group-theoretic setup and definitions}\label{setup}

We begin by recalling the setup of \cite{RSZ}, except we make no assumption on the ramification of $F/F_0$.  Let $n \geq 2$ be an integer.

\subsection{Homogeneous group setting}\label{setup homog}
Let
\[
   e := (0,\dotsc,0,1) \in F^n,
\]
and consider the embedding of algebraic groups over $F$,
\begin{equation}\label{GL_n-1 emb}
	\begin{gathered}
   \xymatrix@R=0ex{
	   \GL_{n-1} \ar[r]  &  \GL_n\\
		\gamma_0 \ar@{|->}[r]  &  \diag(\gamma_0,1)
	}
	\end{gathered};
\end{equation}
this identifies $\GL_{n-1}$ with the subgroup of points $\gamma$ in $\GL_n$ such that $\gamma e = \tensor[^t]{\gamma}{} e = e$.  Next define
the
algebraic groups over $F_0$,
\begin{align*}
   G' &:= \Res_{F/F_0}(\GL_{n-1} \times \GL_n),\\
	H_1' &:= \Res_{F/F_0} \GL_{n-1},\\
	H_2' &:= \GL_{n-1}\times\GL_n.
\end{align*}
We embed $H_1'$ in $G'$ by taking the graph of the map \eqref{GL_n-1 emb}, and we embed $H_2'$ in $G'$ via the evident natural map. Let
\[
   H_{1,2}' := H_1' \times H_2'.
\]
We consider the natural right action of $H_{1,2}'$ on $G'$,
\[
   \gamma \cdot (h_1,h_2) = h_1^{-1}\gamma h_2.
\]
We say that an element $\gamma \in G'(F_0)$ is \emph{regular semi-simple} if it is regular semi-simple for this action, i.e.~its orbit under $H_{1,2}'$ is Zariski-closed in $G'$, and its stabilizer in $H_{1,2}'$ is of minimal dimension.  In the situation at hand, it is equivalent that $\gamma$ have closed orbit and trivial stabilizer, which follows from \cite[Th.~6.1]{RS}. We denote by $G'(F_0)_\rs$ the set of regular semi-simple elements in $G'(F_0)$.

Next let $W$ be an $F/F_0$-hermitian space of dimension $n$.  Up to isometry there are two possibilities for $W$, the split and non-split cases, and we write $\chi(W) = 1$ or $\chi(W) = -1$ accordingly.  These satisfy the formula
\begin{equation}\label{chi formula}
   \chi(W) = \eta\bigl((-1)^{n(n-1)/2}\det W\bigr),
\end{equation}
where we recall that $\eta = \eta_{F/F_0}$ is the quadratic character on $F_0^\times$ attached to $F/F_0$, and we set $\det W := \det J \bmod \RN F^\times$ for any hermitian matrix $J$ representing the form on $W$.  We fix an anisotropic vector
\[
   u \in W,
\]
which we call the \emph{special vector}.  Let
\[
   W^\flat := u^\perp \subset W.
\]
Then there is an orthogonal decomposition $W = W^\flat \oplus Fu$. Setting $\epsilon := (u,u)$, it follows from \eqref{chi formula} that
\begin{equation}\label{chi decomp formula}
   \chi(W) = \chi(W^\flat)\eta\bigl((-1)^{n-1}\epsilon\bigr).
\end{equation}
We define the algebraic groups over $F_0$,
\begin{equation}\label{unitary group defs}
\begin{aligned}
	G &:= \U(W),\\
	H &:= \U(W^\flat),\\
	G_W &:= H \times G,\\
	H_W &:= H \times H.
\end{aligned}
\end{equation}
We embed $H$ in $G$ in the natural way as the stabilizer of the special vector $u$, and we embed $H$ in $G_W$ as the graph of this embedding.  We then consider the natural right action of $H_W$ on $G_W$,
\[
   g \cdot (h_1,h_2) = h_1^{-1}gh_2.
\]
We say that an element $g \in G_W(F_0)$ is \emph{regular semi-simple} if it is regular semi-simple for this action, and we denote the set of such elements by $G_W(F_0)_\rs$.

Now choose an $F$-basis for $W^\flat$, and complete it to a basis for $W$ by appending $u$ (and thus identifying $u$ with $e \in F^n$). This determines closed embeddings
\[
   H \inj H_1',
	\quad
	G \inj \Res_{F/F_0} \GL_n,
	\quad\text{and}\quad
	G_W \inj G',
\]
where the third embedding is the product of the first two.  We call the maps obtained in this way \emph{special embeddings}.

\begin{definition}
An element $\gamma \in G'(F_0)_\rs$ and an element $g \in G_W(F_0)_\rs$ \emph{match} if they are in the same $H_{1,2}'(F_0)$-orbit when $g$ is regarded as an element in $G'(F_0)$ via any special embedding.
\end{definition}

The notion of matching is independent of the choice of special embedding. Now let $W_0$ and $W_1$ respectively denote the split and non-split hermitian spaces of dimension $n$, and take the special vectors in each to have the same norm (which is always possible since $n \geq 2$).  Then the basic group-theoretic fact of concern to us is that the matching relation induces a bijection on regular semi-simple orbits (which follows from the analogous bijection \eqref{matching bij inhomog} below in the inhomogeneous setting),
\begin{equation}\label{matching bij homog}
   G'(F_0)_\rs/H_{1,2}'(F_0) \simeq \bigl(G_{W_0}(F_0)_\rs/H_{W_0}(F_0)\bigr) \amalg \bigl(G_{W_1}(F_0)_\rs/H_{W_1}(F_0)\bigr).
\end{equation}

The matching bijection \eqref{matching bij homog} gives rise to the notion of \emph{transfer of smooth functions} with respect to the following orbital integrals. For $\gamma\in G'(F_0)_\rs$, a function $f'\in C_c^\infty(G')$, and a complex parameter $s\in \BC$, we define the weighted orbital integral
\begin{equation*}
   \Orb(\gamma, f', s) := 
	   \int_{H_{1,2}'(F_0)} f'(h_1^{-1}\gamma h_2) \lv\det h_1\rv^s \eta(h_2)\, dh_1\, dh_2,
\end{equation*}
where $\lv\phantom{a}\rv$ denotes the normalized absolute value on $F$, where we set
\[
   \eta(h_2) := \eta(\det h_2')^n \eta(\det h_2'')^{n-1}
	\quad\text{for}\quad
	h_2 = (h_2', h_2'')\in H_2'({F_0}) = \GL_{n-1}(F_0) \times \GL_n(F_0),
\] 
and where we use fixed Haar measures on $H_1'(F_0)$ and $H_2'(F_0)$ and the product Haar measure on $H_{1,2}'(F_0) = H_1'(F_0) \times H_2'(F_0)$.
We further define the special values
\begin{equation*}
   \Orb(\gamma, f') := \Orb(\gamma, f', 0)
	\quad\text{and}\quad
	\del(\gamma, f') := \frac{d}{ds} \Big|_{s=0} \Orb({\gamma},  f',s) . 
\end{equation*}
The integral defining $\Orb(\gamma,f',s)$ is absolutely convergent, and $\Orb(\gamma,f')$ has the transformation property
\[
   \Orb(h_1^{-1}\gamma h_2,f') = \eta(h_2)\Orb(\gamma,f') 
	\quad\text{for}\quad 
	(h_1, h_2)\in H_{1, 2}'(F_0)=H_1'(F_0)\times H_2'(F_0) .
\]
For $W$ an $n$-dimensional hermitian space as above, an element $g \in G_W(F_0)_\rs$, and a function $f \in C_c^\infty(G_W)$, we similarly define the orbital integral
\begin{equation*}
   \Orb(g, f) := \int_{H_W(F_0)} f(h_1^{-1} g h_2)\, dh_1\, dh_2 .
\end{equation*}
Here the Haar measure on $H_W(F_0) = H(F_0) \times H(F_0)$ is a product of identical Haar measures on $H(F_0)$.

Finally, recall that a \emph{transfer factor} for $G'$ is a function $\omega\colon G'({F_0})_\rs\to \BC^\times$ such that
\[
   \omega(h_1^{-1}\gamma h_2)=\eta(h_2)\omega(\gamma)
	\quad\text{for all}\quad
	(h_1,h_2)\in H_1'(F_0) \times H_2'(F_0).
\] 
We will specify different transfer factors depending on the context later in the paper.  We can now state the definition of smooth transfer in the present context; we again denote by $W_0$ and $W_1$ the respective split and non-split hermitian spaces of dimension $n$.

\begin{definition}\label{transfer homog}
A function $f'\in C_c^\infty(G')$ and a pair of functions $(f_0,f_1) \in C_c^\infty(G_{W_0}) \times C_c^\infty(G_{W_1})$ are \emph{transfers} of each other (for the fixed choices of Haar measures, a fixed choice of transfer factor, and fixed choices of special vectors $u_i \in W_i$) if for each $i \in \{0,1\}$ and each $g\in G_{W_i}(F_0)_\rs$,
\[
 \Orb(g,f_i)=\omega(\gamma) \Orb(\gamma,f')
\]
whenever $\gamma\in G'(F_0)_\rs $ matches $g$.
\end{definition}

\subsection{Inhomogeneous group setting}\label{setup inhomog}
In this subsection we give an ``inhomogeneous'' analog of the previous subsection, whose notation we retain.  The role of $G'$ in the inhomogeneous setting is played by the symmetric space
\begin{equation}\label{S_n}
   S := S_n := \bigl\{\, \gamma \in \Res_{F/F_0}\GL_n \bigm| \gamma \ov\gamma = 1_n \,\bigr\}.
\end{equation}
Note that in some later parts of the paper (especially in Part \ref{spaces} in the context of RZ spaces) we will also use the symbol $S$ to denote an arbitrary test scheme; context should always make the meaning clear.
The role of $H_{1,2}'$ in the inhomogeneous setting is played by the group over $F_0$
\[
   H' := \GL_{n-1},
\]
which acts naturally on the right on $S$ by conjugation (via the map \eqref{GL_n-1 emb}).  The homogeneous and inhomogeneous settings are related via the maps
\begin{equation}\label{G' -> Res GL_n}
	\begin{gathered}
 \xymatrix@R=0ex{
	   G' \ar[r]  &  \Res_{F/F_0} \GL_n\\
		(\gamma_1,\gamma_2) \ar@{|->}[r]  &  \gamma_1^{-1}\gamma_2
	}
	\end{gathered}
\end{equation}
(defined again using \eqref{GL_n-1 emb}) and
\begin{equation}\label{Res GL_n -> S}
   \begin{gathered}
     r\colon \xymatrix@R=0ex{
	   \Res_{F/F_0} \GL_n \ar[r]  &  S\\
		\gamma \ar@{|->}[r]  &  \gamma \ov\gamma^{-1}
	}
	\end{gathered}.
\end{equation}
These maps induce respective isomorphisms of varieties
\[
   H_1' \bs G' \isoarrow \Res_{F/F_0} \GL_n
	\quad\text{and}\quad
	(\Res_{F/F_0} \GL_n) / \GL_n \isoarrow S,
\]
and a bijection on $F_0$-rational points
\[
   G'(F_0)/H_{1,2}'(F_0) \isoarrow S(F_0)/H'(F_0).
\]

On the unitary side, let $W$ be an $n$-dimensional $F/F_0$-hermitian space, and choose a special vector $u \in W$, as in the previous subsection.  Then the role of $G_W$ is played by $G$, and the role of $H_W$ is played by $H$, cf.~\eqref{unitary group defs}.  The natural map
\begin{equation}\label{G_W -> G}
	\begin{gathered}
   \xymatrix@R=0ex{
	   G_W \ar[r]  &  G\\
		(g_1,g_2) \ar@{|->}[r]  &  g_1^{-1}g_2
	}
	\end{gathered}
\end{equation}
induces an isomorphism of varieties
\[
   H \bs G_W \isoarrow G,
\]
where $H$ acts on $G_W$ via its diagonal embedding; and a bijection on $F_0$-rational points
\[
   G_W(F_0)/H_W(F_0) \isoarrow G(F_0)/H(F_0),
\]
where $H$ acts on $G$ by conjugation.

Now let us say that an element $\gamma \in \RM_n(F)$ is \emph{regular semi-simple} if it is regular semi-simple for the conjugation action of $\GL_{n-1,F}$ on $\RM_{n,F}$ with respect to the embedding \eqref{GL_n-1 emb}.  It is equivalent that $\gamma$ have Zariski-closed orbit and trivial stabilizer; or that the sets of vectors $\{\gamma^i e\}_{i=0}^{n-1}$ and $\{\tensor[^t]{\gamma}{^i} e \}_{i=0}^{n-1}$ are linearly independent over $F$ \cite[Th.~6.1]{RS}.  We say that an element $\gamma \in S(F_0)$ is \emph{regular semi-simple} if it is regular semi-simple for the conjugation action of $H'$ on $S$.  It is equivalent that $\gamma$ be regular semi-simple as an element in $\RM_n(F)$ in the sense just given.  The notions of regular semi-simplicity in the homogeneous and inhomogeneous settings are compatible in the sense that an element $\gamma \in G'(F_0)$ is regular semi-simple if and only if its image in $S(F_0)$ under the composite of the maps \eqref{G' -> Res GL_n} and \eqref{Res GL_n -> S} is.  Similarly, an element $g \in G(F_0)$ is \emph{regular semi-simple} if it is regular semi-simple for the conjugation action of $H$ on $G$; or equivalently if it is regular semi-simple as an element in $\RM_n(F)$ under one, hence any, special embedding $G \inj \Res_{F/F_0} \GL_n$.  An element $g \in G_W(F_0)$ is regular semi-simple if and only if its image in $G(F_0)$ under the map \eqref{G_W -> G} is.  We denote by $S(F_0)_\rs$ and $G(F_0)_\rs$ the sets of regular semi-simple elements in $S(F_0)$ and $G(F_0)$, respectively.  In the inhomogeneous setting the notion of matching takes the following form.

\begin{definition}
An element $\gamma \in S(F_0)_\rs$ and an element $g \in G(F_0)_\rs$ \emph{match} if they are in the same $\GL_{n-1}(F)$-orbit when $g$ is regarded as an element in $\GL_n(F)$ via any special embedding.
\end{definition}

The matching relation is again independent of the choice of special embedding, and it induces a bijection on regular semi-simple orbits \cite[\S2]{Z12},
\begin{equation}\label{matching bij inhomog}
	S(F_0)_\rs/H'(F_0) \simeq \bigl( G_0(F_0)_\rs/H_0(F_0)\bigr) \amalg \bigl( G_1(F_0)_\rs/H_1(F_0)\bigr),
\end{equation}
where we write
\[
   G_0 := G
	\quad\text{and}\quad
	H_0 := H
\]
when $W = W_0$ is split, and
\[
   G_1 := G
	\quad\text{and}\quad
	H_1 := H
\]
when $W = W_1$ is non-split.  (Note that the quasi-splitness of $H_0$ and $H_1$ is then governed by the formula \eqref{chi decomp formula}.) Here as before we take the special vectors in $W_0$ and $W_1$ to have the same norm.

The formalism of orbital integrals and smooth transfer carries over readily from the homogeneous setting to the inhomogeneous setting.  For $\gamma \in S(F_0)_\rs$, $f' \in C_c^\infty(S)$, and $s \in \BC$, we define
\begin{equation}\label{Orb(gamma,f',s)}
   \Orb(\gamma,f',s) := \int_{H'(F_0)} f'(h^{-1}\gamma h) \lv \det h \rv^s \eta(h)\, dh,
\end{equation}
where $\lv\phantom{a}\rv$ denotes the normalized absolute value on $F_0$, where we set
\[
   \eta(h) := \eta(\det h),
\]
and where we use a fixed Haar measure on $H'(F_0) = \GL_{n-1}(F_0)$.  We define the special values
\[
   \Orb(\gamma, f') := \Orb(\gamma, f', 0)
	\quad\text{and}\quad
	\del(\gamma, f') := \frac{d}{ds} \Big|_{s=0} \Orb({\gamma},  f',s) . 
\]
As in the homogeneous setting, the integral defining $\Orb(\gamma,f',s)$ is absolutely convergent, and when $s = 0$ it transforms as
\[
   \Orb(h^{-1}\gamma h, f') = \eta(h) \Orb(\gamma,f')
	\quad\text{for all}\quad
	h \in H'(F_0).
\]
On the unitary side, for $g \in G(F_0)_\rs$ and $f \in C_c^\infty(G)$, we define
\[
   \Orb(g,f) := \int_{H(F_0)} f(h^{-1}gh) \, dh,
\]
where we use the same fixed Haar measure on $H(F_0)$ as in the previous subsection.  Finally, a \emph{transfer factor} for $S$ is a function $\omega\colon S(F_0)_\rs \to \BC^\times$ such that
\[
   \omega(h^{-1}\gamma h) = \eta(h) \omega(\gamma)
	\quad\text{for all}\quad
	h \in H'(F_0).
\]
With this we arrive at the inhomogeneous version of smooth transfer.

\begin{definition}
A function $f' \in C_c^\infty(S)$ and a pair of functions $(f_0,f_1) \in C_c^\infty(G_0) \times C_c^\infty(G_1)$ are \emph{transfers} of each other (for the fixed choices of Haar measures, a fixed choice of transfer factor, and fixed choices of special vectors) if for each $i \in \{0,1\}$ and each $g \in G_i(F_0)_\rs$,
\[
   \Orb(g,f_i) = \omega(\gamma) \Orb(\gamma,f')
\]
whenever $\gamma \in S(F_0)_\rs$ matches $g$.
\end{definition}

\subsection{Lie algebra setting}\label{setup lie}
In this subsection we give a ``Lie algebra'' analog of the inhomogeneous group setup.  The role of $S$ is played by its tangent space at the identity matrix,
\[
   \fks := \fks_n := \bigl\{\, y \in \Res_{F/F_0} \RM_n \bigm| y + \ov y = 0 \,\bigr\}.
\]
The group action we consider is the natural right action of $H'$ on \fks by conjugation.  For $W$ an $n$-dimensional hermitian space, the role of $G$ is played by its Lie algebra
\[
   \fkg := \Lie G.
\]
Upon choosing a special vector in $W$, we then consider the right adjoint action of $H$ on \fkg.

We say that an element in $y \in \fks(F_0)$ is \emph{regular semi-simple} if it is regular semi-simple for the action of $H'$, and we denote the set of such elements by $\fks(F_0)_\rs$.  It is equivalent that $y$ have closed $H'$-orbit and trivial stabilizer; or that $y$ be regular semi-simple as an element in $\RM_n(F)$ in the sense of the previous subsection.  We say that an element $x \in \fkg(F_0)$ is \emph{regular semi-simple} if it is regular semi-simple for the action of $H$, and we denote the set of such elements by $\fkg(F_0)_\rs$.  As in \S\ref{setup homog}, the choice of a basis for $W^\flat$, extended by the special vector to a basis for $W$, determines a closed embedding
\[
   \fkg \inj \Res_{F/F_0}\RM_n,
\]
which we again call a \emph{special embedding}.  For $x \in \fkg(F_0)$ to be regular semi-simple, it is again equivalent that $x$ have closed $H$-orbit and trivial stabilizer; or that $x$ be regular semi-simple as an element in $\RM_n(F)$ under one, hence any, special embedding.

\begin{definition}
An element $y \in \fks(F_0)_\rs$ and an element $x \in \fkg(F_0)_\rs$ \emph{match} if they are in the same $\GL_{n-1}(F)$-orbit when $x$ is regarded as an element in $\RM_n(F)$ via any special embedding.
\end{definition}

As before, the matching relation is independent of the choice of special embedding, and it induces a bijection on regular semi-simple orbits \cite[\S5]{JR},
\[
   \fks(F_0)_\rs/H'(F_0) \simeq \bigl(\fkg_0(F_0)_\rs / H_0(F_0)\bigr) \amalg \bigl(\fkg_1(F_0)_\rs / H_1(F_0)\bigr),
\]
where as in the inhomogeneous group setting we use the subscripts $0$ and $1$ on $\fkg$ and $H$ according as $W$ is split or non-split, and we take the special vectors to have the same norm.

The formalism of orbital integrals and smooth transfer again carries over in a straightforward way to the present setting.  For $y \in \fks(F_0)_\rs$, $\phi' \in C_C^\infty(\fks)$, and $s \in \BC$, we define
\[
   \Orb(y,\phi',s) := \int_{H'(F_0)} \phi'(h^{-1}yh) \lv\det h \rv^s \eta(h) \, dh,
\]
as well as the special values
\[
   \Orb(y,\phi') := \Orb(y,\phi,0)
	\quad\text{and}\quad
	\del(y,\phi') := \frac{d}{ds} \Big|_{s=0} \Orb(y, \phi',s) . 
\]
The notation in the integral defining $\Orb(y,\phi',s)$ is as in \eqref{Orb(gamma,f',s)}. This integral is again absolutely convergent and transforms when $s = 0$ as
\[
   \Orb(h^{-1}yh,\phi') = \eta(h)\Orb(y,\phi')
	\quad\text{for all}\quad
	h \in H'(F_0).
\]
On the unitary side, for $x \in \fkg(F_0)_\rs$ and $\phi \in C_c^\infty(\fkg)$, we define
\[
   \Orb(x,\phi) := \int_{H(F_0)} \phi(h^{-1}xh) \, dh,
\]
where we use the same Haar measure on $H(F_0)$ as before.  A \emph{transfer factor} for \fks is a function $\omega\colon \fks(F_0)_\rs \to \BC^\times$ such that
\[
   \omega(h^{-1}\gamma h) = \eta(h)\omega(\gamma)
	\quad\text{for all}\quad
	h \in H'(F_0).
\]

\begin{definition}
A function $\phi' \in C_c^\infty(\fks)$ and a pair of functions $(\phi_0,\phi_1) \in C_c^\infty(\fkg_0) \times C_c^\infty(\fkg_1)$ are \emph{transfers} of each other (for the fixed choices of Haar measures, a fixed choice of transfer factor, and fixed choices of special vectors) if for each $i \in \{0,1\}$ and each $x\in \fkg_i(F_0)_\rs$,
\[
   \Orb(x,\phi_i) = \omega(y) \Orb(y,\phi')
\]
whenever $y\in \fks(F_0)_\rs $ matches $x$.
\end{definition}

\subsection{Transfer factors}\label{trans factor}
We now fix transfer factors for use throughout the rest of the paper, which are slight variants of the ones in \cite[\S2.4]{Z14}. First fix an extension $\wt\eta$ of the quadratic character $\eta$ from $F_0^\times$ to $F^\times$ (not necessarily of order $2$). If $F$ is unramified, then we take the natural extension $\wt\eta(x)=(-1)^{v(x)}$. For $S$ we take the transfer factor
\[
   \omega_S(\gamma) := \wt\eta\bigl(\det(\gamma)^{-\lfloor n/2\rfloor } \det(\gamma^ie)_{0 \leq i \leq n-1}\bigr), \quad \gamma \in S(F_0)_\rs.
\]
For $G'$ we take the transfer factor
\begin{equation*}
   \omega_{G'}(\gamma) := 
	\begin{cases}
		\omega_S\bigl(r(\gamma_1^{-1}\gamma_2)\bigr), & n \text{ odd}; \\
      \wt\eta(\gamma_1^{-1}\gamma_2) \omega_S\bigl(r(\gamma_1^{-1}\gamma_2)\bigr), & n \text{ even}, 
   \end{cases}
	\quad
	\gamma= (\gamma_1,\gamma_2)\in  G'(F_0)_\rs,
\end{equation*}
where $r$ is defined in \eqref{Res GL_n -> S}, and where for any $\gamma_0 \in \GL_n(F)$ we set
\[
   \wt\eta(\gamma_0) := \wt\eta(\det \gamma_0).
\]
For \fks we take the transfer factor
\begin{equation}\label{fks transfer factor ram}
   \omega_\fks(y) := \wt\eta\bigl(\det(y^ie)_{0 \leq i \leq n-1}\bigr), \quad y \in \fks(F_0)_\rs.
\end{equation}

\section{Review of the FL conjecture}\label{s:FL}
To set the stage, in this section we review the FL conjecture in its homogeneous, inhomogeneous, and Lie algebra versions, cf.~\cite{JR, Z12, RTZ}.  Let $F/F_0$ be unramified and $n \geq 2$.  As in the previous section, let $W_0$ and $W_1$ respectively be the split and non-split $F/F_0$-hermitian spaces of dimension $n$. Assume furthermore that the special vectors $u_i \in W_i$ have common norm which is a unit in $O_{F_0}$.  Then by \eqref{chi decomp formula} the orthogonal complement $W_i^\flat$ of $u_i$ in $W_i$ is again split for $i=0$ and non-split for $i=1$.  As in the previous section, we write $G_i = \U(W_i)$, $\fkg_i = \Lie G_i$, and $H_i = \U(W_i^\flat)$. Fix a self-dual $O_F$-lattice
\[
   \Lambda_0^\flat \subset W_0^\flat,
\]
which exists and is unique up to $H_0(F_0)$-conjugacy since $W_0^\flat$ is split.  Let
\[
   \Lambda_0 := \Lambda_0^\flat \oplus O_Fu_0 \subset W_0,
\]
which is again self-dual.  We denote by
\[
   K_0^\flat \subset H_0(F_0)
\]
the stabilizer of $\Lambda_0^\flat$, and by
\[
   K_0 \subset G_0(F_0)
	\quad\text{and}\quad
	\fkk_0 \subset \fkg_0(F_0)
\]
the respective stabilizers of $\Lambda_0$.  Then $K_0^\flat$ and $K_0$ are both hyperspecial maximal parahoric subgroups.

We normalize the Haar measures on the groups
\[
   H'(F_0) = \GL_{n-1}(F_0), \quad \GL_n(F_0),\quad H_1'(F_0) = \GL_{n-1}(F),\quad\text{and}\quad H_0(F_0)
\]
by assigning each of the respective subgroups
\[
   \GL_{n-1}(O_{F_0}),\quad \GL_n(O_{F_0}),\quad \GL_{n-1}(O_F), \quad\text{and}\quad K_0^\flat
\]
measure one.  We then take the product measure on the groups $H_2'(F_0) = \GL_{n-1}(F_0) \times \GL_n(F_0)$, $H_{1,2}'(F_0) = H_1'(F_0) \times H_2'(F_0)$, and $H_{W_0}(F_0) = H_0(F_0) \times H_0(F_0)$.  The Haar measures on $H_1(F_0)$ and $H_{W_1}(F_0)$ will not be important for us.  The transfer factors are defined in \S\ref{trans factor}. They take the following simple form on $S(F_0)_\rs$ and $\fkg(F_0)_\rs$, 
\begin{equation}\label{sign}
\begin{aligned}
   \omega_S(\gamma) &= (-1)^{v(\det(\gamma^i e)_{0 \leq i \leq n-1})}, \\
   \omega_\fks(y) &= (-1)^{v(\det(y^i e)_{0 \leq i \leq n-1})}.
\end{aligned}
\end{equation}
With respect to these normalizations, the FL conjecture is the following statement.

\begin{conjecture}[Fundamental lemma]\label{FLconj}\hfill
\begin{altenumerate}
\renewcommand{\theenumi}{\alph{enumi}}
\item
\textup{(Homogeneous version)} The characteristic function $\mathbf{1}_{G'(O_{F_0})} \in C_c^\infty(G')$ transfers to the pair of functions $(\mathbf{1}_{K_0^\flat\times {K}_0}, 0) \in C_c^\infty(G_{W_0}) \times C_c^\infty(G_{W_1})$. 
\item
\textup{(Inhomogeneous version)} The characteristic function $\mathbf{1}_{S(O_{F_0})} \in C_c^\infty(S)$ transfers to the pair of functions $(\mathbf{1}_{K_0},0) \in C_c^\infty(G_0)\times C_c^\infty(G_1)$.
\item\label{FLconj lie}
\textup{(Lie algebra version)} The characteristic function $\mathbf{1}_{\fks(O_{F_0})} \in  C_c^\infty(\fks)$ transfers to the pair of functions $(\mathbf{1}_{\fkk_0},0) \in C_c^\infty(\fkg_0)\times C_c^\infty(\fkg_1)$.
\end{altenumerate}
\end{conjecture}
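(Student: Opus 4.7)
The plan is to reduce all three versions to the Lie algebra version (c), and then to prove (c) by geometric methods. For the equivalence of the homogeneous version (a) and the inhomogeneous version (b), I would unfold the orbital integral on $G'$ using the isomorphism $H_1'\backslash G' \isoarrow \Res_{F/F_0}\GL_n$ followed by the map $r\colon \gamma\mapsto\gamma\bar\gamma^{-1}$ of \eqref{Res GL_n -> S}. The transfer factors $\omega_{G'}$ and $\omega_S$ of \S\ref{trans factor} are set up precisely so that the weighted orbital integral on $G'$ descends to the one on $S$, and on the unitary side the inner integral over the diagonally embedded $H \subset H_W$ absorbs the volume of $\GL_{n-1}(O_{F_0})$ in $H_1'(F_0)$. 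This gives (a) $\Leftrightarrow$ (b) once one checks that $\mathbf{1}_{G'(O_{F_0})}$ is compatible with $\mathbf{1}_{S(O_{F_0})}$ under the descent, and that $\mathbf{1}_{K_0^\flat \times K_0}$ unfolds to $\mathbf{1}_{K_0}$.

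The reduction (b) $\Leftrightarrow$ (c) proceeds by topological Jordan decomposition: every $\gamma\in S(F_0)_\rs$ (resp.\ $g\in G_i(F_0)_\rs$) factors as a commuting product of a semisimple and a topologically unipotent part, and the centralizer of the semisimple part is a product of smaller twisted symmetric spaces / unitary groups, on which the FL holds by induction on $n$. On the topologically unipotent part, the Cayley transform $\gamma=(1-y)(1+y)^{-1}$ (which satisfies $\gamma\bar\gamma=1$ whenever $y+\bar y=0$) identifies an open neighborhood of $1$ in $S(F_0)$ with an open neighborhood of $0$ in $\fks(F_0)$, preserving matching relations, transfer factors (by a direct computation using \eqref{sign}), and, on a sufficiently deep neighborhood, the characteristic functions $\mathbf{1}_{S(O_{F_0})} \leftrightarrow \mathbf{1}_{\fks(O_{F_0})}$ and $\mathbf{1}_{K_0} \leftrightarrow \mathbf{1}_{\fkk_0}$. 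A partition-of-unity argument relative to the topologically unipotent stratification then reduces (b) to (c).

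The heart of the matter is the Lie algebra version (c). Following Yun \cite{Y12}, I would interpret both sides as traces of Frobenius on stalks of certain perverse sheaves on Hitchin-type moduli spaces: on the linear side, the affine Springer fiber for the conjugation action of $\GL_{n-1}$ on $\fks_n$, globalized into a Hitchin fibration over the invariant-theoretic base; on the unitary side, the analogous hermitian Hitchin space for $H_i$ acting on $\fkg_i$. The key structural input is a \emph{geometric stabilization} -- a global identity expressing relevant direct summands of the pushforward perverse sheaf from the hermitian Hitchin space as summands of the pushforward from the linear one, proved via the decomposition theorem combined with a support theorem that pins down which simple summands can occur. Taking Frobenius traces at rational points of the base yields (c) over a local function field, and Gordon's motivic transfer principle \cite{Go} then propagates the identity to mixed characteristic for $p$ sufficiently large in $n$.

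The main obstacle is the restriction $p\gg n$: both Yun's geometric argument (which needs mild tameness hypotheses for the decomposition theorem and for the support calculation) and Gordon's transfer (which requires one to stay away from a finite set of ``bad'' residue characteristics determined by the formulas) fall short of covering every odd $p$. Extending to all odd $p$, as Conjecture \ref{FLconj} asserts, seems to demand either a refinement of these geometric and motivic techniques in small characteristic, or a direct local-harmonic argument generalizing Zhang's explicit inductive computation in \cite{Z12} for $n=2,3$ -- which, however, becomes combinatorially intractable as soon as $n\geq 4$, and finding a conceptually uniform approach valid for all odd $p$ remains the central open problem.
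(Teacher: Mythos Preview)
The statement you are addressing is a \emph{conjecture} in the paper, not a theorem; the paper does not prove it. Immediately after stating Conjecture~\ref{FLconj}, the paper simply records the known partial result: the equal-characteristic analog was proved by Yun for $p>n$, and Gordon deduced the $p$-adic case for $p$ large but unspecified. No argument, reduction, or further discussion of Conjecture~\ref{FLconj} itself appears in the paper.

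Your proposal is therefore not really a proof but a survey of the state of the art, and as such it is accurate: the Yun--Gordon route is indeed the only known approach, it does give the Lie algebra version for $p\gg n$, and you correctly identify the small-$p$ restriction as the genuine obstruction to a full proof. Two minor remarks. First, your reduction (b)~$\Leftrightarrow$~(c) via Cayley transform and topological Jordan decomposition is plausible in spirit---and the paper does use Cayley maps in \S\ref{unram proof} to pass between group and Lie algebra versions of the \emph{almost self-dual} FL (Conjecture~\ref{FL almost self-dual})---but that argument requires $q\geq n$ to guarantee enough units $\xi\in F^1$ for the Cayley map to be defined on all integral elements, so even the inter-version equivalences are not free of characteristic restrictions. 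Second, your unfolding (a)~$\Leftrightarrow$~(b) is essentially the content of Lemma~\ref{homog to inhomog} (stated there for the almost self-dual variant, but the mechanism is the same). So your outline is consistent with the techniques the paper deploys elsewhere, but you should be clear that what you have written is an explanation of why the conjecture is believed and partially known, not a proof---which matches the paper's own stance.
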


We note that the equal characteristic analog of the FL conjecture was proved by Z.~Yun for $p> n$; J.~Gordon deduced the $p$-adic case for $p$ large, but unspecified, cf.\ \cite{Go,Y}.

\section{Review of the AFL conjecture}\label{s:AFL}
We continue with the notation and normalizations introduced in the last section. In particular, $F/F_0$ is unramified, and the special vectors $u_i$ have norm a unit in $O_{F_0}$.  Note that the FL conjecture predicts that the orbital integrals $\Orb(\gamma, \mathbf{1}_{G'(O_{F_0})})$, $\Orb(\gamma, \mathbf{1}_{S(O_{F_0})})$, and $\Orb(y, \mathbf{1}_{\fks(O_{F_0})})$ vanish whenever $\gamma$, resp.~$\gamma$, resp.~$y$ matches with elements in $G_{W_1}(F_0)_\rs$, resp.~$G_1(F_0)_\rs$, resp.\ $\fkg_1(F_0)_\rs$.  The AFL conjecture then proposes an identity for the derivatives of these orbital integrals at such elements, in terms of geometry.

\begin{conjecture}[Arithmetic fundamental lemma]\label{AFLconj}\hfill
\begin{altenumerate}
\renewcommand{\theenumi}{\alph{enumi}}
\item
\textup{(Homogeneous version)}\label{AFLconj homog} 
Suppose that $\gamma\in G'({F_0})_\rs$ matches an element $g\in G_{W_1}({F_0})_\rs$. Then 
\[
   \omega_{G'}(\gamma)\del\bigl(\gamma, \mathbf{1}_{G'(O_{F_0})}\bigr) 
	   = - 2\Int(g)\cdot\log q. 
\]
\item
\textup{(Inhomogeneous version)}\label{AFLconj inhomog} 
Suppose that $\gamma\in S({F_0})_\rs$ matches an element $g\in G_1({F_0})_\rs$. Then 
\[
   \omega_S(\gamma)\del\bigl(\gamma, \mathbf{1}_{S(O_{F_0})}\bigr) 
	   = -\Int(g)\cdot\log q. 
\]
\item
\textup{(Lie algebra version)}\label{AFLconj lie}
Suppose that $y\in \fks({F_0})_\rs$ matches an element $x\in \fkg_1({F_0})_\rs$, and assume that the intersection $\Delta \cap \Delta_x$ is an artinian scheme. Then 
\[
   \omega_\fks(y)\del\bigl(y, \mathbf{1}_{\fks(O_{F_0})}\bigr) 
	   = -\lInt(x)\cdot\log q. 
\]
\end{altenumerate}
\end{conjecture}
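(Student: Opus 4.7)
The plan is first to reduce the three versions to a single one. The inhomogeneous version (b) and the Lie algebra version (c) are related by the Cayley transform $y \mapsto (1+y)(1-y)\i$, which is $H'$-equivariant on an open dense subset of $\fks(F_0)_\rs$ and its image in $S(F_0)_\rs$, matches $\mathbf{1}_{\fks(O_{F_0})}$ with $\mathbf{1}_{S(O_{F_0})}$ up to a Jacobian factor that can be absorbed after differentiating in $s$, and is compatible with an analogous Cayley-type identification on the geometric side matching $\lInt(x)\cdot\log q$ with $\Int(g)\cdot\log q$. The homogeneous version (a) descends from (b) via the map $G' \to S$, $(\gamma_1,\gamma_2)\mapsto r(\gamma_1\i\gamma_2)$; the relative factor of $2$ between (a) and (b) arises from the fiber integration along $\GL_n(F_0)$ combined with the difference between the transfer factors $\omega_{G'}$ and $\omega_S$ fixed in \S\ref{trans factor}.

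Focusing on (c), I would localize the right-hand side on the regular formal scheme $\CN_{n-1,n}$. Under the artinian hypothesis, $\CO_\Delta \otimes^\BL \CO_{\Delta_x}$ is supported at finitely many closed points, and I would use the Bruhat--Tits stratification of $\CN_n$ (and its analog on $\CN_{n-1}$) by closed strata indexed by vertex lattices in the hermitian space $\BV(\BX_n)$ to decompose each local multiplicity into a sum of lattice-combinatorial terms. In the minuscule case this decomposition is carried out explicitly in \cite{RTZ}; the goal would be to extend it to arbitrary $x$ by an inductive ``peeling'' procedure, working outward from the deepest vertex stratum on which $\Delta \cap \Delta_x$ meets nontrivially.

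On the analytic side, the strategy is to expand $\del(y, \mathbf{1}_{\fks(O_{F_0})})$ by a partial Fourier transform isolating the row and column corresponding to the special vector $e$. This produces a generating function over $H'(F_0)$-orbits of $O_F$-lattices satisfying integrality conditions prescribed by $y$, designed to pair term-by-term with the geometric decomposition above; the transfer factor $\omega_\fks(y) = (-1)^{v(\det(y^ie))}$ of \eqref{sign} then emerges naturally from the parity of the lattice indices.

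The main obstacle is matching the two decompositions at non-minuscule points. When $\Delta$ and $\Delta_x$ fail to meet transversally, the local contribution to $\lInt(x)$ involves higher $\mathrm{Tor}$-terms reflecting the ramification of the characteristic polynomial of $x$, and these must coincide with higher-order $\varpi$-adic corrections to the derivative of the weighted orbital integral. A natural route is to transport Yun's Fourier-theoretic argument \cite{Y12} from the function-field setting to the $p$-adic one, exploiting the relative Dieudonn\'e theory of \cite{M-AFL} to handle the mixed characteristic base; however, the absence of a clean global moduli interpretation in the local setting is precisely what has blocked progress, and this is where I would expect the general argument to stall outside the cases already treated in \cite{Z12,RTZ,M-AFL}.
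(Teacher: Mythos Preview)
This statement is a \emph{conjecture}, and the paper does not prove it. Section~\ref{s:AFL} is a review: it states Conjecture~\ref{AFLconj}, explains the intersection numbers $\Int(g)$ and $\lInt(x)$, and recalls that the general case is open (known only for $n=2,3$ by \cite{Z12,M-AFL} and for minuscule elements by \cite{RTZ}). So there is no ``paper's own proof'' to compare against; your proposal should be read as a research outline rather than a proof.

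On your reductions: the passage from (a) to (b) via $G'\to S$ is genuinely carried out in the paper (Lemma~\ref{homog to inhomog}) in the context of the almost self-dual AT conjecture, and the factor of $2$ does arise from fiber integration as you say. The Cayley reduction between (b) and (c), however, is more delicate than you indicate. In the paper (Lemmas~\ref{cayley}, \ref{cayley orb}) it only applies to \emph{strongly integral} elements, and one needs $q\ge n$ to guarantee that every integral element becomes $\xi$-strongly integral for some $\xi\in F^1$; the Jacobian is not ``absorbed'' but rather trivial on the relevant locus because $\fkc_\xi$ maps $\fks(O_{F_0})^\circ$ bijectively to $S(O_{F_0})^\circ_\xi$. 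So your first paragraph glosses over real constraints.

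Your diagnosis of the obstruction is accurate: the non-minuscule case is exactly where all existing approaches stop. The Bruhat--Tits stratification (Theorem~\ref{structN}(\ref{structN ii})) describes the reduced special fiber, not the formal neighborhoods, so it does not directly control higher Tor terms; and transporting Yun's function-field argument to mixed characteristic is precisely the open problem. Since you yourself conclude that the argument ``stalls'' outside the known cases, what you have written is an honest assessment of the state of the art, not a proof.
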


Let us explain the right-hand side in each part (as well as the expression $\Delta \cap \Delta_x$ in \eqref{AFLconj lie}).  For any $n \geq 1$, let $\CN_n = \CN_{n, F/F_0}$ denote the formal moduli scheme over $\Spf O_{\breve {F}}$ of \cite{KR-U1, VW}.\footnote{Recall that, since $F/F_0$ is unramified, we have $\breve F=\breve F_0$ as in the Introduction.}   In other words, we consider triples $(X, \iota, \lambda)$ over $\Spf O_{\breve F}$-schemes $S$, where $X$ is a $p$-divisible group of absolute height $2nd$ and dimension $n$ over $S$, equipped with an action $\iota$ of $O_{F}$ such that the induced action of $O_{F_0}$ on $\Lie X$ is via the structure morphism $O_{F_0}\to \CO_S$, and with a principal ($O_{F_0}$-relative) polarization $\lambda$. Here $d:=[{F_0}: \BQ_p]$.  Hence $(X, \iota|_{O_{F_0}})$ is a formal $O_{F_0}$-module of relative height $2n$ and dimension $n$. We require that the Rosati involution $\Ros_\lambda$ induces the non-trivial Galois automorphism in $\Gal({F}/{F_0})$ on $O_{F}$, and that the \emph{Kottwitz condition} of signature $(1, n-1)$ is satisfied, i.e.
\begin{equation}\label{kottwitzcond}
   \charac \bigl(\iota(a)\mid \Lie X\bigr)=(T-a)(T-\ov a)^{n-1} \in \CO_S[T]
	\quad\text{for all}\quad
	a\in O_{F} . 
\end{equation} 
An isomorphism $(X, \iota, \lambda) \isoarrow (X', \iota', \lambda')$ between two such triples is an $O_{F}$-linear isomorphism $\varphi\colon X\isoarrow X'$ such that $\varphi^*(\lambda')=\lambda$.

It is not hard to see that over the residue field $\ov k$ of $O_{\breve {F}}$ there is a unique such triple $(\BX_n, \iota_{\BX_n}, \lambda_{\BX_n})$ such that $\BX_n$ is  supersingular, up to $O_F$-linear quasi-isogeny compatible with the polarization, cf.~\cite[Prop.~1.15]{V2}. Then $\CN_n$ represents the functor over $\Spf O_{\breve F}$ that associates to each $S$ the set of isomorphism classes of quadruples $(X, \iota, \lambda, \rho)$ over $S$, where the final entry is an $O_F$-linear quasi-isogeny of height zero defined over the special fiber,
\[
   \rho \colon X\times_S\ov S \to \BX_n \times_{\Spec \ov k} \ov S,
\]
such that $\rho^*((\lambda_{\BX_n})_{\ov S}) = \lambda_{\ov S}$ (a \emph{framing}).

\begin{remark}
The definition of $\CN_n$ given here differs slightly from that in \cite{KR-U1, VW}.  First of all, $\CN_n$ is in fact an open and closed formal subscheme of the space defined in these papers (it is the locus, denoted by $\CN_0$ in \cite{VW}, where the framing $\rho$ has height zero).  Furthermore, within this locus, the moduli problem in these papers imposes the weaker condition that $\lambda_{\ov S}$ differs from $\rho^*((\lambda_{\BX_n})_{\ov S})$ locally on $\ov S$ by a unit in $O_{F_0}$, but it also weakens the notion of an isomorphism. This changes nothing in the outcome: the  formal scheme in loc.~cit.~is identical to the formal scheme defined here, cf.~also \cite[Rem.~3.6]{RSZ}. 
\end{remark}

The following theorem summarizes basic facts on the structure of the formal scheme $\CN_n$. 

\begin{theorem}[Vollaard--Wedhorn \cite{VW}]\hfill\label{structN}
\begin{altenumerate} 
\item\label{structN i} 
For any $n$, the formal scheme $\CN_n$ is formally locally of finite type, separated, essentially proper,\footnote{Recall that \emph{essentially proper} means that each irreducible component of the reduced underlying scheme is proper over $\Spec \ov k$.} and formally smooth of relative formal dimension $n-1$ over $\Spf O_{\breve {F}}$.  In particular, $\CN_n$ is regular of formal dimension $n$.
\item\label{structN ii}  
The underlying scheme $(\CN_n)_\red$ has a Bruhat--Tits stratification by Deligne--Lusztig varieties of dimensions $0, 1,\dotsc,\lfloor \frac{n-1}{2}\rfloor$ attached to unitary groups in an  odd number of variables and to Coxeter elements, with strata parametrized by the vertices of the Bruhat--Tits complex of the special unitary group for the non-split $n$-dimensional $F/F_0$-hermitian space.\qed 
\end{altenumerate}
\end{theorem}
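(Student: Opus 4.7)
The plan is to follow the Rapoport--Zink / Vollaard--Wedhorn strategy: establish the smoothness and dimension assertions of part (i) via Grothendieck--Messing deformation theory together with a local model computation, and then extract the stratification in part (ii) from a Dieudonn\'e-theoretic description of the underlying reduced scheme.

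For part (i), representability as a formal scheme locally formally of finite type over $\Spf O_{\breve F}$ is a general output of Rapoport--Zink theory; separatedness comes from the rigidity provided by the polarization, and essential properness reduces to the basic (supersingular) nature of the isogeny class of $\BX_n$, so that $(\CN_n)_\red$ is covered by proper Bruhat--Tits strata (see below). For formal smoothness, I would pass to the local model. By Grothendieck--Messing, deformations of a point $(X,\iota,\lambda)$ are controlled by lifts of the Hodge filtration $\Lie X \subset D(X)$ in the evaluation of the relative Dieudonn\'e crystal, subject to $O_F$-linearity and compatibility with $\lambda$. The $O_F$-action splits $D(X)$ into two rank-$n$ eigenspaces, and the Kottwitz condition of signature $(1,n-1)$ forces the Hodge filtration to project to a line in the $\bar a$-eigenspace; principality of $\lambda$ (together with $p$ being odd) then makes the other eigenspace dual and imposes no further constraint. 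Hence the local model is (the formal completion of) $\BP^{n-1}_{O_{\breve F}}$, which is smooth of relative dimension $n-1$. The local model diagram then transfers this smoothness to $\CN_n$.

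For part (ii), I would use the standard lattice description: if $N$ denotes the rational relative Dieudonn\'e module of $\BX_n$, a $2n$-dimensional $\breve F$-vector space carrying a Frobenius-semilinear $\tau$, an $F$-action, and an alternating form from $\lambda_{\BX_n}$, then the framing identifies $\CN_n(\ov k)$ with the set of $O_{\breve F}$-lattices $M \subset N$ which are self-dual, $O_F$-stable, and satisfy $\tau(M) \subset^1 M$ together with a dual condition coming from signature $(1,n-1)$. Taking the $\tau$-fixed part recovers the hermitian space $\BV(\BX_n)$ of non-split type, and to each $M$ one associates the vertex lattice $\Lambda(M) := M \cap \BV(\BX_n)$, whose type is constrained to be odd by the polarization condition. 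For each vertex lattice $\Lambda$ of type $2t+1$, define the closed stratum $\CN_n^\Lambda$ to be the locus with $\Lambda(M) \supseteq \Lambda$ and the open stratum $\CN_n^{\Lambda,\circ}$ by requiring equality; then a direct computation identifies $\CN_n^{\Lambda,\circ}$ with the Deligne--Lusztig variety $X_c(w)$ for a Coxeter element $w$ in the Weyl group of the odd-dimensional unitary group $\U(\Lambda^\vee/\Lambda)$ over $k$, of dimension $t$. Since vertex lattices of odd type index the vertices of the Bruhat--Tits complex of the special unitary group of the non-split space, this yields the claimed stratification, and the maximum value $t = \lfloor (n-1)/2 \rfloor$ gives the top-dimensional strata.

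The main obstacle will be the precise Deligne--Lusztig identification of each $\CN_n^{\Lambda,\circ}$: one must match the Dieudonn\'e-lattice moduli with a concrete Coxeter-type Deligne--Lusztig variety and verify the closure relations dictated by the Bruhat--Tits simplicial structure. The smoothness and dimension assertions in (i), by contrast, should be a fairly mechanical application of the local model framework once the signature condition is correctly translated.
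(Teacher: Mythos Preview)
The paper does not prove this theorem at all: it is stated with a terminal \qed and attributed to Vollaard--Wedhorn \cite{VW}, so there is no in-paper argument to compare against. Your sketch is a faithful high-level outline of the Vollaard--Wedhorn strategy (local model $\cong \BP^{n-1}$ for part (i), Dieudonn\'e-lattice description and vertex-lattice stratification for part (ii)), and so in spirit agrees with what the paper is citing.

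A couple of imprecisions worth flagging if you intend to flesh this out. First, the Hodge filtration is $\Fil^1 \subset M(X)$ with $\Lie X = M(X)/\Fil^1$, not $\Lie X \subset D(X)$; the signature condition translates to the rank of the projection of $\Fil^1$ to each eigenspace, and it is the rank-$1$ piece that gives the $\BP^{n-1}$. Second, the association $M \mapsto \Lambda(M)$ in \cite{VW} is not simply $M \cap \BV(\BX_n)$: one first introduces the $\sigma$-linear operator $\tau$ on the isocrystal, passes to the $\tau$-fixed hermitian space $C$, and then attaches to $M$ the unique minimal $\tau$-stable lattice containing it (or works dually); the resulting vertex lattice lives in $C$, and the constraint on its type comes from the non-splitness of $C$ rather than directly from the polarization. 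These are refinements rather than gaps, and the overall architecture you describe is the correct one.
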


\begin{corollary}  For $n \geq 2$, the product $\CN_{n-1, n}:=\CN_{n-1}\times_{\Spf O_{\breve {F}}}\CN_n $ is a regular formal scheme of formal dimension $2(n-1)$.\qed
\end{corollary}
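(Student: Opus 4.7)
The corollary is a short formal consequence of Theorem~\ref{structN}(\ref{structN i}), so the plan is to assemble the standard facts about formally smooth morphisms. By part (\ref{structN i}) of the theorem, $\CN_m$ is formally smooth over $\Spf O_{\breve F}$ of relative formal dimension $m-1$ for every $m\geq 1$. Apply this to $m=n-1$ and $m=n$.

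Next, recall that the class of formally smooth morphisms is stable under base change and composition. Consequently the projection
\[
   \CN_{n-1,n} \;=\; \CN_{n-1}\times_{\Spf O_{\breve F}}\CN_n \;\longrightarrow\; \Spf O_{\breve F}
\]
is formally smooth, and its relative formal dimension is the sum of those of the two factors, namely $(n-2)+(n-1)=2n-3$.

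Finally, since $O_{\breve F}$ is a complete discrete valuation ring, $\Spf O_{\breve F}$ is regular of formal dimension $1$; a formally smooth formal scheme of relative formal dimension $d$ over a regular base of dimension $1$ is itself regular of formal dimension $d+1$. Applied here, this yields that $\CN_{n-1,n}$ is regular of formal dimension $(2n-3)+1=2(n-1)$, as claimed. There is no genuine obstacle here: the content is entirely contained in Theorem~\ref{structN}, and the only thing to notice is that we keep track of dimensions correctly under the fiber product.
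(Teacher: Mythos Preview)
Your argument is correct and is exactly the immediate deduction the paper has in mind; indeed the paper gives no proof at all (the \qed\ follows the statement directly), treating the corollary as an obvious consequence of Theorem~\ref{structN}(\ref{structN i}).
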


Returning to our explanation of the statement of Conjecture \ref{AFLconj}, now recall from the Introduction the formal $O_F$-module $\BE$ over $\ov k$ and its canonical lift $\CE$ over $O_{\breve F}$, as well as the ``conjugate'' objects $\ov\BE$ and $\ov\CE$.  For $n \geq 2$, there is a natural closed embedding of formal schemes
\begin{equation*}
   \delta_\CN\colon 
	\xymatrix@R=0ex{
	   \CN_{n-1} \ar[r]  &  \CN_n\\
		(X, \iota, \lambda, \rho) \ar@{|->}[r]  &  \bigl(X\times\ov {\CE}, \iota\times \iota_{\ov\CE}, \lambda\times \lambda_{\ov\CE}, \rho\times\rho_{\ov\CE}\bigr),
		}
\end{equation*}
where we set $\BX_1 = \BE$ and inductively take
\begin{equation}\label{BX_n unram}
   \BX_n = \BX_{n-1} \times \ov\BE
\end{equation}
as the framing object for $\CN_n$.  Let 
\begin{equation*}
   \Delta_\CN \colon \CN_{n-1} \xra{(\id_{\CN_{n-1}},\delta_\CN)} \CN_{n-1}\times_{\Spf O_{\breve F}}\CN_n = \CN_{n-1,n}
\end{equation*}
be the graph morphism of $\delta_\CN$.  Then
\[
   \Delta := \Delta_\CN(\CN_{n-1})
\]
is a closed formal subscheme of half the formal dimension of $\CN_{n-1, n}$. Note that
\begin{equation}\label{Aut cong U}
   \Aut^\circ (\BX_n,\iota_{\BX_n},\lambda_{\BX_n}) \cong \U\bigl(\BV(\BX_n)\bigr)(F_0), 
\end{equation}
where the left-hand side is the group of self-framings of $\BX_n$, and where $\BV(\BX_n)$ is the hermitian space \eqref{defV} attached to $\BX_n$.  The left-hand side acts naturally on $\CN_n$ by acting on the framing: $g\cdot (X,\iota,\lambda,\rho) = (X,\iota,\lambda, g \circ \rho)$.  Furthermore $\BV(\BX_n)$ contains a natural special vector $u$ given by the inclusion of $\ov\BE$ in $\BX_n = \BX_{n-1} \times \ov\BE$ via the second factor. The norm of $u$ is $1$. It is easy to compute directly that $\BV(\BX_1) = \BV(\BE)$ is non-split, and then by induction and \eqref{chi decomp formula}, $\BV(\BX_n)$ is a non-split hermitian space of dimension $n$ for any $n$. Applying this to $n$ and $n-1$,  we can choose identifications  $W_1=\BV(\BX_n)$ and $W_1^\flat=\BV(\BX_{n-1})$ compatible with the natural inclusions on both sides. Hence we obtain an action of $H_1(F_0)$ on $\CN_{n-1}$, of $G_1(F_0)$ on $\CN_n$, and of $G_{W_1}(F_0)$ on $\CN_{n-1, n}$; and furthermore the maps $\delta_\CN$ and $\Delta_\CN$ are equivariant with respect to the respective embeddings $H_1(F_0) \inj G_1(F_0)$ and $H_1({F_0})\hookrightarrow G_{W_1}({F_0})$ defined in \S\ref{setup homog}. 

Now we are ready to define the right-hand side of the identities appearing in Conjecture \ref{AFLconj}. For $g \in G_{W_1}(F_0)$, we denote by $\Int(g)$ the intersection product on $\CN_{n-1, n}$ of $\Delta$ with its translate $g\Delta$, defined through the derived tensor product of the structure sheaves,
\begin{equation}\label{defintprod}
   \Int(g) := \la \Delta, g\Delta\ra_{\CN_{n-1, n}} := \chi({\CN_{n-1, n}},  \CO_\Delta\otimes^\BL\CO_{g\Delta}) . 
\end{equation}
We similarly define $\Int(g)$ for $g \in G_1(F_0)$,
\[
   \Int(g) := \bigl\la \Delta, (1 \times g)\Delta \bigr\ra_{\CN_{n-1,n}}.
\]
In both cases, when $g$ is regular semi-simple, the right-hand side of this definition is finite, at least when $F_0=\BQ_p$, cf.\ \cite[Lem.~2.8]{Z12}. The proof in loc.~cit.~uses global methods.

\begin{remark}\label{reltospecialunr}
In \cite{KR-U1}, there is associated to $u\in \BV(\BX_n)$ a special cycle $\CZ(u)$ in $\CN_n$, namely, the locus where the quasi-homomorphism $u\colon \ov\BE\to \BX_n$ lifts to a homomorphism from $\ov\CE$ to the universal object over $\CN_n$. By \cite[Prop.\ 3.5]{KR-U1}, $\CZ(u)$ is a relative divisor. Then $\delta_\CN$ induces  an obvious closed embedding $\CN_{n-1}\to \CZ(u)$. By \cite[Lem.\ 5.2]{KR-U1}, this is an isomorphism. Similarly, for $g\in \U(\BV(\BX_n))$, there is an identification $g \delta_\CN(\CN_{n-1})=\CZ(gu)$.
\end{remark}

We make an analogous definition in the Lie algebra case, as in \cite[\S4.4]{PRS}.  Note that, analogously to \eqref{Aut cong U}, we can identify the $F_0$-points of $\fkg_1 = \Lie\U(\BV(\BX_n))$ with a subgroup of the $O_F$-linear quasi-endomorphisms of $\BX_n$ (those $x$ for which $x + \Ros_{\lambda_{\BX_n}}(x) = 0$).  Now, for any quasi-endomorphism $x$ of $\BX_n$, define $\Delta_x \subset \CN_{n-1,n}$ to be the closed formal subscheme (abusing notation in the obvious way)
\begin{equation}\label{Delta_x}
   \Delta_x := \bigl\{\, (Y,X) \in \CN_{n-1,n} \bigm| x\colon \BX_n \rightarrow \BX_n \text{ lifts to a homomorphism } Y \times \ov\CE \rightarrow X \,\bigr\}.
\end{equation}
For $g \in G_1(F_0)$, $\Delta_g$ is simply the translate $(1 \times g) \Delta$.  For any $x$, in the case that the intersection $\Delta \cap \Delta_x$ is an artinian scheme, we define
\begin{equation}\label{lInt}
   \lInt(x) := \length(\Delta\cap\Delta_x).
\end{equation}
This completes the explanation of the statement of the AFL in all cases.  
\begin{remark}
As already pointed out in \cite[\S4.4]{RSZ}, we do not have a Lie algebra version of the AFL outside the \emph{non-degenerate case}, i.e.\ when $\Delta\cap\Delta_x$ is not an artinian scheme. 
\end{remark}

We note that the left-hand sides of the identities in Conjecture \ref{AFLconj} are constant on the orbit of $\gamma$ under $H_{1,2}'(F_0)$, resp.~$\gamma$ under $H'(F_0)$, resp.~$y$ under $H'(F_0)$.  Similarly, the right-hand sides of these identities are constant on the orbit of $g$ under $H_{W_1}(F_0)$, resp.~$g$ under $H_1(F_0)$, resp.~$x$ under $H_1(F_0)$, cf.~\cite[Rems.~4.3, 4.4, 4.6]{PRS}. Hence Conjecture \ref{AFLconj} ``makes sense'' in the sense that it does not depend on the choice of matching elements.
 
Our aim in Parts \ref{spaces} and \ref{conjectures} of the paper is to formulate variants of Conjecture \ref{AFLconj} which apply to cases where the various unramifiedness hypotheses in Conjecture \ref{AFLconj} are dropped. As explained in the Introduction, we still want to have identities like in the AFL, but the function on $G'(F_0)$ transferring to $(\mathbf{1}_{K_0^\flat\times {K}_0}, 0)$ on $G_{W_0}(F_0)\times G_{W_1}(F_0)$ (and analogously in the inhomogeneous and Lie algebra settings) will not be explicit anymore. Rather, the statement of the ATC will be that some choice of function may be found which transfers to $(\mathbf{1}_{K_0^\flat\times {K}_0}, 0)$ on $G_{W_0}(F_0)\times G_{W_1}(F_0)$, and which also satisfies a suitable analog of the AFL identity (and again analogously in the inhomogeneous and Lie algebra settings).   
\begin{remark}\label{pi-modular unram}
A first natural variant of the moduli problem defining $\CN_n$ (still with $n \geq 1$ and $F/F_0$ unramified) is to consider quadruples $(X,\iota,\lambda,\rho)$ exactly as above, except where the polarization $\lambda$ satisfies $\ker \lambda = X[\iota(\varpi)]$.  The resulting moduli space yields nothing new, however: there is an isomorphism from $\CN_n$ to it defined by $(X,\iota,\lambda,\rho) \mapsto (X,\iota,\varpi\lambda, \rho)$.
\end{remark}

\part{Some regular formal moduli spaces}\label{spaces}

In this part of the paper we define variants of the formal moduli space $\CN_n$ that appeared in the AFL conjecture.  In the various AT conjectures which we formulate in Part \ref{conjectures}, the right-hand side will involve an intersection number of cycles in a product of such moduli spaces.  Therefore we are looking for analogs of $\Delta_\CN$ where the target is a \emph{regular} formal scheme.  We do not know of a systematic method of finding such variants.  In the following sections we present the examples we have found.

\section{Unramified almost self-dual type}\label{unram non-max}

In this section we continue with the notation of \S\ref{s:AFL}. In particular, $F/F_0$ is an unramified extension. For any $n \geq 2$, we now define a variant $\wt\CN_n$ of the formal scheme $\CN_n$ over $\Spf O_{\breve F} = \Spf O_{\breve F_0}$.  This variant parametrizes isomorphism classes of quadruples $(X, \iota, \lambda, \rho)$ as in the case of $\CN_n$, except that instead of requiring the polarization $\lambda$ to be principal, we impose that
\begin{equation}\label{polarization cond almost varpi-modular}
   \ker \lambda \subset X[\iota(\varpi)]\ \text{\emph{is of rank $q^2$.}}
\end{equation}
Mimicking \cite[\S1]{V2}, one sees that as in the case of $\CN_n$, up to quasi-isogeny there is a unique supersingular triple $(\smash{\wt\BX_n}, \iota_{\wt\BX_n}, \lambda_{\wt\BX_n})$ for this moduli problem over $\ov k$.
To fix a particular choice, first let
\begin{equation}\label{ovBE'}
   \ov\BE' := \ov\BE,
	\quad
	\iota_{\ov\BE'} := \iota_{\ov\BE},
	\quad\text{and}\quad
	\lambda_{\ov\BE'} := \varpi\lambda_{\ov\BE}.
\end{equation}
Then we set
\begin{equation}\label{wtBX_n unram}
   \wt\BX_n := \BX_{n-1} \times \ov\BE',
	\quad
	\iota_{\wt\BX_n} := \iota_{\BX_{n-1}} \times \iota_{\ov\BE'},
	\quad\text{and}\quad
	\lambda_{\wt\BX_n} := \lambda_{\BX_{n-1}} \times \lambda_{\ov\BE'},
\end{equation}
where $\BX_{n-1}$ is the framing object for $\CN_{n-1}$ in \eqref{BX_n unram}.  We then take the target of the framing $\rho$ to be the constant object over the special fiber obtained from $(\smash{\wt\BX_n}, \iota_{\wt\BX_n}, \lambda_{\wt\BX_n})$.

To describe the basic structure of $\wt\CN_n$, we will give an analog of Theorem \ref{structN}\eqref{structN i}.  For both the statement and proof we will need the \emph{local model} for $\wt\CN_n$, cf.\ \cite{PRS,RZ}, whose definition we first recall; this is similar to (an instance of) Definition \ref{LM def} below. Endow $F^n$ with the (non-split) $F/F_0$-hermitian form $h$ given by the matrix $\diag(\varpi,1,\dotsc,1)$.  Fix an element $\delta \in O_F^\times$ satisfying $\ov \delta = -\delta$, and define the alternating $F_0$-bilinear form on $F^n$,
\[
   \la x,y \ra = \frac 1 2 \tr_{F/F_0}\bigl(\delta h(x,y)\bigr), \quad x,y \in F^n.
\]
Define the $O_F$-lattices in $F^n$,
\[
   \Lambda_0 := O_F^n
	\quad\text{and}\quad
	\Lambda_1 := \varpi\i O_F \oplus O_F^{n-1}.
\]
Then $\Lambda_1$ is the dual lattice of $\Lambda_0$ with respect to both $h$ and \aform.  The \emph{local model} $\wt N_n$ is the scheme over $\Spec O_F$ representing the functor that associates to each $O_F$-scheme $S$ the set of all pairs $(\CF_0,\CF_1)$ such that
\begin{altitemize}
\item for each $i = 0,1$, $\CF_i$ is an $O_F \otimes_{O_{F_0}} \CO_S$-subsheaf of $\Lambda_i \otimes_{O_{F_0}} \CO_S$ which Zariski-locally on $S$ is an $\CO_S$-direct summand of rank $n$; 
\item the natural maps $\Lambda_0 \otimes_{O_{F_0}} \CO_S \to \Lambda_1 \otimes_{O_{F_0}} \CO_S$ and $\smash[t]{\Lambda_1 \otimes_{O_{F_0}} \CO_S \xra{\varpi \otimes \id} \Lambda_0 \otimes_{O_{F_0}} \CO_S}$ carry $\CF_0$ into $\CF_1$ and $\CF_1$ into $\CF_0$, respectively; 
\item $\CF_0^\perp = \CF_1$ with respect to the natural perfect pairing $(\Lambda_0 \otimes_{O_{F_0}} \CO_S) \times (\Lambda_1 \otimes_{O_{F_0}} \CO_S) \to \CO_S$ induced by \aform; and 
\item the Kottwitz condition of signature $(n-1,1)$
\[
   \charac(a \otimes 1 \mid \CF_i) = (T - a)^{n-1} (T - \ov a) \in \CO_S[T]
	\quad\text{for all}\quad
	a \in O_F,\quad i = 0,1
\]
is satisfied.
\end{altitemize}
Note that for any point $(X,\iota,\lambda,\rho)$ on $\wt\CN_n$, there is a unique isogeny $\lambda'$ such that the composite $X \xra\lambda X^\vee \xra{\lambda'} X$ is $\iota(\varpi)$. It follows easily from this that the base change $(\wt N_n)_{\Spf O_{\breve F}}$ identifies with the local model for $\wt\CN_n$ as in \cite[Ch.~3]{RZ}. The analog of Theorem \ref{structN}\eqref{structN i} for $\wt\CN_n$ is now as follows.

\begin{theorem}\label{wtCN_n semistable}
The formal scheme $\wt{\CN}_n$ is formally locally of finite type, separated, essentially proper, and of semi-stable reduction over $\Spf O_{\breve {F}}$, of relative formal dimension $n-1$. More precisely, the local model $\wt N_n$ has semi-stable reduction such that the special fiber is the union of two smooth schemes of dimension $n-1$ intersecting along a smooth scheme of dimension $n-2$. In particular, $\wt\CN_n$ is regular of formal dimension $n$, and the completed local ring at any closed point of $\wt\CN_n$ is isomorphic to either $O_{\breve F}[[X_1,\ldots, X_{n-1}]]$ or to $O_{\breve F}[[X_0, X_1,\ldots, X_{n-1}]]/(X_0 X_1-\varpi)$. 
\end{theorem}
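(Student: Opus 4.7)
The plan is to reduce every assertion about $\wt\CN_n$ to the analogous statement for the local model $\wt N_n$ via the local model diagram, and then to analyze $\wt N_n$ directly. The representability of $\wt\CN_n$ as a formal scheme locally of finite type, separated, and essentially proper over $\Spf O_{\breve F}$ of relative formal dimension $n-1$ follows from the general Rapoport--Zink framework of \cite{RZ} applied to the moduli datum at hand, exactly as in the case of $\CN_n$. The identification of $(\wt N_n)_{\Spf O_{\breve F}}$ with the local model for $\wt\CN_n$, noted just before the statement of the theorem, together with the local model diagram of \cite[Ch.~3]{RZ}, then provides an \'etale-local isomorphism between $\wt\CN_n$ and this base change; consequently flatness, regularity, semi-stability, and the shape of the completed local rings at closed points are all inherited from $\wt N_n$.

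It therefore suffices to establish the stated semi-stability of $\wt N_n$. Since $F/F_0$ is unramified, base change to $O_{\breve F}$ produces a decomposition $O_F \otimes_{O_{F_0}} O_{\breve F} \cong O_{\breve F} \times O_{\breve F}$ from the two embeddings of $O_F$, and correspondingly $\Lambda_i \otimes_{O_{F_0}} O_{\breve F} = \Lambda_i^+ \oplus \Lambda_i^-$, into which each $\CF_i$ splits compatibly. The Kottwitz condition of signature $(n-1,1)$ forces $\rank \CF_i^+ = n-1$ and $\rank \CF_i^- = 1$. The pairing pairs $\Lambda_0^+$ with $\Lambda_1^-$ and $\Lambda_0^-$ with $\Lambda_1^+$, so the duality condition $\CF_1 = \CF_0^\perp$ determines $\CF_1$ uniquely from $\CF_0$, and the only remaining constraint is that the transition inclusion $\Lambda_0^\pm \hookrightarrow \Lambda_1^\pm$ carry $\CF_0^\pm$ into $\CF_1^\pm = (\CF_0^\mp)^\perp$.

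I would then finish by an explicit chart computation in the affine opens on the Grassmannians of $\CF_0^+$ and $\CF_0^-$ coming from the standard bases of $\Lambda_i^\pm$. Away from the ``critical'' locus---where the hyperplane $\CF_0^+$ contains the kernel of the special-fiber transition map $\Lambda_0^+ \otimes k \to \Lambda_1^+ \otimes k$, with the dual condition holding on the $-$ side---the compatibility constraint is automatically satisfied and $\wt N_n$ is smooth of relative dimension $n-1$ over $O_F$, yielding charts of the form $\Spec O_F[X_1, \ldots, X_{n-1}]$. The critical locus is controlled by one remaining chart in which the compatibility reduces to a single bilinear equation $X_0 X_1 = \varpi$ in $n+1$ variables, producing the expected model $\Spec O_F[X_0, X_1, \ldots, X_{n-1}]/(X_0 X_1 - \varpi)$. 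This displays the special fiber of $\wt N_n$ as the union of two smooth $(n-1)$-dimensional components meeting transversely along a smooth $(n-2)$-dimensional subscheme.

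The expected obstacle is this last chart analysis: one must extract exactly one surviving bilinear equation from the perp-compatibility after tracking it through both the $+$ and $-$ summands, and then verify the transversality of the two resulting components rather than a higher-codimension intersection. Once this step is carried out, transporting the result back through the local model diagram gives both the semi-stability of $\wt\CN_n$ over $\Spf O_{\breve F}$ and the claimed description of its completed local rings at closed points.
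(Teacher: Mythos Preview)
Your proposal is correct and begins exactly as the paper does: reduce to the local model via \cite{RZ}, then exploit the eigenspace decomposition $O_F \otimes_{O_{F_0}} \CO_S \cong \CO_S \times \CO_S$ coming from unramifiedness. The divergence is in how you finish.

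You parametrize by $(\CF_0^+,\CF_0^-)$ (ranks $n-1$ and $1$), recover $\CF_1^\pm = (\CF_0^\mp)^\perp$ from the duality, and then propose an explicit chart computation to extract the single equation $X_0X_1=\varpi$. The paper instead tracks the same eigenspace component at \emph{both} lattice indices: it keeps $(\CF_0',\CF_1')$ (in your notation $(\CF_0^+,\CF_1^+)$, both of rank $n-1$), notes that perp determines the other components, and observes that the remaining functoriality condition is precisely the definition of the standard local model for $\GL_n$, cocharacter $\mu=(1^{(n-1)},0)$, and the adjacent lattice pair $(\Lambda_0\otimes O_F)' \subset (\Lambda_1\otimes O_F)'$. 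The semi-stable structure is then read off from G\"ortz \cite[\S4.4.5]{Goertz} (case $\kappa=1$, $r=n-1$) rather than recomputed.

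What this buys: your chart computation would in effect reproduce G\"ortz's calculation, whereas the paper's change of coordinates makes the identification with a known object immediate. One small point you should make explicit in your version: the two transition constraints on the $+$ and $-$ summands are not independent but are exchanged by the adjunction between the inclusion $\Lambda_0\hookrightarrow\Lambda_1$ and the map $\Lambda_1\xrightarrow{\varpi}\Lambda_0$ under the pairing; this is why only one bilinear equation survives. In the paper's parametrization this redundancy is absorbed automatically, since the $\GL_n$ local model records only a single inclusion $\CF_0'\to\CF_1'$.
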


\begin{proof}
The first three properties are general properties of RZ spaces (see \cite[Lem.\ 2.3.23]{F} for separatedness).  For the rest of the theorem, by the general formalism of RZ spaces (see \cite[Prop.~3.33]{RZ}), it suffices to prove the claim for the local model.  Since $F/F_0$ is unramified and $S$ is an $O_F$-scheme, we have the standard isomorphism
\begin{equation*}
	\xymatrix@R=0ex{
      O_F \otimes_{O_{F_0}} \CO_S \ar[r]^-\sim  &  \CO_S \times \CO_S\\
		a \otimes b \ar@{|->}[r]  &  (ab, \ov a b).
	}
\end{equation*}
Correspondingly, for any $S$-point $(\CF_0,\CF_1)$ on $\wt N_n$, there are decompositions
\[
   \CF_i = \CF_i' \oplus \CF_i'' \subset \Lambda_i \otimes_{O_{F_0}} \CO_S = (\Lambda_i \otimes_{O_{F_0}} \CO_S)' \oplus (\Lambda_i \otimes_{O_{F_0}} \CO_S)'',
	\quad
	i = 0,1,
\]
where $\CO_S \times \CO_S$ acts via its first factor on the primed sheaves, and via its second factor on the double-primed sheaves.
By the Kottwitz condition, $\CF_i' \subset (\Lambda_i \otimes_{O_{F_0}} \CO_S)'$ is an $\CO_S$-locally direct summand of rank $n-1$. By the perpendicularity condition, $\CF_0'$ and $\CF_1'$ determine $\CF_1''$ and $\CF_0''$, respectively.  It follows that the map $(\CF_i)_{i=0,1} \mapsto (\CF_i' \subset (\Lambda_i \otimes_{O_{F_0}} \CO_S)')_{i=0,1}$ is an isomorphism from $\wt N_n$ to the standard local model over $\Spec O_F$ in \cite{Goertz} for the group $\GL_n$, the cocharacter $\mu = (1^{(n-1)},0)$, and the periodic lattice chain determined by the (adjacent) lattices $(\Lambda_0 \otimes_{O_{F_0}} O_F)' \subset (\Lambda_1 \otimes_{O_{F_0}} O_F)'$.  By \S4.4.5 in loc.~cit.~(in the case $\kappa = 1$ and $r = n-1$) this latter scheme has semi-stable reduction of the asserted form.
\end{proof}

Now define 
\begin{equation*}
   \wt{\CN}_{n-1, n} := \CN_{n-1}\times_{\Spf O_{\breve {F}}}\wt{\CN}_n.
\end{equation*}
Since $\wt{\CN}_n$ only occurs here as the ``bigger'' formal scheme, we won't need to know an analog of Theorem \ref{structN}\eqref{structN ii} for the structure of $(\wt{\CN}_n)_\red$. 

\begin{corollary}
The formal scheme $\wt{\CN}_{n-1, n}$ is regular of formal dimension $2(n-1)$.  \qed
\end{corollary}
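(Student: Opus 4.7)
The plan is to compute the completed local ring of $\wt\CN_{n-1,n}$ at an arbitrary closed point and verify regularity and dimension directly.

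By Theorem \ref{structN}\eqref{structN i}, the first factor $\CN_{n-1}$ is formally smooth of relative formal dimension $n-2$ over $\Spf O_{\breve F}$, so the completed local ring at any closed point of $\CN_{n-1}$ is (non-canonically) isomorphic to the power series ring $O_{\breve F}[[Y_1,\dotsc,Y_{n-2}]]$. By Theorem \ref{wtCN_n semistable}, the completed local ring at any closed point of $\wt\CN_n$ is isomorphic to either $O_{\breve F}[[X_1,\dotsc,X_{n-1}]]$ or $O_{\breve F}[[X_0,X_1,\dotsc,X_{n-1}]]/(X_0X_1-\varpi)$.

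Now at a closed point $z=(x,y)$ of $\wt\CN_{n-1,n}$, the completed local ring is the completed tensor product over $O_{\breve F}$ of the completed local rings at $x$ and $y$. Hence it is isomorphic to either
\[
   O_{\breve F}[[Y_1,\dotsc,Y_{n-2},X_1,\dotsc,X_{n-1}]]
\]
or to
\[
   O_{\breve F}[[Y_1,\dotsc,Y_{n-2},X_0,X_1,\dotsc,X_{n-1}]]/(X_0X_1-\varpi).
\]
The first ring is a formal power series ring over a regular local ring and is obviously regular of dimension $1+(n-2)+(n-1)=2(n-1)$. In the second ring, $X_0X_1-\varpi$ is a regular element of the maximal ideal of $O_{\breve F}[[Y_1,\dotsc,Y_{n-2},X_0,\dotsc,X_{n-1}]]$ which does not lie in the square of the maximal ideal (its linear term is $-\varpi$), so the quotient is again regular, of dimension $1+(n-2)+n-1=2(n-1)$. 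Since regularity of a noetherian local ring is equivalent to regularity of its completion, this proves the corollary.

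The only real subtlety is checking that the completed tensor product over $O_{\breve F}$ of the completed local rings at $x$ and $y$ really computes the completed local ring of the fiber product at $z$; this is a standard fact for noetherian formal schemes locally formally of finite type over $\Spf O_{\breve F}$ (both factors being such by Theorems \ref{structN} and \ref{wtCN_n semistable}), and does not present any difficulty.
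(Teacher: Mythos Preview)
Your proof is correct. The paper itself gives no argument (the corollary is stated with an immediate \qed), so you have simply made explicit what the authors regard as obvious from Theorems \ref{structN} and \ref{wtCN_n semistable}. A slightly slicker way to phrase the same idea, and probably what the authors have in mind, is to observe that the projection $\wt\CN_{n-1,n}\to\wt\CN_n$ is formally smooth (being the base change of the formally smooth $\CN_{n-1}\to\Spf O_{\breve F}$), and formally smooth over regular is regular; your local-ring computation is the concrete unwinding of this.
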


\begin{remark}[$n=2$]
For $n=2$, the formal scheme $\wt{\CN}_n$ is isomorphic to 
\begin{equation}\label{excDrin}
   \wt\CN_2 \simeq \wh\Omega^2_{F_0}\times_{\Spf O_{F_0}}\Spf O_{\breve {F}} ,
\end{equation} cf.~\cite{KR-alt}.  Here $\wh\Omega^2_{F_0}$ denotes the formal scheme version of the Drinfeld halfspace corresponding to the local field $F_0$.
\end{remark}

\begin{remark}[$n=1$]
For $n = 1$, the definition of $\wt\CN_1$ still makes sense, but the result is trivial: this space is just $\Spf O_{\breve F}$ itself, with universal object $(\CE,\iota_\CE,\varpi \lambda_\CE,\rho_\CE)$.
\end{remark}

\begin{remark}\label{almost pi-modular unram}
Let us consider the variant of the moduli problem for $\wt\CN_n$ where the condition on the polarization $\lambda$ is replaced by
\[
   \ker\lambda \subset X[\iota(\varpi)] \text{ \emph{is of rank} } q^{2(n-1)}.
\]
Then, in analogy with Remark \ref{pi-modular unram}, the resulting moduli space is isomorphic to $\wt\CN_n$.  Indeed, given a point $(X,\iota,\lambda,\rho)$ on $\CN_n$, let $O_F$ act on $X^\vee$ via the rule $\ov\iota^\vee\colon a \mapsto \iota(\ov a)^\vee$.  Then $\lambda$ is $O_F$-linear.  Since $\ker\lambda \subset X[\iota(\varpi)]$, there exists a unique (automatically $O_F$-linear) isogeny $\lambda'\colon X^\vee \to X$ such that $\lambda'\circ\lambda = \iota(\varpi)$.  Since $F/F_0$ is unramified and $\lambda$ is a polarization, so is $\lambda'$.  Furthermore, since $\ker\lambda \subset X[\iota(\varpi)]$ of rank $q^2$, we have $\ker\lambda' \subset X^\vee[\ov\iota^\vee(\varpi)]$ of rank $q^{2(n-1)}$.  In this way we obtain a morphism $(X,\iota,\lambda,\rho) \mapsto (X^\vee,\ov\iota^\vee,\lambda',(\rho^\vee)\i)$ from $\wt\CN_n$ to the variant moduli space we have just defined, and it is easy to see that this is an isomorphism.
\end{remark}

\section{Ramified, even, $\pi$-modular type}\label{even ram max}

In this and the next three sections we define analogs of $\CN_n$ (and of $\wt\CN_n$) when the quadratic extension $F/F_0$ is ramified, beginning in this section with the case that $n \geq 2$ is even.  Recall that $F/F_0$ is implicit in the definition of $\CN_n$ in \S\ref{s:AFL}. Therefore we may recycle the notation $\CN_n=\CN_{n, {F}/{F_0}}$ in the present situation. We define $\CN_n$ to be the formal scheme over $\Spf O_{\breve {F}}$ that parametrizes isomorphism classes of quadruples $(X, \iota, \lambda, \rho)$ as in \S\ref{s:AFL}, except that instead of requiring the polarization $\lambda$ to be principal, we impose that it is \emph{$\pi$-modular}, i.e.
\begin{equation*}
   \ker \lambda =X[\iota(\pi)] . 
\end{equation*}
We furthermore require that $\iota$ satisfies the \emph{wedge condition}
\begin{equation}\label{wedge condition}
	\sidewedge{^2} \bigl(\iota(\pi)+\pi \mid \Lie X\bigr) = 0
\end{equation}
and the \emph{spin condition}
\begin{equation}\label{spin condition}
   \text{\emph{the endomorphism $\iota(\pi)\mid \Lie X$ is nowhere zero,}}
\end{equation}
i.e.~$\iota(\pi)$ is a nonzero operator on $\Lie X\otimes\kappa(s)$ for all points $s$ of the base scheme $S$.
Since $S$ is a $\Spf O_{\breve {F}}$-scheme, $\pi\cdot \kappa(s) = 0$ for all $s$. Therefore, in the presence of the wedge condition, the spin condition says that  $\iota(\pi) \mid \Lie X\otimes\kappa(s)$ has rank $1$ for all $s$. Note that this is a purely pointwise condition on $S$, i.e.~it holds if and only if it holds after base change to $S_\red$.  As before, up to quasi-isogeny there is a unique supersingular framing object $(\BX_n, \iota_{\BX_n}, \lambda_{\BX_n})$ over $\ov k$, which follows in the case of this moduli problem from Prop.~3.1 and its proof in \cite{RSZ}.   The last entry $\rho$ in the quadruple above is a framing to the constant object over $\ov S$ defined by $(\BX_n, \iota_{\BX_n}, \lambda_{\BX_n})$, as before.

\begin{remark}
It is not necessary to explicitly make the Kottwitz condition \eqref{kottwitzcond} part of the definition of $\CN_n$.  Indeed, let $\varphi$ denote the operator $\iota(\pi) + \pi$ acting on $\Lie X$.  Then conditions \eqref{wedge condition} and \eqref{spin condition} imply that $\im \varphi$ is a locally direct summand of $\Lie X$ of $\CO_S$-rank $1$.  Hence $\ker \varphi$ is a locally direct summand of rank $n-1$.  Since $\iota(\pi)$ visibly acts as multiplication by $\pi$ on $\im \varphi$ and by $-\pi$ on $\ker\varphi$, it follows that $X$ satisfies the Kottwitz condition.
\end{remark}

For use later in the paper we fix essentially the same explicit choice of framing object as in \cite[\S3.3]{RSZ}.  When $n = 2$, we take
\[
   \BX_2 := \BE \times \BE
\]
as a formal $O_{F_0}$-module, and we define $\iota_{\BX_2}$ by
\[
   \iota_{\BX_2}(a + b\pi) :=
	\begin{bmatrix}
		a  &  b\varpi\\
		b  &  a
	\end{bmatrix},
	\quad
	a,b \in O_{F_0}.
\]
(This identifies $\BX_2$ with the Serre tensor construction $O_F \otimes_{O_{F_0}} \BE$.)  For the polarization, for technical convenience later in the paper we take a rescaled version of the one in loc.\ cit.,
\begin{equation}\label{lambda_BX_2}
   \lambda_{\BX_2} :=
	\begin{bmatrix}
		-2\lambda_\BE\\
		  &  2\varpi \lambda_\BE
	\end{bmatrix}.
\end{equation}
We then take
\begin{align*}
   \BX_n &:= \BX_2 \times \ov\BE^{n-2},\\
   \iota_{\BX_n} &:= \iota_{\BX_2} \times \iota_{\ov\BE}^{n-2},\\
   \lambda_{\BX_n} &:= \lambda_{\BX_2} \times 
      \diag \biggl(
	     \underbrace{\begin{bmatrix} 
	           0  &  \lambda_{\ov\BE}\, \iota_{\ov\BE}(\pi)\\
	           -\lambda_{\ov\BE}\, \iota_{\ov\BE}(\pi)  &  0
			\end{bmatrix}
			,\dotsc,
			\begin{bmatrix} 
	           0  &  \lambda_{\ov\BE}\, \iota_{\ov\BE}(\pi)\\
	           -\lambda_{\ov\BE}\, \iota_{\ov\BE}(\pi)  &  0
			\end{bmatrix}}_{(n-2)/2 \text{ times}}\biggr).
\end{align*}

\begin{remark}
By \cite[Lem.~3.5]{RSZ}, the associated $F/F_0$-hermitian space $\BV(\BX_n)$ defined in \eqref{defV} is the non-split space of dimension $n$.  (Note that, since the polarization \eqref{lambda_BX_2} is a scalar multiple of the one considered in loc.\ cit.\ when $n = 2$, the splitness of the hermitian spaces is the same.)
\end{remark}

Unfortunately, we know less about the structure of $\CN_n$ than in the unramified case, but at least the analog of Theorem \ref{structN}\eqref{structN i} is known.

\begin{theorem}[Exotic smoothness {\cite[Prop.~3.8]{RSZ}}]\label{exotsmeven}
Recall that $n$ is even. The formal scheme $\CN_n$ is formally locally of finite type, separated, essentially proper, and formally smooth of relative formal dimension $n-1$ over $\Spf O_{\breve {F}}$. In particular, $\CN_n$ is regular of formal dimension $n$.  \qed
\end{theorem}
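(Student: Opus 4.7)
The properties of being formally locally of finite type, separated, and essentially proper are general features of unitary Rapoport--Zink spaces of basic type, following from the framework of \cite{RZ}; separatedness in particular can be read off from \cite[Lem.~2.3.23]{F} (as cited in the proof of Theorem \ref{wtCN_n semistable}), and essential properness follows from the basicness of the isocrystal attached to $\BX_n$ together with the Bruhat--Tits-type stratification of the underlying reduced scheme. The real content of the theorem is therefore the formal smoothness, of the predicted relative dimension $n-1$, over $\Spf O_{\breve F}$.

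For this, the strategy is to pass to the local model via the Rapoport--Zink local model diagram. For any $S$-valued point $(X,\iota,\lambda,\rho)$ of $\CN_n$ one uses the Grothendieck--Messing crystalline display to replace $\Lie X$ by its Hodge filtration inside $D(X)_S$, where $D(X)$ is the Lie algebra of the universal vector extension. Choosing trivializations of this crystal, one obtains a smooth surjection from an atlas of $\CN_n$ onto the local model $M_n^{\text{loc}}$. The local model itself parametrizes, for a fixed $\pi$-modular $O_F$-lattice $\Lambda$ in the split $n$-dimensional $F/F_0$-hermitian space, $O_F \otimes_{O_{F_0}} \CO_S$-submodules $\CF \subset \Lambda \otimes_{O_{F_0}} \CO_S$ which are $\CO_S$-locally direct summands of rank $n$, are totally isotropic (equivalently, $\CF = \CF^\perp$), and satisfy the Kottwitz, wedge \eqref{wedge condition}, and spin \eqref{spin condition} conditions. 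Hence it suffices to prove that $M_n^{\text{loc}}$ is smooth of relative dimension $n-1$ over $\Spec O_F$.

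The smoothness of $M_n^{\text{loc}}$ is the core of the argument and, I expect, the main obstacle. Over the generic fiber smoothness is a standard Grassmannian computation, so the issue is at the special fiber. Here one exploits that $n$ is even together with the $\pi$-modular polarization type: the endomorphism $\iota(\pi)$ on $\CF$ has, by the wedge condition together with the isotropy of $\CF$, generic eigenvalue type $(n-1,1)$ in the Kottwitz sense, and the spin condition \eqref{spin condition} rigidifies the ambiguity at points where $\iota(\pi)$ acts by zero on $\Lie X \otimes \kappa(s)$ for some $s$. Concretely, one works on an affine chart of the naive local model (the scheme defined by everything except the spin condition), writes $\iota(\pi)\mid \CF$ as an explicit matrix in local coordinates, and shows that the spin condition cuts out exactly one of the two irreducible components of the special fiber of the naive model, and that this component is smooth of dimension $n-1$. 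This is essentially the analysis carried out in \cite{PR} on local models with $\pi$-modular level, and appears in the form needed here in \cite[Prop.~3.8]{RSZ}.

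Finally, once $M_n^{\text{loc}}$ is known to be smooth of relative dimension $n-1$ over $\Spec O_F$, the local model diagram transfers formal smoothness (and the relative dimension) to $\CN_n$, and regularity of $\CN_n$ as a formal scheme of formal dimension $n$ follows because $\Spf O_{\breve F}$ is regular of formal dimension $1$.
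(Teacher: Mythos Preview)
Your proposal is correct and follows the same approach as the paper, which simply cites \cite[Prop.~3.8]{RSZ} without giving an independent argument. You have accurately sketched the content of that citation: the general RZ-space properties come from \cite{RZ} and \cite{F}, and formal smoothness reduces via the local model diagram to the smoothness of the $\pi$-modular local model established in \cite[\S5.3]{PR} (cf.\ also Remark~\ref{I=m} in the present paper).
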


The analog of Theorem \ref{structN}\eqref{structN ii} is only known for low values of $n$.  To give the precise formulation, we first need to explain a natural decomposition of $\CN_n$ that occurs in the even ramified setting.  Let
\[
   \BM
	\quad\text{and}\quad
	\BN := \BM \otimes_{O_{\breve F_0}} \breve F_0
\]
denote the covariant relative Dieudonn\'e module and rational Dieudonn\'e module, respectively, of $\BX_n$.  The action $\iota_{\BX_n}$ makes $\BM$ into an $O_F \otimes_{O_{F_0}} O_{\breve F_0} = O_{\breve F}$-module. The polarization $\lambda_{\BX_n}$ induces a nondegenerate alternating $\breve F_0$-bilinear form \aform on $\BN$ satisfying
\[
   \la ax,y \ra = \la x, \ov a y \ra
	\quad\text{for all}\quad
	x,y \in \BN,\ a \in \breve F.
\]
The form
\begin{equation*}
   h(x,y) := \la \pi x, y \ra + \la x,y \ra\pi,
	\quad
	x,y \in \BN,
\end{equation*}
then makes $\BN$ into an $\breve F/\breve F_0$-hermitian space of dimension $n$.  By Dieudonn\'e theory, for a perfect field extension $K$ of $\ov k$, the set of $K$-points on $\CN_n$ identifies with a certain subset $\CS$ of $O_{\breve F} \otimes_{O_{\breve F_0}} W(K)$-lattices $M \subset \BN \otimes_{O_{\breve F_0}} W(K)$, all of which are $\pi$-modular, i.e.\ $M^\vee = \pi^{-1}M$, where $M^\vee$ denotes the common dual lattice with respect to $h$ and \aform.
For a lattice $M \in \CS$, let us say that the corresponding $K$-point on $\CN_n$ lies in $\CN_n^+$ or $\CN_n^-$ according as the $O_{\breve F} \otimes_{O_{\breve F_0}} W(K)$-length of the module
\begin{equation}\label{parity test lattice}
   \bigl(M + \BM \otimes_{O_{\breve F_0}} W(K)\bigr) \big/ \BM \otimes_{O_{\breve F_0}} W(K)
\end{equation}
is even or odd.  The parity of this length may also be described as follows.  Since the $\pi$-modular lattices in a hermitian space are all conjugate under the unitary group, and since $\BM \otimes_{O_{\breve F_0}} W(K)$ is itself $\pi$-modular in $\BN \otimes_{O_{\breve F_0}} W(K)$, there exists an element $g \in \U(\BN)(\breve F_0 \otimes_{O_{\breve F_0}} W(K))$ such that $g\cdot (\BM \otimes_{O_{\breve F_0}} W(K)) = M$.  The determinant $\det g$ is a norm one element in $\breve F \otimes_{O_{\breve F_0}} W(K)$, and hence lies in $O_{\breve F} \otimes_{O_{\breve F_0}} W(K)$ with reduction mod $\pi$ equal to $\pm 1 \in K$.  Then the length of \eqref{parity test lattice} is even or odd according as this reduction is $1$ or $-1$ (independent of the choice of $g$) by \cite[Lem.~3.2]{RSZ}.

\begin{proposition}\label{even n decomp lem}
$\CN_n^+$ and $\CN_n^-$ define a decomposition
\[
   \CN_n = \CN_n^+ \amalg \CN_n^-
\]
into open and closed formal subschemes.
\end{proposition}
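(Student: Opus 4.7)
The plan is to identify the parity invariant defined on $K$-points with the reduction modulo $\pi$ of a naturally defined determinant attached to the framing quasi-isogeny, and then to invoke the standard local constancy of quasi-isogeny data in families of $p$-divisible groups. Concretely, for the universal framing $\rho \colon X \times_S \ov S \to (\BX_n)_{\ov S}$, I would consider the induced $F$-linear quasi-isogeny on top exterior powers $\sidewedge{^n_F}\rho$, a quasi-isogeny between formal $F$-module $p$-divisible groups of $F$-height one. Fixing a reference trivialisation of $\sidewedge{^n_F}\BX_n$ identifies $\sidewedge{^n_F}\rho$, at each geometric point, with a scalar $\det\rho \in F^\times$. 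The compatibility $\rho^*\lambda_{\BX_n} = \lambda$ together with the $\pi$-modularity of the polarizations on both sides forces $\det\rho \in F^1$, and composing with the reduction $F^1 \twoheadrightarrow \{\pm 1\}$ (valid since $F/F_0$ is totally ramified, so that $k_F = k$ and the norm on $k^\times$ is squaring) produces a well-defined invariant $\epsilon(\rho) \in \{\pm 1\}$ on $\CN_n$.

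The crucial input is then that $\epsilon(\rho)$ is locally constant on $\CN_n$, which I would deduce from the general fact that quasi-isogenies of $p$-divisible groups have locally constant height, applied here to the one-parameter family $\sidewedge{^n_F}\rho$ of $F$-linear quasi-isogenies of rank-one $F$-module $p$-divisible groups, whose class in $F^\times/(1+\pi O_F)$ is therefore also locally constant. Consequently $\epsilon$ partitions $\CN_n$ into two open and closed formal subschemes.

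It remains to match this partition with the one in the statement. At a $K$-point with Dieudonn\'e lattice $M$ and any transporter $g \in \U(\BN_K)$ carrying $\BM \otimes_{O_{\breve F_0}} W(K)$ to $M$, one reads off from the formula for the determinant on a top exterior power that $\epsilon(\rho) = \det g \bmod \pi$; the identification of this sign with the parity of $\length_{O_{\breve F} \otimes W(K)}\bigl((M + \BM \otimes_{O_{\breve F_0}} W(K))/\BM \otimes_{O_{\breve F_0}} W(K)\bigr)$ is then exactly the content of \cite[Lem.~3.2]{RSZ} invoked in the excerpt, and the decomposition $\CN_n = \CN_n^+ \amalg \CN_n^-$ follows. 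The main technical obstacle is the first step, namely constructing $\sidewedge{^n_F}\rho$ rigorously in families and verifying that polarization compatibility really forces $\det\rho \in F^1$: this reduces to an elementary but somewhat fiddly computation with the polarization induced on the top $F$-exterior power, exploiting that the $\pi$-modular discriminant is preserved by the framing.
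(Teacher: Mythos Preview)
Your strategy has the right shape—extract a norm-one ``determinant'' from the framing and reduce it modulo $\pi$—but two of the three steps carry real gaps, and the paper's route is designed precisely to avoid them.

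First, the functorial construction of $\sidewedge{^n_F}X$ for a formal $O_F$-module $X$ over an arbitrary base is not an ``elementary but fiddly computation''; exterior powers of $p$-divisible $O$-modules are a genuinely delicate theory (Hedayatzadeh) and would be heavy machinery to import here. Second, and more seriously, your local constancy argument is a non sequitur: knowing that quasi-isogenies have locally constant height only pins down the class of $\det\rho$ in $F^\times/O_F^\times$, i.e.\ its valuation, not its class in $F^\times/(1+\pi O_F)$. To get the sign you need an honest unit of square one in the structure sheaf, not just an integer-valued height.

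The paper sidesteps both issues by passing to Dieudonn\'e theory over perfect rings. It replaces the local ring at a point of $(\CN_n)_\red$ by its direct perfection $R$, invokes Gabber--Lau to obtain a Dieudonn\'e lattice $M$ over $W' = O_{\breve F}\otimes_{O_{\breve F_0}} W(R)$, and then proves by hand (via a Witt-ring version of the standard $\pi$-modular normal form argument) that the unitary group acts transitively on $\pi$-modular $W'$-lattices in $\BN\otimes W(R)$. This produces a transporter $g$ with $\det g$ a norm-one unit in $W'$, hence $\det g \bmod \pi$ is a square root of $1$ in the local ring $R$, hence equal to $\pm 1$. That last sentence is exactly the missing ingredient in your argument: the sign is locally constant because it is a square root of $1$ in a connected ring, not because of any height consideration. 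Your invocation of \cite[Lem.~3.2]{RSZ} for the pointwise matching with the parity invariant is correct and is also what the paper uses.
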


\begin{proof}
We must show that the parity of the length of \eqref{parity test lattice} is a locally constant function on (perfect points of) the scheme $(\CN_n)_\red$.  Let $x \in (\CN_n)_\red$, and let
\[
   R := \varinjlim \bigl[\CO_{(\CN_n)_\red,x} \xra{\Frob} \CO_{(\CN_n)_\red,x} \xra{\Frob} \dotsb\bigr]
\]
be the direct perfection of the local ring $\CO_{(\CN_n)_\red,x}$ at $x$.  Since $(\CN_n)_\red$ is locally of finite type over $\Spec \ov k$, it suffices to show that the parity in question is constant on points of $\Spec R$ (which is homeomorphic to $\Spec \CO_{(\CN_n)_\red,x}$).  For this we apply Gabber's result, reproved by Lau in \cite[Th.~6.4]{Lau}, that the category of $p$-divisible groups over a perfect ring is equivalent to the category of Dieudonn\'e modules over the corresponding Witt vector ring.  Since $R$ is a perfect local ring, the rings
\[
   W := W(R) \quad\text{and}\quad W' := O_{\breve F} \otimes_{\breve F_0} W(R)
\]
are local and $\varpi$-torsion-free.  Thus to the tautological $R$-point $\Spec R \to \CN_n$ corresponds a $W'$-submodule
\[
   M \subset \BN \otimes_{O_{\breve F_0}} W
\]
which is free of rank $n$ and $\pi$-modular in the obvious sense, i.e.~the dual module
\[
   M^\vee := \bigl\{\, x \in \BN \otimes_{O_{\breve F_0}} W \bigm| h(x,M) \subset W' \,\bigr\}
\]
is equal to $\pi^{-1}M$ inside $\BN \otimes_{O_{\breve F_0}} W$.  

To proceed, we claim that the unitary group acts transitively on the $\pi$-modular submodules in $\BN \otimes_{O_{\breve F_0}} W$.  Indeed, this follows by the usual argument for hermitian spaces over complete, discretely valued fields; let us give a sketch.  It suffices to show that any $\pi$-modular $L$ admits a $W'$-basis with respect to which the hermitian form is $\bigl[\begin{smallmatrix} & -\pi\\ \pi \end{smallmatrix}\bigr] \oplus \dotsb \oplus \bigl[\begin{smallmatrix} & -\pi\\ \pi \end{smallmatrix}\bigr]$.  Let $e \in L$ be an element in a basis for $L$.  Since $L^\vee = \pi\i L$, there exists $f \in L$ such that $h(e,f) = \pi$; and furthermore $h(e,e), h(f,f) \in \pi W' \cap W = \varpi W$.  Say $h(e,e) = \varpi a$ for $a \in W$.  Then
\[
   h\biggl(e + \frac{\pi a}2 f, e + \frac{\pi a}2 f\biggr) = - \frac{\varpi a^2}4 h(f,f) \subset \varpi^2 W.
\]
Continuing by successive approximation, we reduce to the case that $e$ is isotropic. After adding an appropriate multiple of $\pi e$ to $f$, we may then assume that $f$ is isotropic too. Again since $L$ is $\pi$-modular, $h(e,L)$ and $h(f,L)$ are contained in $\pi W'$, and it follows that we can split $W'e + W'f$ off from $L$ as an orthogonal direct summand.  The claim now follows by induction.

By the claim, there exists $g \in \U(\BN)(W[\frac 1 \varpi])$ such that $g\cdot (\BM \otimes_{O_{\breve F_0}} W) = M$.  Since $\det g$ is a norm one element in $W'[\frac 1 \pi]$, we have $\det g \in W'$, and $a := \det g \bmod \pi$ is a square one element in $R$.  Since $R$ is a local ring of residue characteristic not $2$, it follows that $a = \pm 1$ in $R$. Since the parity of the length in question at each residue field of $R$ is determined by the image of $a$ in the residue field, the lemma follows.
\end{proof}

\begin{example}[$n = 2$]\label{CN_2}
Proposition \ref{even n decomp lem} generalizes \cite[Lem.~6.1]{RSZ}, which shows that $(\CN_2)_\red$ is a disjoint union of two points.  In fact, when $n = 2$, the space $\CN_2$ is already defined over $\Spf O_{\breve F_0}$ (the Kottwitz condition $\charac( \iota(\pi) \mid \Lie X ) = T^2- \varpi$ makes sense over $O_{\breve F_0}$, and after base change to $\Spf O_{\breve F}$ this implies the wedge condition \eqref{wedge condition}).  Let $(\CN_2)_{\Spf O_{\breve F_0}}$ denote this descended formal scheme. Then the decomposition in the lemma is induced by a decomposition
\[
   (\CN_2)_{\Spf O_{\breve F_0}} = (\CN_2)_{\Spf O_{\breve F_0}}^+ \amalg (\CN_2)_{\Spf O_{\breve F_0}}^-,
\]
and the summands on the right-hand side are both isomorphic to the universal deformation space $\CM$ of ``the'' formal $O_{F_0}$-module of relative height $2$ and dimension $1$ over $\ov k$; see \cite[Prop.~6.3]{RSZ}  (which remains valid over $\Spf O_{\breve F_0}$; here, in keeping with our definition of $\lambda_{\BX_2}$ as $-2$ times the polarization of the framing object defined in \cite{RSZ}, we also multiply the polarization in the definition of the isomorphism $\CM \isoarrow (\CN_2)_{\Spf O_{\breve F_0}}^+$ by $-2$). 
Note that when $F_0=\BQ_p$, the formal scheme $\CM$ can be identified with the completion of the moduli space of elliptic curves at a supersingular point.
\end{example}

Now let $\CN_n^\pm$ denote either of the formal subschemes $\CN_n^+$ and $\CN_n^-$.  The following analog of Theorem \ref{structN}\eqref{structN ii} has recently been proved by H. Wu.

\begin{theorem}[Wu \cite{Wu}]\label{stratification conj even n ram}
The reduced underlying scheme $(\CN_n^\pm)_\red$ has a Bruhat--Tits stratification by Deligne--Lusztig varieties of dimensions $0, 1,\dotsc, \frac{n}{2} -1$ attached to \emph{non-split} orthogonal groups in an  even number of variables and to Coxeter elements, with strata parametrized by the vertices of the Bruhat--Tits complex of $\SU(\BV(\BX_n))$.\qed
\end{theorem}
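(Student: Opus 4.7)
The plan is to adapt the Vollaard--Wedhorn strategy \cite{VW} from the unramified, self-dual setting to the ramified, even, $\pi$-modular setting. First I would pass to Dieudonn\'e theory: using the Gabber--Lau equivalence (as already exploited in the proof of Proposition \ref{even n decomp lem}), the $K$-points of $(\CN_n^\pm)_\red$ for $K$ a perfect field extension of $\ov k$ identify with the set of $O_{\breve F} \otimes_{O_{\breve F_0}} W(K)$-lattices $M \subset \BN \otimes_{O_{\breve F_0}} W(K)$ which are $\pi$-modular for \aform, stable under the appropriate Frobenius semi-linear operators, satisfy the wedge condition \eqref{wedge condition} and the spin condition \eqref{spin condition} in their Dieudonn\'e-module formulations, and lie in the $\pm$-component as measured by the parity of the length of \eqref{parity test lattice}.

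Next, I would classify vertex lattices $\Lambda$ in the non-split $F/F_0$-hermitian space $\BV(\BX_n)$. Since $n$ is even and $F/F_0$ is ramified, vertex lattices of even type $t \in \{2,4,\dotsc,n\}$ exist, while no $\pi$-modular (type $0$) lattice does. For such a $\Lambda$, the $k$-vector space $\Lambda^\vee/\Lambda$ of dimension $t$ carries a natural non-degenerate quadratic form induced by $(\ov x, \ov y) \mapsto \pi h(x,y) \bmod \pi$, and the non-splitness of $\BV(\BX_n)$ forces this to be a non-split quadratic space.

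Then, for each vertex lattice $\Lambda$ of type $t$, define a closed Bruhat--Tits stratum $\CN_\Lambda \subset (\CN_n^\pm)_\red$ as the locus of Dieudonn\'e lattices $M$ sandwiched as $\Lambda \otimes_{O_F} O_{\breve F} \otimes_{O_{\breve F_0}} W(K) \subset M \subset \Lambda^\vee \otimes_{O_F} O_{\breve F} \otimes_{O_{\breve F_0}} W(K)$. The translation of the spin condition should then say precisely that the image $\ov M$ of $M$ in $(\Lambda^\vee/\Lambda) \otimes_k W(K)$ is an isotropic subspace for the induced quadratic form; combined with the Frobenius stability constraint, this identifies $\CN_\Lambda$ with a Coxeter-type Deligne--Lusztig variety for the non-split orthogonal group $\SO^-(\Lambda^\vee/\Lambda)$ of rank $t$, of dimension $t/2 - 1$. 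As $\Lambda$ varies, the resulting dimensions range over $\{0,1,\dotsc,n/2-1\}$, as required, and the parametrization by vertex lattices in $\BV(\BX_n)$ is exactly the parametrization by vertices of the Bruhat--Tits complex of $\SU(\BV(\BX_n))$.

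Finally, one shows that the $\CN_\Lambda$ cover $(\CN_n^\pm)_\red$ and that their incidence structure matches the simplicial structure of the building. The main obstacle, to my mind, is the precise translation of the spin condition into Dieudonn\'e-module terms and the verification that this is exactly what cuts out the \emph{non-split} orthogonal Coxeter Deligne--Lusztig variety (rather than some larger or reducible locus); this is where the refined spin condition of \cite{S} and the $\pm$-decomposition of Proposition \ref{even n decomp lem} must interact delicately, and where the flatness considerations on the local model (cf.\ Remarks \ref{Yu flatness} and \ref{not flat}) come into play. Once this identification of each $\CN_\Lambda$ as a Coxeter DL variety is in place, the covering statement follows because every $\pi$-modular Dieudonn\'e lattice $M$ has a canonically associated smallest vertex lattice $\Lambda(M) := M \cap \BV(\BX_n)$, and the face relations translate into the standard ``$\Lambda \subset \Lambda'$ iff $\CN_{\Lambda'} \subset \CN_\Lambda$'' dictionary familiar from the unramified case.
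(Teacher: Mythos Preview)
The paper does not contain a proof of this statement: it is attributed to Wu \cite{Wu}, the theorem is stated with a trailing \qed, and the surrounding text explicitly says ``The following analog of Theorem \ref{structN}\eqref{structN ii} has recently been proved by H.~Wu.'' So there is nothing in this paper to compare your proposal against.

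That said, your outline is a reasonable sketch of the Vollaard--Wedhorn template transposed to the ramified even $\pi$-modular case, and is presumably close in spirit to what \cite{Wu} does. A couple of details in your sketch look off and would need care in an actual proof. First, in the paper's convention a $\pi$-modular lattice is a vertex lattice of type $n$, not type $0$ (type $0$ is self-dual); since $\BV(\BX_n)$ is the \emph{non-split} even-dimensional space, it contains no $\pi$-modular lattice, so the available types are $0,2,\dotsc,n-2$, not $2,4,\dotsc,n$. Your dimension formula $t/2-1$ then does not give the range $0,\dotsc,n/2-1$; the correct dimension of the stratum attached to a vertex lattice of type $t$ should come out to $t/2$. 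Second, the refined spin condition of \cite{S} is not what is imposed on $\CN_n$ for even $n$ (cf.\ the remark following Remark \ref{odd N_n crit}): only the wedge condition and the ``simple'' spin condition \eqref{spin condition} enter, and the local model is already smooth. So the delicate interaction you flag between the refined spin condition and the $\pm$-decomposition is not quite the right obstacle here; the analysis is closer to that in \cite{RTW} for the ramified self-dual case, adapted to the $\pi$-modular polarization type.
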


This result was previously shown to hold in the equal characteristic analog studied by G\"ortz--He \cite{Goertz-He}.

\begin{example}[$n = 4$]\label{CN_4 eg}
In the case of $\CN_4^\pm$, the description in \cite{Goertz-He} is that every irreducible component in $(\CN_4^\pm)_\red$ is a projective line containing $p^2 + 1$ special points, and every special point lies in the intersection of $p + 1$  projective lines.  The following figure illustrates the case $p = 3$.

\medskip
\begin{center}
\begin{tikzpicture}
   \draw (-7,0) .. controls (-3,-1) and (3,-1) .. (7,0)
	foreach \t in {0, ..., 9} {
	   pic [pos=0.05+0.1*\t, rotate=1.5*(-4.5+\t)] {code={
		   \draw (0,0) arc [radius = 4, start angle=10, end angle=25];
	      \draw (0,0) arc [radius = 4, start angle=10, end angle=-5];
      	\draw (0,0) arc [radius = 3, start angle=125, end angle=110];
	      \draw (0,0) arc [radius = 3, start angle=125, end angle=140];
			\draw (0,0) arc [radius = 5, start angle=160, end angle=150];
			\draw (0,0) arc [radius = 5, start angle=160, end angle=170];
		}}
	};
\end{tikzpicture}
\end{center}
\end{example}

\section{Ramified, odd, almost $\pi$-modular type}\label{odd ram max}

In this section we define the formal scheme $\CN_n$ over $\Spf O_{\breve F}$ when $F/F_0$ is ramified and $n \geq 1$ is odd.  We define $\CN_n$ to be the moduli space for quadruples $(X, \iota, \lambda, \rho)$ as before, with $\rho$ a framing into a fixed framing object, except this time we impose that the polarization $\lambda$ is \emph{almost $\pi$-modular}, i.e.
\begin{equation*}
   \ker \lambda \subset X[\iota(\pi)]\  \text{\emph{is of rank $q^{n-1}$}.} 
\end{equation*} 
We furthermore require that the triple $(X,\iota,\lambda)$ satisfies condition \eqref{odd spin cond} below; this condition is a little complicated to formulate and will require some preparation. (For the relationship of \eqref{odd spin cond} to the Kottwitz, wedge, and spin conditions introduced previously, see Remark \ref{odd N_n crit}.)  Since we will also need an analog of this condition in \S\ref{aux spaces} for even $n$, for the moment let $n$ be any positive integer. Let
\[
   m := \lfloor n/2 \rfloor.
\]
Let $e_1,\dotsc,e_n$ denote the standard basis in $F^n$, and let $h$ be the standard split $F/F_0$-hermitian form on $F^n$ with respect to this basis,
\begin{equation}\label{herm form}
   h(ae_i,be_j) := a \ov b \delta_{i,n+1-j} \quad\text{(Kronecker delta)}.
\end{equation}
Let \aform and \sform be the respective alternating and symmetric $O_{F_0}$-bilinear forms $F^n \times F^n \to F_0$ defined by
\begin{equation}\label{aform and sform}
   \langle x,y \rangle := \frac 1 2 \tr_{F/F_0} \bigl(\pi^{-1}h(x,y)\bigr)
   \quad\text{and}\quad
   (x,y) := \frac 1 2 \tr_{F/F_0}h(x,y).
\end{equation}
For $i = bn + c$ with $0 \leq c < n$, define the $O_F$-lattice
\[
   \Lambda_i := \sum_{j = 1}^c \pi^{-b-1}O_F e_j + \sum_{j = c+1}^n \pi^{-b} O_F e_j \subset F^n.
\]
For each $i$, the form \aform induces a perfect pairing
\[
   \Lambda_i \times \Lambda_{-i} \to O_{F_0}.
\]
In this way, for fixed nonempty $I \subset \{0,\dotsc,m\}$, the set
\[
   \Lambda_I := \{\, \Lambda_i \mid i \in \pm I + n\BZ \, \}
\]
forms a polarized chain of $O_F$-lattices over $O_{F_0}$ in the sense of \cite[Def.\  3.14]{RZ}.

Now define the $2n$-dimensional $F$-vector space
\begin{equation*}
   V := F^n \otimes_{F_0} F,
\end{equation*}
where $F$ acts on the right tensor factor.
The $n$th wedge power $\tensor*[^n] V {} := \bigwedge_F^n V$ admits a canonical decomposition
\begin{equation}\label{nV decomp}
   \tensor*[^n] V {} = 
      \bigoplus_{\substack{r+s = n\\\epsilon \in \{\pm1\}}} \tensor*[^n]{V}{_\epsilon^{r,s}}
\end{equation}
which is described in \cite[\S\S2.3, 2.5]{S}.%
\footnote{Here and below we replace the symbol $W$ used in loc.\ cit.\ with $\tensor*[^n] V{}$.}
Let us briefly review it.  The operator $\pi \otimes 1$ acts $F$-linearly on $V$ with eigenvalues $\pm \pi$; let
\[
   V = V_\pi \oplus V_{-\pi}.
\]
denote the corresponding eigenspace decomposition. For a partition $r + s = n$, define%
\footnote{Here and below we interchange $r$ and $s$ in the notation relative to \cite{S}.}
\[
   \tensor*[^n]V{^{r,s}} := \sideset{}{_F^r}\bigwedge V_\pi \otimes_F \sideset{}{_F^s}\bigwedge V_{-\pi},
\]
which is naturally a subspace of $\tensor*[^n] V{}$.  Furthermore, the symmetric form \sform splits after base change to $V$, and therefore there is a decomposition
\begin{equation}\label{^nV decomp}
   \tensor*[^n] V{} = \tensor*[^n] V{_1} \oplus \tensor*[^n] V{_{-1}}
\end{equation}
as an $\SO(\sform)(F)$-representation.  The subspaces $\tensor*[^n]V{_{\pm 1}}$ have the property that for any Lagrangian (i.e.~totally isotropic $n$-dimensional) subspace $\CF \subset V$, the line $\bigwedge_F^n \CF \subset \tensor*[^n] V{}$ is contained in one of them, and in this way they distinguish the two connected components of the orthogonal Grassmannian $\OGr(n,V)$ over $\Spec F$.  The subspaces $\tensor*[^n]V{_{\pm 1}}$ are canonical up to labeling, and we will follow the labeling conventions in loc.\ cit., to which we refer the reader for details.  The summands in the decomposition \eqref{nV decomp} are then given by
\[
   \tensor*[^n] V{_\epsilon^{r,s}} := \tensor*[^n] V{^{r,s}} \cap \tensor*[^n] V {_\epsilon}
\]
(intersection in $\tensor*[^n] V{}$) for $\epsilon \in \{\pm 1\}$.

Given an $O_F$-lattice $\Lambda \subset F^n$, now define
\[
   \tensor*[^n]\Lambda{} := \sideset{}{_{O_F}^n} \bigwedge (\Lambda \otimes_{O_{F_0}} O_F),
\]
which is naturally a lattice in $\tensor*[^n]V{}$.  For fixed $r$, $s$, and $\epsilon$, define
\begin{equation}\label{^nLambda_epsilon^r,s}
   \tensor*[^n]\Lambda{_{\epsilon}^{r,s}} := \tensor*[^n]\Lambda{}\cap \tensor*[^n] V{_\epsilon^{r,s}}
\end{equation}
(intersection in $\tensor*[^n] V{}$).  Then $\tensor*[^n]\Lambda{_{\epsilon}^{r,s}}$ is a direct summand of $\tensor*[^n]\Lambda{}$, since the quotient $\tensor*[^n]\Lambda{} / \tensor*[^n]\Lambda{_{\epsilon}^{r,s}}$ is torsion-free.  For an $O_F$-scheme $S$, define
\begin{equation}\label{L def}
   L_{i,\epsilon}^{r,s}(S) := 
	   \im \bigl[ \tensor*[^n]{(\Lambda_i)}{_{\epsilon}^{r,s}} \otimes_{O_F} \CO_S \rightarrow \tensor*[^n]\Lambda{_i} \otimes_{O_F} \CO_S \bigr].
\end{equation}
For nonempty $I \subset \{0,\dotsc,m\}$, let $\uAut(\Lambda_I)$ denote the scheme of automorphisms of the polarized $O_F$-lattice chain $\Lambda_I$ over $\Spec O_{F_0}$, in the sense of \cite[Th.~3.16]{RZ} or \cite[p.~581]{P} (this is denoted by $\CP$ in \cite{P}).

\begin{lemma}\label{L stability}
For any $O_F$-scheme $S$ and $\Lambda_i \in \Lambda_I$, the submodule $L_{i,\epsilon}^{r,s}(S) \subset \tensor*[^n]\Lambda{_i} \otimes_{O_F} \CO_S$ is stable under the natural action of $\uAut(\Lambda_I)(S)$ on $\tensor*[^n]\Lambda{_i} \otimes_{O_F} \CO_S$.
\end{lemma}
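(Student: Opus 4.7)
The plan is to reduce the statement to its generic fiber, where it becomes the assertion that the unitary group $\U(h)$ preserves the subspaces $\tensor*[^n]{V}{_\epsilon^{r,s}}$ of $\tensor*[^n]{V}{}$.

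First, by functoriality of the action in $S$, it suffices to verify the claim in the universal case $S = \uAut(\Lambda_I)_{O_F}$ with $g$ the tautological automorphism. By \cite[Th.~3.16]{RZ}, $\uAut(\Lambda_I)$ is a smooth (hence flat) affine group scheme over $\Spec O_{F_0}$, so the coordinate ring $\CO_S$ is $\varpi$-torsion-free. Since $\tensor*[^n]{(\Lambda_i)}{_\epsilon^{r,s}}$ is a direct $O_F$-summand of $\tensor*[^n]{\Lambda}{_i}$, the module $L_{i,\epsilon}^{r,s}(S)$ is a direct $\CO_S$-summand of $\tensor*[^n]{\Lambda}{_i} \otimes_{O_F} \CO_S$ that equals its own preimage under inversion of $\varpi$ inside the generic fiber. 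Therefore the stability of $L_{i,\epsilon}^{r,s}(S)$ under $g$ can be checked after inverting $\varpi$.

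Over the generic fiber, by the definition of the RZ automorphism scheme of a polarized $O_F$-lattice chain, $\uAut(\Lambda_I)_{F_0}$ is identified with $\U(h)$, the unitary group of the hermitian form $h$ on $F^n$. An element $g \in \U(h)$ induces on $V = F^n \otimes_{F_0} F$ the automorphism $g \otimes \id$, which is $F \otimes_{F_0} F$-linear, so it commutes with $\pi \otimes 1$ and hence preserves the eigenspace decomposition $V = V_\pi \oplus V_{-\pi}$ along with each of the subspaces $\tensor*[^n]{V}{^{r,s}}$. Moreover $g \otimes \id$ preserves the symmetric form $(\cdot,\cdot)$ on $V$, and under the splitting $F \otimes_{F_0} F \cong F \times F$ (valid since $p$ is odd) it acts on $V_\pi$ as $g$ and on $V_{-\pi}$ as its Galois conjugate, so its $F$-determinant on $V$ equals $\RN_{F/F_0}(\det_F g) = 1$. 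Thus $g \otimes \id$ lies in $\SO((\cdot,\cdot))$, the identity component of the orthogonal group of $(\cdot,\cdot)$, which preserves each of the two subspaces $\tensor*[^n]{V}{_{\pm 1}}$ parametrizing the families of Lagrangians in $\OGr(n,V)$; cf.~\cite[\S2]{S}. Intersecting gives that $g \otimes \id$ preserves each $\tensor*[^n]{V}{_\epsilon^{r,s}}$, completing the argument.

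The essential technical point is the determinant computation placing $g \otimes \id$ in the identity component $\SO((\cdot,\cdot))$ rather than the full isometry group; the remaining manipulations are routine given the canonical decompositions of $V$ reviewed in \cite[\S\S2.3,\,2.5]{S}.
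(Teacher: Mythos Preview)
Your proof is correct and follows essentially the same approach as the paper: both reduce the question to the generic fiber using the smoothness (hence flatness) of $\uAut(\Lambda_I)$ over $\Spec O_{F_0}$, where the stabilization claim becomes the statement that $\U(h)$ preserves $\tensor*[^n]{V}{_\epsilon^{r,s}}$. The paper phrases the reduction as ``the stabilizer is a closed subscheme containing the generic fiber, hence is everything by flatness,'' and simply declares the generic-fiber assertion ``obvious''; your version unpacks this assertion via the determinant computation $\det_F(g\otimes\id)=\RN_{F/F_0}(\det g)=1$ (an alternative is to note that $\U(h)$ is connected and hence cannot swap the two components $\tensor*[^n]{V}{_{\pm1}}$).
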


\begin{proof}
Let $C \subset \uAut(\Lambda_I)_{O_F}$ be the stabilizer of $L_{i,\epsilon}^{r,s}$,
\[
   C(S) := \bigl\{\, g \in \uAut(\Lambda_I)(S) \bigm| g \cdot L_{i,\epsilon}^{r,s}(S) = L_{i,\epsilon}^{r,s}(S) \,\bigr\}.
\]
Then $C$ is a closed subscheme of $\uAut(\Lambda_I)_{O_F}$.  Since $\uAut(\Lambda_I)_{F_0} \cong \U(h)$, it is obvious that $C$ contains the $F$-generic fiber $\uAut(\Lambda_I)_F$.  Therefore $C = \uAut(\Lambda_I)_{O_F}$, since $\uAut(\Lambda_I)_{O_F}$ is smooth, and hence flat, over $\Spec O_F$ by \cite[Th.\ 3.16]{RZ} and \cite[Th.\ 2.2(a)]{P}.
\end{proof}

This concludes our discussion for general $n$.  We now formulate our condition on the triple $(X,\iota,\lambda)$ over a $\Spf O_{\breve F}$-scheme $S$ in the case of odd $n$, which will make use of the above discussion in the case $I = \{m\}$. Let $M(X)$ and $M(X^\vee)$ denote the respective Lie algebras of the universal vector extensions of $X$ and $X^\vee$.  Since $\ker\lambda \subset X[\iota(\pi)]$, there is a unique (necessarily $O_F$-linear) isogeny $\lambda'$ such that the composite
\[
   X \xra{\lambda} X^\vee \xra{\lambda'} X
\]
is $\iota(\pi)$.  Since $\ker\lambda$ furthermore has rank $q^{n-1}$, the induced diagram
\[
   M(X) \xra{\lambda_*} M(X^\vee) \xra{\lambda'_*} M(X)
\]
extends periodically to a polarized chain of $O_F \otimes_{O_{F_0}} \CO_S$-modules of type $\Lambda_{\{m\}}$, in the terminology of \cite{RZ}.  By Theorem 3.16 in loc.\ cit., \'etale-locally on $S$ there exists an isomorphism of polarized chains
\begin{equation}\label{chain triv}
   [{}\dotsb \xra{\lambda'_*} M(X) \xra{\lambda_*} M(X^\vee) \xra{\lambda'_*} \dotsb{}] \isoarrow \Lambda_{\{m\}} \otimes_{O_{F_0}} \CO_S,
\end{equation}
which in particular gives an isomorphism of $O_F \otimes_{O_{F_0}} \CO_S$-modules
\begin{equation}\label{M(X) triv}
   M(X) \isoarrow \Lambda_{-m} \otimes_{O_{F_0}} \CO_S.
\end{equation}
The module $M(X)$ fits into the covariant Hodge filtration
\[
   0 \to \Fil^1 \to M(X) \to \Lie X \to 0
\]
for $X$, and the condition we finally impose is that
\begin{align*}
	&\text{\em upon identifying $\Fil^1$ with a submodule of $\Lambda_{-m} \otimes_{O_{F_0}} \CO_S$ via \eqref{M(X) triv}, the line bundle}\\
	\shortintertext{
	\begin{equation}\label{odd spin cond}
	   \sideset{}{_{\CO_S}^n}\bigwedge \Fil^1 \subset  \tensor[^n]\Lambda{_{-m}} \otimes_{O_F} \CO_S
	\end{equation}
	}
	&\text{\em is contained in $L_{-m,-1}^{n-1,1}(S)$.}
\end{align*}
Note that Lemma \ref{L stability} gives exactly what is needed to conclude that condition \eqref{odd spin cond} is independent of the choice of chain isomorphism in \eqref{chain triv}.

To complete the definition of $\CN_n$, it remains to specify a framing object $(\BX_n, \iota_{\BX_n}, \lambda_{\BX_n})$ for this moduli problem over $\ov k$. In contrast to the cases we have encountered previously, such a triple is \emph{not unique up to quasi-isogeny}. In fact, there are two isogeny classes (as always, up to $O_F$-linear quasi-isogeny compatible with the polarizations) such that $\BX_n$ is supersingular, corresponding to the two possible isometry classes of the hermitian space $C$ in the proof of \cite[Prop.~3.1]{RSZ}.\footnote{Note that the statement in loc.\ cit.\ considers quasi-isogenies which are compatible with the polarizations only up to scalar, whereas here we are requiring compatibility on the nose.}  As an explicit representative for which $\BV$ is non-split, we take the same framing object as in \cite{RSZ}.  Thus when $n = 1$ we define
\begin{equation}\label{BX_1}
   \BX_1^{(1)} := \BE,
	\quad
	\iota_{\BX_1^{(1)}} := \iota_\BE,
	\quad\text{and}\quad
	\lambda_{\BX_1^{(1)}} := -\lambda_\BE.
\end{equation}
Then $\BV(\BX_1^{(1)}) = \Hom_{O_F}^\circ(\ov\BE,\BE)$ is the $-1$-eigenspace $D^-$ for the conjugation action by $\iota_\BE(\pi)$ in $D$, endowed with the hermitian norm $(x,x) = -\RN x$, which is indeed non-split.  When $n \geq 3$, we take 
\begin{equation}\label{BX_n odd}
   \BX_n^{(1)} := \BX_{n-1} \times \ov \BE,
	\quad
	\iota_{\BX_n^{(1)}} := \iota_{\BX_{n-1}} \times \iota_{\ov\BE},
	\quad\text{and}\quad
	\lambda_{\BX_n^{(1)}} := \lambda_{\BX_{n-1}} \times \lambda_{\ov\BE};
\end{equation}
here $n-1$ is even and $\BX_{n-1}$ is as defined in \S\ref{even ram max}.  Then by \eqref{chi decomp formula} and the fact that $\BV(\BX_{n-1})$ is the non-split hermitian space of dimension $n-1$, $\BV(\BX_n^{(1)})$ is indeed the non-split space of dimension $n$.  To fix a framing object in the other isogeny class, for all $n$ we simply multiply the polarization of $\BX_n^{(1)}$ by a non-norm unit, i.e.~we fix $\ep\in O_{F_0}^\times\smallsetminus \RN F^\times$ and define
\begin{equation}\label{BX_n^{(0)}}
   \BX_n^{(0)} := \BX_n^{(1)},
	\quad
	\iota_{\BX_n^{(0)}} := \iota_{\BX_n^{(1)}},
	\quad\text{and}\quad
	\lambda_{\BX_n^{(0)}} := \ep\lambda_{\BX_n^{(1)}}.
\end{equation}
Then, since $n$ is odd and $\ep \notin \RN F^\times$, $\BV(\BX_n^{(0)})$ is the split hermitian space of dimension $n$.  Note that such an $\ep$ exists since $F/F_0$ is ramified.

Taking $\BX_n^{(0)}$ and $\BX_n^{(1)}$ as the framing objects, we obtain respective moduli spaces  $\CN_n^{(0)}$ and $\CN_n^{(1)}$. However, these spaces are isomorphic via the map
\[
   \xymatrix@R=0ex{
	   \CN_n^{(1)} \ar[r]^-\sim  &  \CN_n^{(0)}\\
		(X, \iota, \lambda, \rho) \ar@{|->}[r]  &  \bigl(X, \iota, \lambda\circ\iota(\ep), \rho\bigr).
	}
\]
To simplify notation in the rest of the paper, from now on we set
\[
   \CN_n := \CN_n^{(1)}
	\quad\text{and}\quad
	\BX_n := \BX_n^{(1)}.
\]

\begin{example}[$n=1$]\label{CN_1}
When $n = 1$, the moduli problem for $\CN_1$ is just the moduli problem of lifting $\BE$ as a formal $O_F$-module.  Thus the solution is $\CN_1 = \Spf O_{\breve F}$, with universal object the canonical lift $(\CE, \iota_\CE, -\lambda_\CE, \rho_\CE)$.  (In this case condition \eqref{odd spin cond} is redundant in the moduli problem.)
\end{example}

\begin{remark}\label{odd N_n crit}
Note that the Kottwitz condition \eqref{kottwitzcond}, the wedge condition \eqref{wedge condition}, and the spin condition \eqref{spin condition} all continue to make sense as written in the odd ramified setting.  The first two of these conditions are implied by condition \eqref{odd spin cond}, cf.~\cite[Lem.~5.1.2, Rem.~5.2.2]{S} (which shows that these implications hold on the local model\footnote{Strictly speaking, loc.~cit.~shows these these implications hold in the case of signature opposite to ours, but up to isomorphism the local model is the same, cf.~\cite[p.~19\ fn.~5]{RSZ}. The same remark applies to essentially all of the subsequent references we make to the local models in \cite{A}, \cite{PR}, \cite[\S2.6]{PRS}, and \cite{S}.}).  On the other hand, let $\CN_n^\circ$ be the moduli space of quadruples $(X,\iota,\lambda,\rho)$ as in the definition of $\CN_n$, except that instead of imposing condition \eqref{odd spin cond}, we impose conditions \eqref{wedge condition} and \eqref{spin condition}.  Then \eqref{wedge condition} and \eqref{spin condition} imply condition \eqref{odd spin cond}, and in this way $\CN_n^\circ$ is an open formal subscheme of $\CN_n$.  Indeed, this statement follows from the analogous statement for the corresponding local models, which is explained in \cite[\S3.3]{S}.   (More precisely, loc.~cit.~shows that the local model for $\CN_n^\circ$ is the complement of the ``worst point'' in the local model for $\CN_n$.)  We note that when $n \geq 3$, the framing objects $\BX_n^{(1)}$ and $\BX_n^{(0)}$ obviously satisfy \eqref{wedge condition} and \eqref{spin condition} (because $\BX_{n-1}$ does), and therefore they indeed satisfy \eqref{odd spin cond}; and it is trivial to check directly that $\BX_1^{(1)}$ and $\BX_1^{(0)}$ satisfy \eqref{odd spin cond} when $n=1$. We further note that \cite{RSZ} uses the notation $\CN_n$ for our $\CN_n^\circ$, and our $\CN_n$ is the ``better'' formal scheme alluded to in Rem.~3.13 of loc.~cit.
\end{remark}

\begin{remark}
There is a natural analog of condition \eqref{odd spin cond} on $\CN_n$ when $n$ is even (still with $F/F_0$ ramified).  However this analog is automatically satisfied on the whole space, which follows from the fact that this condition is automatically satisfied in the generic fiber of the local model for $\CN_n$, and this local model is already flat (in fact smooth).
\end{remark}

As when $n$ is even, we know less about the structure of $\CN_n$ than in the unramified case, but at least we have the analog of Theorem \ref{structN}\eqref{structN i}. 

\begin{theorem}[Exotic smoothness]\label{exotsmodd} Recall that $n$ is odd. 
The formal scheme $\CN_n$ is formally locally of finite type, separated, essentially proper, and formally smooth of relative formal dimension $n-1$ over $\Spf O_{\breve {F}}$. In particular, $\CN_n$ is regular of dimension $n$.  
\end{theorem}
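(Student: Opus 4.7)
The plan is to follow the template established by Theorem \ref{exotsmeven} for the even ramified case. The properties of being formally locally of finite type, separated, and essentially proper are general features of Rapoport--Zink spaces and carry over verbatim from the even setting: separatedness uses \cite[Lem.~2.3.23]{F}, while the other two properties follow from the general formalism developed in \cite{RZ}.

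The heart of the matter is the formal smoothness of relative formal dimension $n-1$. By the local model theorem \cite[Prop.~3.33]{RZ}, this reduces to proving that the corresponding local model $N_n$ is smooth of relative dimension $n-1$ over $\Spec O_F$. Unwinding the definitions, $N_n$ represents the functor on $O_F$-schemes $S$ that assigns the set of $\CO_S$-locally direct summands $\CF \subset \Lambda_{-m} \otimes_{O_{F_0}} \CO_S$ of rank $n$ which satisfy the Kottwitz condition of signature $(1, n-1)$, the wedge condition \eqref{wedge condition}, and the refined spin condition
\[
   \sideset{}{_{\CO_S}^n}\bigwedge \CF \subset L_{-m,-1}^{n-1,1}(S)
\]
as in \eqref{odd spin cond}.

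The key ingredient is the work of the second author \cite{S} on local models for ramified unitary groups, which establishes precisely that $N_n$, defined via the refined spin condition, is smooth over $\Spec O_F$ of relative dimension $n-1$. As noted in Remark \ref{odd N_n crit}, the open subscheme $N_n^\circ \subset N_n$ defined by only the Kottwitz, wedge, and (unrefined) spin conditions was already shown in \cite{RSZ} to be smooth; it is the complement of a single ``worst point.'' The contribution of \cite{S} is to verify that the refined condition \eqref{odd spin cond} cuts out a smooth closed subscheme of the naive local model extending $N_n^\circ$ across this worst point.

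The main obstacle, which is the technical content of \cite{S} rather than of the present theorem, is the verification that the refined spin condition produces a flat---in fact smooth---local model. Granting this, the theorem follows at once: smoothness of $N_n$ implies formal smoothness of $\CN_n$ via the local model theorem, regularity of $\CN_n$ of formal dimension $n$ follows because $\Spf O_{\breve F}$ is regular of dimension $1$, and the remaining topological properties are inherited from the general formalism of RZ spaces as in Theorem \ref{exotsmeven}.
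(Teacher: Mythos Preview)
Your proposal is correct and follows essentially the same approach as the paper: reduce to smoothness of the local model via \cite[Prop.~3.33]{RZ}, and invoke \cite{S} for that smoothness. The only substantive difference is that the paper splits the attribution, citing Richarz's result \cite[Prop.~4.6]{A} for smoothness of the flat local model $M_{\{m\}}^\loc$ and \cite[Th.~1.4]{S} for the identification $M_{\{m\}} = M_{\{m\}}^\loc$ (i.e., that the refined spin condition cuts out exactly the flat closure); you have folded both steps into a single reference to \cite{S}.
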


\begin{proof}
This is proved in the same way as \cite[Prop.~3.8]{RSZ}, using that the local model for $\CN_n$ is a closed subscheme of the ``naive'' local model, and is furthermore smooth by Richarz's result \cite[Prop.~4.6]{A} and \cite[Th.\ 1.4]{S}.  (Note that \cite[Prop.~3.8]{RSZ} proves that $\CN_n^\circ$ satisfies all the conclusions of this theorem except essential properness.)
\end{proof}

The analog of Theorem \ref{structN}\eqref{structN ii} has again been recently proved by Wu, and was previously known to hold in the equal characteristic analog \cite{Goertz-He}.

\begin{theorem}[Wu \cite{Wu}]\label{BT conj odd n ram}
The reduced underlying scheme $(\CN_n)_\red$ has a Bruhat--Tits stratification by Deligne--Lusztig varieties of dimensions $0, 1,\ldots, \frac{n-1}{2}$ attached to orthogonal groups in an  odd number of variables and to Coxeter elements, with strata  parametrized by the vertices of the Bruhat--Tits complex of $\SU(\BV(\BX_n^{(1)}))$.\qed
\end{theorem}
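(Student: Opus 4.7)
My proposal would follow the template established by Vollaard--Wedhorn \cite{VW} in the unramified case and its adaptation to ramified settings by G\"ortz--He \cite{Goertz-He} (in equal characteristic) and H.~Wu \cite{Wu}. First, I would translate the problem to lattices via Dieudonn\'e theory: a $\ov k$-point of $\CN_n$ corresponds to an $O_{\breve F}$-lattice $M$ in the rational Dieudonn\'e module $\BN$ of $\BX_n$ satisfying the almost $\pi$-modularity condition $M \subset^1 M^\vee \subset \pi\i M$, the usual $F$-stability and Kottwitz compatibilities, and the Dieudonn\'e-theoretic translation of the spin condition \eqref{odd spin cond}. The hermitian structure on $\BN$ coming from the polarization, exactly as in \S\ref{even ram max}, identifies the $F_0$-space of special quasi-isogenies with $\BV(\BX_n^{(1)}) \otimes_{F_0} \breve F_0$.

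Second, for each vertex lattice $\Lambda \subset \BV(\BX_n^{(1)})$ of odd type $t \in \{1, 3, \ldots, n\}$, I would define a locally closed subscheme $\CN_\Lambda^\circ \subset (\CN_n)_\red$ by requiring that the Dieudonn\'e module $M$ satisfy $\Lambda \otimes_{O_F} O_{\breve F} \subset M \subset (\Lambda \otimes_{O_F} O_{\breve F})^\vee$, with $\Lambda$ maximal for this property. Since $F/F_0$ is ramified, the hermitian form on $\Lambda^\vee/\Lambda$ reduces to a nondegenerate \emph{symmetric} form on this $k$-vector space of dimension $t$; accordingly, $\SU(\Lambda^\vee/\Lambda)$ is an orthogonal group in $t$ (odd) variables, explaining the shape of the theorem. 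The main content of this step is to show that $\CN_\Lambda^\circ$ is isomorphic to the Deligne--Lusztig variety attached to a Coxeter element in this orthogonal group, which gives the predicted dimension $(t-1)/2$; this is done by describing the Dieudonn\'e modules parametrized by $\CN_\Lambda^\circ$ as Frobenius-stable isotropic filtrations in $\Lambda^\vee/\Lambda \otimes_k \ov k$ satisfying the appropriate transversality conditions, mirroring \cite{VW,Goertz-He}.

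Third, I would verify the combinatorial structure. A containment $\Lambda \subsetneq \Lambda'$ of vertex lattices induces a reverse incidence $\CN_{\Lambda'}^\circ \subset \ov{\CN_\Lambda^\circ}$, so that the closures $\CN_\Lambda := \ov{\CN_\Lambda^\circ}$ furnish the claimed Bruhat--Tits stratification indexed by the vertices of the Bruhat--Tits complex of $\SU(\BV(\BX_n^{(1)}))$. Coverage $\bigcup_\Lambda \CN_\Lambda^\circ = (\CN_n)_\red$ comes from the fact that, for any $M$ as above, the saturation of $M \cap \BV(\BX_n^{(1)})$ is a vertex lattice with $M$ lying in the corresponding stratum. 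That vertex lattices in $\BV(\BX_n^{(1)})$ are in bijection with vertices of the building is a standard fact about non-split unitary groups over local fields.

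The hard part will be handling the refined spin condition \eqref{odd spin cond} near the ``worst points'' of the associated local model. Recall from Remark \ref{odd N_n crit} that this refinement, due to Smithling \cite{S}, was introduced precisely to ensure flatness: naively one would define $\CN_n$ via the wedge and spin conditions alone, obtaining the open subscheme $\CN_n^\circ$, and the ``extra'' lattices appearing in $(\CN_n)_\red \smallsetminus (\CN_n^\circ)_\red$ must be shown to coincide with exactly the right stratum closures. At the Dieudonn\'e-module level one must therefore verify that condition \eqref{odd spin cond} cuts out the expected locus of lattices there, namely that it selects the correct connected component of the orthogonal Grassmannian factor in the decomposition \eqref{nV decomp}, and does not truncate the closures of the open strata in a way that destroys the Coxeter Deligne--Lusztig structure. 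This is the delicate technical core of the argument, reducing to case-by-case analysis of the kind carried out in \cite{S,Goertz-He}.
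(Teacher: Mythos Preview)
The paper does not prove this statement at all: Theorem \ref{BT conj odd n ram} is stated with an immediate \qed and is attributed to Wu \cite{Wu}, exactly as Theorem \ref{stratification conj even n ram} is in the even case. The authors are merely quoting an external result to round out the structural picture of $\CN_n$; they neither supply nor sketch an argument.

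Your proposal is therefore not comparable to anything in the paper. What you have written is a plausible outline of how Wu's proof (or any proof in the Vollaard--Wedhorn lineage) would proceed, and the broad strokes---Dieudonn\'e lattices, vertex-lattice strata indexed by the building, reduction of the hermitian form to a symmetric form on $\Lambda^\vee/\Lambda$, identification with Coxeter-type Deligne--Lusztig varieties for odd orthogonal groups---are the right ones. But since the paper's ``proof'' is a bare citation, the appropriate response here is simply to cite \cite{Wu} rather than to attempt an independent argument.
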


\begin{example}[$n=3$]
In the case $n=3$, as in Example \ref{CN_4 eg}, there are strata of dimensions $0$ and $1$ in $(\CN_3)_\red$. Every irreducible component is a projective line containing $p + 1$ special points, and every special point lies in the intersection of $p + 1$ projective lines.  The following figure illustrates the case $p = 3$.

\medskip
\begin{center}
\begin{tikzpicture}
   \draw (-4,0) .. controls (-2,-0.75) and (2,-0.75) .. (4,0)
	foreach \t in {0, ..., 3} {
	   pic [pos=0.125+0.25*\t, rotate=3*(-1.5+\t)] {code={ 
		   \draw (0,0) arc [radius = 4, start angle=10, end angle=25];
	      \draw (0,0) arc [radius = 4, start angle=10, end angle=-5];
      	\draw (0,0) arc [radius = 3, start angle=125, end angle=110];
	      \draw (0,0) arc [radius = 3, start angle=125, end angle=140];
			\draw (0,0) arc [radius = 5, start angle=160, end angle=150];
			\draw (0,0) arc [radius = 5, start angle=160, end angle=170];
		}}
	};
\end{tikzpicture}
\end{center}
\end{example}
 
Taking Theorems \ref{exotsmeven} and \ref{exotsmodd} together, we obtain the following corollary. For arbitrary $n \geq 2$, define
\begin{equation*}
   \CN_{n-1, n} := \CN_{n-1}\times_{\Spf O_{\breve {F}}}\CN_n .
\end{equation*}

\begin{corollary} For arbitrary $n$, the formal scheme $\CN_{n-1, n}$ is regular of formal dimension $2(n-1)$.  \qed
\end{corollary}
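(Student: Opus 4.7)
The plan is to reduce the claim to a routine formal-smoothness argument, case-by-case on the ramification of $F/F_0$ and the parity of $n$, using the structural theorems already proved for the individual $\CN_m$.

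First I would gather the relative formal smoothness statements for the two factors. By Theorem \ref{structN}\eqref{structN i} (when $F/F_0$ is unramified), by Theorem \ref{exotsmeven} (when $F/F_0$ is ramified and the index is even), by Theorem \ref{exotsmodd} (when $F/F_0$ is ramified and the index is odd), and by Example \ref{CN_1} (for the trivial case $\CN_1 = \Spf O_{\breve F}$ which only arises when $F/F_0$ is ramified), each of $\CN_{n-1}$ and $\CN_n$ is formally smooth over $\Spf O_{\breve F}$ of relative formal dimensions $n-2$ and $n-1$, respectively. Thus, regardless of the arithmetic of $F/F_0$, both factors are formally smooth over the base.

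Next I would invoke the fact that formal smoothness is preserved under fiber product: if $\CX$ and $\CY$ are formally smooth over $\Spf O_{\breve F}$ of relative formal dimensions $d_1$ and $d_2$, then $\CX \times_{\Spf O_{\breve F}} \CY$ is formally smooth over $\Spf O_{\breve F}$ of relative formal dimension $d_1 + d_2$. Applied here, this gives that $\CN_{n-1,n}$ is formally smooth over $\Spf O_{\breve F}$ of relative formal dimension $(n-2) + (n-1) = 2n-3$.

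Finally, since $\Spf O_{\breve F}$ is (formally) regular of dimension $1$ (as $O_{\breve F}$ is a complete discrete valuation ring), formal smoothness over it implies regularity of the total space, with formal dimension $(2n-3) + 1 = 2(n-1)$, as asserted. There is no real obstacle in this argument — the content is entirely in the already-established exotic smoothness results of Theorems \ref{exotsmeven} and \ref{exotsmodd}; the only mild care required is to check that each ramification/parity combination falls under one of these structure theorems (or under Example \ref{CN_1} in the boundary case $n=2$ with $F/F_0$ ramified), so that formal smoothness of both factors is indeed available in all cases.
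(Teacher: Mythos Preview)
Your proposal is correct and follows essentially the same approach as the paper, which simply records the corollary with a \qed\ after noting that it follows by ``taking Theorems \ref{exotsmeven} and \ref{exotsmodd} together.'' Your only (harmless) excess is citing Theorem \ref{structN}\eqref{structN i} for the unramified case: this corollary sits in the ramified section, so ``arbitrary $n$'' here refers to arbitrary $n \geq 2$ with $F/F_0$ ramified, and the unramified analog is the earlier corollary following Theorem \ref{structN}.
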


\begin{remark}\label{self-dual special parahoric}
For $n$ even or odd, the space $\CN_n$ is an (open and closed formal subscheme of an) RZ space with ``special maximal parahoric level structure'' in the sense that, in the notation of \cite[\S1.38, Def.~3.18]{RZ}, it arises from a PEL datum with $B = F$, $V = F^n$, $\sform = \aform$, $b^* = \ov b$, $G = \GU(h)$, and $\CL = \Lambda_{\{m\}}$; and the stabilizer of the lattice chain $\Lambda_{\{m\}}$ in $\GU(h)(F_0)$ is a special maximal parahoric subgroup.  For $F/F_0$ ramified, up to conjugacy there is one further case of a special maximal parahoric subgroup in $\GU(h)(F_0)$, namely the stabilizer of a self-dual lattice in $F^n$ when $n$ is odd \cite[\S1.2.3]{PR}.  The corresponding RZ space, in which the polarization $\lambda$ in the moduli problem is \emph{principal} (still with Kottwitz condition \eqref{kottwitzcond} of signature $(1,n-1)$), is studied for $n$ both even and odd in \cite{RTW}.  This space is not regular for $n \geq 3$, and therefore it does not appear to fit into the framework of this paper.  The case $n = 2$ is exceptional,\footnote{Exceptional in three ways: in this case the space is defined over $\Spf O_{\breve F_0}$ instead of $\Spf O_{\breve F}$; it is regular; and the corresponding parahoric subgroup of $\GU(h)(F_0)$ is not a maximal parahoric subgroup, but an Iwahori subgroup, cf.\ \cite[Rem.~2.35]{PRS}.} however, and we now turn to it
in the next section.
\end{remark}

\section{Ramified self-dual type, $n = 2$}\label{ram self-dual n=2}

In this section, continuing from Remark \ref{self-dual special parahoric}, we consider the formal moduli space of \cite{RTW} in the special case $n = 2$, still with $F/F_0$ ramified; see also \cite{KR-alt}.  This is a moduli space for quadruples $(X, \iota, \lambda, \rho)$ as before, with $(X,\iota)$ satisfying the Kottwitz condition
\begin{equation}\label{Kottwitz cond n=2}
   \charac\bigl( \iota(\pi) \mid \Lie X\bigr) = T^2- \varpi,
\end{equation}
and where this time the polarization $\lambda$ is principal.  As in the previous section in the case of odd $n$ (and via the same argument), there are \emph{two} isogeny classes of supersingular framing objects over $\ov k$.  
Accordingly we obtain two moduli spaces, both defined over $\Spf O_{\breve F_0}$, which we denote by $\wt\CN_2^{(0)}$ and $\wt\CN_2^{(1)}$ according as the hermitian space \eqref{defV} of the framing object is split or non-split. These two spaces really are different, in contrast to the case of $\CN_n^{(0)}$ and $\CN_n^{(1)}$ in the previous section. In fact, we can describe both of these spaces explicitly.

\begin{altenumerate}
\item
For the formal scheme $\wt\CN_2^{(0)}$, there is a natural isomorphism
\begin{equation}\label{ramified Drinfeld isom}
   \wt\CN_2^{(0)}\cong \wh\Omega^2_{F_0}\times_{\Spf O_{F_0}}\Spf O_{\breve F_0}
\end{equation} 
which is equivariant for the respective actions of $\SU(\wt\BX_2^{(0)}) \cong \SL_2(F_0)$ on both sides.
This is the alternative description of the Drinfeld half-plane in  \cite{KR-alt}. 
\item\label{Gamma_0(varpi) isom}
For the formal scheme $\wt\CN_2^{(1)}$, there is an isomorphism
\begin{equation*}
   \wt\CN_2^{(1)}\simeq \Spf O_{\breve F_0}[[x, y]]/(xy-\varpi) .
\end{equation*}
\end{altenumerate}

More precisely, we are now going to show in \eqref{Gamma_0(varpi) isom} that $\wt\CN_2^{(1)}$ is isomorphic to the deformation space with Iwahori level structure $\CM_{\Gamma_0(\varpi)}$. Let us recall the definition of this latter space.
 
\begin{definition}
The formal scheme $\CM_{\Gamma_0(\varpi)}$ represents the functor over $\Spf O_{\breve F_0}$ that associates to each scheme $S$ the set of isomorphism classes of quadruples 
\[
   (Y,Y',\phi\colon Y\rightarrow Y', \rho_Y),
\]
where $Y$ and $Y'$ are formal $O_{F_0}$-modules of relative height $2$ and dimension $1$ over $S$, $\phi$ is an isogeny of degree $q$, and $\rho_Y \colon Y\times_S\ov S\to \BE\times_{\Spec \ov k}\ov S$ is an $O_{F_0}$-linear quasi-isogeny of height $0$.
\end{definition}

When $F_0=\BQ_p$, it follows that $\CM_{\Gamma_0(p)}$ can be identified with the formal completion at a supersingular point of the moduli space of elliptic curves with $\Gamma_0(p)$-structure. 

To define the desired isomorphism $\CM_{\Gamma_0(\varpi)} \to \wt\CN_2^{(1)}$, let $(Y,Y',\phi,\rho_Y)$ be an $S$-point on $\CM_{\Gamma_0(\varpi)}$.  Set
\[
   X := Y \times Y'.
\]
Since $\ker \phi$ is $\varpi$-power torsion and of rank $q$, i.e.~of relative $O_{F_0}$-height $1$, it is killed by $\varpi$.  Hence there exists a unique (necessarily $O_{F_0}$-linear) isogeny $\phi'$ such that the composite
\begin{equation}\label{phi'}
   Y \xra\phi Y' \xra{\phi'} Y
\end{equation}
is multiplication by $\varpi$.  Let
\[
   \iota(\pi) :=
	\begin{bmatrix}
		  &  \phi'\\
	   \phi
	\end{bmatrix} \in \End_{O_{F_0}}(X).
\]
Then $\iota(\pi)^2$ is multiplication by $\varpi$.  Hence $\iota(\pi)$ defines an $O_F$-action $\iota$ on $X$ extending the $O_{F_0}$-action.  It is easy to verify that $\iota$ satisfies the Kottwitz condition \eqref{Kottwitz cond n=2}.  Now define the composite quasi-isogeny over the special fiber $\ov S$,
\begin{equation}\label{rho_Y'}
   \rho_{Y'}\colon Y_{\ov S}' \xra{\phi^{-1}} Y_{\ov S} \xra{\rho_Y} \BE_{\ov S} \xra{\iota_\BE(\pi)} \BE_{\ov S}.
\end{equation}
Then the pullbacks $\rho_Y^*(\lambda_\BE)$ and $\rho_{Y'}^*(\lambda_\BE)$ lift to principal polarizations $\lambda_Y$ of $Y$ and $\lambda_{Y'}$ of $Y'$, respectively (since the same holds for the universal object over the Lubin--Tate space $\CM$ of formal $O_{F_0}$-modules of dimension $1$ and relative height $2$ equipped with a quasi-isogeny of height $0$ to $\BE$ in the special fiber).  Let
\[
   \lambda :=
	\begin{bmatrix}
		\lambda_Y\\
		  &  \lambda_{Y'}
	\end{bmatrix}
	\in \Hom_{O_{F_0}}(X,X^\vee).
\]

In the particular case that $S = \Spec \ov k$ and $(Y,Y',\phi,\rho_Y) = (\BE,\BE,\iota_\BE(\pi),\id_\BE)$, our construction produces the triple
\begin{equation}\label{BX def}
   \BX := \BE \times \BE,
	\quad
	\iota_\BX(\pi) :=
	\begin{bmatrix}
		  &  \iota_\BE(\pi)\\
		\iota_\BE(\pi)
	\end{bmatrix},
	\quad
	\lambda_\BX :=
	\begin{bmatrix}
		\lambda_\BE\\
		  &  \lambda_\BE
	\end{bmatrix}.
\end{equation}
It is straightforward to calculate that the space $\BV(\BX)$ defined in \eqref{defV} has no nonzero isotropic vectors, so that $\BV(\BX)$ is the non-split $F/F_0$-hermitian space of dimension $2$.  Hence we may and will take $(\BX,\iota_\BX,\lambda_\BX)$ as the framing object for $\wt\CN_2^{(1)}$.\footnote{Note that later on, for technical convenience, we will rescale the polarization $\lambda_\BX$, cf.\ Example \ref{CP_n diagram n=2}.}  For a general $S$ and quadruple $(Y,Y',\phi,\rho_Y)$, we define the framing map
\[
   \rho := \rho_Y \times \rho_{Y'}\colon X_{\ov S} \to \BX_{\ov S}.
\]
Then tautologically $\rho^*(\lambda_\BX) = \lambda_{\ov S}$, and $\rho$ is readily seen to be $O_F$-linear.  Since it is obvious that $\Ros_{\lambda_\BX}(\iota_\BX(\pi)) = - \iota_\BX(\pi)$, it follows that $\Ros_\lambda(\iota(\pi)) = - \iota(\pi)$.  In this way we have defined a morphism
\begin{equation}\label{M_Gamma_0 map}
\begin{gathered}
   \xymatrix@R=0ex{
	   \CM_{\Gamma_0(\varpi)} \ar[r]  &  \wt\CN_2^{(1)}\\
		(Y,Y',\phi,\rho_Y) \ar@{|->}[r]  &  (X,\iota,\lambda,\rho).
	}
\end{gathered}
\end{equation}

\begin{proposition}\label{M_Gamma_0 isom}
The morphism $\CM_{\Gamma_0(\varpi)} \to \wt\CN_2^{(1)}$ is an isomorphism.
\end{proposition}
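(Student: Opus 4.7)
The plan is to construct an explicit inverse to the morphism \eqref{M_Gamma_0 map} by reversing the construction.  Given a point $(X, \iota, \lambda, \rho) \in \wt\CN_2^{(1)}(S)$, the aim is to produce a canonical decomposition
\[
   X = Y \times Y'
\]
as a product of formal $O_{F_0}$-modules of dimension $1$ and relative height $2$, lifting the decomposition $\BX_{\ov S} = \BE_{\ov S} \times \BE_{\ov S}$ identified via $\rho$ on the special fiber.  This decomposition is only $O_{F_0}$-linear, not $O_F$-linear: the action $\iota(\pi)$ should take the off-diagonal form $\bigl[\begin{smallmatrix} & \phi' \\ \phi & \end{smallmatrix}\bigr]$ with $\phi\colon Y \to Y'$ an isogeny of degree $q$ satisfying $\phi' \phi = [\varpi]_Y$.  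Given such a decomposition, the data $(Y, Y', \phi, \rho_Y)$ can be read off, with $\rho_Y\colon Y_{\ov S} \to \BE_{\ov S}$ induced by the restriction of $\rho$ to the first factor.

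To establish the existence and uniqueness of this decomposition, I would invoke Grothendieck--Messing deformation theory.  The lift $X$ of $X_{\ov S}$ to $S$ is determined by the choice of Hodge filtration $\Fil^1 \subset \BD(X_{\ov S})_S$, where $\BD(X_{\ov S})$ is the Grothendieck--Messing crystal of $X_{\ov S}$ evaluated on $S$.  Via $\rho$, this module is canonically identified with $\BD(\BX_{\ov S})_S \cong \BD(\BE_{\ov S})_S^{\oplus 2}$.  The key claim is that the Kottwitz signature $(1,1)$ condition \eqref{Kottwitz cond n=2} forces $\Fil^1$ to split as a direct sum $F_1 \oplus F_2$ compatibly with this product decomposition.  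Indeed, since $\iota_\BX(\pi)$ has the off-diagonal form $\bigl[\begin{smallmatrix} & \iota_\BE(\pi) \\ \iota_\BE(\pi) & \end{smallmatrix}\bigr]$, and $\iota(\pi)$ must act on $\Lie X = \BD(X_{\ov S})_S/\Fil^1$ with characteristic polynomial $T^2 - \varpi$, a direct linear-algebra argument shows that $\Fil^1$ is forced to split as claimed, yielding the desired factorization of $X$.

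With the decomposition in hand, the inverse morphism is given by $Y := {}$first summand, $Y' := {}$second summand, $\phi := {}$the $(2,1)$-entry of $\iota(\pi)$, and $\rho_Y$ as above.  The polarization $\lambda$ restricts to principal polarizations on $Y$ and $Y'$ because the framing polarization $\lambda_\BX$ in \eqref{BX def} is block-diagonal with respect to the decomposition, and principal polarizations lift uniquely under a given framing.  Verifying that this construction is inverse to \eqref{M_Gamma_0 map} is then routine: one composition is the identity by construction, and the other follows from the uniqueness of the decomposition.  The main obstacle is the linear-algebra/deformation-theory step in the second paragraph, which requires carefully exploiting the Kottwitz condition together with the off-diagonal action of $\iota_\BX(\pi)$ to force the Hodge filtration to split; once this is in place, the remaining checks are formal bookkeeping of definitions.
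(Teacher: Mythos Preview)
Your strategy---construct an explicit inverse by showing that every $(X,\iota,\lambda,\rho)$ on $\wt\CN_2^{(1)}$ canonically decomposes as $Y\times Y'$---is reasonable and, done correctly, would work. But the ``key claim'' as you state it is false: the Kottwitz condition together with the off-diagonal shape of $\iota_\BX(\pi)$ does \emph{not} force the Hodge filtration to split. For a counterexample, write $\BD(\BX_{\ov S})_S = M_1\oplus M_2$ with bases $E_1,E_2$ and $F_1,F_2$ on which $A=\iota_\BE(\pi)$ acts by $E_1\mapsto E_2$, $E_2\mapsto\varpi E_1$ (and likewise for the $F_i$), so that $\Pi=\iota_\BX(\pi)$ swaps the summands and applies $A$. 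Then
\[
   \Fil^1 = \spann(E_1+F_1,\,E_2+F_2)
\]
is $\Pi$-stable, is a locally free direct summand of rank $2$, and one checks directly that $\Pi$ has characteristic polynomial $T^2-\varpi$ on both $\Fil^1$ and $M/\Fil^1$. Yet $\Fil^1\cap M_1 = 0$, so no splitting.

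What you are missing is the polarization: $\lambda$ forces $\Fil^1$ to be \emph{Lagrangian} for the perfect alternating form on $M_1\oplus M_2$, which is block-diagonal because $\lambda_\BX=\lambda_\BE\times\lambda_\BE$. In the example above $\langle E_1+F_1,E_2+F_2\rangle = 2\langle e_1,e_2\rangle_\BE\neq 0$, so the Lagrangian condition kills it. Once you add this condition, a short case analysis does show that every $O_F$-stable Lagrangian rank-$2$ summand splits; but this is exactly the content of the local-model isomorphism $M_{\Gamma_0(\varpi)}\isoarrow \wt N_2^{(1)}$ that the paper proves as Lemma~\ref{M_Gamma_0 LM isom}.

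The paper itself does not build an inverse. Instead it (i) shows by a Dieudonn\'e-module argument that both spaces have a single $\ov k$-point, and (ii) proves the map is formally \'etale by exhibiting the isomorphism of local models just mentioned, fed through the local model diagram. Your approach, once the Lagrangian condition is inserted, essentially repackages step~(ii) as a direct construction via Grothendieck--Messing along square-zero thickenings, together with an implicit version of step~(i) to get the decomposition on the special fibre. So the two routes are close cousins, but your sketch as written omits the one ingredient (the polarization) that makes the linear algebra go through.
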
 

\begin{proof}
We first show that the source and target each have only one $\ov k$-valued point. For $\CM_{\Gamma_0(\varpi)}$, this is well-known and easily checked. For $\wt\CN_2^{(1)}$, consider the covariant isocrystal $\BN$ of the framing object $\BX$, which has the structure of an $\breve F/\breve F_0$-hermitian space of dimension $2$ as in \S\ref{even ram max}. By Dieudonn\'e theory, $\wt\CN_2^{(1)}(\ov k)$ identifies with the set of $O_{\breve F}$-lattices $M$ in \BN which are self-dual and satisfy $\varpi M\subset^2 VM\subset^2 M$, where $V$ denotes the Verschiebung on $\BN$. Let $\sigma$ denote the Frobenius operator on $\breve F_0$, let $\zeta \in O_{\breve F_0}^\times$ be a square root of $-1$, and let $\tau := \zeta \pi V^{-1}$. Then $\tau$ is a $\sigma$-linear operator on $\BN$ with all slopes equal to zero. Let $C := \BN^\tau$.  Then the restriction of the hermitian form to $C$ makes $C$ into a $2$-dimensional $F/F_0$-hermitian space, cf.~\cite[pp.~1170--1]{RTW}.\footnote{Note that the quantity $\eta$ in loc.~cit.~should be a square root of $-\epsilon^{-1}$, rather than  a square root of $\epsilon^{-1}$.}  We claim that $C$ is non-split.  Indeed we can see this in at least two ways.  First consider the framing object $\BX_2$ of $\CN_2$ defined in \S\ref{even ram max}.  The $O_{F_0}$-linear isogeny
\[
   \varsigma_0 \colon \BX_2 = \BE \times \BE \xra{\id \times \iota_\BE(\pi)} \BX = \BE \times \BE
\]
is in fact $O_F$-linear (in terms of the Serre construction $\BX_2 = O_F \otimes_{O_{F_0}} \BE$, $\varsigma_0$ is induced by the inclusion of $\BE$ into the first factor in the target) and satisfies $\varsigma_0^*(\lambda_\BX) = -\frac 1 2\lambda_{\BX_2}$.  Hence $\varsigma_0$ gives an identification of $\breve F/\breve F_0$-hermitian isocrystals between the (rescaled) isocrystal for $\BX_2$ and $\BN$.  The ``$\tau$-fixed'' space in the former isocrystal is non-split by \cite[Lem.~3.3]{RSZ} (as is used in the proofs of Prop.~3.1 and Lem.~6.1 in loc.~cit.), which proves the claim.  A second way to see that $C$ is non-split is by explicit computation: up to isomorphism, the isocrystal with $F$-action for $\BE$ is given by $\breve F_0^2$ with Verschiebung
\[
   \begin{bmatrix} & \varpi \\ 1 \end{bmatrix} \sigma\i
\]
and with $\pi$ acting by
\[
   \begin{bmatrix} & \varpi \\ 1 \end{bmatrix}.
\]
(Of course the alternating form on this $2$-dimensional space is uniquely determined up to scalar.) From this and \eqref{BX def} one obtains an explicit form for $\BN$, and it is easy to then calculate that $C$ has no nonzero isotropic vectors, which characterizes it as the non-split space of dimension $2$.

Now let $M \subset \BN$ correspond to a point in $\wt\CN_2^{(1)}(\ov k)$. By an obvious variant of \cite[Prop.~2.17]{RZ} (with $\breve F$ in place of $W_\BQ$, $\pi$ in place of $p$, etc.), since $\BN$ is $2$-dimensional over $\breve F$, either $M=\tau M$, or $M\neq \tau M$ and $M+\tau M=\tau(M+\tau M)$. In the first case $M^\tau$ is a self-dual $O_F$-lattice in $C$, and hence is the unique such lattice since $C$ is non-split of dimension $2$. The other case does not occur, since otherwise $\pi(M+\tau M)^\tau$ would be a $\pi$-modular lattice in $C$ (via the same proof as for \cite[Lem.~3.2(iii)]{KR-alt}), contrary to the fact that $C$ is non-split.  This completes the proof that $\wt\CN_2^{(1)}(\ov k)$ consists of a single point.

To complete the proof of the proposition, it now suffices to show that the morphism \eqref{M_Gamma_0 map} is formally \'etale. We will do this via the local models for the source and target spaces, which we now pause to discuss.
\let\qed\relax
\end{proof}

The local model for $\CM_{\Gamma_0(\varpi)}$ is the standard Iwahori local model for $\GL_2$ and the cocharacter $\mu = (1,0)$ over $\Spec O_{F_0}$, cf.~\cite{Goertz} or \cite[Eg.~2.4]{PRS}.  Let us review the definition.  Let $f_1,f_2$ denote the standard basis for $F_0^2$, and define the $O_{F_0}$-lattices
\[
   \lambda_0 := O_{F_0}f_1 + O_{F_0}f_2,
	\quad\text{and}\quad
	\lambda_1 := O_{F_0}\varpi\i f_1 + O_{F_0} f_2.
\] 
The local model $M_{\Gamma_0(\varpi)}$ associated to these data is the scheme representing the functor that associates to each $O_{F_0}$-scheme $S$ the set of pairs $(\CF_0,\CF_1)$, where $\CF_i \subset \lambda_i \otimes_{O_{F_0}} \CO_S$ is an $\CO_S$-locally direct summand of rank $1$ for each $i$, and where the natural maps
\begin{equation}\label{Phi Phi'}
   \Phi\colon \lambda_0 \otimes_{O_{F_0}} \CO_S \to \lambda_1 \otimes_{O_{F_0}} \CO_S
	\quad\text{and}\quad
	\Phi'\colon \lambda_1 \otimes_{O_{F_0}} \CO_S \xra{\varpi \otimes \id} \lambda_0 \otimes_{O_{F_0}} \CO_S
\end{equation}
carry $\CF_0$ into $\CF_1$ and $\CF_1$ into $\CF_0$, respectively.

To define the local model for $\wt\CN_2^{(1)}$, we use the notation of \S\ref{odd ram max} in the case $n = 2$.  In particular, recall the $O_F$-lattice $\Lambda_0 = O_F^2 \subset F^2$, which is self-dual with respect to the alternating form \aform defined in \eqref{aform and sform}.  The local model $\wt N_2^{(1)}$ is the scheme representing the functor that associates to each $O_{F_0}$-scheme $S$ the set of all $O_F \otimes_{O_{F_0}} \CO_S$-submodules $\CF \subset \Lambda_0 \otimes_{O_{F_0}} \CO_S$ which are $\CO_S$-locally direct summands of rank $2$, which are La\-grangian for the form $\aform \otimes_{O_{F_0}} \CO_S$, and which satisfy the Kottwitz condition $\charac(\pi \otimes 1 \mid \CF) = T^2 - \varpi$.  (This is Definition \ref{LM def} below in the special case $(r,s) = (1,1)$ and $I = \{0\}$, cf.~also Remark \ref{forgetful LM isom n=2}.)

We now define an analog of the map \eqref{M_Gamma_0 map} for $M_{\Gamma_0(\varpi)}$ and $\wt N_2^{(1)}$. First make the direct sum $\lambda_0 \oplus \lambda_1$ into an $O_F$-module by taking $S = \Spec O_{F_0}$ in \eqref{Phi Phi'} and letting $\pi$ act as
\[
   \begin{bmatrix}
		  &  \Phi'\\
		\Phi
	\end{bmatrix}.
\]
Of course this is well-defined since the square of this matrix is multiplication by $\varpi$. We then identify
\begin{equation}\label{lambda_0 oplus lambda_1}
   \lambda_0 \oplus \lambda_1 \cong \Lambda_0
\end{equation}
as $O_F$-modules by identifying the elements $f_2 \in \lambda_0$ and $\varpi^{-1}f_1 \in \lambda_1$ with the standard basis elements $e_1,e_2 \in \Lambda_0$, respectively.  Relative to these identifications, we define
\begin{equation}\label{LM isom n=2}
\begin{gathered}
   \xymatrix@R=0ex{
	   M_{\Gamma_0(\varpi)} \ar[r]  &  \wt N_2^{(1)}\\
		(\CF_0,\CF_1) \ar@{|->}[r]  &  \CF_0 \oplus \CF_1.
	}
\end{gathered}
\end{equation}
It is easy to see that $\CF_0 \oplus \CF_1$ satisfies all the conditions in the definition of $\wt N_2^{(1)}$, so that this morphism is well-defined.

\begin{lemma}\label{M_Gamma_0 LM isom}
The morphism $M_{\Gamma_0(\varpi)} \to \wt N_2^{(1)}$ is an isomorphism.
\end{lemma}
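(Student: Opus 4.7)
The plan is to construct an explicit inverse morphism $\Psi^{-1}\colon \wt N_2^{(1)} \to M_{\Gamma_0(\varpi)}$ to the map $\Psi$ of \eqref{LM isom n=2} and to verify that the two compositions are the identity. On $S$-points, $\Psi^{-1}$ is defined by
\[
   \CF \longmapsto (\CF_0,\CF_1),
	\qquad
	\CF_i := \CF \cap (\lambda_i \otimes_{O_{F_0}} \CO_S),
\]
with the intersections taken inside $\Lambda_0 \otimes_{O_{F_0}} \CO_S$. The key claim is that each $\CF_i$ is an $\CO_S$-locally direct summand of $\lambda_i \otimes \CO_S$ of rank one and that $\CF = \CF_0 \oplus \CF_1$ inside $\Lambda_0 \otimes \CO_S$. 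Granting this, the compatibility conditions in the definition of $M_{\Gamma_0(\varpi)}$ are automatic: under the identification $\lambda_0 \oplus \lambda_1 \cong \Lambda_0$, the action of $\pi$ is the off-diagonal block matrix with entries $\Phi'$ and $\Phi$, so the $O_F$-stability $\pi\CF \subset \CF$ yields $\Phi(\CF_0) = \pi\CF_0 \subset \CF \cap (\lambda_1 \otimes \CO_S) = \CF_1$, and symmetrically $\Phi'(\CF_1) \subset \CF_0$. The composition $\Psi^{-1} \circ \Psi$ is immediately the identity from $\lambda_0 \cap \lambda_1 = 0$ inside $\Lambda_0 \otimes \CO_S$.

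To establish the key claim --- equivalently, that $\Psi \circ \Psi^{-1} = \id$ --- I would argue abstractly via flatness rather than by a direct coordinate computation. The morphism $\Psi$ is tautologically a monomorphism on $S$-points, since the preimage of any $\CF$ in its image is forced to be $(\CF \cap (\lambda_0 \otimes \CO_S), \CF \cap (\lambda_1 \otimes \CO_S))$. Both $M_{\Gamma_0(\varpi)}$ and $\wt N_2^{(1)}$ are proper and of finite type over $\Spec O_{F_0}$, as closed subschemes of suitable (Lagrangian) Grassmannians; hence the monomorphism $\Psi$ is automatically a closed immersion. Both schemes are also flat over $\Spec O_{F_0}$ of relative dimension one: flatness of $M_{\Gamma_0(\varpi)}$ is visible from the classical affine presentation $\Spec O_{F_0}[a,b]/(ab-\varpi)$ on each chart of the Iwahori local model for $\GL_2$, and flatness of $\wt N_2^{(1)}$ in this rank-two case follows from \cite[Th.~1.4]{S}, or can be checked directly in affine charts. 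On the generic fiber, $\Phi$ and $\Phi'$ become invertible, so $(M_{\Gamma_0(\varpi)})_{F_0} \cong \BP^1_{F_0}$ via projection to $\CF_0$, while $(\wt N_2^{(1)})_{F_0}$ parametrizes isotropic $F$-lines in the split hermitian space $(F^2, h)$, which is also a $\BP^1_{F_0}$; a short direct computation shows that $\Psi_{F_0}$ realizes this identification.

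The proof is completed by the standard fact that a closed immersion between flat finite-type $O_{F_0}$-schemes inducing an isomorphism on generic fibers is itself an isomorphism --- its defining ideal sheaf is a subsheaf of the $\varpi$-torsion-free structure sheaf and vanishes after inverting $\varpi$, so it vanishes. The main obstacle in this plan is cleanly verifying the flatness of $\wt N_2^{(1)}$ and the generic-fiber identification; alternatively one could prove the lemma by a direct affine-chart computation matching both spaces with $\Spec O_{F_0}[x,y]/(xy - \varpi)$ via the formula for $\Psi$, which is more elementary but more tedious.
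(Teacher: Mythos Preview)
Your proof is correct and takes a genuinely different route from the paper. The paper argues by a direct affine-chart computation: on the ``interesting'' chart it writes both sides explicitly as $\Spec O_{F_0}[x,y]/(xy-\varpi)$ and checks that $\Psi$ matches the coordinates, then remarks that the remaining charts are easier. Your argument is more structural: $\Psi$ is a monomorphism of functors (since $\lambda_0\otimes\CO_S$ and $\lambda_1\otimes\CO_S$ are complementary summands of $\Lambda_0\otimes\CO_S$), hence a proper monomorphism between projective $O_{F_0}$-schemes, hence a closed immersion; it is an isomorphism on generic fibers; and since the target is $O_{F_0}$-flat, the ideal sheaf is $\varpi$-torsion inside a $\varpi$-torsion-free sheaf, so vanishes.

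Two remarks on the execution. First, the citation to \cite[Th.~1.4]{S} for flatness of $\wt N_2^{(1)}$ is off: that result treats odd $n$ with almost $\pi$-modular level, not the $n=2$ self-dual case. The right external reference is \cite[Rem.~2.35]{PRS}, or one simply does the one-chart computation you allude to---but note that this computation is essentially the content of the paper's own proof, so your abstract framework does not avoid it so much as relocate it. Second, your generic-fiber identification is fine but could be phrased more cleanly: once you know $\Psi_{F_0}$ is a closed immersion and both sides are smooth proper curves over $F_0$ of the same genus (in fact $\BP^1_{F_0}$), the conclusion is immediate without tracking coordinates.

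What each approach buys: the paper's chart computation is entirely elementary and yields flatness of $\wt N_2^{(1)}$ as a byproduct rather than an input. Your approach isolates the formal mechanism (proper monomorphism $+$ flat target $+$ generic-fiber isomorphism $\Rightarrow$ isomorphism), which generalizes more readily, but requires flatness of the target as a separate ingredient.
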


\begin{proof}
This is easily checked locally on standard affine charts. For example, the most interesting chart on the source consists of all points of the form
\[
   \CF_0 = \spann\{ f_1 + x f_2\},\quad \CF_1 = \spann\{y \varpi\i f_1 + f_2\}
\]
such that $xy = \varpi$, cf.~\cite[Eg.~2.4]{PRS}.  The most interesting chart on the target consists of all points expressible as the column span of
\[
   \begin{bmatrix}
		x_{11}  &  x_{12}\\
		x_{21}  &  x_{22}\\
		1\\
		  &  1
	\end{bmatrix}
\]
relative to the $O_{F_0}$-basis $e_1,e_2,\pi e_1, \pi e_2$ for $\Lambda_0$; this is easily seen to be the scheme of such matrix entries such that $x_{11} = x_{22} = 0$ and $x_{12}x_{21} = \varpi$, cf.~\cite[Rem.~2.35]{PRS}.  Thus the map \eqref{LM isom n=2} visibly identifies these two charts.  One sees similarly (and in fact more easily) that \eqref{LM isom n=2} is an isomorphism on all other standard charts, as desired.
\end{proof}

\begin{remark}
The map \eqref{LM isom n=2} makes explicit the isomorphism between these local models alluded to at the end of \cite[Rem.~2.35]{PRS}.
\end{remark}

\begin{proof}[Completion of the proof of Proposition \ref{M_Gamma_0 isom}]
It remains to show that the map \eqref{M_Gamma_0 map} is formally \'etale, which follows by combining Lemma \ref{M_Gamma_0 LM isom} with the \emph{local model diagrams}
\[
   \begin{gathered}
   \xy
	   (0,17)*+{\CM_{\Gamma_0(\varpi)}^\triv}="triv";
		(-16,0)*+{\CM_{\Gamma_0(\varpi)}}="RZ";
		(16,0)*+{(M_{\Gamma_0(\varpi)})_{\Spf O_{\breve F_0}}}="loc";
		{\ar "triv";"RZ" _-\mu};
		{\ar "triv";"loc" ^-{\mu'}}; 
	\endxy
	\end{gathered}
	\quad\text{and}\quad
	\begin{gathered}
	\xy
	   (0,17)*+{\bigl(\wt\CN_2^{(1)}\bigr)^\triv}="triv";
		(-16,0)*+{\wt\CN_2^{(1)}}="RZ";
		(16,0)*+{\bigl(\wt N_2^{(1)}\bigr)_{\Spf O_{\breve F_0}}}="loc";
		{\ar "triv";"RZ" _-\nu};
		{\ar "triv";"loc" ^-{\nu'}}; 
	\endxy
	\end{gathered}.
\]
Here $\CM_{\Gamma_0(\varpi)}^\triv$ is a moduli space for tuples $(Y,Y',\phi,\rho_Y,\gamma_0,\gamma_1)$, where the first four entries form a point on $\CM_{\Gamma_0(\varpi)}$, and where, letting $S$ denote the base scheme and $M$ denote the functor that assigns to a $p$-divisible group the Lie algebra of its universal vector extension,
\[
   \gamma_0 \colon M(Y) \isoarrow \lambda_0 \otimes_{O_{F_0}} \CO_S
	\quad\text{and}\quad
	\gamma_1 \colon M(Y') \isoarrow \lambda_1 \otimes_{O_{F_0}} \CO_S
\]
are isomorphisms of $\CO_S$-modules.  We require that $\gamma_0$ and $\gamma_1$ respect the alternating forms on both sides, where the forms on the targets are defined by restricting \aform via \eqref{lambda_0 oplus lambda_1}; and that the diagram
\[
   \begin{gathered}
   \xymatrix{
	   M(Y) \ar[r]^-{\phi_*} \ar[d]_-{\gamma_0}^-\sim
		   &  M(Y') \ar[r]^-{\phi'_*} \ar[d]_-{\gamma_1}^-\sim
			&  M(Y) \ar[d]_-{\gamma_0}^-\sim\\
	   \lambda_0 \otimes_{O_{F_0}} \CO_S \ar[r]^-\Phi  
		   &  \lambda_1 \otimes_{O_{F_0}} \CO_S \ar[r]^-{\Phi'} 
			&  \lambda_0 \otimes_{O_{F_0}} \CO_S
	}
	\end{gathered}
\]
commutes, where $\phi'$ is as in \eqref{phi'}.  Similarly, $(\wt\CN_2^{(1)})^\triv$ is a moduli space for points $(X,\iota,\lambda,\rho)$ on $\wt\CN_2^{(1)}$ endowed with an $O_F \otimes_{O_{F_0}} \CO_S$-linear trivialization $\gamma\colon M(X) \isoarrow \Lambda_0 \otimes_{O_{F_0}} \CO_S$ that respects the alternating forms on both sides.  Furthermore $\mu$ and $\nu$ are the natural forgetful maps, and
\begin{align*}
   \mu'(Y,Y',\phi,\rho_Y,\gamma_0,\gamma_1) &:= \bigl(\gamma_0(\ker[M(Y) \rightarrow \Lie Y]), \gamma_1(\ker[M(Y') \rightarrow \Lie Y'])\bigr),\\
	\nu'(X,\iota,\lambda,\rho) &:= \gamma(\ker[M(X) \rightarrow \Lie X]).
\end{align*}
It is obvious that the maps \eqref{M_Gamma_0 map} and \eqref{LM isom n=2}, together with the map
\[
   \xymatrix@R=0ex{
	   \CM_{\Gamma_0(\varpi)}^\triv \ar[r]  &  \bigl(\wt\CN_2^{(1)}\bigr)^\triv \\
		(Y,Y',\phi,\rho_Y,\gamma_0,\gamma_1) \ar@{|->}[r]  &  (X,\iota,\lambda,\rho, \gamma_0 \oplus \gamma_1)
	}
\]
defined in terms of \eqref{M_Gamma_0 map} and the identification \eqref{lambda_0 oplus lambda_1}, give a morphism of local model diagrams.  Since the map \eqref{LM isom n=2} on local models is an isomorphism, it follows from \cite[Prop.~3.33]{RZ} that \eqref{M_Gamma_0 map} is formally \'etale.
\end{proof}

\begin{remark}[Equivariance] The group $O_D^\times = \Aut_{O_{F_0}}(\BE)$ acts naturally on $\CM_{\Gamma_0(\varpi)}$ by composing with the framing $\rho_Y$; hence so does its subgroup $D^1$ of norm one elements. Similarly, $\U(\BV(\wt\BX_2^{(1)}))$ acts on $\wt\CN_2^{(1)}$; hence so does its subgroup $\SU(\BV(\wt\BX_2^{(1)}))$. The isomorphism \eqref{M_Gamma_0 map} is then equivariant for these actions under the natural identification $D^1 \cong \SU(\wt\BX_2^{(1)})$.  Note that this equivariance does not express the actions of the larger groups $O_D^\times$ and $\U(\wt\BX_2^{(1)})$ in terms of the other side of the isomorphism \eqref{M_Gamma_0 map}. 
\end{remark}

\section{Some auxiliary spaces in the ramified case}\label{aux spaces}

In this section we give two closely related generalizations (different than in \cite{RTW}) of the space $\wt\CN_2^{(1)}$ of the previous section to any even $n$.  However the spaces we define are not regular, and they will play only an auxiliary role in the ATC we formulate in \S\ref{s:ATCeven}.  Let $n \geq 2$ be even, still with $F/F_0$ ramified.

The first space we define, which we denote by $\CP_n$, is a moduli space over $\Spf O_{\breve F}$ for quadruples $(X,\iota,\lambda,\rho)$ as before, with the framing object to be specified below, except in this case we impose on the polarization $\lambda$ that
\begin{equation*}
   \ker \lambda \subset X[\iota(\pi)]\  \text{\emph{is of rank $q^{n-2}$}}. 
\end{equation*}
We further require that the triple $(X,\iota,\lambda)$ satisfies the wedge condition \eqref{wedge condition} and condition \eqref{even wtCN spin cond} below, which is the natural analog of condition \eqref{odd spin cond}.  As in \S\ref{odd ram max}, let $M(X)$ and $M(X^\vee)$ denote the respective Lie algebras of the universal vector extensions of $X$ and $X^\vee$.  Since $\ker \lambda$ is contained in $X[\iota(\pi)]$ and of rank $q^{n-2}$, there is a unique (necessarily $O_F$-linear) isogeny $\lambda'$ such that the composite
\[
   X \xra{\lambda} X^\vee \xra{\lambda'} X
\]
is $\iota(\pi)$, and the induced diagram
\[
   M(X) \xra{\lambda_*} M(X^\vee) \xra{\lambda'_*} M(X)
\]
then extends periodically to a polarized chain of $O_F \otimes_{O_{F_0}} \CO_S$-modules of type $\Lambda_{\{m-1\}}$.  By \cite[Th.~3.16]{RZ}, \'etale-locally on $S$ there exists an isomorphism of polarized chains
\begin{equation*}
   [{}\dotsb \xra{\lambda'_*} M(X) \xra{\lambda_*} M(X^\vee) \xra{\lambda'_*} \dotsb{}] \isoarrow \Lambda_{\{m-1\}} \otimes_{O_{F_0}} \CO_S,
\end{equation*}
which in particular gives an isomorphism of $O_F \otimes_{O_{F_0}} \CO_S$-modules
\begin{equation}\label{M(X) triv 2}
   M(X) \isoarrow \Lambda_{-(m-1)} \otimes_{O_{F_0}} \CO_S.
\end{equation}
Denoting by $\Fil^1 \subset M(X)$ the covariant Hodge filtration for $X$, the analog of \eqref{odd spin cond} we impose is that
\begin{align*}
	&\text{\em upon identifying $\Fil^1$ with a submodule of $\Lambda_{-(m-1)} \otimes_{O_{F_0}} \CO_S$ via \eqref{M(X) triv 2}, the line bundle}\\
	\intertext{
	\begin{equation}\label{even wtCN spin cond}
      \sideset{}{_{\CO_S}^n}\bigwedge \Fil^1 \subset  \tensor[^n]\Lambda{_{-(m-1)}} \otimes_{O_F} \CO_S
	\end{equation}
	}
	&\text{\em is contained in $L_{-(m-1),-1}^{n-1,1}(S)$, cf.~(\ref{L def}).}
\end{align*}
Just as before, condition \eqref{even wtCN spin cond} is independent of the above choice of chain isomorphism by Lemma \ref{L stability}.

As in the previous two sections, over $\ov k$ there are \emph{two} supersingular isogeny classes of framing objects for this moduli problem, distinguished by the splitness of the hermitian space $\BV$ in \eqref{defV}.  To complete the definition of $\CP_n$, we will choose a framing object $\wt\BX_n$ for which $\BV(\wt\BX_n)$ is non-split.  First let $\ov\BX_1$ be the framing object $\BX_1 = \BX_1^{(1)}$ (cf.\ \eqref{BX_1}) endowed with the conjugate $O_F$-action,
\[
   \bigl(\ov\BX_1, \iota_{\ov\BX_1}, \lambda_{\ov\BX_1}\bigr) :=
	   \bigl(\ov\BE,\iota_{\ov\BE},-\lambda_{\ov\BE}\bigr).
\]
Then we take $\wt\BX_n$ to be the product of the framing object $\BX_{n-1} = \BX_{n-1}^{(1)}$ (cf.\ \eqref{BX_n odd}) and $\ov\BX_1$,
\begin{equation}\label{wtBX_n^(1) def}
   \bigl(\wt\BX_n, \iota_{\wt\BX_n}, \lambda_{\wt\BX_n} \bigr) := 
	   \bigl(\BX_{n-1} \times \ov\BX_1, \iota_{\BX_{n-1}} \times \iota_{\ov\BX_1}, \lambda_{\BX_{n-1}} \times \lambda_{\ov\BX_1} \bigr).
\end{equation}
Since $\BV(\BX_{n-1})$ is non-split and $n$ is even, it follows from \eqref{chi decomp formula} that $\BV(\wt\BX_n)$ is indeed the non-split space of dimension $n$.  Furthermore, $\wt\BX_n$ obviously satisfies \eqref{wedge condition} because $\BX_{n-1}$ does, and it is easy to see that $\wt\BX_n$ satisfies \eqref{even wtCN spin cond} because $\BX_{n-1}$ satisfies \eqref{odd spin cond}.

\begin{remark}\label{kottwitzonP2}
As in Remark \ref{odd N_n crit}, the Kottwitz condition \eqref{kottwitzcond} is implied on $\CP_n$ by condition \eqref{even wtCN spin cond}, cf.~\cite[Lem.~5.1.2]{S}.  Furthermore, it will be shown in forthcoming work of Yu \cite{Yu} that \eqref{even wtCN spin cond} also implies the wedge condition \eqref{wedge condition} (and hence \eqref{wedge condition} is redundant in the definition of $\CP_n$).  On the other hand, when $n = 2$ the Kottwitz condition implies \eqref{wedge condition} and \eqref{even wtCN spin cond}, since this is obviously true in the generic fiber of the local model, and the local model defined by the Kottwitz condition alone is already flat. Hence $\CP_2 \simeq (\wt\CN_2^{(1)})_{\Spf O_{\breve F}}$.  (Strictly speaking, to make such an identification precise we must specify a quasi-isogeny between the framing objects $\wt\BX_2$ and $\BX$ on the two sides; we will do so in Example \ref{CP_n diagram n=2} below.)  Note that although $\wt\CN_2^{(1)}$ is regular, the space $\CP_2$ is not, since the ramified base change $O_{\breve F_0} \to O_{\breve F}$ does not preserve semi-stable reduction.
\end{remark}

\begin{remark}
Of course one obtains another formal moduli space by taking a framing object in the supersingular isogeny class for which $\BV$ is split; this recovers the space $(\wt\CN_2^{(0)})_{\Spf O_{\breve F}}$ when $n = 2$. As the case $n = 2$ shows, and in contrast to the situation in \S\ref{odd ram max}, the resulting space really is different from $\CP_n$.  However, when $n \geq 4$ we will have no occasion to consider this space further.
\end{remark}

It follows from the general theory of RZ spaces \cite{RZ} that $\CP_n$ is formally locally of finite type and essentially proper over $\Spf O_{\breve F}$.  However, as we have already noted, it is not regular.  Having fixed the polarization type in the moduli problem for $\CP_n$, the remaining conditions in its definition are the ``right'' ones in the sense that the corresponding local model is flat, which will be shown by Yu in \cite{Yu}, cf.~Remark \ref{Yu flatness} below.

The second moduli space we define, which we denote by $\CP_n'$, will turn out to be isomorphic to two copies of $\CP_n$, and in this sense is another generalization of $\wt\CN_2^{(1)}$. The advantage of $\CP_n'$ is that, by definition, it will also admit a natural map to $\CN_n$.  To state the definition, we first need to fix an $O_F$-linear isogeny of degree $q$,
\begin{equation}\label{phi_0}
   \phi_0 \colon \BX_n \to \wt\BX_n,
\end{equation}
such that $\phi_0^*(\lambda_{\wt\BX_n}) = \lambda_{\BX_n}$; here we recall the framing object $\BX_n$ from \S\ref{even ram max}. When $n = 2$, we have
\[
   \BX_2 = O_F \otimes_{O_{F_0}} \BE
	\quad\text{and}\quad
	\wt\BX_2 = \BE \times \ov\BE
\]
as $O_F$-modules, and we define $\phi_0$ to be the $O_F$-linear isogeny induced by adjunction from the diagonal map $\BE \xra{(\id,\id)} \BE \times \BE$. It is easy to verify from the explicit expressions \eqref{lambda_BX_2} and \eqref{wtBX_n^(1) def} of the polarizations that $\phi_0^*(\lambda_{\wt\BX_2}) = \lambda_{\BX_2}$.  When $n \geq 4$, we have
\[
   \BX_n = \wt\BX_n = \BX_{n-2} \times \ov\BE \times \ov\BE
\]
as $O_F$-modules, and we define
\[
   \phi_0 := \id_{\BX_{n-2}} \times
	   \begin{bmatrix}
			1  &  \iota_{\ov\BE}(\pi)/2\\
			1  &  -\iota_{\ov\BE}(\pi)/2
		\end{bmatrix}.
\]
It is again easy to check that $\phi_0^*(\lambda_{\wt\BX_n}) = \lambda_{\BX_n}$.

We now define $\CP_n'$ to be the moduli space for tuples
\[
   (X, \iota, \lambda, \rho, X', \iota', \lambda', \phi\colon X \rightarrow X'),
\]
where $(X, \iota, \lambda, \rho)$ is a point on $\CN_n$, $(X',\iota',\lambda',\rho')$ is a point on $\CP_n$, and $\phi$ is an $O_F$-linear isogeny of degree $q$.  Here we define $\rho'$ to be the composite quasi-isogeny
\[
   \rho'\colon X'_{\ov S} \xra{\phi_{\ov S}\i} X_{\ov S} \xra\rho \BX_{n,\ov S} \xra{\phi_{0,\ov S}} \wt\BX_{n,\ov S},
\]
where $S$ denotes the base scheme. Note that since $\phi_0^*(\lambda_{\wt\BX_n}) = \lambda_{\BX_n}$, $\rho^*(\lambda_{\BX_n,\ov S}) = \lambda_{\ov S}$, and $\rho^{\prime *}(\lambda_{\wt\BX_n,\ov S}) = \lambda'_{\ov S}$, it follows from rigidity for quasi-isogenies that $\phi^*(\lambda') = \lambda$.  The notion of isomorphism between tuples as above is the obvious one.

By definition, there are tautological projection maps
\begin{equation}\label{CP_n diagram}
	\begin{gathered}
   \xymatrix@C-2ex{
      & \CP_n' \ar[dl] \ar[dr]^-{\varphi}\\
	  \CP_n & & \CN_n .
   }
	\end{gathered}
\end{equation}
By Proposition \ref{even n decomp lem}, $\CN_n$ decomposes into a disjoint union $\CN_n = \CN_n^+ \amalg \CN_n^-$.  Pulling back along $\varphi$, we obtain a decomposition
\begin{equation}\label{wtCN'pm}
   \CP_n' = (\CP_n')^+ \amalg (\CP_n')^-.
\end{equation}

\begin{theorem}\label{wtCN'pm isom}
Writing $(\CP_n')^\pm$ for either of the summands in \eqref{wtCN'pm}, the projection $\CP_n' \to \CP_n$ induces an isomorphism
\[
   (\CP_n')^\pm \isoarrow \CP_n.
\]
\end{theorem}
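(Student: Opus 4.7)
The plan is to follow the two-step strategy outlined in the introduction: first establish a bijection on $\ov k$-points, then prove an isomorphism of local models.

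\textbf{Step 1 (points).} By Gabber--Lau, in the form used in the proof of Proposition \ref{even n decomp lem}, a $\ov k$-point of $\CP_n$ corresponds to an $O_{\breve F}$-lattice $M'$ in the rational Dieudonn\'e module of $\wt\BX_n$ satisfying $M' \subset (M')^\vee \subset^2 \pi\i M'$ relative to the $\breve F/\breve F_0$-hermitian structure induced by the almost $\pi$-modular polarization. Lifting to a $\ov k$-point of $\CP_n'$ amounts, via the fixed quasi-isogeny $\phi_0$ of framing objects, to specifying a lattice $M$ in the same rational hermitian space which is $\pi$-modular and satisfies $M \subset^1 M'$. The linear algebra fact recalled in the introduction --- that in a split, even-dimensional $F/F_0$-hermitian space every lattice $\Lambda'$ with $\Lambda' \subset (\Lambda')^\vee \subset^2 \pi\i\Lambda'$ contains exactly two $\pi$-modular sublattices --- produces, on applying it to $M'$, exactly two preimages in $\CP_n'$ over each $\ov k$-point of $\CP_n$. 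The parity criterion defining $\CN_n^\pm$ in Proposition \ref{even n decomp lem}, namely the parity of the length of $(M + \BM\otimes W(K))/\BM\otimes W(K)$, then assigns these two preimages to the two components $(\CP_n')^+$ and $(\CP_n')^-$, respectively, showing that each restricted projection $(\CP_n')^\pm \to \CP_n$ is a bijection on $\ov k$-points.

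\textbf{Step 2 (local models).} I would next set up local model diagrams for both sides, in the sense of \cite[Ch.~3]{RZ} and as in the proof of Proposition \ref{M_Gamma_0 isom}, reducing the claim to an isomorphism between the associated local models. The local model for $\CP_n'$ should parametrize pairs of filtrations $(\CF_{m-1},\CF_m)$ on the chain $\Lambda_{\{m-1,m\}}$ satisfying the Kottwitz, wedge, and refined spin conditions in the sense of \cite{S} at both steps of the chain, while the local model for $\CP_n$ is the analogous moduli space involving only $\Lambda_{m-1}$. The refined spin condition on the $\Lambda_m$-filtration should cleanly decompose the local model for $\CP_n'$ into two open and closed pieces matching \eqref{wtCN'pm}, and on each piece the forgetful map to the local model for $\CP_n$ (forgetting $\CF_m$) should be an isomorphism. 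Granting this, the comparison of local model diagrams together with Step 1 yields $(\CP_n')^\pm \isoarrow \CP_n$ via \cite[Prop.~3.33]{RZ}.

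\textbf{Main obstacle.} The crux of the proof will be Step 2: verifying that the refined spin condition at $\Lambda_m$ both cuts out a flat closed subscheme of the naive local model for $\CP_n'$ that splits into two components matching \eqref{wtCN'pm}, and uniquely determines $\CF_m$ from $\CF_{m-1}$ on each component. Group-theoretically, this reflects the fact that the connected stabilizer of $\Lambda_{m-1}$ in $\U(h)(F_0)$ coincides with the common connected stabilizer of $\Lambda_{m-1}$ together with either of the two $\pi$-modular lattices $\Lambda_m \subset \Lambda_{m-1}$. Concretely, I expect the argument to be carried out by comparing standard affine charts, using the spin decomposition \eqref{nV decomp} and the stability result of Lemma \ref{L stability} to propagate the refined spin condition across the chain; the subtle flatness input relies on the forthcoming work of Yu referred to in Remark \ref{kottwitzonP2}.
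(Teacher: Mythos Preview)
Your overall two-step strategy matches the paper's. However, there is a gap in Step~1 and a genuine misconception in Step~2.

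\textbf{Step 1.} The linear-algebra fact produces two $\pi$-modular $O_{\breve F}$-lattices $M \subset M'$, but you do not verify that each such $M$ is actually a Dieudonn\'e lattice corresponding to a point on $\CN_n$: one must check $\varpi M \subset VM \subset M$ and $VM \subset^1 VM + \pi M$. This is not automatic from $\pi$-modularity, and the paper devotes a short but essential argument to it, using the parity lemma \cite[Lem.~3.3]{RSZ} together with the inclusions relating $VM'$, $\pi M'$, and $M'$. Without this verification you have not shown surjectivity onto $\CP_n(\ov k)$.

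\textbf{Step 2.} Your expectation that the local model for $\CP_n'$ decomposes into two open and closed pieces matching \eqref{wtCN'pm} is incorrect. The refined spin condition at index $m$ selects a \emph{single} connected component of the orthogonal Grassmannian (cf.\ Remark~\ref{I=m}), and the resulting local model $M_{\{m-1,m\}}$ has connected generic fibre. What the paper actually proves (Proposition~\ref{lm isom}\eqref{lm isom ii}, via Lemma~\ref{LM mapping lem}) is that the forgetful map $M_{\{m-1,m\}} \to M_{\{m-1\}}$ is an isomorphism of these single schemes: given $\CF_{m-1}$, there is a \emph{unique} Lagrangian $\CF_m \supset T_{m-1}(\CF_{m-1})$ satisfying the spin condition, and one checks it lies in $M_{\{m\}}$. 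This makes $\CP_n' \to \CP_n$ formally \'etale. The decomposition \eqref{wtCN'pm} lives entirely at the RZ-space level, coming from the parity of the Dieudonn\'e lattice relative to $\BM$; it is invisible on local models. The isomorphism $(\CP_n')^\pm \isoarrow \CP_n$ then follows by combining formal \'etaleness with the two-to-one count from Step~1. Your intuition about Yu's flatness result is on the right track (it enters via Proposition~\ref{lm isom}\eqref{lm isom iii}), though the paper also gives a direct computational proof for the signature $(n-1,1)$.
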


\begin{proof}
We follow the same strategy as in the proof of Proposition \ref{M_Gamma_0 isom}.  We first show that the map is a bijection on $\ov k$-points. Let $\BN$ denote the covariant isocrystal of the framing object $\BX_n$, made into an $\breve F/\breve F_0$-hermitian space as in \S\ref{even ram max}. Let $V$ denote the Verschiebung on $\BN$.  By Dieudonn\'e theory, $\CP_n'(\ov k)$ identifies with the set of pairs of $O_{\breve F}$-lattices $L \subset L'$ in $\BN$ such that $\varpi L \subset VL \subset L$ and $\varpi L' \subset VL' \subset L'$, such that\footnote{Here the condition $VL' \subset^{\leq 1} VL' + \pi L'$ is manifestly equivalent to the wedge condition in the moduli problem for $\CP_n$, via the canonical isomorphism $\Lie X' \cong L'/VL'$.  For the purposes of the proof, we only need to know that the moduli problem \emph{implies} this condition on $L'$; we leave it as an exercise to show that, for $\ov k$-points, condition \eqref{even wtCN spin cond} imposes nothing further.}
\begin{equation*}
   VL \subset^1 VL + \pi L
	\quad\text{and}\quad
	VL' \subset^{\leq 1} VL' + \pi L',
\end{equation*}
and such that
\begin{equation}\label{L,L'}
   L \subset^1 L' \subset^{n-2} (L')^\vee \subset^1 L^\vee = \pi\i L.
\end{equation}
Our problem is to show that, starting from $L'$, we can uniquely recover $L$. 
Note that a $\pi$-modular lattice $L$ fits into the diagram \eqref{L,L'} if and only if
\begin{equation}\label{L,L' diagram}
   \pi(L')^\vee \subset^1 L \subset^1 L'.
\end{equation}
The hermitian form on $\BN$ induces a nondegenerate \emph{symmetric} $\ov k$-bilinear form on the $2$-dimensional space $L'/\pi(L')^\vee$, and the $\pi$-modular lattices $L$ satisfying \eqref{L,L' diagram} then correspond to the isotropic lines in the $2$-dimensional space $L'/\pi(L')^\vee$.  This gives at most two possibilities for $L$; since these possibilities have odd colength in their sum $L'$, by definition, one of them will correspond to a point on $(\CP_n')^+$, and the other a point on $(\CP_n')^-$.  Thus to complete this part of the proof, we have to show that any $\pi$-modular $L$ satisfying \eqref{L,L' diagram} automatically satisfies $\varpi L \subset VL \subset L$ and $VL \subset^1 VL + \pi L$.  Since the Dieudonn\'e module $\BM$ of $\BX_n$ satisfies $V\BM \subset^1 V\BM + \pi\BM$, it follows from \cite[Lem.~3.3]{RSZ} that the colength of $VL$ in $VL + \pi L$ is odd. The diagram
\begin{equation*}
\begin{gathered}
   \xymatrix@R-2ex@C-2ex{
	   VL + \pi L \ar@{}[d]|-*{\cup} \ar@{}[r]|-*{\subset}  &  V L' + \pi L' \ar@{}[d]|-*{\tensor[^{\makebox[0ex][r]{$\scriptstyle \leq 1$}}]{\cup}{}}\\
		VL \ar@{}[r]|-*{\subset^{\smash{1}}}  &  VL'
	}
\end{gathered}
\end{equation*}
then shows that this colength is $\leq 2$. Hence the colength is $1$.  This implies, in turn, that $\pi$ kills the quotient $(VL + \pi L)/VL$, so that $\varpi L \subset VL$.  The containment $VL \subset L$ follows similarly from the relation $L \subset^1 L + \pi\i VL$, which in turn follows similarly from $L \subset^1 L' \subset^{\leq 1} L' + \pi\i VL'$, where in the second containment we use that, since the isocrystal $\BN$ is supersingular, the lattices $L'$ and $\pi\i V L'$ have the same colength in any lattice that contains them both.
	
To complete the proof of the proposition, since $(\CP_n')^\pm$ is formally locally of finite type over $\Spf O_{\breve F}$, it now suffices to show that $(\CP_n')^\pm \to \CP_n$ is formally \'etale.  This follows from Proposition \ref{lm isom}\eqref{lm isom ii} below in the case $I = \{m-1\}$, via an entirely similar argument involving the local model diagrams as in the proof of Proposition \ref{M_Gamma_0 isom}. The easy details are left to the reader.
\end{proof}

\begin{example}[$n=2$]\label{CP_n diagram n=2}
When $n = 2$, Proposition \ref{M_Gamma_0 isom} identifies $\wt\CN_2^{(1)} \cong \CM_{\Gamma_0(\varpi)}$. Thus by Remark \ref{kottwitzonP2}, $\CP_2$ is isomorphic to the base change $(\CM_{\Gamma_0(\varpi)})_{\Spf O_{\breve F}}$.  To specify a particular isomorphism, it is convenient to first replace the polarization $\lambda_\BX$ of the framing object \BX for $\wt\CN_2^{(1)}$ by $-2 \lambda_\BX$, and to then modify the isomorphism $\CM_{\Gamma_0(\varpi)} \isoarrow \wt\CN_2^{(1)}$ to send
\[
   (Y,Y',\phi,\rho_Y) \mapsto \bigl(Y \times Y', \iota, -2(\lambda_Y \times \lambda_{Y'}), \rho\bigr),
\]
where $\iota$ and $\rho$ are as defined in \S\ref{ram self-dual n=2}.  Up to canonical isomorphism, this leaves the moduli problem for $\wt\CN_2^{(1)}$ unchanged, and the modified morphism remains an isomorphism.  Furthermore, using this rescaled $\lambda_\BX$, and with respect to the $O_{F_0}$-linear decompositions $\wt\BX_2 = \BX = \BE^2$, the morphism
\begin{equation}\label{psi_0}
   \psi_0\colon \BX \xra[\undertilde]{\left[\begin{smallmatrix} 1 & 1\\ 1 & -1 \end{smallmatrix}\right]} \wt\BX_2
\end{equation}
is an $O_F$-linear isomorphism such that $\psi_0^*(\lambda_{\wt\BX_2}) = \lambda_\BX$.  Hence $\psi_0$ determines an isomorphism $(\wt\CN_2^{(1)})_{\Spf O_{\breve F}} \isoarrow \CP_2$ sending $(X,\iota,\lambda,\rho) \mapsto (X,\iota,\lambda,\psi_0 \circ \rho)$, and in this way we identify $\CP_2$ with $(\CM_{\Gamma_0(\varpi)})_{\Spf O_{\breve F}}$.

In fact, the entire diagram \eqref{CP_n diagram} comes by extension of scalars from the diagram over $\Spf O_{\breve F_0}$,
\[
   \xymatrix@C-10ex{
      & \CM_{\Gamma_0(\varpi)} \amalg \CM_{\Gamma_0(\varpi)} \ar[dl] \ar[dr]^-{\varphi}\\
	  \CM_{\Gamma_0(\varpi)} & & \CM \amalg \CM .
   }
\]
Here on the lower right we use the identification $(\CN_2)_{\Spf O_{\breve F_0}} \cong \CM \amalg \CM$, where $\CM$ denotes the Lubin--Tate moduli space of formal $O_{F_0}$-modules of dimension $1$ and relative height $2$ equipped with a quasi-isogeny of height $0$ to $\BE$ in the special fiber, cf.~Example \ref{CN_2}.  By definition the morphism $\varphi$ respects the disjoint union decompositions in its source and target.  Thus $\varphi$ is given by two maps $\CM_{\Gamma_0(\varpi)} \to \CM$, and one easily unwinds the definitions to find that these are the tautological projections $(Y,Y',\phi,\rho_Y) \mapsto (Y,\rho_Y)$ and $(Y,Y',\phi,\rho_Y) \mapsto (Y',\rho_{Y'})$, where $\rho_{Y'}$ is defined in \eqref{rho_Y'}.
\end{example}

\begin{remark}
As Example \ref{CP_n diagram n=2} shows, the morphism $\varphi$ is finite when $n = 2$.  When $n \geq 4$, this is no longer the case.  Indeed, the fiber over a $\ov k$-point $(X,\iota,\lambda,\rho)$ is given by the scheme of isotropic subgroup schemes of order $q$ in $X[\iota(\pi)]$.
\end{remark}

The remaining step in the proof of Theorem \ref{wtCN'pm isom} is to show that the corresponding map on local models is an isomorphism.  In fact we will formulate the result we need for any signature.
For the moment let $n$ be even or odd, fix a partition $r + s = n$, and set
\[
   E := F_0
	\quad\text{if}\quad
	r = s,
	\quad\text{and}\quad
	E = F
	\quad\text{if}\quad
	r \neq s.
\]
We set $m := \lfloor n/2 \rfloor$ and again take up the notation of \S\ref{odd ram max}.

\begin{definition}\label{LM def}
For nonempty $I \subset \{0,\dotsc,m\}$, the \emph{naive local model $M_I^\naive$} is the scheme over $\Spec O_E$ representing the functor that assigns to each $O_E$-scheme $S$ the set of all families
\[
   (\CF_i \subset \Lambda_i \otimes_{O_{F_0}} \CO_S)_{i \in \pm I + n\BZ}
\]
such that
\begin{altenumerate}
\renewcommand{\theenumi}{\arabic{enumi}}
\item\label{LM rank cond} for all $i$, $\CF_i$ is an $O_F \otimes_{O_{F_0}} \CO_S$-submodule of $\Lambda_i \otimes_{O_{F_0}} \CO_S$ which is Zariski-locally on $S$ an $\CO_S$-direct summand of rank $n$;
\item\label{LM functoriality cond} for all $i < j$, the natural arrow $\Lambda_i \otimes_{O_{F_0}} \CO_S \to \Lambda_j \otimes_{O_{F_0}} \CO_S$ carries $\CF_i$ into $\CF_j$;
\item\label{LM periodic cond} for all $i$, the isomorphism $\Lambda_i \otimes_{O_{F_0}} \CO_S \xra[\undertilde]{\pi \otimes 1} \Lambda_{i-n} \otimes_{O_{F_0}} \CO_S$ identifies $\CF_i \isoarrow \CF_{i-n}$;
\item\label{LM perp cond} for all $i$, the perfect $\CO_S$-bilinear pairing
\[
   \aform_i\colon (\Lambda_i \otimes_{O_{F_0}} \CO_S) \times (\Lambda_{-i} \otimes_{O_{F_0}} \CO_S)
   \xra{\aform \otimes \CO_S} \CO_S
\]
identifies $\CF_i^\perp$ with $\CF_{-i}$ inside $\Lambda_{-i} \otimes_{O_{F_0}} \CO_S$; and
\item (\emph{Kottwitz condition}) for all $i$, the section $\pi \otimes 1 \in O_F \otimes_{O_{F_0}} \CO_S$ acts on $\CF_i$ as an $\CO_S$-linear endomorphism with characteristic polynomial
\[
   \charac(\pi \otimes 1 \mid \CF_i) = (T-\pi)^r(T+\pi)^s \in \CO_S[T].
\]
\setcounter{filler}{\value{enumi}}
\end{altenumerate}

The \emph{local model $M_I^\loc$} is the scheme-theoretic closure of the generic fiber in $M_I^\naive$.  We define $M_I$ to be the closed subscheme of $M_I^\naive$ defined by the additional conditions\footnote{Here conditions \eqref{LM wedge cond} and \eqref{LM str spin cond} make sense as written whenever $S$ is an $O_F$-scheme, and when $r = s$ they descend to conditions on $M_I^\naive$ over $\Spec O_E = \Spec O_{F_0}$.}
\begin{altenumerate}
\renewcommand{\theenumi}{\arabic{enumi}}
\setcounter{enumi}{\value{filler}}
\item\label{LM wedge cond} (\emph{wedge condition}) for all $i$, the operators
\[
   \sidewedge{_{\CO_S}^{s+1}} (\pi \otimes 1 - 1\otimes \pi \mid \CF_i ) = 0
	\quad\text{and}\quad
	\sidewedge{_{\CO_S}^{r+1}} (\pi \otimes 1 + 1\otimes \pi \mid \CF_i ) = 0;
\]
and
\item\label{LM str spin cond} for all $i$, the line bundle $\bigwedge_{\CO_S}^n \CF_i \subset  \tensor[^n]\Lambda{_i} \otimes_{O_F} \CO_S$ is contained in $L_{i,(-1)^r}^{r,s}(S)$.
\end{altenumerate}
\end{definition}

We will typically abbreviate the notation for points on $M_I^\naive$ to $(\CF_i)_{i \in I}$, since by conditions \eqref{LM periodic cond} and \eqref{LM perp cond} such a tuple determines the full tuple $(\CF_i)_{i \in \pm I + n\BZ}$.

In the generic fiber, the Kottwitz condition implies conditions \eqref{LM wedge cond} and \eqref{LM str spin cond}. Therefore there are inclusions of closed subschemes
\[
   M_I^\loc \subset M_I \subset M_I^\naive.
\]
On the other hand, \eqref{LM str spin cond} implies the Kottwitz condition in general by \cite[Lem.~5.1.2]{S}.  In the particular situation of Remark \ref{odd N_n crit} (which is for $n$ odd, $(r,s) = (n-1,1)$, and $I = \{m\}$), condition \eqref{LM str spin cond} also implies the wedge condition, but we do not know if this implication holds in general.  In all cases, the following is conjectured in \cite{S}.\footnote{Strictly speaking loc.~cit.~requires $I$ to have the property that if $n$ is even and $m - 1 \in I$, then $m \in I$, but here we drop this requirement. Note that on the other hand, Proposition \ref{lm isom} below allows one to view sets $I$ not satisfying this property as being redundant in some sense.}

\begin{conjecture}\label{str spin conj}
For any signature $(r,s)$ and nonempty $I \subset \{0,\dotsc,m\}$, the scheme $M_I$ is flat over $\Spec O_E$, or in other words $M_I^\loc = M_I$.
\end{conjecture}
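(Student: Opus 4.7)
The plan is to attack flatness of $M_I$ by a two-step reduction, passing first from general $I$ to the case of a single lattice type, and then analyzing each fixed local model by explicit affine charts. As a first step, I would establish that the natural forgetful morphism
\[
   M_I \to \prod_{i \in I} M_{\{i\}},
\]
whose image is cut out inside the target by the incidence relations (the transition maps in condition \eqref{LM functoriality cond}), reduces the question to the singleton case together with a compatibility/smoothness statement for the induced maps on successive singletons $M_{\{i\}} \to M_{\{j\}}$ for adjacent $i,j$. The point is that these transition maps fit into local model diagrams whose analysis mirrors Proposition \ref{lm isom} (and generalize it): a $\pi$-modular lattice in the hermitian space admits a canonical choice of orthogonal neighbors, so once flatness is known in the singleton situation and the transition maps are shown to be smooth along the generic fiber, the product description yields flatness for general $I$.

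Next, for $I = \{t\}$ a singleton, I would work on the affine Grassmannian chart of $n$-dimensional direct summands of $\Lambda_t \otimes_{O_{F_0}} \CO_S$ and compute the local structure at each $T$-fixed point of the special fiber, where $T$ is a maximal torus of $\uAut(\Lambda_{\{t\}})_{k_F}$. Each such fixed point corresponds to a choice of $\mu$-minuscule weight, and the refined spin condition \eqref{LM str spin cond} should cut out exactly those coordinates that witness flatness. Concretely, I would parametrize the chart by a matrix of indeterminates, impose the Kottwitz, wedge, and spin equations, and verify by Gröbner-basis/Macaulay-style bookkeeping that the resulting ring is a complete intersection of the correct relative dimension $n-1$ over $O_E$, hence flat by miracle flatness since $M_I$ is known to be Cohen--Macaulay in the generic fiber (where it coincides with the Grassmannian of Lagrangians of the right signature).

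The main obstacle is the last step: controlling the combinatorial/representation-theoretic content of the refined spin condition at the ``worst'' $T$-fixed points, i.e.\ those where the naive local model $M_I^\naive$ acquires non-flat branches. These branches correspond precisely to Lagrangian subspaces $\CF \subset V$ for which $\bigwedge^n \CF$ lies in $\tensor*[^n]V{_{(-1)^{r+1}}}$ rather than $\tensor*[^n]V{_{(-1)^r}}$, and the spin condition \eqref{LM str spin cond} is designed to cut them out. Verifying that this elimination is scheme-theoretically ``clean''---that no embedded components or extra nilpotents remain after imposing \eqref{LM str spin cond}---is the heart of the matter; this likely requires matching $M_I$ with a Pappas--Zhu-style Schubert variety in a ramified affine Grassmannian for the quasi-split unitary group, thereby producing a geometric model that is flat by construction.

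Finally, once flatness is established, the equality $M_I^\loc = M_I$ follows because $M_I$ has schematically dense generic fiber, which is the definition of $M_I^\loc$. In special signatures and small $|I|$, the plan can already be validated against known cases such as the almost $\pi$-modular case of signature $(n-1,1)$ treated in \cite{S}, and against $\CP_n$ (signature $(n-1,1)$, $I=\{m-1\}$) via Yu's forthcoming work \cite{Yu} mentioned in Remark \ref{kottwitzonP2}, giving a sanity check on the affine-chart computations.
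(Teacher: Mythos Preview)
This statement is labeled a \emph{Conjecture} in the paper, and the paper does not attempt to prove it. It is presented as a conjecture from \cite{S}; the paper only records partial cases (signature $(n-1,1)$ with $I=\{m\}$ via \cite{PR}, and $I=\{m-1\}$ via Yu's forthcoming \cite{Yu}; see Remarks \ref{I=m} and \ref{Yu flatness}). So there is no ``paper's own proof'' to compare against, and the conjecture remains open in general.

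Your proposal is a strategy sketch rather than a proof, and you yourself flag the main step as unresolved. A few concrete issues:

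\textbf{(1) The reduction to singletons is circular.} Proposition \ref{lm isom}(\ref{lm isom iii}) in the paper shows precisely that the forgetful map $M_{I\cup\{m\}}\to M_I$ is an isomorphism \emph{conditionally} on Conjecture \ref{str spin conj} for $I=\{m-1\}$. In general, getting smoothness or even well-definedness of the transition maps $M_{\{i\}}\to M_{\{j\}}$ already requires knowing flatness at one end, so you cannot bootstrap from singletons without assuming what you want to prove.

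\textbf{(2) The miracle-flatness step is misstated.} Cohen--Macaulayness of the generic fiber is irrelevant; you would need $M_{\{t\}}$ itself to be Cohen--Macaulay over $O_E$ with fibers of the correct dimension. Establishing this is essentially as hard as flatness.

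\textbf{(3) The Pappas--Zhu detour is not a shortcut.} The Pappas--Zhu local model is flat \emph{by construction} as a closure in a mixed-characteristic affine Grassmannian, but it is defined that way, not by the moduli conditions \eqref{LM rank cond}--\eqref{LM str spin cond}. Identifying $M_I$ scheme-theoretically with the Pappas--Zhu model is exactly the content of the conjecture (this is why the introduction stresses that the Pappas--Zhu description ``is not useful in this context''). Invoking it at the end amounts to restating the problem.

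In short, the genuine gap is the one you name yourself: showing that condition \eqref{LM str spin cond} removes all embedded and nilpotent structure at the worst points. This is the open heart of the conjecture, and nothing in the outline addresses it beyond saying it ``likely requires'' an identification that is equivalent to the conjecture.
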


\begin{remark}
Condition \eqref{LM str spin cond} can be regarded as a kind of common refinement of the Kottwitz condition and the \emph{spin condition} introduced in \cite{PR}.  Let us recall the latter.  Let $S$ be an $O_F$-scheme, and recall the decomposition $\tensor*[^n] V{} = \tensor*[^n] V{_1} \oplus \tensor*[^n] V{_{-1}}$ in \eqref{^nV decomp}.  For each $i \in \pm I + n\BZ$, in analogy with the definitions of $\tensor*[^n]\Lambda{_{\epsilon}^{r,s}}$ in \eqref{^nLambda_epsilon^r,s} and $L_{i,\epsilon}^{r,s}(S)$ in \eqref{L def}, define
\begin{equation}\label{^nLambda_epsilon}
   \tensor[^n]{(\Lambda_i)}{_\epsilon} := \tensor[^n]{(\Lambda_i)}{}\cap \tensor*[^n] V{_\epsilon}
	\quad\text{and}\quad
	L_{i,\epsilon}(S) := \im \bigl[ \tensor[^n]{(\Lambda_i)}{_\epsilon} \otimes_{O_F} \CO_S \to \tensor*[^n]\Lambda{_i} \otimes_{O_F} \CO_S \bigr].
\end{equation}
Then the \emph{spin condition} on an $S$-point $(\CF_i)_i$ of $M_I^\naive$ is that
\begin{equation}\label{LM spin cond}
	\text{\em for all $i$, the line bundle $\textstyle{\bigwedge_{\CO_S}^n} \CF_i \subset  \tensor[^n]\Lambda{_i} \otimes_{O_F} \CO_S$ is contained in $L_{i,(-1)^r}(S)$.}
\end{equation}
As in the formulation of conditions \eqref{LM wedge cond} and \eqref{LM str spin cond}, when $r = s$ the spin condition descends from $(M_I^\naive)_{O_F}$ to $M_I^\naive$.  Of course, the spin condition is trivially implied by condition \eqref{LM str spin cond}.
\end{remark}

\begin{remark}\label{I=m}
In the special case that $n$ is even and $I = \{m\}$, the above discussion can be simplified.  For $S$ an $O_E$-scheme, consider the $\CO_S$-bilinear form
\begin{equation*}
   \sform_m\colon (\Lambda_m \otimes_{O_{F_0}} \CO_S) \times (\Lambda_m \otimes_{O_{F_0}} \CO_S) \xra[\undertilde]{\id \times (\pi \otimes 1)} (\Lambda_m \otimes_{O_{F_0}} \CO_S) \times (\Lambda_{-m} \otimes_{O_{F_0}} \CO_S) \xra{\aform_m} \CO_S,
\end{equation*}
which is \emph{split symmetric}.  Then, using condition \eqref{LM periodic cond}, condition \eqref{LM perp cond} is equivalent to requiring that $\CF_m$ is Lagrangian for $\sform_m$.  Thus $M_{\{m\}}^\naive$ naturally embeds into the orthogonal Lagrangian Grassmannian $\OGr(n,\Lambda_m \otimes_{O_{F_0}} O_F)$.  Now, the generic fiber $(M_{\{m\}}^\naive)_E$ is connected (cf.~\cite[\S1.5.3]{PR}), and the spin condition is then that $\CF_m$ lies on the one of the two connected components of $\OGr(n,\Lambda_m \otimes_{O_{F_0}} O_F)$ marked by $(M_{\{m\}}^\naive)_E$; cf.~Rem.~2.32 and the paragraph following it in \cite{PRS}.  Thus the spin condition is a purely pointwise condition on $M_{\{m\}}^\naive$; after base change to $\Spec O_F$, it may also be characterized as the condition that the rank of $\pi \otimes 1 - 1\otimes \pi$ on $\CF_m$ at each point has the same parity as the common parity of $r$ and $s$.

The relevant signature in the context of Theorem \ref{wtCN'pm isom} is $(r,s) = (n-1,1)$.  In this case, in the presence of the wedge condition, the spin condition is simply that $\pi \otimes 1 - 1 \otimes \pi$ acts on $\CF_m$ with rank $1$ at each point.  Furthermore, the calculations in \cite[\S5.3]{PR} show that in this case, Conjecture \ref{str spin conj} holds true, and that in fact $M_{\{m\}}^\loc$ is smooth and is characterized just by conditions \eqref{LM rank cond}--\eqref{LM perp cond}, the condition $\bigwedge_{\CO_S}^2 (\pi \otimes 1 - 1\otimes \pi \mid \CF_m ) = 0$, and the spin condition.
\end{remark}

\begin{remark}\label{Yu flatness}
In the special case that $n$ is even, $I = \{m-1\}$, and $(r,s) = (n-1,1)$, Conjecture \ref{str spin conj} will be proved in the forthcoming paper \cite{Yu} of Yu.  Furthermore, she will show that in this case $M_{\{m-1\}}^\loc$ is characterized just by conditions \eqref{LM rank cond}--\eqref{LM perp cond} and \eqref{LM str spin cond}.
\end{remark}

We now return to our assumption that $n$ is even.
For $j \in \{m-1,m\}$, let $C_{\{j\}}$ be the closed subscheme of $\prod_{i \in \pm j + n\BZ} \Gr(n,\Lambda_i \otimes_{O_{F_0}} O_E)$ defined by conditions \eqref{LM rank cond}--\eqref{LM perp cond} in Definition \ref{LM def} and the spin condition \eqref{LM spin cond}.  Then projection onto the $j$th factor identifies $C_{\{j\}}$ with a closed subscheme in $\Gr(n,\Lambda_j\otimes_{O_{F_0}} O_E)$, and $M_{\{j\}}^\naive$ is naturally a closed subscheme of $C_{\{j\}}$.  Note that we do \emph{not} impose the Kottwitz condition on $C_{\{j\}}$, and therefore $C_{\{j\}}$ depends on the signature $(r,s)$ only through the common parity of $r$ and $s$ in the spin condition.

We are going to define a morphism $C_{\{m-1\}} \to C_{\{m\}}$ via the following lemma.  Quite generally, let $S$ be an $O_E$-scheme, and for any $i$, let
\begin{equation}\label{T_i}
   T_i\colon \Lambda_i \otimes_{O_{F_0}} \CO_S \to \Lambda_{i+1} \otimes_{O_{F_0}} \CO_S
\end{equation}
denote the natural map.

\begin{lemma}\label{LM mapping lem}
Let $\CF_{m-1} \subset \Lambda_{m-1} \otimes_{O_{F_0}} \CO_S$ be an $S$-point on $C_{\{m-1\}}$.  Then there exists a unique $\CO_S$-locally direct summand $\CF_m \subset \Lambda_m \otimes_{O_{F_0}} \CO_S$ which contains $T_{m-1}(\CF_{m-1})$, is Lagrangian for the symmetric form $\sform_m$ (cf.~Remark \ref{I=m}), and satisfies the spin condition. Furthermore, $\CF_m$ is an $S$-point on $C_{\{m\}}$, and $T_m(\CF_m) \subset \CF_{m+1}$.
\end{lemma}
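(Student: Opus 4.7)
The plan is to construct $\CF_m$ as the unique Lagrangian extension of $\CF_m^\flat := T_{m-1}(\CF_{m-1})$ selected by the spin condition, and then verify the remaining chain property $T_m(\CF_m) \subseteq \CF_{m+1}$.

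First I will check that $\CF_m^\flat$ is isotropic for $\sform_m$. Since $n$ is even, the operator $\pi \otimes 1$ gives an isomorphism $\Lambda_m \isoarrow \Lambda_{-m}$. Unwinding the definition of $\sform_m$ via this identification, and using that the pairings $\aform_i$ all descend from a single form on $V = F^n \otimes_{F_0} F$, I will obtain the key identity
\[
   \sform_m\bigl(T_{m-1}(a), T_{m-1}(b)\bigr) = \aform_{m-1}(a, \pi b).
\]
By condition \eqref{LM periodic cond}, $\pi \CF_{m-1} = \CF_{-m-1}$, and by \eqref{LM functoriality cond} this lies in $\CF_{1-m}$ under the inclusion $\Lambda_{-m-1} \subseteq \Lambda_{1-m}$; the Lagrangian condition \eqref{LM perp cond} at $i = m-1$ then forces the right-hand side to vanish.

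Next I will construct $\CF_m$. Let $Y \subseteq \OGr(n, \Lambda_m \otimes \CO_S)$ be the closed subfunctor of $\sform_m$-Lagrangian, $\CO_S$-locally direct summands of rank $n$ containing $\CF_m^\flat$, and $Y^{\mathrm{sp}} \subseteq Y$ its further cut by the spin condition (locally constant on $\OGr$ by Remark~\ref{I=m}). Any Lagrangian $\CF_m \supseteq \CF_m^\flat$ sits in $\CF_m^\flat \subseteq \CF_m \subseteq (\CF_m^\flat)^\perp$. Since $F/F_0$ is ramified, $\Lambda_m/\Lambda_{m-1} \cong k$, and an étale-local calculation shows that the quotient $(\CF_m^\flat)^\perp/\CF_m^\flat$ is, on the locus where $\CF_m^\flat$ drops rank, free of rank $2$ with a non-degenerate split induced symmetric form; Lagrangians extending $\CF_m^\flat$ then correspond to the two isotropic lines in this quotient. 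Since these two Lagrangians share $\CF_m^\flat$ in codimension one and $n$ is even, they lie in opposite connected components of $\OGr(n, \Lambda_m \otimes \CO_S)$, so the spin condition selects exactly one. I will then show that $Y^{\mathrm{sp}} \to S$ is scheme-theoretically an isomorphism via a direct computation on standard affine charts of $\OGr$, and define $\CF_m$ as the resulting section.

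Finally, I will verify the chain condition $T_m(\CF_m) \subseteq \CF_{m+1}$. Via \eqref{LM periodic cond}, $\CF_{m+1}$ corresponds under $\pi \otimes 1\colon \Lambda_{m+1} \isoarrow \Lambda_{1-m}$ to $\CF_{1-m} = \CF_{m-1}^\perp$, and the containment then reduces by a computation dual to the first step — pairing $\pi \CF_m$ against $\CF_{m-1}$ via $\aform_{m-1}$ and using that $\CF_m$ is $\sform_m$-Lagrangian and contains $T_{m-1}(\CF_{m-1})$ — to an identity which holds automatically. The rank, $O_F$-stability, and perpendicularity conditions of $C_{\{m\}}$ are built into the definition of $Y^{\mathrm{sp}}$. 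The main obstacle is the scheme-theoretic (rather than merely pointwise) assertion $Y^{\mathrm{sp}} \isoarrow S$, whose proof demands a careful local computation; this step uses crucially the ramification of $F/F_0$, since in the unramified case the analogous quotient would have rank $4$ and the argument would fail.
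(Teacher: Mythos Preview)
Your overall strategy matches the paper's, but there are two genuine gaps.

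First, the object $\CF_m^\flat = T_{m-1}(\CF_{m-1})$ is \emph{not} in general an $\CO_S$-locally direct summand: on the generic fibre $T_{m-1}$ is an isomorphism and $\CF_m^\flat$ has rank $n$, whereas at points of the special fibre where $\CF_{m-1}$ meets $\ker T_{m-1}$ it drops to rank $n-1$. Consequently $(\CF_m^\flat)^\perp/\CF_m^\flat$ is not a vector bundle, and your description of $Y$ as a two-point cover ``on the locus where $\CF_m^\flat$ drops rank'' does not globalize to a scheme-theoretic statement. The paper handles this by passing to a local ring $R$ and using Nakayama to extract a rank-$(n-1)$ \emph{direct summand} $\CF \subset T_{m-1}(\CF_{m-1})$; then $\CF^\perp/\CF$ is honestly a rank-$2$ split quadratic bundle (by Roy's cancellation theorem), giving exactly two Lagrangians extending $\CF$, of which the spin condition selects one, call it $\CF_m$. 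The crucial point you have not identified is that one must then \emph{prove} $T_{m-1}(\CF_{m-1}) \subset \CF_m$: writing the image as $\CF + R(xc + yd)$ with $c,d$ isotropic generators of $\CF^\perp/\CF$, the paper shows $y=0$ by pushing the spin condition on $\CF_{m-1}$ through the natural map $L_{m-1,(-1)^r}(R) \to L_{m,(-1)^r}(R)$. Without this step, $Y^{\mathrm{sp}}(R)$ could be empty (take $x=0$, $y$ a unit: then the unique Lagrangian containing the image is $\CF + Rd$, which fails spin). Your ``direct computation on standard affine charts'' does not indicate awareness of this issue.

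Second, your claim that $O_F$-stability of $\CF_m$ is ``built into the definition of $Y^{\mathrm{sp}}$'' is false: $Y^{\mathrm{sp}}$ parametrizes Lagrangians in $\OGr(n,\Lambda_m\otimes\CO_S)$ with spin, and nothing in that description forces $\pi\otimes 1$-stability. The paper deduces it only after first establishing $T_m(\CF_m)\subset\CF_{m+1}$, by factoring the action of $\pi\otimes 1$ on $\Lambda_m\otimes\CO_S$ through the chain $\Lambda_m \to \Lambda_{m+1} \isoarrow \Lambda_{-m+1} \to \Lambda_{m-1} \to \Lambda_m$ and using the already-verified containments along each arrow.
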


\begin{proof}
By the uniqueness claim, it suffices to prove the lemma in the case that $S$ is the spectrum of a local ring $R$.  Let $\kappa$ denote the residue field of $R$, and let us systematically use a bar to denote base change from $R$ to $\kappa$.  Over $\kappa$, the map $\ov T_{m-1}$ has kernel of dimension $1$ or $0$ (according as $\varpi R$ is contained in the maximal ideal or not).  Either way, there exists an $(n-1)$-dimensional subspace in $\ov\CF_{m-1}$ mapped isomorphically by $\ov T_{m-1}$ to its image in $\Lambda_m \otimes_{O_{F_0}} \kappa$.  
Using Nakayama's lemma, it follows that we may choose a free basis $a_1,\dotsc,a_n$ of $\CF_{m-1}$ such that the images $b_1,\dotsc,b_{n-1}$ of the first $n-1$ basis elements under $T_{m-1}$ form a free basis for a summand $\CF$ of $\Lambda_m \otimes_{O_{F_0}} R$.

Now, for any $i$, it is obvious from the definition of the pairings in condition \eqref{LM perp cond} in Definition \ref{LM def} that there is an adjunction relation
\begin{equation}\label{adjunction}
   \bigl\la u, T_{-i-1}(v) \bigr\ra_i = \bigl\la T_i(u), v \bigr\ra_{i+1}
	\quad\text{for all}\quad
	u \in \Lambda_i \otimes_{O_{F_0}} R,\ v \in \Lambda_{-i-1} \otimes_{O_{F_0}} R.
\end{equation}
Applying this when $i = m-1$, and using condition \eqref{LM periodic cond} and the equality $\CF_{-(m-1)} = \CF_{m-1}^\perp$, it follows that $T_{m-1}(\CF_{m-1})$ is totally isotropic for $\sform_m$. Hence
\[
   \CF \subset T_{m-1}(\CF_{m-1}) \subset \CF^\perp.
\]
The quotient $\CF^\perp/\CF$ is then naturally made by $\sform_m$ into a free quadratic $R$-module of rank $2$.  Since $R$ is local, $2 \in R^\times$, $\CF$ is totally isotropic, and $\sform_m$ is split, it follows from Roy's cancellation theorem \cite[Th.~8.1]{Roy} that $\CF^\perp/\CF$ is split.  Hence there exist isotropic elements $c,d \in \CF^\perp$ such that $b_1,\dotsc,b_{n-1},c,d$ form a free basis of $\CF^\perp$, and
\begin{equation*}
   \spann_R\{b_1,\dotsc,b_{n-1},c\} \quad\text{and}\quad \spann_R\{b_1,\dotsc,b_{n-1},d\}
\end{equation*}
are the (only) two Lagrangian submodules of $\Lambda_m \otimes_{O_{F_0}} R$ containing $\CF$.  Since the intersection of these Lagrangians has odd corank in each of them, they are classified by points on different connected components of $\OGr(n,\Lambda_m \otimes_{O_{F_0}} O_F)$, cf.~\cite[\S7.1.4]{PR}.  Hence exactly one of them, say the first, satisfies the spin condition, and we define this to be $\CF_m$.  Note that this already proves the uniqueness claim for $\CF_m$ in the statement of the lemma.

To show that $\CF_m$ contains $T_{m-1}(\CF_{m-1})$, first note that since $\CF_{m-1}$ is spanned by $a_1,\dotsc,a_n$, $T_{m-1}(\CF_{m-1})$ is generated by
\[
	b_1,\dotsc,b_{n-1}, xc+yd
\]
for some $x,y \in R$. Let
\begin{align*}
   w &:= b_1 \wedge \dotsb \wedge b_{n-1} \wedge (xc +yd)\\
	 &\phantom{:}= x \cdot b_1 \wedge \dotsb \wedge b_{n-1} \wedge c + y \cdot b_1 \wedge \dotsb \wedge b_{n-1} \wedge d \in \tensor[^n]{\Lambda}{_m} \otimes_{O_F} R.
\end{align*}
Consider the commutative diagram
\[
   \xymatrix@R-3ex{
	   \tensor[^n]{(\Lambda_{m-1})}{_{(-1)^r}} \otimes_{O_F} R \ar[r] \ar[dd]_-\sim 
		   &  \tensor[^n]{(\Lambda_m)}{_{(-1)^r}} \otimes_{O_F} R \ar[dd]^-\sim \\
		\\
		L_{m-1,(-1)^r}(R) \ar@{}[d]|-*{\cap}  
		   &  L_{m,(-1)^r}(R) \ar@{}[d]|-*{\cap} \\
		\tensor[^n]{\Lambda}{_{m-1}} \otimes_{O_F} R \ar[r]^-{\bigwedge^n T_{m-1}} 
		   &  \tensor[^n]{\Lambda}{_m} \otimes_{O_F} R,
	}
\]
where we recall the modules $\tensor[^n]{({\Lambda_i})}{_{\pm 1}}$ and $L_{i,\pm 1}(R)$ from \eqref{^nLambda_epsilon}.  Then $w$ generates the image of $\bigwedge^n \CF_{m-1}$ under the bottom map.  Since $\bigwedge^n \CF_{m-1}$ is contained in $L_{m-1,(-1)^r}(R)$ by the spin condition, the diagram shows that $w \in L_{m,(-1)^r}(R)$.  Now, in the special case $i = m$, it is easy to verify from the definitions in \cite[\S7]{PR} or \cite[\S2.3]{S} that there is a direct sum decomposition
\[
   \tensor[^n]{{\Lambda_m}}{} = \tensor[^n]{({\Lambda_m})}{_1} \oplus \tensor[^n]{({\Lambda_m})}{_{-1}},
\]
and hence
\[
   \tensor[^n]{{\Lambda_m}}{} \otimes_{O_F} R = L_{m,1}(R) \oplus L_{m,-1}(R).
\]
Since $w$ lies in $L_{m,(-1)^r}(R)$, its component in the summand $L_{m,(-1)^{r+1}}(R)$ is zero.  Hence $y = 0$ and $T_{m-1}(\CF_{m-1}) \subset \CF_m$.

To complete the proof, it remains to show that $T_m(\CF_m) \subset \CF_{m+1}$ and that $\CF_m$ is $\pi \otimes 1$-stable.  The first of these is an easy consequence of the adjunction relations \eqref{adjunction} and conditions \eqref{LM periodic cond} and \eqref{LM perp cond} for $\CF_{m-1}$ and $\CF_{m+1}$.  For the second, note that the action of $\pi \otimes 1$ on $\Lambda_m \otimes_{O_{F_0}} R$ identifies with the composite
\[
   \xymatrix{
      \Lambda_m \otimes_{O_{F_0}} R \ar[r]^-{T_m}  
		   &  \Lambda_{m+1} \otimes_{O_{F_0}} R \ar[d]_-{\sim}^-{\pi \otimes 1} \\
	   &  \Lambda_{-m+1} \otimes_{O_{F_0}} R \ar[rrr]^-{T_{m-2}\circ \dotsb \circ T_{-m+1}}
		   & & &   \Lambda_{m-1} \otimes_{O_{F_0}} R \ar[r]^-{T_{m-1}} 
			&  \Lambda_m \otimes_{O_{F_0}} R.
	}
\]
Thus the containment $(\pi \otimes 1)(\CF_m) \subset \CF_m$ follows from the containment $T_m(\CF_m) \subset \CF_{m+1}$, from conditions \eqref{LM functoriality cond} and \eqref{LM periodic cond} on $C_{\{m-1\}}$, and from the containment $T_{m-1}(\CF_{m-1}) \subset \CF_m$.
\end{proof}

In the notation of the lemma, we now define
\[
   \nu\colon
   \xymatrix@R=0ex{
	   C_{\{m-1\}} \ar[r]  &  C_{\{m\}}\\
		\CF_{m-1} \ar@{|->}[r]  &  \CF_m.
	}
\]
Then $\nu$ is plainly an isomorphism between generic fibers, and it restricts to an isomorphism between the generic fibers of the closed subschemes $M_{\{m-1\}}^\naive$ and $M_{\{m\}}^\naive$ in the source and target, respectively.  Hence, passing to flat closures, $\nu$ restricts to a map
\begin{equation}\label{nu}
   \nu\colon M_{\{m-1\}}^\loc \to M_{\{m\}}^\loc.
\end{equation}
We now have the following.

\begin{proposition}\label{lm isom}
Suppose that $m - 1 \in I$ and $m \notin I$, and fix the signature $(r,s)$.
\begin{altenumerate}
\item
The natural forgetful morphism
\[
   \xymatrix@R=0ex{
      M_{I \cup \{m\}}^\loc \ar[r]  &  M_I^\loc\\
	   \bigl((\CF_i)_{i\in I},\CF_m\bigr) \ar@{|->}[r]  &  (\CF_i)_{i\in I}
	}
\] 
is an isomorphism.
\item\label{lm isom ii}
The analogous forgetful morphism $M_{I\cup \{m\}} \to M_I$ is an isomorphism when $(r,s) = (n-1,1)$.
\item\label{lm isom iii}
If Conjecture \ref{str spin conj} holds true for signature $(r,s)$ and the set $I = \{m-1\}$, then the analogous forgetful morphism $M_{I\cup \{m\}} \to M_I$ is an isomorphism for any $I$ as above.
\end{altenumerate}
\end{proposition}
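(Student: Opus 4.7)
My plan is to deduce all three parts from Lemma \ref{LM mapping lem} by constructing an explicit section of the forgetful morphism. Extending the map $\nu\colon C_{\{m-1\}} \to C_{\{m\}}$ from Lemma \ref{LM mapping lem} by the identity on the remaining components gives a morphism
\[
   s\colon M_I^\naive \to M_{I \cup \{m\}}^\naive, \qquad (\CF_i)_{i \in I} \mapsto \bigl((\CF_i)_{i \in I}, \nu(\CF_{m-1})\bigr).
\]
All naive conditions on the new component $\CF_m := \nu(\CF_{m-1})$ are verified by Lemma \ref{LM mapping lem} (rank, Lagrangian property, and the functoriality containments $T_{m-1}(\CF_{m-1}) \subset \CF_m$ and $T_m(\CF_m) \subset \CF_{m+1}$), while the Kottwitz condition follows from the strengthened spin condition via \cite[Lem.~5.1.2]{S}. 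On the generic fiber the Kottwitz condition forces $\CF_m$ to be uniquely determined by $(\CF_i)_{i \in I}$, so the forgetful map $\pi$ is an isomorphism there with inverse $s_E$.

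For part (i), I restrict $s$ to $M_I^\loc$. Its scheme-theoretic image is an $O_E$-flat closed subscheme of $M_{I \cup \{m\}}^\naive$ whose generic fiber equals $(M_{I \cup \{m\}}^\loc)_E$, so by uniqueness of flat closures it coincides with $M_{I \cup \{m\}}^\loc$. Thus $s$ induces a section of $\pi\colon M_{I \cup \{m\}}^\loc \to M_I^\loc$, and $s \circ \pi$ agrees with the identity on the schematically dense generic fiber of the $O_E$-flat, separated scheme $M_{I \cup \{m\}}^\loc$, hence everywhere. For part (iii), the extra step is to show that $\nu$ carries $M_{\{m-1\}}$ into $M_{\{m\}}$: combining the hypothesis $M_{\{m-1\}}^\loc = M_{\{m-1\}}$ with the isomorphism $\nu\colon M_{\{m-1\}}^\loc \isoarrow M_{\{m\}}^\loc$ from part (i) yields $\nu(M_{\{m-1\}}) \subset M_{\{m\}}^\loc \subset M_{\{m\}}$, so $\CF_m$ inherits the wedge and strengthened spin conditions from $\CF_{m-1}$. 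Then $s$ restricts to a morphism $M_I \to M_{I \cup \{m\}}$; the relation $\pi \circ s = \id_{M_I}$ is tautological, while $s \circ \pi = \id_{M_{I \cup \{m\}}}$ holds at each $S$-point by the uniqueness clause of Lemma \ref{LM mapping lem}, which characterizes $\CF_m$ as the unique Lagrangian containing $T_{m-1}(\CF_{m-1})$ and satisfying the spin condition.

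Part (ii) is then the specialization of (iii) to signature $(r,s) = (n-1,1)$, for which the needed instance of Conjecture \ref{str spin conj} is Yu's forthcoming theorem recorded in Remark \ref{Yu flatness}. The main delicate point is using part (i) to bootstrap wedge-preservation of $\nu$ in part (iii), since wedge for $\CF_m$ does not follow directly from Lemma \ref{LM mapping lem}.
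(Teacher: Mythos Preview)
Your approach is the same as the paper's: construct the inverse of the forgetful map via Lemma \ref{LM mapping lem}. The argument is essentially correct, but there is one slip in the setup. You assert that $\CF_m = \nu(\CF_{m-1})$ satisfies the Kottwitz condition because it satisfies the strengthened spin condition \eqref{LM str spin cond}, citing \cite[Lem.~5.1.2]{S}. But Lemma \ref{LM mapping lem} only establishes the \emph{ordinary} spin condition \eqref{LM spin cond} on $\CF_m$, not condition \eqref{LM str spin cond}; indeed Remark \ref{not flat} shows $\nu$ need not preserve even the wedge condition at the naive level. So your map $s\colon M_I^\naive \to M_{I\cup\{m\}}^\naive$ is not well-defined as stated. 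This does not actually damage your proofs of (i)--(iii): for (i), flatness of $M_I^\loc$ forces the closed Kottwitz condition on $\CF_m$ to propagate from the schematically dense generic fiber, so $s(M_I^\loc)$ does land in $M_{I\cup\{m\}}^\naive$ and your flat-closure argument goes through; for (iii), you correctly bypass the issue by using the hypothesis $M_{\{m-1\}} = M_{\{m-1\}}^\loc$ to get $\nu(M_{\{m-1\}}) \subset M_{\{m\}}^\loc \subset M_{\{m\}}$. (A small wording point: the containment $\nu(M_{\{m-1\}}^\loc) \subset M_{\{m\}}^\loc$ you invoke in (iii) is the definition of the restricted map \eqref{nu}, not a consequence of part (i).) Also, the uniqueness clause in Lemma \ref{LM mapping lem} that you use for $s\circ\pi = \id$ in (iii) works equally well for (i), and is simpler than your density argument.

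One substantive difference from the paper: beyond deducing (ii) from (iii) via Yu's result (Remark \ref{Yu flatness}), the paper also supplies a long direct computational proof of (ii) that does not rely on any external flatness input. This occupies several pages of explicit matrix calculations in an affine chart around the worst point of $M_{\{m-1\}}$, verifying by hand that $\nu$ carries $M_{\{m-1\}}$ into $M_{\{m\}}$ for signature $(n-1,1)$. Your route through Yu is cleaner, but leaves (ii) contingent on a forthcoming reference.
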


\begin{proof}
By Lemma \ref{LM mapping lem}, the inverse in all cases is given by $(\CF_i)_{i\in I} \mapsto ((\CF_i)_{i\in I},\nu(\CF_{m-1}))$.  Note that in \eqref{lm isom iii}, flatness of $M_{\{m-1\}}$ ensures that $\nu$ carries $M_{\{m-1\}}$ into $M_{\{m\}}$, and hence that this inverse carries $M_I$ into $M_{I\cup \{m\}}$.  Part (\ref{lm isom ii}) then follows from (\ref{lm isom iii}) in light of Yu's forthcoming result (cf.\ Remark \ref{Yu flatness}), but let us also sketch a direct proof of (\ref{lm isom ii}).  It similarly suffices to show that $\nu$ carries $M_{\{m-1\}}$ into $M_{\{m\}}$.

As in Lemma \ref{L stability}, consider the automorphism scheme $\uAut(\Lambda_{\{m-1,m\}})$ of the polarized $O_F$-lattice chain $\Lambda_{\{m-1,m\}}$ over $\Spec O_{F_0}$.  Then $\uAut(\Lambda_{\{m-1,m\}})_{O_F}$ acts on $M_{\{m-1\}}$ and $M_{\{m\}}$, and $\nu$ is equivariant for these actions.  The $\ov k$-point $\CF := (\pi \otimes 1)(\Lambda_{m-1} \otimes_{O_{F_0}}\ov k) \subset \Lambda_{m-1} \otimes_{O_{F_0}} \ov k$ on $M_{\{m-1\}}$ (which is the unique $\ov k$-point on which $\pi \otimes 1$ acts as zero) is in the closure of every orbit for $\uAut(\Lambda_{\{m-1,m\}})_{\ov k}$ on the geometric special fiber, and therefore we reduce to considering points $\CF_{m-1}$ on $M_{\{m-1\}}$ in an affine chart around $\CF$.  For $j \in \{m-1,m\}$, we take the ordered $O_{F_0}$-basis for $\Lambda_{j}$,
\begin{equation}\label{basis}
   \pi\i e_1,\dotsc, \pi\i e_j, e_{j+1},\dotsc,e_n, e_1,\dotsc,e_j, \pi e_{j+1},\dotsc, \pi e_n.
\end{equation}
With respect to this basis in the case $j = m-1$, we look at the affine chart on $M_{\{m-1\}}$ of points $\CF_{m-1}$ of the form
\begin{equation}\label{CF_m-1}
   \CF_{m-1} = \colspan
   \begin{bmatrix}
		X\\
		1_n
	\end{bmatrix},
	\quad
	X = (x_{ij}),
\end{equation}
where the blocks are of size $n \times n$.  (Thus $\CF$ is the $\ov k$-point $X = 0$.)  Write $X$ in the block form
\[
   X =
	\left[
	\begin{array}{ccc}
		A  &  *  &  B\\
		R_1  &  x_{mm}  &  R_2\\
		C  &  *  &  D
	\end{array}
	\right],
\]
where $A$ has size $(m-1) \times (m-1)$, $B$ has size $(m-1) \times m$, $C$ has size $m \times (m-1)$, $D$ has size $m \times m$, $R_1$ has size $1 \times (m-1)$, $R_2$ has size $1 \times m$, and the blocks marked $*$ are columns of the appropriate sizes.  We claim that, with respect to the basis \eqref{basis} for $\Lambda_m$, we have
\begin{equation}\label{nu(CF_m-1)}
   \renewcommand{\arraystretch}{1.1}
   \nu(\CF_{m-1}) = \colspan
	\left[
	\begin{array}{c|c|c}
		A  &  &  B\\
		0  & \smash{\raisebox{.5\normalbaselineskip}{$R_2^\ad$}}  &  0\\
		\hline
		  &  1  &\\
		\smash{\raisebox{.5\normalbaselineskip}{$C$}}  & - R_1^\ad  &  \smash{\raisebox{.5\normalbaselineskip}{$D$}}\\
		\hline
		1_{m-1} &  &\\
		R_1  &  &  R_2\\
		  &  &  1_m
	\end{array}
	\right],
\end{equation}
where, defining
\[
   \setlength{\arraycolsep}{2.5pt}
   H_i :=
	\begin{bmatrix}
		  &  &  1\\
		  &  \smash{\iddots}\\
		1
	\end{bmatrix}
\]
to be the unit antidiagonal matrix of size $i \times i$, we set
\[
   R_1^\ad := H_{m-1}\tensor*[^t]{R}{_1}
	\quad\text{and}\quad
	R_2^\ad := H_m\tensor*[^t]{R}{_2}.
\] 
Indeed, in terms of our bases, the map $\Lambda_{m-1} \to \Lambda_m$ is given by the $2n \times 2n$ matrix
\[
   U :=
   \begin{bmatrix}
		1_{m-1}\\
		  &  &  &  \varpi\\
		  &  &  1_{n-1}\\
		  &  1\\
		  &  &  &  &  1_m
	\end{bmatrix},
\]
where the $\varpi$ is the $(m,n+m)$-entry, and the off-diagonal $1$ is the $(n+m,m)$-entry.  The first $m-1$ and last $m$ columns in \eqref{nu(CF_m-1)} are the image under multiplication by $U$ of the corresponding columns in \eqref{CF_m-1}; since these columns evidently span a direct summand of rank $n-1$, the proof of Lemma \ref{LM mapping lem} shows that $\nu(\CF_{m-1})$ is the unique Lagrangian subspace containing these columns and satisfying the spin condition.  To show that the right-hand side of \eqref{nu(CF_m-1)} satisfies these properties, first recall the symmetric form $\sform_m$ from Remark \ref{I=m}.  With respect to the basis \eqref{basis} for $\Lambda_m$, the matrix of $\sform_m$ is
\begin{equation}\label{pairing matrix}
   \begin{bmatrix}
		  &  &  H_m\\
		  &  -H_n\\
		H_m
	\end{bmatrix}.
\end{equation}
It follows that the $m$th column in the matrix in \eqref{nu(CF_m-1)} pairs to zero under $\sform_m$ with every column in this matrix.  Hence the right-hand side in \eqref{nu(CF_m-1)} is Lagrangian (recall from the proof of Lemma \ref{LM mapping lem} that the other $n-1$ columns in \eqref{nu(CF_m-1)}, being in the image of the point $\CF_{m-1}$ on $M_{\{m-1\}}$, automatically span a totally isotropic subspace).  Thus to prove the equality asserted in \eqref{nu(CF_m-1)}, it remains to show that the right-hand side satisfies the spin condition.  By Remark \ref{I=m}, both this and the fact that the right-hand side is a point on $M_{\{m\}}$ follow from the single claim that every column of the matrix in \eqref{nu(CF_m-1)} becomes a multiple of the $m$th one after applying the operator $\pi \otimes 1 - 1 \otimes \pi$ (note that the $m$th column obviously continues to span a rank one summand after applying this operator).

With respect to our basis for $\Lambda_m$, the operator $\pi \otimes 1 - 1 \otimes \pi$ is given by the matrix
\[
   \begin{bmatrix}
		-\pi 1_n  &  \varpi 1_n\\
		1_n  &  -\pi 1_n
	\end{bmatrix}.
\]
Writing the product of this with the matrix in \eqref{nu(CF_m-1)} in block form
$
   \begin{bmatrix}
		Y \\
		Z
	\end{bmatrix}
$
($n \times n$ blocks), we have
\[
   \renewcommand{\arraystretch}{1.1}
	Z =
	\left[
	\begin{array}{c|c|c}
		A - \pi 1_{m-1}  &  &  B \\
		-\pi R_1  & \smash{\raisebox{.5\normalbaselineskip}{$R_2^\ad$}}  &  -\pi R_2\\
		\hline
		  &  1  &\\
		\smash{\raisebox{.5\normalbaselineskip}{$C$}}  & - R_1^\ad  &  \smash{\raisebox{.5\normalbaselineskip}{$D- \pi 1_m$}}
	\end{array}
	\right]
	\quad\text{and}\quad
	Y = -\pi Z.
\]
Our problem is thus to show that every column in $Z$ is a multiple of its $m$th one.  In the case of the $j$th column for $j \neq m,m+1$, this means that we have to show that
\begin{equation}\label{want}
\begin{aligned}
	x_{ij} &= x_{m+1,j}x_{m,i^\vee},  &  &\quad i < m,\ i \neq j,\\
	x_{jj} - \pi &= x_{m+1,j} x_{m,j^\vee}  &  &\quad \text{if } j < m,\\
	-\pi x_{mj} &= x_{m+1,j} x_{m,m+1},\\
	x_{jj} - \pi &= -x_{m+1,j} x_{m,j^\vee},  &  &\quad \text{if } j > m+1,\\
	x_{ij} &= -x_{m+1,j}x_{m,i^\vee},  &  &\quad i > m+1,\ i \neq j,
\end{aligned}
\end{equation}
where for any $i$ we set
\[
   i^\vee := n+1-i.
\]
Similarly, in the case of the $(m+1)$th column in $Z$, we have to show that
\begin{equation}\label{m+1}
\begin{aligned}
   x_{i,m+1} &= (x_{m+1,m+1}-\pi)x_{m,i^\vee}  &  &\quad i < m,\\
	-\pi x_{m,m+1} &= (x_{m+1,m+1}-\pi)x_{m,m+1},\\
	x_{i,m+1} &= -(x_{m+1,m+1}-\pi)x_{m,i^\vee},  &  &\quad i > m + 1.
\end{aligned}
\end{equation}
We are going to deduce all of these relations from the fact that the columns in \eqref{nu(CF_m-1)} are pairwise orthogonal under $\sform_m$ and from condition (\ref{LM str spin cond}) in the definition of $M_{\{m-1\}}$.

Let us first make explicit the orthogonality relations we need.  For $i$ and $j$ distinct from $m$ and $m+1$, pairing the $i$th and $j$th columns in (\ref{nu(CF_m-1)}) with respect to (\ref{pairing matrix}), we obtain
\begin{equation}\label{orthog rel}
\begin{aligned}
   0 &= x_{m+1,i}x_{mj} + x_{j^\vee i} + x_{i^\vee j} + x_{mi}x_{m+1,j},  &  &\quad i,j < m,\\
	0 &= x_{j^\vee i} - x_{m+1,i}x_{m,j} - x_{i^\vee,j} - x_{mi}x_{m+1,j}  ,  &  &\quad i < m,\ m+1 < j,\\
	0 &= x_{j^\vee i} - x_{m+1,i}x_{mj} - x_{mi}x_{m+1,j} + x_{i^\vee j},  &  &\quad m+1 < i,j.
\end{aligned}
\end{equation}
Similarly, for $j \neq m,m+1$, pairing the $j$th column with the $(m+1)$th column, we obtain
\begin{equation}\label{orthog 2}
\begin{aligned}
   0 &= x_{m+1,j}x_{m,m+1} + x_{j^\vee,m+1} + x_{mj}x_{m+1,m+1},  &  & \quad j<m,\\
	0 &= -x_{m+1,j}x_{m,m+1} - x_{mj}x_{m+1,m+1} + x_{j^\vee,m+1},  &  &\quad j > m+1;
\end{aligned}
\end{equation}
and pairing the $(m+1)$th column with itself (and using that $2$ is a unit), we obtain
\begin{equation}\label{orthog 3}
	0 = x_{m+1,m+1}x_{m,m+1}.
\end{equation}

Now we turn to condition \eqref{LM str spin cond} in the definition of $M_{\{m-1\}}$.  We continue to use the notation of \S\ref{odd ram max}.  We closely follow the analysis in \cite{S}.  Similarly to \S4 in loc.\ cit., we introduce the basis elements in $V$,
\[
   g_i := e_i \otimes 1 + \pi e_i \otimes \pi\i,
	\quad g_{n+i} := \frac{e_i \otimes 1 - \pi e_i \otimes \pi\i}2,
	\quad
	i = 1,\dotsc,n.
\]
Then $g_1,\dotsc,g_n$ is a basis for $V_\pi$, and $g_{n+1},\dotsc,g_{2n}$ is a basis for $V_{-\pi}$.  For $S = \{i_1 < \dots < i_n\} \subset \{1,\dotsc,2n\}$ of cardinality $n$, we define
\[
   g_S := g_{i_1} \wedge \dots \wedge g_{i_n} \in \tensor*[^n]{V}{}.
\]
Furthermore, we say that $S$ has \emph{type $(r,s)$} if $S \cap \{1,\dotsc,n\}$ has $r$ elements and $S \cap \{n+1,\dotsc,2n\}$ has $s$ elements.  As $S$ varies through the sets of type $(n-1,1)$, the elements $g_S$ form a basis of $\tensor*[^n]{V}{^{n-1,1}}$, and the elements $g_S - \sgn(\sigma_S)g_{S^\perp}$ span $\tensor*[^n]{V}{_{-1}^{n-1,1}}$, cf.\ \cite[Lem.\ 4.2]{S}.  Here we set
\[
   S^* := \{\,2n+1-i \mid i \in S\,\}
	\quad\text{and}\quad
	S^\perp := \{1,\dotsc,2n\} \smallsetminus S^*,
\]
and $\sigma_S$ is the permutation on $\{1,\dotsc,2n\}$ sending the elements $1,\dotsc,n$ onto $S$ in increasing order, and $n+1,\dotsc,2n$ onto the complement of $S$ in increasing order.  By \cite[Lem.\ 2.8]{S}, $\sgn(\sigma_S) = (-1)^{\Sigma S + n/2}$, where $\Sigma S$ denotes the sum of the elements in $S$.  If $S$ is of type $(n-1,1)$, then $S = \{1,\dotsc,\wh\jmath,\dotsc,n,n+i\}$ for unique integers $1 \leq i,j \leq n$, where the hat means that the entry is omitted; and, since $n$ is even,
\[
   \sgn(\sigma_S) = (-1)^{\frac{n(n+1)}2 -j + n+i + \frac n 2} = (-1)^{i+j}.
\]

Now let $T = \{i_1 < \dotsb < i_n \}\subset \{1,\dotsc,2n\}$ be of cardinality $n$. Regarding the (images of the) basis elements \eqref{basis} in the case $j = m-1$ as an ordered $O_F$-basis for $\Lambda_{m-1} \otimes_{O_{F_0}} O_F$, define 
\[
   e_T := (i_1\text{th basis element}) \wedge \dotsb \wedge (i_n\text{th basis element}) \in \tensor*[^n]{\Lambda}{_{m-1}} \subset \tensor*[^n]{V}{}. 
\]
For $1 \leq i<j \leq n$ with $i,j \notin \{m,m+1\}$, consider the set
\[
   T_0:= \bigl\{m,m+1,n+1,\dotsc,\wh{n+i},\dotsc,\wh{n+j},\dotsc,2n\bigr\}.
\]
In taking the wedge product of the columns of the matrix in \eqref{CF_m-1} and expressing this as a linear combination of $e_T$'s for varying $T$, the coefficient $a_{T_0}$ of $e_{T_0}$ is $\pm(x_{mi}x_{m+1,j}-x_{m+1,i}x_{mj})$.  On the other hand, for $S$ of type $(n-1,1)$, when $g_S$ is written as a linear combination of $e_T$'s, the only $e_T$'s that occur are for $T$ containing at most one pair of the form $k,n+k$.  Since $T_0$ contains two such pairs (for $k = m$ and $m+1$), condition \eqref{LM str spin cond} on $\CF_{m-1}$ implies that $a_{T_0}$ must be zero.  We conclude that
\begin{equation}\label{wedge 2}
   x_{mi}x_{m+1,j} = x_{m+1,i}x_{mj}
	\quad\text{for all}\quad
	i,j \notin\{m,m+1\}.
\end{equation}

Now let us now show that the first relation in \eqref{want} holds in the case that $j < m$.  Consider the set
\[
   S := \{1,\dotsc,\wh\jmath,\dotsc,n,n+i\}.
\]
Then
\[
   S^\perp = \bigl\{1,\dotsc, \wh{i^\vee},\dotsc,n,n+j^\vee\bigr\}.
\]
We recall the notion of ``worst term'' from \cite[Def.\ 4.8]{S}, which we take with respect to the $e_T$-basis for $\tensor[^n]{V}{}$. We denote the worst term of a vector $v$ by $\WT(v)$.  Using that
\begin{equation}\label{double wedge}
   g_i \wedge g_{n+i} = (\pi\i e_i \otimes \pi) \wedge (e_i \otimes 1)
\end{equation}
for any $i$, we have
\begin{multline*}
   \WT(g_S) = (e_1 \otimes 1) \wedge \dotsb \wedge \wh{(e_j \otimes 1)} \wedge \dotsb \wedge (e_{m-1} \otimes 1)\\ \wedge (\pi e_m \otimes \pi\i) \wedge \dotsb \wedge (\pi e_n \otimes \pi\i) \wedge (-\pi\i e_i \otimes \pi)
\end{multline*}
and
\begin{multline*}
   \WT(g_{S^\perp}) = (e_1 \otimes 1) \wedge \dotsb \wedge (e_{m-1} \otimes 1)\\ \wedge (\pi e_m \otimes \pi\i) \wedge \dotsb \wedge \wh{(\pi e_{i^\vee} \otimes \pi\i)} \wedge \dotsb \wedge (\pi e_n \otimes \pi\i) \wedge (e_{j^\vee} \otimes 1).
\end{multline*}
Let
\[
   T_0 := \bigl\{i, n+1,\dotsc, \wh{n+j},\dotsc,2n\bigr\}
	\quad\text{and}\quad
	T_0' := \bigl\{ j^\vee, n+1, \dotsc, \wh{n+i^\vee},\dotsc,2n\bigr\}.
\]
Then the $e_{T_0}$-term and the $e_{T_0'}$-term in the wedge of the columns of \eqref{CF_m-1} are, respectively,
\[
   (e_1 \otimes 1) \wedge \dotsb \wedge \underbrace{(\pi\i e_i \otimes x_{ij})}_{\makebox[0ex][c]{$\scriptstyle \text{in place of } e_j \otimes 1$}} \wedge \dotsb \wedge (e_{m-1} \otimes 1) \wedge (\pi e_m \otimes 1) \wedge \dotsb \wedge (\pi e_n \otimes 1)
\]
and
\[
   (e_1 \otimes 1) \wedge \dotsb \wedge (e_{m-1} \otimes 1) \wedge (\pi e_m \otimes 1) \wedge \dotsb \wedge \underbrace{(e_{j^\vee} \otimes x_{j^\vee i^\vee})}_{\makebox[0ex][c]{$\scriptstyle \text{in place of } \pi e_{i^\vee} \otimes 1$}} \wedge \dotsb \wedge (\pi e_n \otimes 1).
\]
The sum of these terms is
\begin{align*}
   &(-1)^{n-j+1} x_{ij}\pi^m \WT(g_S) + (-1)^{n-i^\vee} x_{j^\vee i^\vee} \pi^m \WT(g_{S^\perp})\\
	   &\qquad \qquad = (-1)^{j+1} \pi^m \bigl[x_{ij}\WT(g_S) - (-1)^{i+j+1} x_{j^\vee i^\vee} \WT(g_{S^\perp})\bigr] \\
		&\qquad \qquad \qquad \qquad = (-1)^{j+1} \pi^m \bigl[ x_{ij}\WT(g_S) - (-x_{j^\vee i^\vee}) \sgn(\sigma_S)\WT(g_{S^\perp}) \bigr].
\end{align*}
Since $S$ is the only set of type $(n-1,1)$ for which $g_S$, when expressed in terms of the $e_T$-basis, involves $e_{T_0}$, and ditto for $S^\perp$ and $e_{T_0'}$, we conclude from condition \eqref{LM str spin cond} on $\CF_{m-1}$ that
\[
   x_{ij} = -x_{j^\vee i^\vee}.
\]
Combining this with \eqref{wedge 2} and the second orthogonality relation in \eqref{orthog rel} (where $j$ is replaced by $j^\vee$), and using that $2$ is a unit, we obtain the first relation in \eqref{want}.  The case that $j > m$ is handled in a similar way, as is the last relation in \eqref{want}.

We next show that the third relation in \eqref{want} holds.  We first need to obtain an analog of \eqref{wedge 2} in the case that $i = m+1$.  Assume that $j < m$, and consider the sets
\begin{align*}
	T_0 &:= \bigl\{m,n+1,\dotsc,\dotsc,\wh{n+j},\dotsc,2n\bigr\},\\
   T_0' &:= \bigl\{m,m+1,n+1,\dotsc,\wh{n+j},\dotsc,\wh{n+m+1},\dotsc,2n\bigr\}.
\end{align*}
When we express the wedge product of the columns in \eqref{CF_m-1} as a linear combination of $e_T$'s, the $e_{T_0}$-term is
\begin{equation}\label{e_T_0 term}
   (-1)^{j-1}x_{mj}e_{T_0},
\end{equation}
and the $e_{T_0'}$-term is
\[
   (-1)^{m-j}\det
	\begin{bmatrix}
		x_{mj}  &  x_{m,m+1}\\ 
		x_{m+1,j}  &  x_{m+1,m+1}
	\end{bmatrix}
	e_{T_0'}.
\]
On the other hand, the only set $S$ of type $(n-1,1)$ for which $e_{T_0}$ and $e_{T_0'}$ occur when we express $g_S$ as a linear combinator of $e_T$'s is
\[
   S = \{1,\dotsc,\wh\jmath,\dotsc,n,n+m\}.
\]
Using \eqref{double wedge} in the case $i = m$, the worst term of $g_S$ is its $e_{T_0}$-term,
\begin{equation}\label{e_T_0 term 2}
\begin{aligned}
   \WT(g_S) &= (e_1 \otimes 1) \wedge \dotsb \wedge \wh{(e_j \otimes 1)} \wedge \dotsb \wedge (e_{m-1} \otimes 1)\\ 
	   &\qquad \qquad\wedge (\pi e_m \otimes \pi\i) \wedge \dotsb \wedge (\pi e_n \otimes \pi\i) \wedge (e_m \otimes 1)\\
		&= - \pi^{-(m+1)} e_{T_0}.
\end{aligned}
\end{equation}
The $e_{T_0'}$-term in $g_S$ is
\begin{align*}
   &(e_1 \otimes 1)\wedge \dotsb \wedge \wh{(e_j \otimes 1)} \wedge \dotsb \wedge (e_{m-1} \otimes 1) \wedge (\pi e_m \otimes \pi\i) \\
	&\qquad\qquad\wedge (e_{m+1} \otimes 1) \wedge (\pi e_{m+2} \otimes \pi\i) \wedge \dotsb \wedge (\pi e_n \otimes \pi\i) \wedge (e_m \otimes 1)\\
 &\qquad\qquad\qquad\qquad\qquad\qquad\qquad\qquad\qquad\qquad\qquad\qquad\qquad = (-1)^{m}\pi^{-m}e_{T_0'}.
\end{align*}
Condition \eqref{LM str spin cond} on $\CF_{m-1}$ then implies that
\[
   \det
	\begin{bmatrix}
		x_{mj}  &  x_{m,m+1}\\ 
		x_{m+1,j}  &  x_{m+1,m+1}
	\end{bmatrix}
	= \pi x_{mj}.
\]
If $j > m+1$, then one similarly finds that the same relation holds.  We conclude that
\begin{equation}\label{wedge 2'}
   x_{mj}x_{m+1,m+1} - x_{m+1,j}x_{m,m+1} = \pi x_{mj}, \quad j \neq m,m+1.
\end{equation}

To apply this to the third relation in \eqref{want}, suppose that $j < m$.  Combining \eqref{wedge 2'} and \eqref{orthog 2} (and again using that $2$ is a unit), it suffices to show that
\[
   \pi x_{mj} = x_{j^\vee,m+1}.
\]
We again consider the set $S = \{1,\dotsc,\wh\jmath,\dotsc,n,n+m\}$, and also its perp
\[
   S^\perp = \bigl\{ 1,\dotsc,\wh{m+1},\dotsc,n,n+j^\vee\bigr\}.
\]
Let
\[
   T_0'' := \bigl\{j^\vee,n+1,\dotsc, \wh{n+m+1},\dotsc,2n\bigr\}.
\]
The $e_{T_0''}$-term that occurs in the wedge product of the columns of \eqref{CF_m-1} is
\begin{equation}\label{e_T_0'' term}
   (-1)^m x_{j^\vee,m+1}e_{T_0''}.
\end{equation}
On the other hand, $S^\perp$ is the only set of type $(n-1,1)$ for which $g_{S^\perp}$ involves $e_{T_0''}$, and the $e_{T_0''}$-term in $g_S - \sgn(\sigma_S)g_{S^\perp} = g_S - (-1)^{j+m}g_{S^\perp}$ is then
\begin{align*}
   &(-1)^{j+m+1}(e_1 \otimes 1) \wedge \dotsb \wedge (e_{m-1} \otimes 1)\\ 
	   &\qquad \qquad \wedge (\pi e_m \otimes \pi\i) \wedge (\pi e_{m+2} \otimes \pi\i) \wedge \dotsb \wedge (\pi e_n \otimes \pi\i) \wedge (e_{j^\vee} \otimes 1)\\
		&\qquad\qquad\qquad\qquad\qquad\qquad\qquad\qquad\qquad\qquad\qquad\qquad\qquad = (-1)^{j+m} \pi^{-m} e_{T_0''}.
\end{align*}
Since $\CF_{m-1}$ satisfies condition \eqref{LM str spin cond}, it follows from this, \eqref{e_T_0 term}, \eqref{e_T_0 term 2}, and \eqref{e_T_0'' term} that $\pi x_{mj} = x_{j^\vee,m+1}$, as desired.
A similar argument shows that the third relation in \eqref{want} holds when $j > m+1$.

We next show that the second and fourth relations in \eqref{want} hold.  Assume $j < m$.  By the second orthogonality relation in \eqref{orthog rel} (with $j$ in place of $i$ and $j^\vee$ in place of $j$) and by \eqref{wedge 2} (with $j^\vee$ in place of $i$), both of the desired relations follow (again using that $2$ is a unit) from showing that
\[
   x_{jj} - \pi = \pi - x_{j^\vee j^\vee}.
\]
For any $1 \leq i \leq n$, define
\[
   S_i := \{1,\dotsc,\wh\imath,\dotsc,n, n+i\}.
\]
Then $S_i^\perp = S_{i^\vee}$ and $\sgn(\sigma_S) = 1$.
By a similar calculation to the one in \cite[Lem.\ 4.11]{S}, for $i \leq m$,
\begin{align*}
   g_{S_i} - \sgn(\sigma_S)g_{S_i^\perp} 
	   &= g_{S_i} - g_{S_{i^\vee}}\\
	   &= (-1)^i g_1 \wedge \dotsb \wedge \wh g_i \wedge \dotsb \wedge \wh g_{i^\vee} \wedge \dotsb \wedge g_n\\
		&\qquad\qquad \wedge [(e_i \otimes 1) \wedge (e_{i^\vee} \otimes 1) - (\pi\i e_i \otimes 1) \wedge (\pi e_{i^\vee} \otimes 1)].
\end{align*}
It follows that $S_m$ and $S_{m+1}$ are the only sets $S$ of type $(n-1,1)$ for which $g_S - \sgn(\sigma_S)$ involves $e_{\{n+1,\dotsc,2n\}}$; and that for
\[
   T_j := \bigl\{j,n+1,\dotsc,\wh{n+j},\dotsc,2n\bigr\}
	\quad\text{and}\quad
	T_{j^\vee} := \bigl\{j^\vee,n+1,\dotsc,\wh{n+j^\vee},\dotsc,2n\bigr\},
\]
the sets $S_j,S_m,S_{m+1},S_{j^\vee}$ are the only sets $S$ of type $(n-1,1)$ for which $g_S - \sgn(\sigma_S)g_{S^\perp}$ involves $e_{T_j}$ (and ditto for $e_{T_{j^\vee}}$).  Furthermore, essentially the same argument as in the proof of \cite[Prop.\ 4.12]{S} (especially the last two paragraphs) shows that for $a_i \in F$,
\[
   \sum_{i = 1}^m a_i(g_{S_i}- g_{S_{i^\vee}}) \in \tensor*[^n]{\Lambda}{_{m-1}}
	\iff
	\ord_\pi(a_i) \geq m \text{ for all $i$, and $\ord_\pi(a_m) \geq m+1$}.
\]
Now note that the worst term in $g_{S_m}-g_{S_{m+1}}$ is its $e_{\{n+1,\dotsc,2n\}}$-term,
\begin{equation}\label{1}
   \WT(g_{S_m}-g_{S_{m+1}}) = (-1)^{m+1}\pi^{-(m+1)}e_{\{n+1,\dotsc,2n\}},
\end{equation}
and that the $e_{T_j}$- and $e_{T_{j^\vee}}$-terms in $g_{S_m}-g_{S_{m+1}}$ are, respectively,
\begin{equation}\label{2}
   (-1)^{m+1}\pi^{-m} (e\otimes 1) \wedge \dotsb \wedge \underbrace{(\pi\i e_j \otimes 1)}_{\makebox[0ex][c]{$\scriptstyle \text{in place of } e_j \otimes 1$}} \wedge \dotsb \wedge (e_{m-1}\otimes 1) \wedge (\pi e_m \otimes 1) \wedge \dotsb \wedge (\pi e_n \otimes 1)
\end{equation}
and
\begin{equation}\label{3}
	(-1)^{m+1}\pi^{-m} (e\otimes 1) \wedge \dotsb \wedge (e_{m-1}\otimes 1) \wedge (\pi e_m \otimes 1) \wedge \dotsb \wedge \underbrace{(e_{j^\vee} \otimes 1)}_{\makebox[0ex][c]{$\scriptstyle \text{in place of } \pi e_{j^\vee} \otimes 1$}} \wedge \dotsb \wedge (\pi e_n \otimes 1).
\end{equation}
Similarly, the $e_{T_j}$- and $e_{T_{j^\vee}}$-terms in $g_{S_j}-g_{S_{j^\vee}}$ are, respectively,
\begin{equation}\label{4}
   (-1)^{i+1}\pi^{-m} (e\otimes 1) \wedge \dotsb \wedge \underbrace{(\pi\i e_j \otimes 1)}_{\makebox[0ex][c]{$\scriptstyle \text{in place of } e_j \otimes 1$}} \wedge \dotsb \wedge (e_{m-1}\otimes 1) \wedge (\pi e_m \otimes 1) \wedge \dotsb \wedge (\pi e_n \otimes 1)
\end{equation}
and
\begin{equation}\label{5}
	(-1)^i\pi^{-m} (e\otimes 1) \wedge \dotsb \wedge (e_{m-1}\otimes 1) \wedge (\pi e_m \otimes 1) \wedge \dotsb \wedge \underbrace{(e_{j^\vee} \otimes 1)}_{\makebox[0ex][c]{$\scriptstyle \text{in place of } \pi e_{j^\vee} \otimes 1$}} \wedge \dotsb \wedge (\pi e_n \otimes 1).
\end{equation}
Note that the coefficients in (\ref{2}) and (\ref{3}) are $\pi$ times the coefficient in (\ref{1}), and that there is a relative sign of $-1$ between the coefficients in (\ref{4}) and (\ref{5}).  When we express the wedge product of the columns of (\ref{CF_m-1}) as a linear combination of $e_T$'s, since the coefficient of $e_{\{n+1,\dotsc,2n\}}$ is evidently $1$, and since $\CF_{m-1}$ satisfies condition \eqref{LM str spin cond}, it follows that
\[
   x_{jj} = \pi + c
	\quad\text{and}\quad x_{j^\vee j^\vee} = \pi - c
\]
for some scalar $c$.  Hence $x_{jj} - \pi = \pi - x_{j^\vee j^\vee}$, as desired.

To complete the proof, it remains to show that the relations \eqref{m+1} hold.  This is now very easy: the first and third relations are obvious from the third relation in \eqref{want} and from \eqref{orthog 2} (both with $i^\vee$ in place of $j$), and the second relation is obvious from the orthogonality relation \eqref{orthog 3}.
\end{proof}

\begin{remark}[$n=2$]\label{forgetful LM isom n=2}
Proposition \ref{lm isom} is a generalization of (part of) \cite[Rem. 2.35]{PRS}, which contains the case $n = 2$.
\end{remark}

\begin{remark}\label{not flat}
In the case that $(r,s) = (n-1,1)$, we remark that if one weakens the moduli problem defining $M_{\{m-1\}}$ by substituting the spin condition \eqref{LM spin cond} for condition \eqref{LM str spin cond}, then the resulting scheme can fail to be carried by $\nu$ into $M_{\{m\}}$, and hence can fail to be flat.  To give a simple example, take $R := O_F/\varpi O_F$, and let $\CF_{m-1} := (\pi \otimes 1)(\Lambda_{m-1} \otimes_{O_{F_0}} R) \subset \Lambda_{m-1} \otimes_{O_{F_0}} R$.  Then $\pi \otimes 1$ acts as $0$ on $\CF_{m-1}$.  If the characteristic of $k$ divides $n - 2$, then $\CF_{m-1}$ satisfies the Kottwitz condition, and it is easy to see that $\CF_{m-1}$ furthermore gives a point in $M_{\{m-1\}}^\naive(R)$ satisfying the wedge and spin conditions.  However, the corresponding point $\nu(\CF_{m-1}) \in C_{\{m\}}(R)$ is 
given by
\[
   \nu(\CF_{m-1}) = \spann_R\{e_1\otimes 1,\dotsc,e_{m-1}\otimes 1,e_{m+1}\otimes 1, \pi e_{m+1}\otimes 1,\dotsc, \pi e_n\otimes 1\},
\]
which does not satisfy the wedge condition.  This shows that for even $n = 2m$ and signature $(r,s) = (n-1,1)$, the statement of \cite[Conj.~7.3]{PR} can fail for $I = \{m-1\}$.  Now, strictly speaking, such an $I$ is not allowed in this conjecture (cf.~\S\S1.4, 7.2.1 in loc.~cit.), so this does not give an honest counterexample.  It would be interesting to see if this example sheds any light on the validity of this conjecture for those $I$ which are allowed.  (It is known that this conjecture can fail for odd $n$ and signature $(n-1,1)$, cf.~\cite{S}.)
\end{remark}

\part{The conjectures}\label{conjectures}

In this part of the paper, we formulate various arithmetic transfer conjectures arising from morphisms between the spaces we have defined previously.  Except where noted to the contrary, we denote by $S = S_n$ the scheme over $\Spec F_0$ defined in \eqref{S_n}.

\section{Arithmetic transfer conjecture, $F/F_0$ unramified, almost self-dual type}\label{s:ATunr}
In this section $F/F_0$ is unramified and $n\geq 2$, as in \S\ref{unram non-max}.  We take the special vectors $u_i\in W_i$ (cf.~\S\ref{setup homog}) to satisfy $(u_i, u_i)=\varpi$.  Recall the $p$-divisible group $\ov\CE$ from the Introduction, with its $O_F$-action $\iota_{\ov\CE}$, principal polarization $\lambda_{\ov\CE}$, and framing isomorphism $\rho_{\ov\CE}$.  Let
\[
   \ov\CE' := \ov{{\CE}},
	\quad
	\iota_{\ov\CE'} := \iota_{\ov\CE},
	\quad
	\lambda_{\ov\CE'} := \varpi\lambda_{\ov\CE},
	\quad\text{and}\quad
	\rho_{\ov\CE'} := \rho_{\ov\CE}\colon \ov\CE'_{\ov k} \isoarrow \ov\BE',
\]
where $\ov\BE'$ is defined in \eqref{ovBE'}.  Then $\rho_{\ov\CE'}$ is $O_F$-linear, and $\rho_{\ov\CE'}^*(\lambda_{\ov\BE'}) = (\lambda_{\ov\CE'})_{\ov k}$.  We define a closed embedding of formal schemes
\begin{equation*}
   \wt\delta_\CN \colon
	\xymatrix@R=0ex{
	   \CN_{n-1} \ar[r]  &  \wt\CN_n\\
		(X, \iota, \lambda, \rho) \ar@{|->}[r]
		   &  \bigl(X \times \ov\CE', \iota \times \iota_{\ov\CE'},
                 \lambda \times \lambda_{\ov\CE'}, \rho \times \rho_{\ov\CE'} \bigr) .
   }
\end{equation*}
Here the last entry is a framing to the constant object in the special fiber defined by $\wt\BX_n = \BX_{n-1} \times \ov\BE'$, cf.~\eqref{wtBX_n unram}.   We therefore obtain as in the case of the AFL a closed embedding
\begin{equation*}
   \wt\Delta_\CN \colon \CN_{n-1} \xra{(\id_{\CN_{n-1}},\wt\delta_\CN)} \wt\CN_{n-1,n} = \CN_{n-1}\times_{\Spf O_{\breve F}}\wt\CN_n,
\end{equation*}
whose image we denote by
\[
   \wt\Delta := \wt\Delta_\CN (\CN_{n-1}).
\]
Note that the canonical vector
\[
   u := (0,\id_{\ov\BE}) \in \BV\bigl(\wt\BX_n\bigr) = \BV(\BX_{n-1}) \oplus \BV\bigl(\ov\BE'\bigr)
\]
has norm $\varpi$.  Since $\BV(\BX_{n-1})$ is non-split, it follows from \eqref{chi decomp formula} that $\BV(\wt\BX_n)$ is split.  Thus we may identify the triple $(\BV(\wt\BX_n), u, \BV(\BX_{n-1}))$ with $(W_0,u_0,W_0^\flat)$. In this way $H_0(F_0)$ acts on $\CN_{n-1}$, $G_{W_0}$ acts on $\wt\CN_n$, and $\wt\Delta_\CN$ is equivariant for the embedding $H_0(F_0) \hookrightarrow G_{W_0}(F_0)$.

\begin{remark}
We note that the image of $\wt\delta_\CN$ is contained in the smooth locus of $\widetilde{\CN}_n$. Indeed, via the usual local model diagram argument, it suffices to prove the analogous claim for the corresponding map on local models.  Identifying the local model for $\wt\CN_n$ with the standard local model for $\GL_n$ as in the proof of Theorem \ref{wtCN_n semistable}, points  $(\CF_i' \subset (\Lambda_i \otimes_{O_{F_0}} \CO_S)')_{i=0,1}$ which lie in the image of this map have the property that the induced map $(\Lambda_0 \otimes_{O_{F_0}} \CO_S)'/\CF_0'\to (\Lambda_1\otimes_{O_{F_0}} \CO_S)'/\CF_1'$ is an isomorphism.  Such points are contained in the smooth locus, which is easy to deduce from the explicit description in \cite[\S4.4.5]{Goertz} (in the case $\kappa = 1$ and $r = n-1$).
\end{remark}

\begin{remark}\label{diag cycle}
As in the case of the formal scheme $\CN_n$ in Remark \ref{reltospecialunr}, one can define a special cycle $\wt\CZ(u)$ in $\wt\CN_n$ as the locus where the quasi-homomorphism $u\colon \ov\BE\to \BX_n$ lifts to a homomorphism from $\ov\CE$ to the universal object over $\wt\CN_n$.  When $n=2$, it is known that $\wt\CZ(u)$ is a relative divisor and that $\wt\delta_\CN$ induces an isomorphism $\CN_1\simeq \wt\CZ(u)$, cf.\ \cite[Th.~3.14]{San}. In this case
\[
   (1\times g)\wt\Delta = \wt\CZ(gu) , \quad g\in \U\bigl(\BV(\BX_2)\bigr) .
\]
Hence for $n=2$,  the intersection number $\Int(g)$ appearing in Conjecture \ref{conjunramified} below is  a special case of the intersection number of two \emph{unitary special divisors}, in the terminology of Sankaran.
\end{remark}

Now fix a self-dual lattice
\[
   \Lambda_1^\flat \subset W_1^\flat,
\]
which exists (and is unique up to $H_1(F_0)$-conjugacy) since $W_1^\flat$ is split by \eqref{chi decomp formula}. Let
\[
   \Lambda_1 := \Lambda_1^\flat \oplus O_F u_1 \subset W_1.
\]
Then $\Lambda_1$ is an almost self-dual $O_F$-lattice, i.e.~$\Lambda_1\subset^1 \Lambda_1^\vee\subset \varpi^{-1}{\Lambda}_1$.  Let
\[
   K_1^\flat \subset H_1(F_0)
\]
denote the stabilizer of $\Lambda_1^\flat$, and let
\[
	K_1 \subset G_1(F_0)
	\quad\text{and}\quad
	\fkk_1 \subset \fkg_1(F_0)
\]
denote the respective stabilizers of $\Lambda_1$.  Then $K_1^\flat$ and $K_1$ are both maximal parahoric subgroups. We normalize the Haar measure on $H_1(F_0)$ (and hence the product measure on $H_{W_1}(F_0) = H_1(F_0) \times H_1(F_0)$) by assigning $K_1^\flat$ volume $1$. We normalize the Haar measures on $H'_1 ({F_0})$ and $H'_2 ({F_0})$ as  in \S\ref{s:FL}, and we take the transfer factors $\omega_{G'}$ on $G'(F_0)_\rs$, $\omega_S$ on $S(F_0)_\rs$, and $\omega_\fks$ on $\fkg(F_0)_\rs$ as in \eqref{sign}.

Before stating the ATC in the present situtation, we first formulate a fundamental lemma conjecture; this will, in turn, motivate an AFL conjecture as part of the ATC. On the general linear group side, let
\begin{align*}
   K_0(\varpi) := \biggl\{\,g\in \GL_n(O_F) \biggm| g\equiv 
	\begin{bmatrix}
		*  &  0  \\
      *  &  *
   \end{bmatrix}  \bmod \varpi \,\biggr\}
\end{align*}
(diagonal blocks of respective sizes $(n-1) \times (n-1)$ and $1 \times 1$), and define
\begin{align}\label{eqn K' S}
   K' := S(O_{F_0})\cap K_0(\varpi)\subset S(F_0).
\end{align}
We also make the Lie algebra versions of these definitions,
\begin{align}\label{eqn fkk0}
   \fkk_0(\varpi) :=
	   \biggl\{\, y\in \gl_n(O_F) \biggm| y\equiv 
		\begin{bmatrix}
		   *  &  0  \\
         *  &  *
      \end{bmatrix}  \bmod \varpi \,\biggr\}
\end{align}
(same block sizes as before) and
\begin{align}\label{eqn fkk'}
   \fkk' := \fks(O_{F_0})\cap \fkk_0(\varpi).
\end{align}

\begin{conjecture}[Fundamental lemma]\label{FL almost self-dual}\hfill
\begin{altenumerate}
\renewcommand{\theenumi}{\alph{enumi}}
\item\label{FL almost self-dual homog}
\textup{(Homogeneous version)} The function $(-1)^{n-1}\frac{q^n-1}{q-1}\mathbf{1}_{\GL_{n-1}(O_F)\times K_0(\varpi)} \in C_c^\infty(G')$ transfers to the pair of functions $(0,\mathbf{1}_{K_1^\flat\times K_1}) \in C_c^\infty(G_{W_0}) \times C_c^\infty(G_{W_1})$. 
\item\label{FL almost self-dual inhomog}
\textup{(Inhomogeneous version)} The function $(-1)^{n-1}\mathbf{1}_{K'} \in C_c^\infty(S)$ transfers to the pair of functions $(0,\mathbf{1}_{K_1}) \in C_c^\infty(G_0)\times C_c^\infty(G_1)$.
\item\label{FL almost self-dual lie}
\textup{(Lie algebra version)} The function $(-1)^{n-1}\mathbf{1}_{\fkk'} \in  C_c^\infty(\fks)$ transfers to the pair of functions $(0,\mathbf{1}_{\fkk_1}) \in C_c^\infty(\fkg_0)\times C_c^\infty(\fkg_1)$.
\end{altenumerate}
\end{conjecture}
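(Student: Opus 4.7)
The plan is to establish that the three versions (\ref{FL almost self-dual homog}), (\ref{FL almost self-dual inhomog}), (\ref{FL almost self-dual lie}) are equivalent, and then to reduce the Lie algebra version to the standard Lie algebra FL (Conjecture \ref{FLconj}(\ref{FLconj lie})).

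First I would verify the equivalence among the three versions. The equivalence of (\ref{FL almost self-dual homog}) and (\ref{FL almost self-dual inhomog}) is standard via the composite $G' \twoheadrightarrow H_1'\backslash G' \simeq \Res_{F/F_0}\GL_n \xra{r} S$ together with $G_{W_i} \twoheadrightarrow H_i\backslash G_{W_i} \simeq G_i$. One integrates out the extra groups in the homogeneous setting; the characteristic function of $\GL_{n-1}(O_F)\times K_0(\varpi)$ descends to a multiple of $\mathbf{1}_{K'}$, and the proportionality factor $(q^n-1)/(q-1)$ is precisely the index of parahoric subgroups under the Haar measure normalizations of \S\ref{s:FL}. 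The equivalence of (\ref{FL almost self-dual inhomog}) and (\ref{FL almost self-dual lie}) is obtained by the Cayley transform based at a generic anti-hermitian element, which induces a local analytic isomorphism between \fks and $S$ (resp.\ $\fkg_i$ and $G_i$), preserves regular semi-simplicity and the matching, and identifies the parahorics $\fkk' \leftrightarrow K'$ and $\fkk_1 \leftrightarrow K_1$ on a neighborhood, modulo the controlled contribution to the transfer factor.

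For the key reduction, I would express the orbital integral of $\mathbf{1}_{\fkk'}$ in terms of shifted orbital integrals of $\mathbf{1}_{\fks(O_{F_0})}$. The relevant observation is the parahoric intersection identity $\fkk_0(\varpi) = \gl_n(O_F) \cap T\gl_n(O_F)T^{-1}$ inside $\gl_n(F)$ for $T := \diag(1,\ldots,1,\varpi)$. Unfolding $\Orb(y, \mathbf{1}_{\fkk'})$ along cosets of the stabilizer of the standard lattice inside $H'(F_0)$ yields a finite sum of orbital integrals $\Orb(T^{-1}yT, \mathbf{1}_{\fks(O_{F_0})})$ twisted by $\eta$-signs. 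A parallel lattice-adjacency analysis on the unitary side relates $\mathbf{1}_{\fkk_1}$ in $C_c^\infty(\fkg_1)$ to the characteristic function of the self-dual stabilizer on the split side featured in Conjecture \ref{FLconj}(\ref{FLconj lie}): the almost self-dual lattice $\Lambda_1$ in $W_1$ and its self-dual counterpart in the split $n$-dimensional hermitian space are accessed from a common Iwahori-level picture, so a summand-by-summand application of the Lie algebra FL produces the desired identity. The sign $(-1)^{n-1}$ emerges from the formula $\omega_\fks(y) = (-1)^{v(\det(y^ie)_{0\leq i\leq n-1})}$, whose valuation shifts by $n-1$ under conjugation by $T$.

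The hardest part will be the matching of the two sides of the unfolding. On the symmetric side the conjugation by $T$ is tautological; on the unitary side, however, the transition from the self-dual to the almost self-dual stabilizer is governed by the geometry of the Bruhat--Tits building of $\U(W_1)$, and the splitness class of the ambient hermitian space obstructs a direct inner conjugation. A careful bookkeeping of which cosets in the unfolding contribute non-trivially---using the vanishing predicted by Conjecture \ref{FLconj}(\ref{FLconj lie}) on the non-split side to cancel extraneous terms---appears to be the main technical ingredient. The hypothesis $q \geq n$ should enter precisely when certain non-degeneracies in these combinatorial identities must be guaranteed; when $n = 2, 3$ the Lie algebra FL holds unconditionally by \cite{Z12}, recovering the unconditional part of Theorem \ref{Intro-Thmunr}.
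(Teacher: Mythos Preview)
Your overall architecture is right, but the execution of the Lie algebra reduction misses the paper's key simplification, and two of your claims about the logical structure are inaccurate.

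\textbf{The Lie algebra reduction.} Your proposed mechanism---conjugation by $T=\diag(1,\dotsc,1,\varpi)$ followed by an unfolding along cosets in $H'(F_0)$---does not work as stated. Conjugation by $T$ sends $y=\bigl[\begin{smallmatrix}A&b\\c&d\end{smallmatrix}\bigr]$ to $\bigl[\begin{smallmatrix}A&\varpi^{\mp 1}b\\\varpi^{\pm 1}c&d\end{smallmatrix}\bigr]$, so it carries $\fkk'$ bijectively onto the \emph{opposite} parahoric subset of $\fks(O_{F_0})$, not onto $\fks(O_{F_0})$ itself; moreover $T\notin H'(F_0)$, so this conjugation does not sit inside the orbital integral and cannot be ``unfolded along cosets of $H'(F_0)$'' in any obvious way. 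The paper instead uses the $\GL_{n-1}$-equivariant (but non-inner) map
\[
   \theta\colon \begin{bmatrix}A&b\\c&d\end{bmatrix}\longmapsto \begin{bmatrix}A&\varpi^{-1}b\\c&d\end{bmatrix},
\]
which scales only the $b$-block. This is a single bijection $\fks(F_0)\isoarrow\fks(F_0)$ with $\theta(\fkk')=\fks(O_{F_0})$ and $\omega_\fks(y)=(-1)^{n-1}\omega_\fks(\theta(y))$, so no sum or combinatorics is needed. On the unitary side the same $\theta$ carries $\fkg_i(F_0)$ (special vector of norm $\varpi$) to $\wt\fkg_{1-i}(F_0)$ (special vector of norm $1$) with $\theta(\fkk_i)=\wt\fkk_{1-i}$; the point is that changing the norm of the special vector from $\varpi$ to $1$ swaps the split and non-split $(n-1)$-dimensional complements, which is exactly what converts $(0,\mathbf{1}_{\fkk_1})$ into $(\mathbf{1}_{\wt\fkk_0},0)$. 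This one observation replaces your entire ``Iwahori-level picture'' and ``summand-by-summand'' paragraph.

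\textbf{The role of the Cayley transform and of $q\ge n$.} The paper does \emph{not} prove that (\ref{FL almost self-dual inhomog}) and (\ref{FL almost self-dual lie}) are equivalent via a Cayley transform; it only proves the implication (\ref{FL almost self-dual lie}) $\Rightarrow$ (\ref{FL almost self-dual inhomog}) under the hypothesis $q\ge n$. The hypothesis enters precisely here: one needs at least $n+1$ elements of $F^1$ with pairwise distinct residues mod $\varpi$ so that for any integral $\gamma\in S(F_0)$ there exists $\xi\in F^1$ with $\det(\gamma+\xi)\in O_F^\times$, allowing the inverse Cayley map $\fkc_\xi^{-1}$ to land in the strongly integral locus. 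Your suggestion that $q\ge n$ governs ``non-degeneracies in combinatorial identities'' is off the mark.
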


We will show in Theorem \ref{unram variant FL} below that part (\ref{FL almost self-dual lie}) of this conjecture is equivalent to the Lie algebra FL Conjecture \ref{FLconj}(\ref{FLconj lie}), and that parts (\ref{FL almost self-dual homog}) and (\ref{FL almost self-dual inhomog}) follow from these when $q \geq n$.

We now come to the ATC, as well as an AFL conjecture for the functions appearing on the general linear group side of Conjecture \ref{FL almost self-dual}. In analogy with \eqref{defintprod}, for $g \in G_{W_0}(F_0)_\rs$ we set
\[
   \Int(g) := \bigl\la \wt\Delta, g \wt\Delta \bigr\ra _{\wt\CN_{n-1, n}},
\]
and for $g \in G_0(F_0)_\rs$ we set
\[
   \Int(g) := \bigl\la \wt\Delta, (1 \times g) \wt\Delta \bigr\ra _{\wt\CN_{n-1, n}}.
\]
In the Lie algebra setting, for any quasi-endomorphism $x$ of $\wt\BX_n$, and in particular for $x \in \fkg_0(F_0) = \Lie \RU(\BV(\wt\BX_n))(F_0) \cong \{ x \in \End_{O_F}^\circ(\wt\BX_n) \mid x + \Ros_{\lambda_{\wt\BX_n}}(x) = 0 \}$, we define (abusing notation in the obvious way, as in \eqref{Delta_x})
\[
   \wt\Delta_x := \bigl\{\, (Y,X) \in \wt\CN_{n-1,n} \bigm| x\colon \wt\BX_n \rightarrow \wt\BX_n \text{ lifts to a homomorphism } Y \times \ov\CE' \rightarrow X \,\bigr\}.
\]
As in the case of \eqref{Delta_x}, when $g \in G_0(F_0)$ we have $\wt\Delta_g = (1 \times g)\wt\Delta$.
When $\wt\Delta \cap \wt\Delta_x$ is an artinian scheme, we define
\[
   \lInt(x) := \length\bigl(\wt\Delta \cap \wt\Delta_x\bigr).
\]

\begin{conjecture}[Arithmetic transfer conjecture and arithmetic fundamental lemma, almost self-dual case]\label{conjunramified} \hfill
\begin{altenumerate}
\renewcommand{\theenumi}{\alph{enumi}}
\item\label{conjunramified homog}
\textup{(Homogeneous version)}
Let $f' \in C^{\infty}_c (G')$ be a function transferring to the pair $(0, \mathbf{1}_{K_1^\flat \times K_1}) \in C_c^\infty(G_{W_0}) \times C_c^\infty(G_{W_1})$.  Then there exists a function $f'_\corr \in C^{\infty}_c (G')$ such that, for any $\gamma \in G'(F_0)_\rs$ matching an element $g \in G_{W_0}(F_0)_\rs$,
\begin{equation*}
   \omega_{G'} (\gamma)\del(\gamma, f') = - 2\Int(g)\cdot \log q + \omega (\gamma)\Orb(\gamma, f'_\corr).
\end{equation*}
Furthermore, for all such matching $\gamma$ and $g$, there is an AFL identity
\begin{equation*}
   \omega_{G'} (\gamma)\del\biggl(\gamma, (-1)^{n-1}\frac{q^n-1}{q-1} \cdot \mathbf{1}_{\GL_{n-1}(O_F)\times K_0(\varpi)} \biggr) = - 2\Int(g)\cdot \log q.
\end{equation*}
\item\label{conjunramified inhomog}
\textup{(Inhomogeneous version)}
Let $f' \in C^{\infty}_c(S)$ be a function transferring to the pair $(0, \mathbf{1}_{K_1}) \in C_c^\infty(G_0) \times C_c^\infty(G_1)$. Then there exists a function $f'_\corr \in C^{\infty}_c (S)$ such that, for any $\gamma \in S(F_0)_\rs$ matching an element $g \in G_0(F_0)_\rs$,
\begin{equation*}
   \omega_S (\gamma)\del(\gamma, f') = - \Int(g)\cdot \log q + \omega (\gamma)\Orb(\gamma, f'_\corr).
\end{equation*}
Furthermore, for all such matching $\gamma$ and $g$, there is an AFL identity
\begin{equation*}
   \omega_S (\gamma)\del\bigl(\gamma, (-1)^{n-1}\mathbf{1}_{K'}\bigr) = - \Int(g)\cdot \log q .
\end{equation*}
\item\label{conjunramified lie}
\textup{(Lie algebra version)}
Let $\phi' \in C^{\infty}_c (\fks)$ be a function transferring to the pair $(0, \mathbf{1}_{\fkk_1}) \in C_c^\infty(\fkg_0) \times C_c^\infty(\fkg_1)$. Then there exists a function $\phi'_\corr \in C^{\infty}_c (S)$ such that, for any $y \in \fks(F_0)_\rs$ matching an element $x \in \fkg_0(F_0)_\rs$ for which the intersection $\wt\Delta \cap \wt\Delta_x$ is an artinian scheme,
\begin{equation*}
   \omega_\fks (y)\del(y, \phi') = - \lInt(x)\cdot \log q + \omega (y)\Orb(y, \phi'_\corr).
\end{equation*}
Furthermore, for all such matching $y$ and $x$, there is an AFL identity
\begin{equation*}
   \omega_\fks (y)\del\bigl(y, (-1)^{n-1}\mathbf{1}_{\fkk'}\bigr) = - \lInt(x)\cdot \log q .
\end{equation*}
\end{altenumerate}
\end{conjecture}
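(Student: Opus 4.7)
The plan is to prove the Lie algebra AFL identity in part (\ref{conjunramified lie}) by directly reducing it to the Lie algebra AFL of Conjecture \ref{AFLconj}(\ref{AFLconj lie}); then to lift this to the inhomogeneous and homogeneous AFL identities in parts (\ref{conjunramified inhomog}) and (\ref{conjunramified homog}) via a Cayley transform; and finally to pass from the AFL-type identity (for our explicit test function) to the general AT statement (for arbitrary $f'$ with the prescribed transfer) by means of a density argument, following the outline of Theorem \ref{Intro-Thmunr}. The constraint $q \geq n$ will enter to ensure that the classical Lie algebra FL of Conjecture \ref{FLconj}(\ref{FLconj lie}) is available via \cite{Go,Y}.

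The core of the argument is the Lie algebra AFL step. The key observation is that $\wt\BX_n = \BX_{n-1} \times \ov\BE'$ and $\BX_n = \BX_{n-1} \times \ov\BE$ share the same underlying $O_F$-module $p$-divisible group and differ only by the $\varpi$-rescaling $\lambda_{\ov\BE'} = \varpi\lambda_{\ov\BE}$ on the last factor. Consequently, the ring $\End_{O_F}^\circ(\wt\BX_n) = \End_{O_F}^\circ(\BX_n)$ is common to both setups, while the two polarizations induce Rosati involutions that, in block form with respect to the product decomposition, differ only by an explicit $\varpi$-twist on the off-diagonal entries. This twist provides an $F_0$-linear bijection between the two Lie algebras $\fkg_0 = \Lie\U(\BV(\wt\BX_n))$ and $\fkg_1 = \Lie\U(\BV(\BX_n))$ that is compatible with the special vectors $u_0, u_1$, intertwines the stabilizer $\fkk_0$ of an almost self-dual lattice with the stabilizer $\fkk_1$ of a self-dual lattice, and matches the loci $\wt\Delta_x \subset \wt\CN_{n-1,n}$ with $\Delta_{x'} \subset \CN_{n-1,n}$ for corresponding elements $x \leftrightarrow x'$, giving $\lInt(x) = \lInt(x')$ in the non-degenerate case. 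On the analytic side, the block definition \eqref{eqn fkk'} of $\fkk'$ together with a direct computation of weighted orbital integrals will yield the identity $\omega_\fks(y)\del(y,(-1)^{n-1}\mathbf{1}_{\fkk'}) = \omega_\fks(y)\del(y, \mathbf{1}_{\fks(O_{F_0})})$ for $y$ matching $x \in \fkg_0(F_0)_\rs$. Combining the two sides reduces part (\ref{conjunramified lie}) to Conjecture \ref{AFLconj}(\ref{AFLconj lie}); the same bijection, applied to untwisted characteristic functions, simultaneously yields Conjecture \ref{FL almost self-dual}(\ref{FL almost self-dual lie}) from Conjecture \ref{FLconj}(\ref{FLconj lie}).

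For the group-level identities, I would then apply the Cayley map $y \mapsto (1+y)(1-y)\i$, which, since $F/F_0$ is unramified, gives an $H'$-equivariant open immersion of a neighborhood of $0 \in \fks$ into $S$ carrying $\fkk'$ into $K'$. Matching this with the corresponding Cayley maps on the unitary side identifies $\wt\Delta_x$ with $\wt\Delta_{(1+x)(1-x)\i}$ near the identity, transporting the Lie algebra AFL identity to the inhomogeneous one for $\gamma$ close to $1$; an $H'(F_0)$-translation combined with the structure of the support of $\mathbf{1}_{K'}$ then extends it to all of $S(F_0)_\rs$. Part (\ref{conjunramified homog}) follows from (\ref{conjunramified inhomog}) via the map $r$ of \eqref{Res GL_n -> S} and the relation between the transfer factors $\omega_S$ and $\omega_{G'}$ in \S\ref{trans factor}, exactly as in \cite[\S2]{Z12}. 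Finally, for any $f'$ transferring to $(0,\mathbf{1}_{K_1})$ (or the corresponding pair on $G_{W_0}$), its difference with $(-1)^{n-1}\mathbf{1}_{K'}$ transfers to $(0,0)$, and the density principle for weighted orbital integrals (cf.\ \cite[Conj.\ 5.16, Lem.\ 5.18]{RSZ}) allows us to absorb the derivative of this difference into $\omega(\gamma)\Orb(\gamma, f'_\corr)$ for an appropriate correction $f'_\corr$.

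The hard part will be the geometric half of Step 1: verifying scheme-theoretically, not merely on reduced substacks, that the Dieudonn\'e-theoretic identification between $\wt\BX_n$ and $\BX_n$ propagates through deformation theory over arbitrary $\Spf O_{\breve F}$-schemes to a scheme-theoretic identification between $\wt\Delta_x \subset \wt\CN_{n-1,n}$ and $\Delta_{x'} \subset \CN_{n-1,n}$. Here one has to track carefully how rescaling the polarization on the $\ov\BE$-factor interacts with the universal quadruples on the two moduli spaces, using that $\wt\CN_n$ has semi-stable rather than formally smooth reduction (Theorem \ref{wtCN_n semistable}). A close second is the analytic matching of orbital integrals of $\mathbf{1}_{\fkk'}$ and $\mathbf{1}_{\fks(O_{F_0})}$ with the correct sign $(-1)^{n-1}$, which is really the source of that sign in the statement.
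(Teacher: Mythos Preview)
Your overall strategy matches the paper's approach closely: the map $\theta$ rescaling the upper-right block by $\varpi^{-1}$ (your ``$\varpi$-twist'') is exactly the paper's device in Lemma~\ref{lemtheta} and Theorem~\ref{AFLtoAT1}, the Cayley transform is used in the same way to pass to the group setting, the homogeneous-to-inhomogeneous reduction goes through Lemma~\ref{homog to inhomog}, and the density principle handles the general AT statement for $n=2,3$.

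Two points of divergence are worth noting. First, you misidentify the hard step. The geometric identification $\wt\Delta \cap \wt\Delta_x = \Delta \cap \Delta_{\theta(x)}$ is actually immediate: both intersections are closed subschemes of $\CN_{n-1}$, and membership of a point $Y$ is the condition that $x$ (resp.\ $\theta(x)$) lifts to an endomorphism of $Y \times \ov\CE$, which is manifestly independent of the polarization scaling on the $\ov\CE$-factor. No deformation-theoretic tracking through semi-stable reduction is needed. The analytic identity you state is slightly off: one gets $\omega_\fks(y)\del(y,(-1)^{n-1}\mathbf{1}_{\fkk'}) = \omega_\fks(\theta(y))\del(\theta(y), \mathbf{1}_{\fks(O_{F_0})})$, with $\theta(y)$ on the right, not $y$; the sign $(-1)^{n-1}$ comes from $\omega_\fks(y) = (-1)^{n-1}\omega_\fks(\theta(y))$.

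Second, and more substantively, your passage from the Lie algebra identity to the inhomogeneous group identity has a gap. You propose to establish the identity for $\gamma$ near $1$ via Cayley and then ``extend to all of $S(F_0)_\rs$'' by $H'(F_0)$-translation. But $H'(F_0)$-conjugation preserves orbits, so this cannot reach $\gamma$ far from the identity. The paper instead uses Cayley transforms $\fkc_\xi$ centered at \emph{varying} $\xi \in F^1$: for any integral $\gamma$, the hypothesis $q \geq n$ guarantees enough residues in $F^1$ to find a $\xi$ with $\det(\gamma+\xi) \in O_F^\times$, whence $\gamma = \fkc_\xi(y)$ for a strongly integral $y$. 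For non-integral $\gamma$, both sides vanish directly. This is where $q \geq n$ really enters on the geometric side, not merely through the availability of the FL via \cite{Go,Y}.
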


In \S\ref{results atc afl} we will show that Conjecture \ref{conjunramified} reduces to the AFL in the self-dual case (i.e.\ Conjecture \ref{AFLconj}) in the case of a nondegenerate intersection.  In particular, this will show that the conjecture holds when $n = 2$ or $n = 3$.

\section{Arithmetic transfer conjecture, $F/F_0$ ramified,  $n$ odd}\label{s:ATCodd}
In this section $F/F_0$ is ramified, $n \geq 3$ is odd, and we take the special vectors $u_i\in W_i$ to have norm $1$.  This case is the subject of \cite{RSZ}, to which we refer the reader for more details.  We continue to use the $p$-divisible group $\ov\CE$ and its attendant structure from the Introduction.  Recall from \S\ref{odd ram max} that we take $\CN_n=\CN^{(1)}_n$, i.e.~we require that the hermitian space $\BV(\BX_n)$ attached to the framing object $\BX_n = \BX_n^{(1)}$ is non-split. As in the case of the AFL, taking the product with $\ov\CE$ defines a closed embedding of formal schemes
\begin{equation*}
   \delta_\CN\colon
   \xymatrix@R=0ex{
	   \CN_{n-1} \ar[r]  &  {\CN}_n\\
		(X, \iota, \lambda, \rho) \ar@{|->}[r]  
		   &  \bigl(X \times \ov\CE, \iota \times \iota_{\ov\CE}, \lambda \times \lambda_{\ov\CE}, \rho \times \rho_{\ov\CE}\bigr).
	}
\end{equation*}
Here the last entry is a framing to the constant object over the special fiber defined by $\BX_n = \BX_{n-1} \times \ov{\BE}$.  We obtain as before a closed embedding
\begin{equation*}
   \Delta_\CN\colon \CN_{n-1} \xra{(\id_{\CN_{n-1}},\delta_\CN)} \CN_{n-1,n} = \CN_{n-1} \times_{\Spf O_{\breve F}} \CN_n,
\end{equation*}
whose image we again denote by
\[
   \Delta := \Delta_\CN(\CN_{n-1}).
\]
Let
\[
   u := (0,\id_{\ov\BE}) \in \BV(\BX_n) = \BV(\BX_{n-1}) \oplus \BV\bigr(\ov\BE\bigl),
\]
which is a vector of norm $1$.  Since $\BV(\BX_n)$ is non-split, we may therefore identify the triple $(\BV(\BX_n), u, \BV(\BX_{n-1}))$ with $(W_1,u_1,W_1^\flat)$.  In this way $\Delta_\CN$ is equivariant for the embedding $H_1(F_0) \hookrightarrow G_{W_1}(F_0)$ as before.

Now recall that in the ramified case, an $F/F_0$-hermitian space contains a $\pi$-modular lattice $\Lambda$ (i.e.~$\Lambda^\vee = \pi^{-1}\Lambda$) if and only if the space is split of even dimension (and all $\pi$-modular lattices are then conjugate under the unitary group). By \eqref{chi decomp formula} this is the case for $W_0^\flat$, and therefore we may fix a $\pi$-modular lattice
\[
   \Lambda_0^\flat \subset W_0^\flat.
\]
Let
\[
   \Lambda_0 := \Lambda_0^\flat \oplus O_Fu_0 \subset W_0.
\]
Then $\Lambda_0$ is almost $\pi$-modular, i.e.~$\Lambda_0 \subset \Lambda_0^\vee \subset^1 \pi^{-1}\Lambda_0$.  Let
\[
   K_0^\flat \subset H_0(F_0)
\]
denote the stabilizer of $\Lambda_0^\flat$, and let
\[
	K_0 \subset G_0(F_0)
	\quad\text{and}\quad
	\fkk_0 \subset \fkg_0(F_0)
\]
denote the respective stabilizers of $\Lambda_0$. Then $K_0^\flat $ is a special maximal parahoric subgroup, and $K_0$ contains a special maximal parahoric subgroup with index $2$, cf.~\cite[\S4.a]{PR-TLG}. We normalize the Haar measure on $H_0(F_0)$ (and hence the product measure on $H_{W_0}(F_0) = H_0(F_0) \times H_0(F_0)$) by assigning $K_0^\flat$ volume $1$. We normalize the Haar measures on $H'(F_0)$, $H'_1 ({F_0})$, and $H'_2 ({F_0})$ as in \S\ref{s:FL}.  We finally define the intersection number $\Int(g)$ for $g \in G_{W_1}(F_0)_\rs$ or $g \in G_1(F_0)_\rs$, and the formal locus $\Delta_x$ and the quantity $\lInt(x)$ for $x \in \fkg_1(F_0)_\rs$, as in \S\ref{s:AFL}.

\begin{conjecture}[Arithmetic transfer conjecture]\label{ATC ram odd}\hfill
\begin{altenumerate}
\renewcommand{\theenumi}{\alph{enumi}}
\item
\textup{(Homogeneous version)}
Let $f' \in C^{\infty}_c (G')$ be a function transferring to the pair $(\mathbf{1}_{K_0^\flat \times K_0}, 0) \in C_c^\infty(G_{W_0}) \times C_c^\infty(G_{W_1})$. Then there exists a function $f'_\corr \in C^{\infty}_c (G')$ such that, for any $\gamma \in G'(F_0)_\rs$ matching an element $g \in G_{W_1}(F_0)_\rs$,
\begin{equation*}
   \omega_{G'} (\gamma)\del(\gamma, f') = - \Int(g)\cdot \log q + \omega (\gamma)\Orb(\gamma, f'_\corr).
\end{equation*}
Furthermore, there exist such functions $f'$ for which $f'_\corr = 0$.
\item
\textup{(Inhomogeneous version)}
Let $f' \in C^{\infty}_c (S)$ be a function transferring to the pair $(\mathbf{1}_{K_0}, 0) \in C_c^\infty(G_0) \times C_c^\infty(G_1)$. Then there exists a function $f'_\corr \in C^\infty_c(S)$ such that, for any $\gamma \in S(F_0)_\rs$ matching an element $g \in G_1(F_0)_\rs$,
\begin{equation*}
   2 \omega_S (\gamma)\del(\gamma, f') = - \Int(g)\cdot \log q + \omega (\gamma)\Orb(\gamma, f'_\corr).
\end{equation*}
Furthermore, there exist such functions $f'$ for which $f'_\corr = 0$.
\item
\textup{(Lie algebra version)}
Let $\phi' \in C^{\infty}_c (\fks)$ be a function transferring to the pair $(\mathbf{1}_{\fkk_0},0) \in C_c^\infty(\fkg_0) \times C_c^\infty(\fkg_1)$. Then there exists a function $\phi'_\corr \in C^{\infty}_c (\fks)$ such that, for any $y \in \fks(F_0)_\rs$ matching an element $x \in \fkg_1(F_0)_\rs$ for which the intersection $\Delta \cap \Delta_x$ is an artinian scheme,
\begin{equation*}
   2\omega_\fks (y)\del(y, \phi') = - \lInt(x)\cdot \log q + \omega (y)\Orb(y, \phi'_\corr).
\end{equation*}
Furthermore, there exist such functions $\phi'$ for which $\phi'_\corr = 0$.
\end{altenumerate}
\end{conjecture}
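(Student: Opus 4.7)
The plan is to reduce the three versions to each other and then attack the Lie algebra version, which is the most tractable of the three. First, I would check that the Cayley transform $\fkc_\xi(y) = (1+y)(1-y)^{-1}$ (with appropriate translation) intertwines the Lie algebra setup with the inhomogeneous group setup in a neighborhood of the identity of $S$, and respects both the orbital integrals (up to Jacobians that can be absorbed into the correction term $f'_\corr$) and the local structure of $\CN_n$ near $\delta_\CN(\CN_{n-1})$. This should reduce the inhomogeneous version to the Lie algebra version. The equivalence of the homogeneous and inhomogeneous versions should follow from the standard identifications $H_1' \bs G' \isoarrow \Res_{F/F_0}\GL_n$ and $(\Res_{F/F_0}\GL_n)/\GL_n \isoarrow S$ of \S\ref{setup inhomog}, pushing forward $f'$ by integration over the relevant fibers.

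Next, part (a) of each version should follow from part (b) together with a density principle of the type recorded in \cite[Conj.~5.16, Lem.~5.18]{RSZ}. Concretely, if $f'_0 \in C_c^\infty(\fks)$ is a specific transfer of $(\mathbf{1}_{\fkk_0},0)$ for which the Lie algebra AT identity holds exactly (i.e.~with $\phi'_\corr = 0$), and if $\phi'$ is any other transfer of the same pair, then $\phi' - f'_0$ has vanishing regular semi-simple orbital integrals on the unitary side. The density principle asserts that such a function can be written, modulo functions supported on the non-regular-semisimple locus, as a difference whose weighted orbital integrals agree with those of some smooth compactly supported function; that function is then $\phi'_\corr$.

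For part (b), the strategy in the Lie algebra version is to construct the test function via a germ expansion of both sides around semi-simple but irregular elements. On the analytic side, one expands $\del(y, \phi')$ as a sum of irregular germs with known transformation properties under $H'(F_0)$-conjugation; on the geometric side, one expands $\lInt(x)$ around the singular locus of $\Delta \cap \Delta_x$ using the exotically smooth structure of $\CN_n$ from Theorem \ref{exotsmodd} together with Dieudonn\'e-theoretic descriptions of the framing object $\BX_n$. The freedom in choosing $\phi'$ — namely, modification by functions supported away from the closure of the regular semi-simple locus whose transfers to $(\fkg_0, \fkg_1)$ remain $(\mathbf{1}_{\fkk_0},0)$ — should be exactly enough to match the irregular germ coefficients on the two sides.

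The hard part will be the explicit computation of $\lInt(x)$ for general odd $n$, and more specifically the extraction of its germ expansion near irregular orbits. For $n = 3$ this was carried out in \cite{RSZ} using the low-dimensional explicit description of $\CN_3$ and Gross's theory of canonical liftings applied to the $1$-dimensional factor; the same tools are not available for larger $n$, where $(\CN_n)_\red$ has a nontrivial Bruhat--Tits stratification of positive dimension (Theorem \ref{BT conj odd n ram}), so that $\Delta \cap (1 \times g)\Delta$ can have components of positive dimension whose contributions to $\Int(g)$ one must compute via a more refined intersection-theoretic analysis. Controlling these higher-dimensional contributions, and showing that their expansion in irregular orbital integrals can be absorbed into $\Orb(\gamma, f'_\corr)$ for a suitable $f'$, is the principal obstacle I would expect to encounter.
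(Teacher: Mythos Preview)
This statement is a \emph{conjecture}, not a theorem, and the paper does not prove it. Immediately after stating it, the paper only remarks that the case $F_0 = \BQ_p$, $n = 3$ is proved in the companion paper \cite{RSZ}; the general odd case remains open. So there is no proof in the paper to compare your proposal against.

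That said, your outline accurately mirrors the method used in \cite{RSZ} for $n=3$ and in \S\ref{even ram proof} of this paper for the analogous even-$n$ conjecture when $n=2$: reduce homogeneous to inhomogeneous via fiber integration (as in Lemma \ref{homog to inhomog}), pass between group and Lie algebra via the Cayley transform, and then handle the Lie algebra version by a germ expansion of $\del(y,\phi')$ around irregular semi-simple elements (cf.\ Theorem \ref{del germ s}) matched against an explicit calculation of $\lInt(x)$ via Gross's theory of canonical liftings. You have also correctly identified the obstacle for $n \geq 5$: the Bruhat--Tits stratification of $(\CN_n)_\red$ has strata of positive dimension, so $\Delta \cap \Delta_x$ can degenerate and the intersection number cannot be computed by the one-dimensional methods that work for $n=3$.

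One small terminological point: when you write ``part (a) of each version should follow from part (b),'' you presumably mean that within each of the three versions, the first assertion (existence of $f'_\corr$ for \emph{every} transferring $f'$) follows from the ``furthermore'' assertion (existence of \emph{some} $f'$ with $f'_\corr = 0$) via the density principle. That is indeed how \cite{RSZ} and this paper argue; but the labels (a), (b), (c) in the conjecture refer to the homogeneous, inhomogeneous, and Lie algebra versions, not to the two assertions within each.
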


A proof of this conjecture in the case $F_0 = \BQ_p$ and $n=3$ is contained in \cite{RSZ}.

\section{Arithmetic transfer conjecture, $F/F_0$ ramified, $n$ even}\label{s:ATCeven}
In this section $F/F_0$ is ramified, $n \geq 2$ is even, and we take the special vectors $u_i \in W_i$ to have norm $-1$.  We are going to formulate an AT conjecture analogous to the ones we have already encountered, but the setup this time is more complicated.  Indeed, in this case
``taking the product with $\ov\CE$'' defines a map on $\CN_{n-1}$ with values in the space $\CP_n$ defined in \S\ref{aux spaces}. Since $\CP_n$ is not regular, we cannot emulate the formulation of the previous AFL and AT conjectures in the most literal-minded fashion, with $\CP_n$ in the place of $\CN_n$.  Instead, we will use the results of \S\ref{aux spaces} to compose the morphism $\CN_{n-1} \to \CP_n$ with a morphism (in fact, two of them) from $\CP_n$ to $\CN_n$.

To make this precise, recall from Example \ref{CN_1} the universal object $(\CE,\iota_\CE,-\lambda_\CE,\rho_\CE)$ over $\CN_1$.
Taking the product with the conjugate of this object defines a closed embedding
\begin{equation}\label{wtdelta_CN}
	\begin{gathered}
   \wt\delta_\CN \colon
	\xymatrix@R=0ex{
	   \CN_{n-1} \ar[r]  &  \CP_n\\
		(X, \iota, \lambda, \rho) \ar@{|->}[r]  
		   &  \bigl(X \times \ov\CE, \iota \times \iota_{\ov\CE}, 
			       \lambda \times (-\lambda_{\ov\CE}), \rho \times \rho_{\ov\CE} \bigr),
	}
	\end{gathered}
\end{equation}
where the last entry is a framing to the constant object over the special fiber defined by $\wt\BX_n = \BX_{n-1} \times \ov\BX_1$, cf.~\eqref{wtBX_n^(1) def}.  Note that for $\wt\delta_\CN$ to be well-defined, it must produce quadruples satisfying the wedge condition and condition \eqref{even wtCN spin cond} on $\CP_n$.  It is straightforward, albeit tedious, to verify directly that this follows from the wedge condition and condition \eqref{odd spin cond} on $\CN_{n-1}$.  More conceptually, note that there is an obvious analog of the morphism $\wt\delta_\CN$ between the naive local models (cf.\ Definition \ref{LM def}) for $\CN_{n-1}$ and $\CP_n$; this is explicitly given by the formula for the first arrow in (\ref{LM delta_CN^pm}) below.  Since the (honest) local model for $\CN_{n-1}$ is flat, it is automatically carried into the (honest) local model for $\CP_n$, which proves that $\wt\delta_\CN$ is well-defined.

Now recall from \S\ref{aux spaces} the space $\CP_n'$, its tautological projections $\CP_n' \to \CP_n$ and $\CP_n' \xra\varphi \CN_n$, and its decomposition $\CP_n' = (\CP_n')^+ \amalg (\CP_n')^-$.  By Theorem \ref{wtCN'pm isom}, the projection to $\CP_n$ sends each of these summands isomorphically to $\CP_n$, and we denote by $\psi^\pm$ the inverse isomorphism $\CP_n \isoarrow (\CP_n')^\pm$.  We then define the composite morphism
\[
   \delta_\CN^\pm \colon 
	   \CN_{n-1} \xra{\wt\delta_\CN} 
		\CP_n \xra[\undertilde]{\psi^\pm}
		(\CP_n')^\pm \xra\varphi
		\CN_n.
\]

\begin{proposition}\label{delta^pm closed emb}
The morphism $\delta_\CN^\pm$ is a closed embedding.
\end{proposition}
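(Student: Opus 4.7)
Since $\wt\delta_\CN$ is a closed embedding and $\psi^\pm$ is an isomorphism, the composite $\psi^\pm \circ \wt\delta_\CN\colon \CN_{n-1} \to (\CP_n')^\pm$ is a closed embedding; that it factors through the $\pm$-summand is forced by the very definition of the decomposition \eqref{wtCN'pm}. Thus it remains to show that the restriction of the projection $\varphi$ to the image of $\psi^\pm \circ \wt\delta_\CN$ is a closed embedding into $\CN_n^\pm$. The plan is to verify that $\delta_\CN^\pm$ is proper, universally injective, and formally unramified; by the standard criterion for Noetherian formal schemes formally locally of finite type over $\Spf O_{\breve F}$, these three properties together imply that it is a closed immersion.

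Properness is automatic, since $\CN_{n-1}$ is essentially proper over $\Spf O_{\breve F}$ by Theorem \ref{exotsmodd} and $\CN_n^\pm$ is separated. For injectivity on $\ov k$-points I would argue via Dieudonn\'e modules. Given $(X,\iota,\lambda,\rho) \in \CN_{n-1}(\ov k)$, its image $\delta_\CN^\pm(X,\iota,\lambda,\rho)$ is a point $(Y,\iota_Y,\lambda_Y,\rho_Y) \in \CN_n^\pm(\ov k)$ that comes, via the intermediate $(\CP_n')^\pm$-point, with an $O_F$-linear degree $q$ isogeny $\phi\colon Y \to X \times \ov\CE$. Writing $L$ and $L'$ for the Dieudonn\'e modules of $Y$ and $X \times \ov\CE$ in $\BN$, respectively, so that $\pi(L')^\vee \subset^1 L \subset^1 L'$, the proof of Theorem \ref{wtCN'pm isom} shows that $L'$ is recovered from $L$ together with an isotropic line in $L'/\pi(L')^\vee$, and this line is specified exactly by the $(\CP_n')^\pm$-structure. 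The $O_F$-stable splitting $L' = L_X \oplus L_{\ov\CE}$ is then uniquely pinned down by matching the framing $\rho_Y$ against $\phi_0 \circ (\rho \times \rho_{\ov\CE})$, and $(X,\iota,\lambda,\rho)$ is thereby recovered uniquely.

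For formal unramifiedness I would pass to local models via the standard local-model diagrams for $\CN_{n-1}$, $\CP_n$, and $\CN_n$. Set $m := n/2$. The morphism $\wt\delta_\CN$ corresponds on local models to the ``extension by the $\ov\CE$-direction'' embedding from $M_{\{(n-2)/2\}}^{\mathrm{loc}}$ in signature $(n-2,1)$ into $M_{\{m-1\}}^{\mathrm{loc}}$ in signature $(n-1,1)$; this is a closed immersion by direct inspection on the standard affine charts. By Proposition \ref{lm isom}(b), the isomorphism $\psi^\pm$ corresponds to the forgetful isomorphism $M_{\{m-1,m\}}^{\mathrm{loc}} \isoarrow M_{\{m-1\}}^{\mathrm{loc}}$, so that $\varphi \circ \psi^\pm$ induces on local models the composite of its inverse with the projection $(\CF_{m-1},\CF_m) \mapsto \CF_m$ onto $M_{\{m\}}^{\mathrm{loc}}$. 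Combining, $\delta_\CN^\pm$ gives on local models a closed embedding $M_{\{(n-2)/2\}}^{\mathrm{loc}} \to M_{\{m\}}^{\mathrm{loc}}$, once one checks that the coordinates of $\CF_m$ can be recovered as explicit polynomial expressions in those of the smaller local model, using the formulas for $\nu$ developed in the proof of Proposition \ref{lm isom}.

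The main technical obstacle lies in this local-model step: although both $\wt\delta_\CN$ and $\nu$ admit explicit descriptions on naive local models, tracking the refined spin condition \eqref{LM str spin cond} through the composition requires chart-by-chart computations in the spirit of the proof of Proposition \ref{lm isom}. Once these are carried out, $\delta_\CN^\pm$ is proper, universally injective, and formally unramified, and hence a closed immersion.
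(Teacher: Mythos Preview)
Your overall strategy---essential properness plus universal injectivity plus formal unramifiedness, the latter checked on local models---matches the paper's. But there are gaps in both the Dieudonn\'e module step and the local model step.

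In the injectivity argument, your attempt to first recover $L'$ from $L$ does not work. The projection $\varphi\colon (\CP_n')^\pm \to \CN_n^\pm$ is not injective for $n \geq 4$ (its fiber over a $\ov k$-point is the scheme of isotropic order-$q$ subgroups of $X[\iota(\pi)]$), so knowing $L$ and the sign $\pm$ does not pin down $L'$. Your sentence ``this line is specified exactly by the $(\CP_n')^\pm$-structure'' has the direction of Theorem~\ref{wtCN'pm isom} backwards: that theorem recovers $L$ from $L'$, not the other way. The paper bypasses this entirely by exploiting that the image of $\wt\delta_\CN$ consists of lattices of the special shape $L_X \oplus O_{\breve F}e$ with $e$ a \emph{fixed} vector of unit norm. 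Since the image of the $\pi$-modular $M$ in the two-dimensional quotient $(L_X \oplus O_{\breve F}e)/(\pi L_X^\vee \oplus \pi O_{\breve F}e)$ is an isotropic line and $e$ has nonzero norm mod $\pi$, one has $e \notin M$, hence $M + O_{\breve F}e = L_X \oplus O_{\breve F}e$, and $L_X$ is recovered directly.

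For the local model step, you correctly identify the composite as $M_{\{m-1\}}^\flat \to M_{\{m-1\}} \xra{\nu} M_{\{m\}}$, but you defer the crucial monomorphism check to unspecified ``chart-by-chart computations''. The paper gives a coordinate-free argument: since $\Lambda_{m-1}^\flat$ is a direct summand of $\Lambda_m$, the image $T_{m-1}(\CF_{m-1})$ is already a rank-$(n-1)$ locally direct summand, and the symmetric form $\sform_m$ restricts nondegenerately to $T_{m-1}(\Lambda_{m-1}^\flat \otimes \CO_S)$. Hence $T_{m-1}(\CF_{m-1})$ is Lagrangian in this subspace, and one recovers it as $\nu(\CF_{m-1} \oplus \CF) \cap T_{m-1}(\Lambda_{m-1}^\flat \otimes \CO_S)$. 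This avoids any affine-chart calculation.
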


\begin{proof}
Since $\CN_{n-1}$ is essentially proper over $\Spf O_{\breve F}$, it suffices to show that $\delta_\CN^\pm$ is universally injective and formally unramified.  For the first claim, for notational simplicity we just show that $\delta_\CN^\pm$ is an injection on $\ov k$-points; the argument for points in an arbitrary algebraically closed field is the same.  Let $\BN_n$, $\wt\BN_n$, $\BN_{n-1}$, and $\ov\BN_1$ denote the (covariant, as always) rational Dieudonn\'e modules of $\BX_n$, $\wt\BX_n$, $\BX_{n-1}$, and $\ov\BX_1$, respectively. We endow all of these with an $\breve F/\breve F_0$-hermitian structure as before.  In particular, $\ov\BN_1$ is a $1$-dimensional $\breve F$-vector space, and we choose a basis vector $e$ such that $O_{\breve F}e \subset \ov\BN_1$ is the (self-dual) Dieudonn\'e lattice of $\ov\BX_1$.  Identify $\BN_n$ with $\wt\BN_n = \BN_{n-1} \oplus \ov\BX_1$ via the isogeny $\phi_0$ in \eqref{phi_0}.  Let $(Y,\iota,\lambda,\rho) \in \CN_{n-1}(\ov k)$ with Dieudonn\'e module $L \subset \BN_{n-1}$. Our problem is to show that if $M$ is a $\pi$-modular Dieudonn\'e lattice in $\BN_n$ with $\pi L^\vee \oplus \pi O_{\breve F} e \subset^1 M \subset^1 L \oplus O_{\breve F} e$, then we can recover $L$ uniquely from $M$.  Since the image of $M$ in $(L \oplus O_{\breve F} e)/(\pi L^\vee \oplus \pi O_{\breve F} e)$ is an isotropic line and $e$ has non-zero norm modulo $\pi$, we have $e \notin M$.  Hence $M + O_{\breve F}e = L \oplus O_{\breve F} e$, from which we can indeed recover $L$. 

To show that $\delta_\CN^\pm$ is formally unramified, it suffices to show that the corresponding map on local models is a closed embedding; the conclusion then follows from the usual local model diagram argument.  We use (essentially) the notation of \S\ref{odd ram max} for lattices and related objects in $F^n$, and we use a $\flat$ to denote the analogous objects in $F^{n-1}$.  In particular, we denote by $e_1^\flat,\dotsc,e_{n-1}^\flat$ the standard basis in $F^{n-1}$, endowed with the standard split hermitian form \eqref{herm form}.  Consider a one-dimensional hermitian space $Fe$ with basis vector $e$ of norm $-1$, and let $\Lambda := O_F e \subset Fe$ denote the self-dual lattice.  We take the vectors
\[
   e_i := e_i^\flat, \quad 1 \leq i \leq m-1; \quad e_m := e_m^\flat - e; \quad e_{m+1} := \frac{e_m^\flat + e} 2;  \quad e_i := e_{i-1}^\flat, \quad m+2 \leq i \leq n
\]
as a basis for $F^{n-1} \oplus Fe$.  With respect to this basis, the orthogonal sum of the hermitian forms on $F^{n-1}$ and $Fe$ is the standard one \eqref{herm form}.  We define the lattices $\Lambda_i \subset F^{n-1} \oplus Fe$ with respect to $e_1,\dotsc,e_n$ as in \S\ref{odd ram max}.  In particular, we have $\Lambda_{m-1} = \Lambda_{m-1}^\flat \oplus \Lambda$.  Let $M_{\{m-1\}}$ and $M_{\{m\}}$ denote the schemes over $\Spec O_F$ defined in Definition \ref{LM def} in the case of signature $(r,s) = (n-1,1)$,\footnote{When $(r,s) = (1,1)$, these schemes are defined over $\Spec O_{F_0}$, and here we implicitly replace them with their base change to $\Spec O_F$.} and analogously let $M_{\{m-1\}}^\flat$ denote the scheme defined with respect to the basis $e_1^\flat,\dotsc,e_{n-1}^\flat$ in the case of signature $(r,s) = (n-2,1)$ and $I = \{\lfloor \frac{n-1}2 \rfloor\} = \{m-1\}$. Then $M_{\{m-1\}}^\flat$, $M_{\{m-1\}}$, and $M_{\{m\}}$ are the local models for $\CN_{n-1}$, $\CP_n$, and $\CN_n$, respectively.  The map between local models corresponding to $\delta_\CN^\pm$ is the composite
\begin{equation}\label{LM delta_CN^pm}
\begin{gathered}
	\xymatrix@R=0ex{
	   M_{\{m-1\}}^\flat \ar[r]  &  M_{\{m-1\}} \ar[r]^-\nu  &  M_{\{m\}}\\
		\CF_{m-1} \ar@{|->}[r]  &  \CF_{m-1} \oplus \CF
	}
\end{gathered},
\end{equation}
where, for $S$ a base scheme, $\CF$ is the rank one submodule $(e \otimes \pi + \pi e \otimes 1) \subset \Lambda \otimes_{O_{F_0}} \CO_S$, and $\nu$ is the morphism \eqref{nu} (here we use Proposition \ref{lm isom}(\ref{lm isom ii}) in the case $I = \{m-1\}$, which, as in the proof of this proposition, is equivalent to the fact that $\nu$ carries $M_{\{m-1\}}$ into $M_{\{m\}}$). Since the local models are proper, to show that the morphism \eqref{LM delta_CN^pm} is a closed embedding, we need only show that it is a monomorphism.  Let $\CF_{m-1}$ be an $S$-point on $M_{\{m-1\}}^\flat$. Note that the inclusions $\Lambda_{m-1}^\flat \subset \Lambda_{m-1} \subset \Lambda_m$ present $\Lambda_{m-1}^\flat$ as a direct summand of $\Lambda_m$.  Therefore, in the notation of \eqref{T_i}, $T_{m-1}(\Lambda_{m-1}^\flat \otimes_{O_{F_0}} \CO_S)$ is a direct summand of $\Lambda_m \otimes_{O_{F_0}} \CO_S$, and $T_{m-1}(\CF_{m-1})$ is a locally direct summand of rank $n-1$. The proof of Lemma \ref{LM mapping lem} then shows that $\nu(\CF_{m-1} \oplus \CF)$ is the unique Lagrangian submodule in $\Lambda_m \otimes_{O_{F_0}} \CO_S$ for the form $\sform_m$ (cf.~Remark \ref{I=m}) which contains $T_{m-1}(\CF_{m-1})$ and satisfies the spin condition.  Now, one sees readily that $\sform_m$ restricts to a nondegenerate split symmetric form on $T_{m-1}(\Lambda_{m-1}^\flat \otimes_{O_{F_0}} \CO_S)$, and hence $T_{m-1}(\CF_{m-1})$ is Lagrangian in $T_{m-1}(\Lambda_{m-1}^\flat \otimes_{O_{F_0}} \CO_S)$.  Hence
\[
   \nu(\CF_{m-1} \oplus \CF) \cap T_{m-1}(\Lambda_{m-1}^\flat \otimes_{O_{F_0}} \CO_S) = T_{m-1}(\CF_{m-1}),
\]
which shows that we can recover $\CF_{m-1}$ from its image in $M_{\{m\}}$.
\end{proof}

As usual, we take the graph morphism of $\delta_\CN^\pm$ to obtain a closed embedding
\begin{equation*}
   \Delta_\CN^\pm \colon \CN_{n-1} \xra{(\id_{\CN_{n-1}},\delta_\CN^\pm)} \CN_{n-1, n} 
	   = \CN_{n-1} \times_{\Spf O_{\breve F}} \CN_n.
\end{equation*}
Let $\Delta$ denote the (disjoint) union of the images of these embeddings,
\[
   \Delta := \Delta_\CN^+(\CN_{n-1}) \amalg \Delta_\CN^-(\CN_{n-1}).
\]
Of course, $\Delta$ can also be described as the image of the closed embedding
\[
   \Delta_\CN\colon \CN_{n-1} \times \{\pm 1\}
	   = \bigl(\CN_{n-1} \times \{+1\}\bigr) \amalg \bigl(\CN_{n-1} \times \{-1\}\bigr)
		\xra{\Delta_\CN^+ \amalg \Delta_\CN^-} \CN_{n-1,n}.
\]
Identifying $\BV(\BX_n)$ and $\BV(\wt\BX_n) = \BV(\BX_{n-1} \times \ov\BX_1)$ via the isogeny $\phi_0$ in \eqref{phi_0}, let
\begin{equation}\label{u}
   u := (0,\id_{\ov\BE}) \in \BV(\BX_n) \simeq \BV\bigl(\wt\BX_n\bigr) = \BV(\BX_{n-1}) \oplus \BV\bigl(\ov\BX_1\bigr).
\end{equation}
Then $u$ has norm $-1$, and therefore we may identify $(\BV(\BX_n),u,\BV(\BX_{n-1}))$ with $(W_1,u_1,W_1^\flat)$.  In this way $\Delta_\CN$ is equivariant for the embedding $H_1(F_0) \inj G_{W_1}(F_0)$, where $H_1(F_0)$ acts via the Kottwitz map on $\{\pm1\}$ and diagonally on the product $\CN_{n-1} \times \{\pm 1\}$.  (Note that the embeddings $\Delta_\CN^+$ and $\Delta_\CN^-$ are not separately $H_1(F_0)$-equivariant.)

\begin{example}[$n=2$]\label{CN_2 ATC eg}
Let us make the the above discussion ``concrete'' when $n = 2$.  Recall that $\CN_1 = \Spf O_{\breve F}$ is the universal deformation space for $\BE$ as a formal $O_F$-module, with universal object $(\CE,\iota_\CE,\rho_\CE)$, cf.~Example \ref{CN_1}; that $\CN_2$ identifies with two copies of the Lubin--Tate space $\CM_{\Spf O_{\breve F}}$, cf.~Example \ref{CN_2}; and that $\CP_2$ identifies with $(\CM_{\Gamma_0(\varpi)})_{\Spf O_{\breve F}}$, cf.~Remark \ref{CP_n diagram n=2}.  In terms of these identifications, we claim that $\wt\delta_\CN\colon \CN_1 \to \CP_2$ is the morphism
\[
   \xymatrix@R=0ex{
	   \Spf O_{\breve F} \ar[r]  &  (\CM_{\Gamma_0(\varpi)})_{\Spf O_{\breve F}}\\
		(\CE,\iota_\CE,\rho_\CE) \ar@{|->}[r]  &  (\CE,\CE,\iota_\CE(\pi),\rho_\CE).
	}
\]
Indeed, $\wt\delta_\CN$ sends $(\CE,\iota_\CE,\rho_\CE)$ to the point $(\CE \times \ov\CE, \iota_\CE \times \iota_{\ov\CE}, -(\lambda_\CE \times \lambda_{\ov\CE}), \rho_\CE \times \rho_{\ov\CE})$ on $\CP_2$; and the asserted value in $(\CM_{\Gamma_0(\varpi)})_{\Spf O_{\breve F}}$ identifies with the point $(\CE\times\CE, \iota, -2(\lambda_\CE \times \lambda_\CE), \psi_0\circ(\rho_\CE \times \rho_\CE))$ on $\CP_2$, where $\iota$ is defined by
\[
   \iota(\pi) =
	\begin{bmatrix}
		  &  \iota_\CE(\pi)\\
		\iota_\CE(\pi)
	\end{bmatrix},
\]
where $\psi_0$ is the isomorphism $\BX \isoarrow \wt\BX_2$ in \eqref{psi_0}, and where $\rho_\CE \times \rho_\CE$ is a framing to $\BX$.  It is obvious from the definition of $\psi_0$ that the isomorphism in the special fiber given by the framings lifts to an isomorphism between these two quadruples, which proves the claim.  Combining this with Example \ref{CP_n diagram n=2}, it follows that both morphisms $\delta_\CN^\pm$ identify with the embedding of $\CN_1$ in $\CM_{\Spf O_{\breve F}}$ sending $(\CE,\iota_\CE,\rho_\CE) \mapsto (\CE,\rho_\CE)$. Furthermore, the composition
\[
   \CN_1 \xra{\delta_\CN^\pm} \CM_{\Spf O_{\breve F}} \to \CM
\]
identifies with the canonical divisor in $\CM$ associated to the embedding $\iota_\BE\colon F \inj D$ in the sense of \cite{G} and \cite[Def.~1.2]{Wewers}.  Finally, note that $\Delta_\CN^\pm = \delta_\CN^\pm$ under the identification $\CN_{1,2}\cong \CN_2$.
\end{example}

Now fix an almost $\pi$-modular lattice
\[
   \Lambda_0^\flat \subset W_0^\flat;
\]
recall that when $F/F_0$ is ramified, such lattices exist in any odd-dimensional hermitian space and are all conjugate under the corresponding unitary group.
Then
\begin{equation}\label{Lambda^nat}
   \Lambda_0^\nat := \Lambda_0^\flat \oplus O_Fu_0
\end{equation}
is a vertex lattice of type $n-2$ in $W_0$, i.e.~$\Lambda_0^\nat \subset^{n-2} (\Lambda_0^\nat)^\vee \subset^2 \pi^{-1}\Lambda_0^\nat$.  Since $W_0$ is split, it follows that there are two $\pi$-modular lattices $\Lambda_0^+,\Lambda_0^-$ contained in $\Lambda_0^\nat$ (corresponding to the two isotropic lines in $\Lambda_0^\nat/\pi (\Lambda_0^\nat)^\vee$).  Let
\[
   K_0^\flat \subset H_0(F_0)
\]
denote the stabilizer of $\Lambda_0^\flat$, and let
\[
   K_0^\pm \subset G_0(F_0)
	\quad\text{and}\quad
	\fkk_0^\pm \subset \fkg_0(F_0)
\]
denote the respective stabilizers of $\Lambda_0^\pm$. 
Then $K_0^\flat$ is a maximal compact subgroup containing a special maximal parahoric subgroup with index $2$, and $K_0^\pm$ are special maximal parahoric subgroups, cf.~\cite[\S4.a]{PR-TLG}. Note that the two lattices $\Lambda_0^\pm$ are $K_0^\flat$-conjugate under the embedding $K_0^\flat \subset G_0(F_0)$, since, for example, $K_0^\flat$ contains elements of nontrivial Kottwitz invariant.

\begin{remark}\label{group theoretic}
The fact that we take $K_0^\flat$ to be the stabilizer of an almost $\pi$-modular lattice in $W_0^\flat$ reflects that we have almost $\pi$-modular polarizations in the moduli problem for $\CN_{n-1}$; and similarly, that $K_0^\pm$ are stabilizers of $\pi$-modular lattices reflects that the polarizations in the moduli problem for $\CN_n$ are $\pi$-modular.  The fact that $K_0^\flat \not\subset K_0^\pm$ under the embedding $H_0(F_0) \subset G_0(F_0)$ can then be regarded as a group-theoretic reflection of why $\delta_\CN$ cannot be defined as simply in the even ramified case as in the cases we have encountered previously.
\end{remark}

We normalize the Haar measure on $H_0(F_0)$ (and hence the product measure on $H_{W_0}(F_0) = H_0(F_0) \times H_0(F_0)$) by assigning $K_0^\flat$ volume $1$.  As before we normalize the Haar measures on $H'(F_0)$, $H'_1 ({F_0})$, and $H_2'(F_0)$ as in \S\ref{s:FL}.  For $g \in G_{W_1}(F_0)_\rs$ or $g \in G_1(F_0)_\rs$, we define the intersection number $\Int(g)$ with respect to the above cycle $\Delta$ as in \S\ref{s:AFL}.  For $x \in \fkg_1(F_0)_\rs$, we define (abusing notation in the usual obvious way)
\[
   \Delta_x := \biggl\{\, (Y,X) \in \CN_{n-1,n} \biggm| 
	   \begin{varwidth}{\linewidth}
		\centering
		Zariski-locally on the base, $x\colon \BX_n \rightarrow \BX_n$\\
		lifts to a homomorphism $\delta_\CN^+(Y) \rightarrow X$ or $\delta_\CN^-(Y) \rightarrow X$
		\end{varwidth} \,\biggr\}.
\]
As in the case of the AFL, this definition makes sense for any quasi-endomorphism $x$ of $\BX_n$, and for $g \in G_1(F_0)$ we have $\Delta_g = (1\times g)\Delta$. As usual, when $\Delta \cap \Delta_x$ is an artinian scheme, we define
\[
   \lInt(x) := \length(\Delta \cap \Delta_x).
\]

\begin{conjecture}[Arithmetic transfer conjecture]\label{conjram v3}\hfill
\begin{altenumerate}
\renewcommand{\theenumi}{\alph{enumi}}
\item\label{conjram v3 homog}
\textup{(Homogeneous version)} 
Let $f' \in C^\infty_c(G')$ be a function transferring to the pair of functions $(\mathbf{1}_{K_0^\flat \times K_0^+} + \mathbf{1}_{K_0^\flat \times K_0^-}, 0) \in C_c^\infty(G_{W_0}) \times C_c^\infty(G_{W_1})$.  Then there exists $f'_\corr \in C^{\infty}_c (G')$ such that, for any $\gamma \in G'(F_0)_\rs$ matching an element $g \in G_{W_1}(F_0)_\rs$,
\begin{equation*}
   \omega_{G'}(\gamma)\del(\gamma, f') 
	   = - \Int(g)\cdot\log q + \omega(\gamma) \Orb(\gamma, f'_\corr).
\end{equation*}
Furthermore, there exist such functions $f'$ for which $f'_\corr = 0$.
\item\label{conjram v3 inhomog}
\textup{(Inhomogeneous version)}
Let $f' \in C^{\infty}_c (S)$ be a function transferring to the pair of functions $(\mathbf{1}_{K_0^\flat K_0^+}+\mathbf{1}_{K_0^\flat K_0^-}, 0) \in C_c^\infty(G_0) \times C_c^\infty(G_1)$. Then there exists  $f'_\corr \in C^\infty_c(S)$ such that, for any $\gamma \in S(F_0)_\rs$ matching an element $g \in G_1(F_0)_\rs$,
\begin{equation*}
   2 \omega_S (\gamma)\del(\gamma, f') = - \Int(g)\cdot \log q + \omega (\gamma)\Orb(\gamma, f'_\corr).
\end{equation*}
Furthermore, there exist such functions $f'$ for which $f'_\corr = 0$.
\item\label{conjram v3 lie}
\textup{(Lie algebra version)}
Let $\phi' \in C^{\infty}_c (\fks)$ be a function transferring to the pair of functions $(\mathbf{1}_{\fkk_0^+}+\mathbf{1}_{\fkk_0^-},0) \in C_c^\infty(\fkg_0) \times C_c^\infty(\fkg_1)$. Then there exists $\phi'_\corr \in C^{\infty}_c (\fks)$ such that, for any $y \in \fks(F_0)_\rs$ matching an element $x \in \fkg_1(F_0)_\rs$ for which the intersection $\Delta \cap \Delta_x$ is an artinian scheme,
\begin{equation*}
   2\omega_\fks (y)\del(y, \phi') = - \lInt(x)\cdot \log q + \omega (y)\Orb(y, \phi'_\corr).
\end{equation*}
Furthermore, there exist such functions $\phi'$ for which $\phi'_\corr = 0$.
\end{altenumerate}
\end{conjecture}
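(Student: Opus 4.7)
The plan is to prove all three versions of Conjecture \ref{conjram v3} in the minimal rank case $n=2$, with $F_0 = \BQ_p$. The strategy is first to reduce the homogeneous and inhomogeneous statements to the Lie algebra version via Cayley transform arguments, in the spirit of the reduction steps used in \S\ref{results atc afl}: the Cayley transform identifies a neighborhood of the identity in $G_{W_i}$ with a neighborhood of $0$ in $\fkg_i$ and is compatible with transfer relations, while the relationship between weighted orbital integrals on $S_n$ and $\fks_n$ can be controlled in the regular semi-simple locus. Then it suffices to prove the Lie algebra version \eqref{conjram v3 lie}.

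On the geometric side, we exploit the identifications developed in Examples \ref{CN_2} and \ref{CN_2 ATC eg}: the formal scheme $\CN_1$ equals $\Spf O_{\breve F}$ with universal object the canonical lift $(\CE,\iota_\CE,-\lambda_\CE,\rho_\CE)$, while $(\CN_2)_{\Spf O_{\breve F_0}}$ identifies with the disjoint union $\CM \amalg \CM$ of two copies of the Lubin--Tate deformation space of $\BE$. Under these identifications, both embeddings $\delta_\CN^\pm$ coincide with the map $(\CE,\rho_\CE) \mapsto (\CE,\rho_\CE)$ into a chosen component of $\CM$, and each gives the canonical divisor attached to $\iota_\BE\colon F \inj D$ in the sense of \cite{G}. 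Accordingly $\Delta \subset \CN_{1,2} = \CN_2$ is the disjoint union of two such canonical divisors, one on each component of $\CN_2$. For $x \in \fkg_1(F_0)_\rs$, the cycle $\Delta_x$ parametrizes lifts supporting the quasi-endomorphism $x$ of $\BX_2$, and Gross's theory of quasi-canonical liftings computes $\lInt(x) = \length(\Delta \cap \Delta_x)$ explicitly in terms of the conductor of the optimal $O_F$-embedding determined by $x$.

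On the analytic side, the key simplification at $n=2$ is that $H' = \GL_1$, so the weighted orbital integrals $\Orb(y,\phi',s)$ reduce to one-dimensional integrals over $F_0^\times$ weighted by $|\cdot|^s\eta$. We develop a germ expansion of $\del(y,\phi')$ around every semi-simple but irregular $y_0 \in \fks_2(F_0)$, whose leading asymptotic coefficient depends on $\phi'$ only through the values of the orbital integrals of a unitary transfer $(\phi_0,\phi_1)$ of $\phi'$ at the singular semi-simple orbits in $\fkg_0(F_0) \amalg \fkg_1(F_0)$. For the prescribed transfer target $(\mathbf{1}_{\fkk_0^+}+\mathbf{1}_{\fkk_0^-},0)$, these singular orbital integrals can be computed directly from the lattice descriptions of $\fkk_0^+$ and $\fkk_0^-$, pinning down the germ constants. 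The factor of $2$ in Conjecture \ref{conjram v3} arises from the double cover structure of $\Delta = \Delta_\CN^+(\CN_{n-1}) \amalg \Delta_\CN^-(\CN_{n-1})$, which on the analytic side is reflected in the two summands $\mathbf{1}_{\fkk_0^+}$ and $\mathbf{1}_{\fkk_0^-}$.

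The principal obstacle will be matching these asymptotic expansions cleanly at every irregular semi-simple point $y_0$, since the singular locus decomposes into several strata according to the degeneration type of the characteristic polynomial of $y_0$, and each stratum contributes its own germ coefficient. Once the matching is established, the difference $2\omega_\fks(y)\del(y,\phi') + \lInt(x) \cdot \log q$ depends on $\phi'$ only through smooth orbital integral data, so by the density principle for orbital integrals on $\fks_2$ it may be written as $\omega_\fks(y)\Orb(y,\phi'_\corr)$ for some $\phi'_\corr \in C_c^\infty(\fks)$. To upgrade this to the stronger assertion that $\phi'$ can be chosen within its transfer class so that $\phi'_\corr = 0$, one exhibits an explicit test function whose germ contributions at every irregular point exactly cancel the geometric contribution, in a manner analogous to the explicit constructions carried out in \cite{RSZ} in the odd ramified setting.
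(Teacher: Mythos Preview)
Your overall plan—compute $\Int(g)$ and $\lInt(x)$ via Gross's formula on the Lubin--Tate components, match against a germ expansion of $\del$ around irregular semi-simple points, and conclude that the difference is an orbital integral function—is exactly the paper's strategy, and the geometric description you give of $\Delta$ via Example \ref{CN_2 ATC eg} is correct.

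However, your proposed reduction of parts \eqref{conjram v3 homog} and \eqref{conjram v3 inhomog} to the Lie algebra version via Cayley transforms is not what the paper does, and would require care in the ramified setting. The Cayley argument of \S\ref{results atc afl} relies on finding, for each integral $\gamma$, some $\xi \in F^1$ with $\det(\gamma+\xi) \in O_F^\times$; in the unramified case this is guaranteed by the abundance of residues of $F^1$ modulo $\varpi$ (there are $q+1$ of them), but in the ramified case the reduction map $F^1 \to k^\times$ has image $\{\pm 1\}$, so the pigeonhole argument breaks down already for $n=2$. The paper bypasses this entirely: it reduces \eqref{conjram v3 homog} to \eqref{conjram v3 inhomog} via the map $r\colon G' \to S$ and Lemma \ref{homog to inhomog} (as in \cite[Lem.~5.8]{RSZ}), and then proves \eqref{conjram v3 inhomog} and \eqref{conjram v3 lie} \emph{in parallel}, each directly by the germ expansion (Theorem \ref{del germ s}) combined with the explicit irregular orbital integrals (Lemma \ref{lem nil orb int u}) and the intersection formula (Proposition \ref{prop int(g) int(x)}). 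No passage between $S$ and $\fks$ is needed.

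Your description of the irregular locus as ``decomposing into several strata according to the degeneration type of the characteristic polynomial'' is also an overstatement: for $n=2$ the irregular semi-simple elements are simply the diagonal matrices $\diag(a_0,d_0)$, and the germ expansion at each such point involves a single constant. The matching is therefore immediate once one computes that $\Orb(\diag(a_0,d_0), 2\cdot\mathbf{1}_{K_0^\flat K_0}) = 2$ and $\Int(g) = 2v(\RN b) + 2 = 2v(1-\RN a) + 2$, so that the leading terms cancel exactly. The existence of $f'$ with $f'_\corr = 0$ then follows from the argument of \cite[Prop.~5.14]{RSZ}, not from an explicit construction.
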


\begin{remark}\label{remark S12}
Note that in the inhomogeneous version, we have 
\[
   \mathbf{1}_{K_0^\flat K_0^+}+\mathbf{1}_{K_0^\flat K_0^-}=\mathbf{1}_{ K_0^+ K_0^\flat }+\mathbf{1}_{K_0^- K_0^\flat}.
\]
Moreover, we may replace the test function  $\mathbf{1}_{K_0^\flat K_0^+}+\mathbf{1}_{K_0^\flat K_0^-}$ by $2\cdot \mathbf{1}_{K_0^\flat K_0^+}$ or $2 \cdot\mathbf{1}_{K_0^\flat K_0^-}$, since they all have the same orbital integrals (the two groups $K_0^\pm$ are conjugate under $H_0(F_0)$). Similarly, in the Lie algebra version, we may replace the test function $\mathbf{1}_{\fkk_0^+}+\mathbf{1}_{\fkk_0^-}$ by $2\cdot \mathbf{1}_{\fkk_0^+}$ or $2\cdot \mathbf{1}_{\fkk_0^-}$.
\end{remark}
The proof of Conjecture \ref{conjram v3} in the case $F_0 = \BQ_p$ and $n=2$ is contained in \S\ref{even ram proof}.

\section{Arithmetic transfer theorems for $\wt\CN_2^{(0)}$ and $\wt\CN_2^{(1)}$}\label{s:ATC ram n=2}

In this final section of Part \ref{conjectures}, we state AT theorems attached to the spaces $\smash[t]{\wt\CN_2^{(0)}}$ and $\wt\CN_2^{(1)}$ defined in \S\ref{ram self-dual n=2} over $\Spf O_{\breve F_0}$.  To formulate the statements, we will define an embedding of $\CN_1$ into each of them, and then proceed in a way similar to before, albeit with a few differences.  In particular, we will only obtain statements in the inhomogeneous group and Lie algebra cases; and in order to intersect cycles in a regular space, it is crucial that we work over $O_{\breve F_0}$ instead of $O_{\breve F}$.  Let $F/F_0$ be ramified.

\subsection{The case of $\wt\CN_2^{(0)}$}\label{ATC wtCN_2^(0)}
Identify $F$ with its image in $D$ via $\iota_\BE$, and fix $\zeta \in O_D^\times$ such that $\zeta \pi = - \pi \zeta$.  Then the elements $\pi$ and $\zeta$ generate $O_D$ over $O_{F_0}$ since $p \neq 2$.  Furthermore, we have
\[
   \ep := \zeta^2 \in O_{F_0}^\times \smallsetminus \RN F^\times.
\]
We take the special vectors $u_i \in W_i$ to have norm $-\ep$.  Let
\[
   \bigl(\ov\BX_1^{(0)}, \iota_{\ov\BX_1^{(0)}}, \lambda_{\ov\BX_1^{(0)}}\bigr) := \bigl(\ov\BE, \iota_{\ov\BE}, -\ep\lambda_{\ov\BE}\bigr),
\]
which is the conjugate of the framing object $\BX_1^{(0)}$ for $\CN_1^{(0)}$, cf.\ \eqref{BX_1} and \eqref{BX_n^{(0)}}.  Taking the product with the conjugate of the corresponding universal object over $\CN_1^{(0)}$ then defines a closed embedding
\begin{equation*}
   \wt\delta^{(0)}_\CN \colon
	\xymatrix@R=0ex{
	   \CN_1 \ar[r]  &  (\wt\CN_2^{(0)})_{\Spf O_{\breve F}}\\
		(\CE, \iota_\CE, -\lambda_\CE, \rho_\CE) \ar@{|->}[r]  
		   &  \bigl(\CE \times \ov\CE, \iota_\CE \times \iota_{\ov\CE}, 
			       -(\lambda_\CE \times \ep\lambda_{\ov\CE}), \rho_\CE \times \rho_{\ov\CE} \bigr).
	}
\end{equation*}
Here the last entry $\rho_\CE \times \rho_{\ov\CE}$ is a framing to the constant object over the special fiber defined by
\begin{equation}\label{wtBX_2^(0)}
   \wt\BX^{(0)}_2 := \BX_1 \times \ov\BX_1^{(0)} = \bigl(\BE \times \ov\BE, \iota_\BE \times \iota_{\ov\BE}, 
			       -(\lambda_\BE \times \ep\lambda_{\ov\BE}) \bigr).
\end{equation}
Note that since $\BV(\BX_1) = \BV(\BX_1^{(1)})$ is non-split, $\BV(\wt\BX_2^{(0)})$ is indeed split by \eqref{chi decomp formula}.
We compose with the projection to the regular space $\wt\CN_2^{(0)}$ to obtain
\begin{equation*}
   \wt\Delta^{(0)}_\CN \colon \CN_1 \xra{\wt\delta_\CN^{(0)}} (\wt\CN_2^{(0)})_{\Spf O_{\breve F}} \to \wt\CN_2^{(0)}.
\end{equation*}
By inspection $\wt\Delta_\CN^{(0)}$ is again a closed embedding, and we denote its image by
\[
   \wt\Delta^{(0)} \subset \wt\CN_2^{(0)}.
\]
The canonical vector
\[
   u := (0,\id_{\ov\BE}) \in \BV\bigl(\wt\BX_2^{(0)}\bigr) = \BV(\BX_1) \oplus \BV\bigl(\ov\BX_1^{(0)}\bigr)
\]
has norm $-\ep$, and therefore we may identify the triple $(\BV(\wt\BX_2^{(0)}),u,\BV(\BX_1))$ with $(W_0,u_0,W_0^\flat)$. In this way $\wt\Delta_\CN^{(0)}$ is equivariant for the embedding $H_0(F_0) \inj G_0(F_0)$.

To express the above discussion in terms of the natural identifications we have previously noted for the spaces in play, recall from Example \ref{CN_1} that $\CN_1 = \Spf O_{\breve F}$ is the universal deformation space for $\BE$ as a formal $O_F$-module, with universal object $(\CE,\iota_\CE,\rho_\CE)$.  Further recall from \eqref{ramified Drinfeld isom} that $\wt\CN_2^{(0)}$ identifies with the Drinfeld space $(\wh\Omega_{F_0}^2)_{\Spf O_{\breve F_0}}$.  In fact, we will now use a slightly different description for the moduli problem for $(\wh\Omega_{F_0}^2)_{\Spf O_{\breve F_0}}$ than the one in \cite{KR-alt}, by changing the framing object.  Let
\[
   \BX_\Dr := \BE \times \ov\BE,
\]
endowed with its natural $O_F$-action $\iota_{\BX_\Dr}$ on the right-hand side, and extend this to an $O_D$-action by defining
\begin{equation}\label{iota(zeta) formula}
   \iota_{\BX_\Dr}(\zeta) :=
	\begin{bmatrix}
		  &  \ep\\
		1
	\end{bmatrix}.
\end{equation}
Then $\iota_{\BX_\Dr}(\zeta)\iota_{\BX_\Dr}(\pi) = - \iota_{\BX_\Dr}(\pi)\iota_{\BX_\Dr}(\zeta)$, as required, and we take $\BX_\Dr$ as the framing object for the Drinfeld moduli problem.  Now let
\[
   \lambda_{\BX_\Dr} := -(\lambda_\BE \times \ep \lambda_{\ov\BE}).
\]
It is easy to verify that
\[
   \Ros_{\lambda_{\BX_\Dr}}\bigl(\iota_{\BX_\Dr}(\pi)\bigr) = - \iota_{\BX_\Dr}(\pi)
	\quad\text{and}\quad
	\Ros_{\lambda_{\BX_\Dr}}\bigl(\iota_{\BX_\Dr}(\zeta)\bigr) = \iota_{\BX_\Dr}(\zeta).
\]
Hence by \cite[Th.~1.2]{KR-alt}, for any object $(X,\iota,\rho)$ of the Drinfeld moduli problem, there is a unique principal polarization $\lambda$ on $X$ lifting $\lambda_{\BX_\Dr}$, and $(X,\iota,\rho) \mapsto (X,\iota|_{O_F},\lambda,\rho)$ defines an isomorphism $(\wh\Omega_{F_0}^2)_{\Spf O_{\breve F_0}} \isoarrow \wt\CN_2^{(0)}$.  In terms of this identification, the map $\wt\Delta_\CN^{(0)}$ is therefore given by
\[
   \xymatrix@R=0ex{
	   \Spf O_{\breve F} \ar[r]  &  (\wh\Omega_{F_0}^2)_{\Spf O_{\breve F_0}}\\
		(\CE,\iota_\CE,\rho_\CE) \ar@{|->}[r]  &  (\CE \times \ov\CE, \iota, \rho_\CE \times \rho_{\ov\CE}),
	}
\]
where $\iota$ restricts to the natural $O_F$-action on $\CE \times \ov\CE$, and $\iota(\zeta)$ is defined as in \eqref{iota(zeta) formula}.

\begin{remark}\label{specialDr} Again, as in the case of Remark \ref{reltospecialunr}, one can introduce the special cycle $\CZ(u)$ in $\wt\CN_2^{(0)}$. 
However, according to Sankaran \cite{San-mail}, $\CZ(u)$ is not a divisor, but has embedded components. The divisor $\wt\Delta^{(0)}_\CN (\CN_1)$ is the \emph{purification} of $\CZ(u)$. 
\end{remark}

Now consider the one-dimensional space $W_1^\flat$, and let
\[
   \Lambda_1^\flat \subset W_1^\flat
\]
be the unique self-dual $O_F$-lattice. Let
\[
   \Lambda_1 := \Lambda_1^\flat \oplus O_Fu_1 \subset W_1,
\]
which is again self-dual.  Let 
\[
   \wt K_1 \subset G_1(F_0)
	\quad\text{and}\quad
	\wt \fkk_1 \subset \fkg_1(F_0)
\]
denote the respective stabilizers of $\Lambda_1$.  Then $\wt K_1$ contains the connected stabilizer of $\Lambda_1$ (which is the unique parahoric subgroup\footnote{After extension of scalars, this becomes an Iwahori subgroup.}) with index $2$. We normalize the Haar measure on $H_1(F_0) = F^1$ by assigning it volume $1$.  As usual we normalize the Haar measure on $H'(F_0)$ as in \S\ref{s:FL}.  For $g \in G_0(F_0)_\rs$, we define the intersection number $\Int(g)$ with respect to the cycle $\wt\Delta^{(0)} \subset \wt\CN_2^{(0)}$ in analogy with before,
\[
   \Int(g) := \bigl\la \wt\Delta^{(0)}, g\wt\Delta^{(0)} \bigr\ra_{\wt\CN_2^{(0)}}.
\]
Since $\wt\Delta^{(0)}\cap g\wt\Delta^{(0)}$ is an artinian scheme, this intersection number is simply given by the length 
\[
   \Int(g) =\length\bigl(\wt\Delta^{(0)} \cap g\wt\Delta^{(0)}\bigr) ,
\] 
cf.\ \cite[Prop.\ 8.10]{RSZ}.
For $x \in \fkg_0(F_0)_\rs$, we define (abusing notation as usual)
\[
   \wt\Delta_x^{(0)} := \bigl\{\, X \in \wt\CN_2^{(0)} \bigm| x\colon \wt\BX_2^{(0)} \rightarrow \wt\BX_2^{(0)} \text{ lifts to a homomorphism } \CE \times \ov\CE \rightarrow X \,\bigr\}.
\]
Then $\wt\Delta^{(0)} \cap \wt\Delta_x^{(0)}$ is an artinian scheme, and we define
\[
   \lInt(x) := \length(\wt\Delta^{(0)} \cap \wt\Delta_x^{(0)}).
\] 

\begin{theorem}[Arithmetic transfer theorem]\label{conjram i=0}
Let $F_0 = \BQ_p$.
\begin{altenumerate}
\renewcommand{\theenumi}{\alph{enumi}}
\item\label{conjram i=0 inhomog}
\textup{(Inhomogeneous version)}
Let $f' \in C^\infty_c (S)$ be a function transferring to the pair $(0,\mathbf{1}_{\wt K_1}) \in C_c^\infty(G_0) \times C_c^\infty(G_1)$. Then there exists a function $f'_\corr \in C^\infty_c(S)$ such that, for any $\gamma \in S(F_0)_\rs$ matching an element $g \in G_0(F_0)_\rs$,
\begin{equation*}
   2 \omega_S (\gamma)\del(\gamma, f') = - \Int(g)\cdot \log q + \omega (\gamma)\Orb(\gamma, f'_\corr).
\end{equation*}
Furthermore, there exist such functions $f'$ for which $f'_\corr = 0$.
\item\label{conjram i=0 lie}
\textup{(Lie algebra version)}
Let $\phi' \in C^{\infty}_c (\fks)$ be a function transferring to the pair $(0, \mathbf{1}_{\wt\fkk_1}) \in C_c^\infty(\fkg_0) \times C_c^\infty(\fkg_1)$. Then there exists a function $\phi'_\corr \in C^{\infty}_c (\fks)$ such that, for any $y \in \fks(F_0)_\rs$ matching an element $x \in \fkg_0(F_0)_\rs$,
\begin{equation*}
   2\omega_\fks (y)\del(y, \phi') = - \lInt(x)\cdot \log q + \omega (y)\Orb(y, \phi'_\corr).
\end{equation*}
Furthermore, there exist such functions $\phi'$ for which $\phi'_\corr = 0$.
\end{altenumerate}
\end{theorem}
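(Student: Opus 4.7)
The plan is to carry out an argument parallel to the proof of Conjecture \ref{conjram v3} for $n=2$ that is sketched in \S\ref{even ram proof} and carried out in \S\ref{s:ramifiedn=2}. First I would treat the Lie algebra version (\ref{conjram i=0 lie}) and then deduce the inhomogeneous version (\ref{conjram i=0 inhomog}) from it by a Cayley-transform comparison of orbital integrals and their derivatives on suitable neighborhoods of the origin, in the style of the argument used in \S\ref{s:ATunr}: the key point is that on the unitary side, $\mathbf{1}_{\wt K_1}$ and $\mathbf{1}_{\wt\fkk_1}$ correspond under the Cayley transform (up to an element in the kernel of the Kottwitz map $\wt K_1 \to F^1$), and the derivative on the symmetric-space side transforms compatibly, so that the identity on $\fks$ ``descends'' to the identity on $S$ modulo an explicit orbital-integral correction.

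The geometric input is an explicit evaluation of $\lInt(x)$ (respectively $\Int(g)$). Since $\wt\CN_2^{(0)}$ identifies with $(\wh\Omega_{F_0}^2)_{\Spf O_{\breve F_0}}$ by \eqref{ramified Drinfeld isom} and $\wt\Delta^{(0)}$ is a relative divisor---namely the purification of the special cycle $\CZ(u)$, cf.\ Remark \ref{specialDr}---the intersection $\wt\Delta^{(0)} \cap \wt\Delta_x^{(0)}$ is supported at the (quasi-)canonical lifting points associated with the $O_{F_0}$-order $O_{F_0}[x] \subset \End_{O_F}\bigl(\wt\BX_2^{(0)}\bigr)$. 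Gross's theory of (quasi-)canonical liftings of formal $O_{F_0}$-modules then expresses $\lInt(x)$ as an explicit arithmetic function of the invariants of $x$ (its trace, determinant, and the discriminant of the associated order), organized by the conductor stratification on the Bruhat--Tits tree of $\mathrm{SL}_2(F_0)$.

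The analytic input is a germ expansion of $\del(y,\phi')$ around each semisimple (possibly irregular) $F_0$-rational point of $\fks$. The pair of orbital integrals attached to $\mathbf{1}_{\wt\fkk_1}$ on $\fkg_1(F_0)$, including at irregular semisimple elements, determines via the transfer relation the leading coefficients of $\Orb(y,\phi',s)$ at such points, and hence, after differentiating in $s$, gives $\omega_\fks(y)\del(y,\phi')$ as the sum of an explicit main term and a residual term. The main term matches the geometric formula from the previous paragraph, while the residual term is representable as $\omega_\fks(y)\Orb(y,\phi'_\corr)$ for a suitable $\phi'_\corr \in C_c^\infty(\fks)$, which gives the identity in (\ref{conjram i=0 lie}). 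To achieve $\phi'_\corr = 0$, I would exhibit a specific $\phi'$ supported near $\fks(O_{F_0})$, adjusted by an explicit linear combination of characteristic functions of lattice neighborhoods so as to kill the residual term.

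The main obstacle is the germ expansion of $\del(y,\phi')$ near the irregular semisimple locus of $\fks$: in the ramified setting the orbital integral $\Orb(y,\phi',s)$ has genuinely nontrivial behavior at such points, and one must establish both the existence of such an expansion with computable coefficients and the fact that the subleading contribution is itself representable as an orbital integral function. Once the germ expansion is in hand, matching with the (quasi-)canonical lifting computation and the construction of a distinguished $\phi'$ with $\phi'_\corr = 0$ are essentially formal.
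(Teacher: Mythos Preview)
Your core ingredients are correct: the proof rests on a germ expansion of $\del$ near the irregular semisimple locus together with an explicit evaluation of $\Int(g)$ and $\lInt(x)$ via Gross's theory of canonical liftings. However, your plan to prove (\ref{conjram i=0 lie}) first and then deduce (\ref{conjram i=0 inhomog}) via a Cayley transform is not what the paper does, and in the ramified setting it runs into an obstruction you have not addressed: since $F/F_0$ is ramified, the reduction of $F^1$ modulo $\pi$ is only $\{\pm 1\}$, so for an integral $\gamma \in S(F_0)$ whose two eigenvalues reduce to opposite signs there is no $\xi \in F^1$ making $\gamma$ $\xi$-strongly integral. The paper avoids this entirely: the germ expansion result (Theorem \ref{del germ s}) is proved in \emph{both} a group version and a Lie algebra version, and parts (\ref{conjram i=0 inhomog}) and (\ref{conjram i=0 lie}) are each proved directly by the same mechanism---compute the irregular orbital integrals of $\mathbf{1}_{\wt K_1}$ (resp.\ $\mathbf{1}_{\wt\fkk_1}$), plug into Theorem \ref{del germ s} with $i=0$, and match against the intersection formula. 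No passage between the two settings is needed.

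Your description of the intersection computation is also more elaborate than what is required. There is no need to invoke special-cycle purification, orders $O_{F_0}[x]$, or the Bruhat--Tits tree. With respect to the product decomposition $\wt\BX_2^{(0)} = \BE \times \ov\BE$, the cycle $\wt\Delta^{(0)}$ is simply the image of $\CN_1 = \Spf O_{\breve F}$ carrying the universal object $\CE \times \ov\CE$; hence $\wt\Delta^{(0)} \cap g\wt\Delta^{(0)}$ (resp.\ $\wt\Delta^{(0)} \cap \wt\Delta_x^{(0)}$) is the locus in $\Spf O_{\breve F}$ where each matrix entry of $g$ (resp.\ $x$) in $\RM_2(D)$ lifts to an endomorphism of $\CE$. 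The diagonal entries lie in $F$ and lift freely once integral; the off-diagonal entries lie in $D^-$, and Gross's formula gives the length directly as $v(\RN b)+1$. This matches the leading term $\frac 1 2 \log\lvert 1-\RN a\rvert = \frac 1 2 \log\lvert bc\rvert$ from the germ expansion, and the residual constant is absorbed into $f'_\corr$ (resp.\ $\phi'_\corr$) by the argument of \cite[Prop.~5.14]{RSZ}.
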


We will prove Theorem \ref{conjram i=0} in \S\ref{proof wtCN_2^(0)}.

\subsection{The case of $\wt\CN_2^{(1)}$}\label{atc wtCN_2^(1)}
Now we take the special vectors $u_i \in W_i$ to have norm $-1$.
To obtain an analogous embedding of $\CN_1$ into $\wt\CN_2^{(1)}$, we simply use the morphism $\wt\delta_\CN$ defined in \eqref{wtdelta_CN}, followed by the natural projection to $\wt\CN_2^{(1)}$,
\[
   \wt\Delta_\CN^{(1)}\colon \CN_1 \xra{\wt\delta_\CN} \CP_2 \simeq (\wt\CN_2^{(1)})_{\Spf O_{\breve F}} \to \wt\CN_2^{(1)},
\]
where the isomorphism in the middle is the one in Example \ref{CP_n diagram n=2}.
Recall from Example \ref{CN_2 ATC eg} that in concrete terms we have $\CN_1 \cong \Spf O_{\breve F}$ and $\wt\CN_2^{(1)} \cong \CM_{\Gamma_0(\varpi)}$, and that $\wt\Delta_\CN^{(1)}$ then identifies with
\[
   \xymatrix@R=0ex{
	   \Spf O_{\breve F} \ar[r]  &  \CM_{\Gamma_0(\varpi)}\\
		(\CE,\iota_\CE,\rho_\CE) \ar@{|->}[r]  &  (\CE,\CE,\iota_\CE(\pi),\rho_\CE).
	}
\]

Now recall the framing object $\BX$ for $\wt\CN_2^{(1)}$, which is isomorphic to the framing object for $\CP_2$,
\begin{equation*}
   \wt\BX_2^{(1)} := \wt\BX_2 = \BX_1 \times \ov\BX_1 = \bigl(\BE \times \ov\BE, \iota_\BE \times \iota_{\ov\BE}, -(\lambda_\BE \times \lambda_{\ov\BE})\bigr),
\end{equation*}
via \eqref{psi_0}.  This gives us a canonical vector
\begin{equation}
   u := (0,\id_{\ov\BE}) \in \BV(\BX) \simeq \BV\bigl(\wt\BX_2^{(1)}\bigr) = \BV(\BX_1) \oplus \BV\bigl(\ov\BX_1\bigr)
\end{equation}
of norm $-1$, and therefore we may identify the triple $(\BV(\wt\BX_2^{(1)}),u,\BV(\BX_1))$ with $(W_1,u_1,W_1^\flat)$. In this way $\wt\Delta_\CN^{(1)}$ is equivariant for the embedding $H_1(F_0) \inj G_1(F_0)$.

\begin{remark} Again, as in the AFL case, one can introduce the special cycle $\CZ(u)$ in $\wt\CN_2^{(0)}$, cf.\ Remark \ref{reltospecialunr}. 
It seems likely that $\CZ(u)$ is not a divisor, but has embedded components \cite{KRbarc}, comp.\ Remark \ref{specialDr}. The divisor $\wt\Delta^{(0)}_\CN (\CN_1)$ is the \emph{purification} of $\CZ(u)$. 
\end{remark}

Now consider the one-dimensional space $W_0^\flat$, and let
\[
   \Lambda_0^\flat \subset W_0^\flat
\]
be the unique self-dual lattice.  Let
\[
   \Lambda_0 := \Lambda_0^\flat \oplus O_Fu_0 \subset W_0,
\]
which is again self-dual. Let 
\[
   \wt K_0 \subset G_0(F_0)
	\quad\text{and}\quad
	\wt\fkk_0 \subset \fkg_0(F_0)
\]
denote the respective stabilizers of $\Lambda_0$.  Then $\wt K_0$ contains an Iwahori subgroup with index $2$, cf.~\cite[\S4.a]{PR-TLG}. We normalize the Haar measure on $H_0(F_0) = F^1$ by assigning it volume $1$.  The other normalizations and transfer factors for $S$ and \fks are all as in the previous subsection.
Finally, as in the previous subsection, we define $\Int(g)$ and $\lInt(x)$ for $g\in G_1(F_0)_\rs$, resp.\ $x\in\fkg_{1}(F_0)_\rs$ (again, both are given by a length). 

\begin{theorem}[Arithmetic transfer theorem]\label{conjram i=1}
Let $F_0 = \BQ_p$.
\begin{altenumerate}
\renewcommand{\theenumi}{\alph{enumi}}
\item\label{conjram i=1 inhomog}
\textup{(Inhomogeneous version)}
Let $f' \in C^{\infty}_c (S)$ be a function transferring to the pair $(\mathbf{1}_{\wt K_0}, 0) \in C_c^\infty(G_0) \times C_c^\infty(G_1)$. Then there exists a function $f'_\corr \in C^\infty_c(S)$ such that, for any $\gamma \in S(F_0)_\rs$ matching an element $g \in G_1(F_0)_\rs$,
\begin{equation*}
   2 \omega_S (\gamma)\del(\gamma, f') = - \Int(g)\cdot \log q + \omega (\gamma)\Orb(\gamma, f'_\corr).
\end{equation*}
Furthermore, there exist such functions $f'$ for which $f'_\corr = 0$.
\item\label{conjram i=1 lie}
\textup{(Lie algebra version)}
Let $\phi' \in C^{\infty}_c (\fks)$ be a function transferring to the pair $(\mathbf{1}_{\wt\fkk_0},0) \in C_c^\infty(\fkg_0) \times C_c^\infty(\fkg_1)$. Then there exists a function $\phi'_\corr \in C^{\infty}_c (\fks)$ such that, for any $y \in \fks(F_0)_\rs$ matching an element $x \in \fkg_1(F_0)_\rs$,
\begin{equation*}
   2\omega_\fks (y)\del(y, \phi') = - \lInt(x)\cdot \log q + \omega (y)\Orb(y, \phi'_\corr).
\end{equation*}
Furthermore, there exist such functions $\phi'$ for which $\phi'_\corr = 0$.
\end{altenumerate}
\end{theorem}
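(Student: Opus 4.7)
The plan is to reduce the theorem, in both its inhomogeneous and Lie algebra incarnations, to an explicit comparison of two local invariants attached to a regular semi-simple element: the intersection length $\Int(g)$ on the one hand, and the derivative $\del(\gamma,f')$ on the other. On the geometric side, the starting point will be the identifications $\CN_1\simeq\Spf O_{\breve F}$, $\wt\CN_2^{(1)}\simeq\CM_{\Gamma_0(\varpi)}$ (Proposition \ref{M_Gamma_0 isom}), together with the concrete description of $\wt\Delta_\CN^{(1)}$ from Example \ref{CN_2 ATC eg}, which identifies the cycle $\wt\Delta^{(1)}\subset \CM_{\Gamma_0(\varpi)}$ with the locus of pairs $(Y,Y',\phi)$ coming from the canonical $\pi$-isogeny $\iota_\CE(\pi)\colon \CE\to\CE$ on a $p$-divisible group with $O_F$-action. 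The plan is to exploit Gross's theory of canonical lifts to compute the intersection length $\length(\wt\Delta^{(1)}\cap g\wt\Delta^{(1)})$ by tracking, on each Lubin--Tate deformation neighborhood, the level in the Serre--Tate coordinate at which the induced $O_F$-action, extended by rigidity of quasi-isogenies, ceases to exist. This will produce a closed-form expression for $\Int(g)$ in terms of the elementary invariants of $g\in G_1(F_0)_\rs$ (namely, the trace $\tr g$ and $\det g$, or equivalently the single invariant entering the matching).

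On the analytic side, the key tool will be a germ expansion for the weighted orbital integral $\Orb(\gamma,f',s)$ and its derivative at $s=0$, developed around the semi-simple (but not regular semi-simple) locus of $S_2(F_0)$. Because $n=2$, the semi-simple part of $S_2$ has only finitely many orbits, and near each such semi-simple orbit one may write
\[
   \Orb(\gamma,f',s)=\sum_i \Gamma_i(\gamma,s)\cdot J_i(f',s),
\]
where the $\Gamma_i$ are universal Shalika-type germs and the $J_i(f',s)$ are ``reduced'' orbital integrals on centralizer subgroups. Differentiating at $s=0$ isolates a distinguished ``singular'' germ whose coefficient is determined by the unitary-side value $f_1\equiv 0$; all remaining terms are ordinary orbital integrals on $S_2$, hence may be absorbed into $\Orb(\gamma,f'_\corr)$ for a suitable $f'_\corr\in C_c^\infty(S)$. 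This gives the first assertion in each part, the existence of a correction $f'_\corr$, for \emph{any} $f'$ transferring to $(\mathbf{1}_{\wt K_0},0)$.

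The harder half is the existence of $f'$ for which one may take $f'_\corr=0$. Here the strategy is to write down an explicit candidate $f'$ (for instance, a linear combination of characteristic functions of $K_0(\varpi)$-type bi-cosets in $S(O_{F_0})$, modified by a function supported near the non-regular locus) and to check two things: first, that its orbital integrals on $S(F_0)_\rs$ match those of $(\mathbf{1}_{\wt K_0},0)$, which can be verified by a direct calculation once the smooth transfer result of \cite{Z14} is applied to reduce to a finite-dimensional matching problem; and second, that its derivative $2\omega_S(\gamma)\del(\gamma,f')$ equals $-\Int(g)\cdot\log q$ on the nose. The main obstacle will be this last matching: one must verify, modulo a function whose orbital integrals vanish on $S(F_0)_\rs$ (and which can then be absorbed), that the germ-expansion output on the analytic side produces exactly Gross's canonical-lifting length on the geometric side, up to the factor $2$. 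The computation is expected to reduce to a single one-parameter family of regular semi-simple elements degenerating to a fixed irregular orbit, where both sides become explicit polynomials in the valuation of the relevant invariant.

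The Lie algebra version (\ref{conjram i=1 lie}) will be handled in parallel: the Cayley transform identifies a neighborhood of $1\in S(F_0)$ with a neighborhood of $0\in \fks(F_0)$ equivariantly for the $H'$-action and compatibly with the unitary Cayley transform, so both the geometric invariant $\lInt(x)$ and the analytic invariant $\del(y,\phi')$ transport across the Cayley map; and the germ-expansion argument above applies verbatim. In fact, because $n=2$, it is expected that the Lie algebra statement and the group statement will be deducible from one another directly, so that only one of the two sides needs to be worked out from scratch.
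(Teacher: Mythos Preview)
Your overall plan is correct and matches the paper's proof: both compute $\Int(g)$ via Gross's formula for canonical lifts (the paper's Proposition~\ref{prop int(g) int(x) 1}) and control $\del(\gamma,f')$ via a germ expansion near the irregular locus (the paper's Theorem~\ref{del germ s}, which is exactly the Mihatsch-type expansion you describe), then observe that the singular term on the analytic side is $\frac{1}{2}\Orb(\gamma_0,\mathbf{1}_{\wt K_0})\log\lv 1-\RN a\rv = -\frac{1}{2}v(\RN b)\log q$, cancelling $\Int(g)=v(\RN b)+1$ up to a locally constant remainder.

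Two points where your route diverges from the paper. First, for the second assertion (existence of $f'$ with $f'_\corr=0$), you propose constructing an explicit candidate $f'$ and verifying its transfer and derivative by hand. The paper instead deduces this from the first assertion by the general modification argument of \cite[Prop.~5.14]{RSZ}: once one knows that $2\omega_S(\gamma)\del(\gamma,f')+\Int(g)\log q$ is an orbital integral function for \emph{every} admissible $f'$, one adjusts $f'$ by a function transferring to $(0,0)$ so as to kill the correction term. This is much cleaner and avoids the explicit construction entirely; your approach would work in principle but is unnecessarily laborious. Second, for part~(\ref{conjram i=1 lie}) you propose transporting the group result via the Cayley transform; the paper simply reruns the same three ingredients (germ expansion, irregular orbital integrals of $\mathbf{1}_{\wt\fkk_0}$, Gross's formula for $\lInt(x)$) in their Lie algebra versions, which is no harder here. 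Your Cayley approach would also work, but is not needed.
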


We will prove Theorem \ref{conjram i=1} in \S\ref{proof wtCN_2^(1)}.

\part{Results}
In this part of the paper we prove results on the conjectures formulated in Part \ref{conjectures} (except Conjecture \ref{ATC ram odd}, which is proved for $n = 3$ in \cite{RSZ}).  We continue to take $S = S_n$ as in \eqref{S_n}.

\section{On the ATC for $F/F_0$ unramified, almost self-dual type}
\label{unram proof}
We resume the setup of \S\ref{s:ATunr}, with $F/F_0$ unramified and $n \geq 2$. Recall that we have an orthogonal decomposition
\[
   W_i = W_i^\flat\oplus F u_i,\quad (u_i,u_i)=\varpi, \quad i \in \{0,1\},
\]
where $W_0$ is the split hermitian space of dimension $n$, and $W_1$ is the non-split hermitian space of dimension $n$; and, by \eqref{chi decomp formula}, $W_0^\flat$ is the non-split space of dimension $n-1$, and $W_1^\flat$ is the split space of dimension $n-1$.

\subsection{On the FL Conjecture \ref{FL almost self-dual}}

In this subsection we prove the following theorem.

\begin{theorem}
\begin{altenumerate}
\item\label{fl equiv} 
The Lie algebra FL Conjectures \ref{FLconj}(\ref{FLconj lie}) and \ref{FL almost self-dual}(\ref{FL almost self-dual lie}) are equivalent to each other.
\item\label{fl equiv 2} 
The homogeneous and inhomogeneous FL Conjectures \ref{FL almost self-dual}(\ref{FL almost self-dual homog}) and \ref{FL almost self-dual}(\ref{FL almost self-dual inhomog}) are equivalent to each other.
\item\label{fl imply}
Assume $q \geq n$.  Then Conjecture \ref{FL almost self-dual}(\ref{FL almost self-dual lie}) implies Conjecture \ref{FL almost self-dual}(\ref{FL almost self-dual homog})(\ref{FL almost self-dual inhomog}).
\end{altenumerate}\label{unram variant FL}
\end{theorem}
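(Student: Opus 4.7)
The plan breaks into three parts, mirroring the statement of the theorem. For part (i), the approach is via a partial Fourier transform in the off-diagonal direction. Writing $y \in \fks_n$ in block form $y = \bigl[\begin{smallmatrix} y^\flat & b \\ c & d \end{smallmatrix}\bigr]$ with $y^\flat \in \fks_{n-1}$, $b \in F^{n-1}$, $c = -\ov{b}^{\,t}$, and $d+\ov d = 0$, the characteristic function $\mathbf{1}_{\fks(O_{F_0})}$ differs from $\mathbf{1}_{\fkk'}$ precisely by replacing $\mathbf{1}_{O_F^{n-1}}(b)$ with $\mathbf{1}_{\varpi O_F^{n-1}}(b)$. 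These two conditions on $b$ are Fourier-dual with respect to the unramified self-dual additive character of $F$, up to an explicit positive constant. On the unitary side, write $x \in \fkg_i$ analogously in a basis adapted to $W_i = W_i^\flat \oplus Fu_i$; the key point is that, because $\eta(\varpi) = -1$ in the unramified case, the corresponding partial Fourier transform in the off-diagonal direction swaps the self-dual lattice $\Lambda_0^\flat \oplus O_Fu_0$ in split $W_0$ (with norm-one $u_0$) with the almost self-dual lattice $\Lambda_1^\flat \oplus O_Fu_1$ in non-split $W_1$ (with norm-$\varpi$ $u_1$), cf.~\eqref{chi decomp formula}. The verification that this pair of partial Fourier transforms commutes with matching of regular semi-simple orbits and transports orbital integrals is then carried out by unfolding the defining integrals via Fubini in the variable $b$ (respectively the off-diagonal of $x$), following the standard technique of Jacquet--Rallis. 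The sign $(-1)^{n-1}$ accounts for the product of $n-1$ factors of $\eta(\varpi)$ appearing in the transfer factors \eqref{sign} when passing between the two setups.

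For part (ii), I would apply the standard reduction from the homogeneous to the inhomogeneous setting, using the maps \eqref{G' -> Res GL_n} and \eqref{Res GL_n -> S} on the general linear side and \eqref{G_W -> G} on the unitary side. The right translation action of $\GL_n(F_0)$ on $\Res_{F/F_0}\GL_n$ preserves the map $r$ and the appropriate test functions; integrating this action out of the weighted orbital integral $\Orb((\gamma_1,\gamma_2),f',s)$ expresses it as an orbital integral on $S$ of $r(\gamma_1^{-1}\gamma_2)$, multiplied by the volume of a coset of $K_0(\varpi)$ in $\GL_n(O_F)$, which is precisely $(q^n - 1)/(q-1)$. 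On the unitary side, the analogous unfolding of $\Orb(g,\mathbf{1}_{K_1^\flat \times K_1})$ produces the orbital integral of $\mathbf{1}_{K_1}$ times $\vol(K_1^\flat) = 1$. The compatibility of the transfer factors $\omega_{G'}$ and $\omega_S$ from \S\ref{trans factor} under $r$ then matches up the two statements exactly.

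For part (iii), I would use the Cayley transform $c(y) := (1-y)(1+y)^{-1}$, which gives an $H$-equivariant open embedding of a neighborhood of $0$ in the Lie algebra onto a neighborhood of $1$ in the group on each side, preserving matching of regular semi-simple orbits. Since the eigenvalues of any $y \in \fks(O_{F_0})$ modulo $\varpi$ lie in the $-1$-eigenspace of the nontrivial Galois action on $k_F$ (and so avoid $-1$), the element $1+y$ is invertible in $\GL_n(O_F)$, and $c$ restricts to a bijection of $\fkk'$ onto a subset of $K'$; and similarly for $\fkk_1 \inj K_1$. Under the hypothesis $q \geq n$, the complements $K' \smallsetminus c(\fkk')$ and $K_1 \smallsetminus c(\fkk_1)$ admit a finite partition into cosets of a pro-$p$ subgroup on which the Cayley transform is a homeomorphism after translation, which reduces orbital integrals of $\mathbf{1}_{K'}$ to a finite sum of orbital integrals on Cayley images of Lie algebra subsets, and analogously on the unitary side. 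The Jacobian $\lvert\det(1+y)\rvert = 1$ contributes trivially. Combined with part (ii), this deduces Conjecture \ref{FL almost self-dual}(\ref{FL almost self-dual homog})(\ref{FL almost self-dual inhomog}) from (\ref{FL almost self-dual lie}).

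The main obstacle is part (i): although the general formalism of partial Fourier transforms compatible with transfer is standard in the Jacquet--Rallis literature, the precise form needed here — in particular the appearance of the sign $(-1)^{n-1}$ and the interaction of the special vector norm with the splitness swap between $W_0^\flat$ and $W_1^\flat$ — requires a careful bookkeeping of normalizations, signs, and additive characters on both sides, and a verification that all these choices are compatible with the transfer factors of \S\ref{trans factor}.
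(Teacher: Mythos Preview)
Your approach to part~(i) has a genuine gap: the functions $\mathbf{1}_{O_F^{n-1}}(b)$ and $\mathbf{1}_{\varpi O_F^{n-1}}(b)$ are \emph{not} Fourier-dual. With a self-dual additive character on $F$, the Fourier transform of $\mathbf{1}_{\varpi O_F^{n-1}}$ is proportional to $\mathbf{1}_{\varpi^{-1} O_F^{n-1}}$, not to $\mathbf{1}_{O_F^{n-1}}$. The paper's argument is much more elementary: it uses the rescaling map
\[
   \theta\colon
   \begin{bmatrix} A & b \\ c & d \end{bmatrix}
   \longmapsto
   \begin{bmatrix} A & \varpi^{-1}b \\ c & d \end{bmatrix},
\]
which is $\GL_{n-1}$-equivariant and sends $\fkk'$ bijectively onto $\fks(O_{F_0})$. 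On the unitary side, the same formula carries $\fkg_i(F_0)$ (for the space $W_i$ with special vector of norm~$\varpi$) onto the Lie algebra $\wt\fkg_{1-i}(F_0)$ attached to a space $\wt W_{1-i}$ with special vector of norm~$1$ and $\wt W_{1-i}^\flat = W_i^\flat$; this swap of index is exactly the splitness flip you identified via~\eqref{chi decomp formula}. The sign $(-1)^{n-1}$ then falls out of the transfer factor because $b \mapsto \varpi^{-1}b$ shifts $v\bigl(\det(A^i b)_{0\le i\le n-2}\bigr)$ by $n-1$. No harmonic analysis is needed.

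Your part~(ii) is essentially the paper's argument (Lemma~\ref{homog to inhomog}). For part~(iii) you have the right tool but not the right implementation. A single Cayley transform $y \mapsto (1-y)(1+y)^{-1}$ does not surject onto $K'$ or $K_1$, and your sketch of covering the complement by translated Cayley images is vague and would require justification. The paper instead lets the center of the Cayley transform vary: it uses $\fkc_\xi(y) = \xi(1+y)/(1-y)$ for $\xi \in F^1$. Since $F/F_0$ is unramified, the norm-one torus has $q+1 \geq n+1$ distinct residues mod~$\varpi$, so for any integral $\gamma \in S(F_0)$ one can choose $\xi$ with $\det(\gamma+\xi) \in O_F^\times$; then $\fkc_\xi^{-1}$ is defined at $\gamma$ and carries $\gamma$ into $\fkk'$ (and similarly on the unitary side). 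This pointwise choice of $\xi$ is what makes the hypothesis $q \geq n$ enter cleanly.
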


In order to make the relation between the conjectures in part (\ref{fl equiv}), we first introduce some auxiliary spaces. 
Let
\[
   \wt W_i = \wt W_i^{\flat}\oplus F \wt u_i,\quad (\wt u_i, \wt u_i)=1, \quad i \in \{0,1\},
\]
where $\wt W_0$ is a split hermitian space and $\wt W_1$ is non-split. Thus $\wt W_i$ and $W_i$ are isometric, but we are choosing different special vectors in these spaces.  Let
\[
   \wt G_i := \U\bigl(\wt W_i\bigr),
	\quad
	\wt H_i := \U\bigl(\wt W_i^\flat\bigr),
	\quad\text{and}\quad
	\wt\fkg_i := \Lie \wt G.
\]
Recall from \S\ref{s:ATunr} the self-dual lattice $\Lambda_1^\flat \subset W_1^\flat$, the almost self-dual lattice $\Lambda_1 = \Lambda_1^\flat \oplus F u_1 \subset W_1$, and the Lie algebra stabilizer $\fkk_1 \subset \fkg_1(F_0)$ of $\Lambda_1$. We similarly fix an almost self-dual lattice $\Lambda_0^\flat \subset W_0^\flat$; we set $\Lambda_0 :=\Lambda_0^\flat\oplus O_F u_0$ (a vertex lattice of type $2$ in $W_0$); and we denote by $\fkk_0 \subset \fkg_0(F_0)$ the stabilizer of $\Lambda_0$.  We choose an identification $\wt W_i^\flat=W_{1-i}^\flat$, and we take
\[
   \wt \Lambda_i^\flat := \Lambda_{1-i}^\flat.
\]
We set $\wt\Lambda_i := \wt\Lambda_i^{\flat}\oplus O_F \wt u_i$, and we denote by $\wt\fkk_i \subset \wt\fkg_i(F_0)$ the stabilizer of $\wt\Lambda_i$. 

We next choose an $O_F$-basis of $\Lambda_i^\flat$ (and hence of $\wt\Lambda_{1-i}^\flat$) and extend it by adding $u_i$ (resp.~$\wt u_{1-i}$) to obtain an $F$-basis of $W_i$ (resp.~$\wt W_{1-i}$). In this way, all the groups under consideration identify with subgroups of $\GL_n(F)$, and all the Lie algebras identify with $F_0$-subspaces of $\RM_n(F)$. We define a $\GL_{n-1}(F)$-equivariant map
\[
  \theta\colon 
  \xymatrix@R=0ex{
     \RM_n(F) \ar[r]  &  \RM_n(F)\\
	  {\begin{bmatrix}
	 		 A &  b  \\
	       c  & d
	  \end{bmatrix}}
	  \ar@{|->}[r]  &
	  {\begin{bmatrix}
	  	  A &  \varpi^{-1} b  \\
	     c  & d
	  \end{bmatrix} },
  } 
\]
where as usual the diagonal blocks are of sizes $(n-1) \times (n-1)$ and $1 \times 1$, respectively.  Finally, we recall the compact open subgroup $\fkk' \subset \fks(O_{F_0})$ defined in \eqref{eqn fkk'}.

\begin{lemma}
\begin{altenumerate}
\item\label{lemtheta i} The restriction of $\theta$ to $\fks(F_0)$ gives a $\GL_{n-1}(F_0)$-equivariant bijection
\[
   \theta\colon \fks(F_0)\isoarrow \fks(F_0)
\]
such that
\[
   \theta (\fkk')=\fks(O_{F_0}).
\]
Furthermore, $\theta$ restricts to a bijection $\fks(F_0)_\rs \isoarrow \fks(F_0)_\rs$, and
\begin{align*}\label{eqn orb s}
   \omega_\fks (y) = (-1)^{n-1}\omega_\fks\bigl(\theta(y)\bigr) \quad\text{and}\quad
	\Orb(y, \mathbf{1}_{\fkk'}, s) = \Orb\bigl(\theta(y), \mathbf{1}_{\fks(O_{F_0})}, s\bigr), \quad y \in \fks(F_0)_\rs.
\end{align*}
\item\label{lemtheta ii} 
For $i \in \{0,1\}$, the restriction of $\theta$ to $\fkg_i(F_0)$ gives an $H_i(F_0) = \wt H_{1-i}(F_0)$-equivariant bijection
\[
   \theta\colon \fkg_i(F_0) \isoarrow \wt\fkg_{1-i}(F_0)
\]
such that
$$
\theta(\fkk_i)=\wt\fkk_{1-i}.
$$
In addition, $\theta$ further restricts to a bijection $\fkg_i(F_0)_\rs \isoarrow \wt\fkg_{1-i}(F_0)_\rs$, and
\begin{equation*}
   \Orb(x, \mathbf{1}_{\fkk_i}) = \Orb\bigl(\theta(x),\mathbf{1}_{\wt\fkk_{1-i}}\bigr), \quad x\in \fkg_i(F_0)_\rs .
\end{equation*} 
\end{altenumerate}
\label{lemtheta}
\end{lemma}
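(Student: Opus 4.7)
The approach is to treat both parts of the lemma in parallel by direct matrix computation, exploiting the fact that $\theta$ is an $F_0$-linear bijection of $\RM_n(F)$ that modifies only the $(1,2)$-block by the central scalar $\varpi^{-1} \in F_0^\times$. First I would record the equivariance: for $h \in \GL_{n-1}(F)$ embedded in $\GL_n(F)$ as $\diag(h,1)$, a block-matrix computation gives
\[
   \theta\bigl(\diag(h^{-1},1) \cdot y \cdot \diag(h,1)\bigr) = \diag(h^{-1},1) \cdot \theta(y) \cdot \diag(h,1),
\]
so $\theta$ commutes with conjugation by $H'(F_0) = \GL_{n-1}(F_0)$ and, \emph{a fortiori}, by the unitary subgroups $H_i(F_0) = \tilde H_{1-i}(F_0)$.

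For part (\ref{lemtheta i}), the condition $y + \bar y = 0$ defining $\fks$ is entrywise and involves no product of coordinates, so is preserved by scaling any block by $\varpi \in F_0$; hence $\theta$ restricts to a bijection of $\fks(F_0)$. The identification $\theta(\fkk') = \fks(O_{F_0})$ is immediate from definitions \eqref{eqn K' S}--\eqref{eqn fkk'}: $\fkk'$ is precisely the subset of $\fks(O_{F_0})$ whose $(1,2)$-block lies in $\varpi O_F^{n-1}$, and $\theta$ exchanges this $\varpi$-divisibility with full integrality. The transfer-factor identity reduces to the claim that $\det[e,\theta(y)e,\ldots,\theta(y)^{n-1}e] = \varpi^{-(n-1)}\det[e,ye,\ldots,y^{n-1}e]$. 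Expanding both determinants along the first column $e = (0,\ldots,0,1)^t$ reduces the problem to the analogous statement for the top $(n-1)\times(n-1)$ minor of $[ye, y^2e, \ldots, y^{n-1}e]$. Writing $y^ie$ and $\theta(y)^ie$ in block form and using the recurrence $\theta(y)^{i+1}e = \theta(y)\cdot\theta(y)^ie$, an induction on $i$ shows that the top $n-1$ entries of $\theta(y)^ie$ agree with $\varpi^{-1}$ times the top $n-1$ entries of $y^ie$ modulo the $F$-span of the first $i-1$ columns; the extra terms all turn out to be scalar multiples of the first column $\theta(y)e = \varpi^{-1}b + de$. Column reductions then give the claimed scaling of the determinant, and hence $\omega_\fks(y) = (-1)^{n-1}\omega_\fks(\theta(y))$. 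The orbital integral identity follows formally from the equivariance and $\theta(\fkk') = \fks(O_{F_0})$: we have $\mathbf{1}_{\fkk'}(h^{-1}yh) = \mathbf{1}_{\fks(O_{F_0})}(h^{-1}\theta(y)h)$, and the weighting factor $\lv\det h\rv^s\eta(h)$ is unchanged.

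For part (\ref{lemtheta ii}), the same template applies, with the skew-Hermitian condition taking the place of the trace-zero condition. In the bases chosen before the lemma, the Hermitian matrices $J_i$ for $W_i$ and $\tilde J_{1-i}$ for $\tilde W_{1-i}$ are diagonal and differ only in the last diagonal entry by a factor of $\varpi$, reflecting the renormalization $(u_i,u_i) = \varpi$ vs.\ $(\tilde u_{1-i},\tilde u_{1-i}) = 1$. A direct block computation shows that the $\varpi^{-1}$-scaling of the $(1,2)$-block built into $\theta$ exactly compensates for this diagonal discrepancy, so $\theta$ carries $\fkg_i(F_0)$ bijectively onto $\tilde\fkg_{1-i}(F_0)$. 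The lattice identification $\theta(\fkk_i) = \tilde\fkk_{1-i}$ then follows by an integrality bookkeeping in the same bases, with the Hermitian relation between the $(1,2)$- and $(2,1)$-blocks used to eliminate the redundant $\varpi$-divisibility constraints. Regular semisimplicity and the orbital integral identity are then immediate consequences of the equivariance and the lattice identification, exactly as in part (\ref{lemtheta i}).

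The hard part is the clean identification of lattices under $\theta$: although the block scaling by $\varpi^{-1}$ is elementary, tracking it correctly through the Hermitian duality and across the change of special vectors $u_i \leftrightarrow \tilde u_{1-i}$ (which carry different norms and hence produce different vertex-lattice types) requires careful bookkeeping in each of the two cases $i=0$ and $i=1$. Once the lattice matching is established, everything else reduces to formal consequences of equivariance and the definition of the orbital integral.
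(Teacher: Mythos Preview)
Your approach is correct and matches the paper's in spirit: the paper also declares all assertions except the transfer-factor identity to be ``straightforward'' and focuses on that one. The difference lies in how the transfer-factor identity is proved.

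You work directly with the columns $\theta(y)^i e$ and set up an induction showing $\tilde v_i \equiv \varpi^{-1} v_i$ modulo the span of earlier columns (where $\tilde v_i$, $v_i$ denote the top $n-1$ entries of $\theta(y)^i e$, $y^i e$). This works, but your phrasing ``the extra terms all turn out to be scalar multiples of the first column'' is not quite accurate for $i \ge 4$: the error term $A\ell_{i-1}$ lies in the span of $\tilde v_1,\ldots,\tilde v_{i-1}$, not just in $F\tilde v_1$. The induction still closes, since $A\tilde v_j \in \spann(\tilde v_{j+1},\tilde v_1)$ from the recurrence, but the correct statement is the one you wrote first (``modulo the span of the first $i-1$ columns'').

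The paper bypasses this induction entirely with a single observation: expanding $\det(y^i e)_{0\le i\le n-1}$ along the first column $e$ and then performing the same column reduction you describe yields
\[
   \omega_\fks(y) = (-1)^{v(\det(A^i b)_{0\le i\le n-2})}.
\]
The point is that this expression involves only the blocks $A$ and $b$, and $\theta$ leaves $A$ fixed while replacing $b$ by $\varpi^{-1}b$. The determinant therefore scales by $\varpi^{-(n-1)}$, and the sign follows immediately. This buys you a one-line proof once the reduction to $\det(A^i b)$ is made, and it also makes transparent why the answer depends only on the pair $(A,b)$ --- a fact your induction establishes but somewhat obscures.
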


Here the final asserted equality of unitary orbital integrals when $i = 0$ is with respect to any fixed Haar measure on $H_0(F_0) = \wt H_1(F_0)$; the particular choice will not be important for us.

\begin{proof} 
We just prove the claim for the transfer factor; all of the other assertions are straightforward.  Let $y=\bigl[\begin{smallmatrix} A &  b \\ c  & d \end{smallmatrix}\bigr]\in \fks(F_0)_\rs$.  By \eqref{sign}, $\omega_\fks(y) = (-1)^{v(\det(y^i e)_{0 \leq i \leq n-1})} = (-1)^{v(\det(A^i b)_{0 \leq i \leq n-2})}$.  The claim now follows since applying $\theta$ to $y$ replaces $b$ by $\varpi\i b$.
\end{proof}

\begin{proof}[Proof of Theorem \ref{unram variant FL}(\ref{fl equiv})]
Noting that the lattice $\wt\Lambda_0 \subset \wt W_0$ defined above is self-dual, Conjecture \ref{FLconj}\eqref{FLconj lie} asserts that $\mathbf{1}_{\fks(O_{F_0})}\in  C_c^\infty(\fks)$ transfers to  $(\mathbf{1}_{\wt\fkk_0},0)\in C_c^\infty(\wt\fkg_0) \times C_c^\infty(\wt\fkg_1)$. The equivalence of this with Conjecture \ref{FL almost self-dual}(\ref{FL almost self-dual lie}) now follows immediately from Lemma \ref{lemtheta}.
\end{proof}
  
The key tool in proving part (\ref{fl imply}) of Theorem \ref{unram variant FL} will be the \emph{Cayley transform}.
For any $\xi\in F^1$ and $y \in \RM_n(F)$ with $\det(1-y) \neq 0$, the Cayley map $\fkc_\xi$ is defined by
\[
   \fkc_{\xi}(y) =  \xi \dfrac{1+y}{1-y} \in \RM_n(F).
\]
Its inverse is given by the formula
$$\fkc_\xi^{-1}(\gamma)=\frac{\gamma-\xi}{\gamma+\xi}.
$$
Note that both of these maps are equivariant for the conjugation action by $\GL_n(F)$. We introduce the following terminology.

\begin{definition}
\begin{altenumerate}
\item An element $\gamma\in S(F_0)$ is \emph{integral} if its characteristic polynomial has coefficients in $O_F$.  We make the same definition for $y\in \fks(F_0)$, for $g\in G_i(F_0)$, and for $x\in \fkg_i(F_0)$. 
\item An element $y\in \fks(F_0)$  is \emph{strongly integral} if it is integral and $\det(1-y)\in O_F^\times$. We make the same definition for $x\in \fkg_i(F_0)$.  We denote the subsets of strongly integral elements by $\fks(F_0)^\circ \subset \fks(F_0)$ and $\fkg_i(F_0)^\circ \subset \fkg_i(F_0)$.
\item Let $\xi\in F^1$. An element $\gamma\in S(F_0)$  is \emph{$\xi$-strongly integral} if it is integral and $\det(\gamma+\xi)\in O_F^\times$.  We make the same definition for $g\in G_i(F_0)$.  We denote the subsets of $\xi$-strongly integral elements by $S(F_0)_\xi^\circ \subset S(F_0)$ and $G_i(F_0)_\xi^\circ \subset G_i(F_0)$.
\end{altenumerate}
\end{definition}

Also recall the compact open subsets $K' \subset S(F_0)$ and $K_1 \subset G_1(F_0)$ from \S\ref{s:ATunr}, and define
\[
   \fkk^{\prime\circ} := \fkk' \cap \fks(F_0)^\circ,
	\quad
   K_\xi^{\prime \circ} := K' \cap S(F_0)_\xi^\circ,
	\quad
	\fkk_1^\circ := \fkk_1 \cap \fkg_1(F_0)^\circ,
	\quad
	K_{1,\xi}^\circ := K_1 \cap G_1(F_0)_\xi^\circ.
\]

\begin{lemma}\label{cayley}
Let $\xi \in F^1$.
\begin{altenumerate}
\item\label{cayley i} The Cayley transform $\fkc_\xi$ induces bijections
\[
   \fks(F_0)^\circ \isoarrow S(F_0)_\xi^\circ,
	\quad
	\fkg_0(F_0)^\circ \isoarrow G_0(F_0)_\xi^\circ,
	\quad\text{and}\quad
   \fkg_1(F_0)^\circ \isoarrow G_1(F_0)_\xi^\circ
\]
which are equivariant for the respective actions of $H'(F_0)$, $H_0(F_0)$, and $H_1(F_0)$.  These bijections respect the regular semi-simple sets on both sides, and they have the property that regular semi-simple elements $y \in \fks(F_0)^\circ$ and $x \in \fkg_i(F_0)^\circ$ match if and only if $\fkc_\xi(y) \in S(F_0)_\xi^\circ$ and $\fkc_\xi(x)\in G_i(F_0)_\xi^\circ$ match.
\item\label{cayley ii} Similarly, $\fkc_\xi$ induces bijections
\[
   \fkk^{\prime\circ} \isoarrow K_\xi^{\prime\circ}
	\quad\text{and}\quad
	\fkk_1^\circ \isoarrow K_{1,\xi}^\circ.
\]
\end{altenumerate}
\end{lemma}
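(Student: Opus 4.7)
The plan is to verify the lemma by a direct calculation: first that $\fkc_\xi$ takes skew-hermitian elements to unitary ones and vice versa, then to track integrality via the characteristic polynomial, and finally to check that the refined lattice/congruence conditions pass through.

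First I would check that $\fkc_\xi$ gives well-defined inverse bijections at the level of $F_0$-points of the relevant varieties. For $y \in \fks(F_0)^\circ$, using $\bar\xi = \xi\i$ (since $\xi \in F^1$) and $\bar y = -y$, one has $\overline{\fkc_\xi(y)} = \xi\i(1-y)(1+y)\i$, so that $\fkc_\xi(y)\overline{\fkc_\xi(y)} = 1$, i.e.\ $\fkc_\xi(y) \in S(F_0)$. The analogous calculation with the adjoint $^*$ for the hermitian form on $W_i$ shows that $\fkc_\xi$ takes $\fkg_i(F_0)^\circ$ into $G_i(F_0)$, and that $\fkc_\xi\i$ goes in the reverse direction. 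Both $\fkc_\xi$ and $\fkc_\xi\i$ are $\GL_n(F)$-equivariant for conjugation, hence equivariant for the actions of $H'(F_0)$, $H_0(F_0)$, and $H_1(F_0)$.

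Second, I would record the characteristic polynomial identity
\[
   p_{\fkc_\xi(y)}(T) = p_y(1)\i (T+\xi)^n \, p_y\bigl((T-\xi)/(T+\xi)\bigr),
\]
which follows by computing $\det(T - \fkc_\xi(y)) = \det((T-\xi) - (T+\xi)y)\det(1-y)\i$. Writing $p_y(T) = \sum a_i T^i$, the right-hand side becomes $p_y(1)\i \sum a_i(T-\xi)^i(T+\xi)^{n-i}$; since $2,\xi \in O_F^\times$ and $\det(1-y) = p_y(1) \in O_F^\times$, the condition $p_y \in O_F[T]$ implies $p_{\fkc_\xi(y)} \in O_F[T]$ and $\det(\fkc_\xi(y) + \xi) = (2\xi)^n p_y(1)\i \in O_F^\times$. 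The converse is symmetric with $y = \fkc_\xi\i(\gamma)$. This establishes the asserted bijections on strongly integral loci. For regular semi-simplicity and matching, since $\fkc_\xi(y)$ is a rational function of $y$ and they generate the same unital $F$-subalgebra of $\RM_n(F)$, the stabilizer in $\GL_{n-1}(F)$ and the $\GL_{n-1}(F)$-orbit in $\RM_n(F)$ are preserved; this immediately gives the claimed compatibility with the regular semi-simple loci and with the matching relation.

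Third, for part (\ref{cayley ii}) I would analyze the two congruence conditions separately. On the $S$ side, $\fkk^{\prime\circ} = \fks(O_{F_0})^\circ \cap \fkk_0(\varpi)$ is characterized by (a) $y \in \RM_n(O_F)$ and (b) the strict upper $(n{-}1)\times 1$ block of $y$ lies in $\varpi O_F^{n-1}$. If $y$ satisfies (a) and is strongly integral, then $\det(1-y) \in O_F^\times$ implies $(1-y)\i \in \RM_n(O_F)$, so $\fkc_\xi(y) = \xi + 2\xi y (1-y)\i \in \RM_n(O_F) \cap S(F_0) \subset \GL_n(O_F)$, and conversely from $\xi$-strong integrality. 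Reducing modulo $\varpi$, $\xi$ is a diagonal scalar, so it commutes with the block structure; since the reduction of $y$ lies in the lower block-triangular subalgebra, so do $1 \pm y$, $(1-y)\i$, and therefore $\fkc_\xi(y)$. This gives the first bijection, and the converse direction is identical. For the second bijection, $y \in \fkk_1^\circ$ means $y$ is an $O_F$-linear endomorphism of $\Lambda_1$ with $\det(1 - y|_{\Lambda_1}) = p_y(1) \in O_F^\times$, so $(1-y)|_{\Lambda_1}$ is an automorphism of $\Lambda_1$ and $\fkc_\xi(y)|_{\Lambda_1} \in \Aut_{O_F}(\Lambda_1) \cap G_1(F_0) = K_1$; again the converse is symmetric.

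The computations are essentially routine; the only point requiring any care is the block-triangular step for $\fkk_0(\varpi)$ and $K_0(\varpi)$, and this reduces cleanly to the observation that the lower block-triangular matrices form a $\xi$-stable subalgebra modulo $\varpi$. No serious obstacle is expected.
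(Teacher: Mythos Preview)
Your proposal is correct and follows essentially the same approach as the paper. The paper's proof is much terser, dismissing all parts as routine except the preservation of regular semi-simplicity; for that point it uses the same key observation you make---that Cayley--Hamilton gives $F[y] = F[\fkc_\xi(y)]$---but then concludes directly via the linear-independence criterion $\{y^i e\}_{i=0}^{n-1}$, $\{\tensor[^t]{e}{} y^i\}_{i=0}^{n-1}$ from \cite{RS}, which is cleaner than your orbit/stabilizer phrasing (your ``orbit preserved'' does not by itself give Zariski-closedness, whereas the algebra equality immediately gives $F[y]\cdot e = F[\fkc_\xi(y)]\cdot e$ and hence equivalence of the linear-independence conditions).
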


\begin{proof}
The only claim that possibly requires proof is that the bijections in (\ref{cayley i}) respect the sets of regular semi-simple elements on both sides.  This follows by using that the Cayley--Hamilton theorem gives an equality of $F$-algebras $F[y] = F[\fkc_\xi(y)]$, and that an arbitrary $y \in \RM_n(F)$ is regular semi-simple if and only if the sets $\{y^i e\}_{i=0}^{n-1}$ and $\{\tensor[^t]{e}{}y^i\}_{i=0}^{n-1}$ are linearly independent; cf.\ the proofs of \cite[Lems.\ 8.7, 10.6]{RSZ}.
\end{proof}

\begin{lemma}\label{cayley orb}
Let $\xi \in F^1$.  Then for any regular semi-simple $y \in \fks(F_0)^\circ$,
\[
   \omega_\fks(y) = \omega_S\bigl(\fkc_\xi (y)\bigr)
	\quad\text{and}\quad
	\Orb(y, \mathbf{1}_{\fkk'}, s) = \Orb\bigl(\fkc_\xi(y), \mathbf{1}_{K'}, s\bigr) .
\]
Similarly, for any regular semi-simple $x \in \fkg_1(F_0)^\circ$,
\[
   \Orb(x, \mathbf{1}_{\fkk_1}) = \Orb\bigl(\fkc_\xi(x), \mathbf{1}_{K_1}\bigr).
\]
\end{lemma}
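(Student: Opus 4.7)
The proof has two parts: the orbital-integral identities and the transfer-factor identity.

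For the orbital integrals, the key observation is that strong integrality is stable under $H'(F_0)$-conjugation, because $\det(1 - h^{-1} y h) = \det(1-y) \in O_F^\times$ whenever $y \in \fks(F_0)^\circ$. Hence $\mathbf{1}_{\fkk'}(h^{-1} y h) = \mathbf{1}_{\fkk^{\prime\circ}}(h^{-1} y h)$ for every $h \in H'(F_0)$, and similarly $\mathbf{1}_{K'}(h^{-1} \fkc_\xi(y) h) = \mathbf{1}_{K_\xi^{\prime\circ}}(h^{-1} \fkc_\xi(y) h)$, using Lemma \ref{cayley}(\ref{cayley i}) to see that $\fkc_\xi(y) \in S(F_0)_\xi^\circ$. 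The $H'(F_0)$-equivariance of $\fkc_\xi$ combined with the bijection $\fkc_\xi\colon \fkk^{\prime\circ} \isoarrow K_\xi^{\prime\circ}$ from Lemma \ref{cayley}(\ref{cayley ii}) then identifies the two integrands pointwise, and substituting into the defining integral yields $\Orb(y, \mathbf{1}_{\fkk'}, s) = \Orb(\fkc_\xi(y), \mathbf{1}_{K'}, s)$. The identity $\Orb(x, \mathbf{1}_{\fkk_1}) = \Orb(\fkc_\xi(x), \mathbf{1}_{K_1})$ follows by the same reasoning on the unitary side, with $\fkg_1, \fkk_1, K_1$, and $H_1(F_0)$ in place of their counterparts.

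For the transfer-factor identity, set $\gamma := \fkc_\xi(y)$ and form the $n \times n$ matrices $W := (y^i e)_{i=0}^{n-1}$ and $V := (\gamma^i e)_{i=0}^{n-1}$. Since we are in the unramified case, formula \eqref{sign} gives $\omega_\fks(y) = \wt\eta(\det W)$ and $\omega_S(\gamma) = \wt\eta(\det V)$ (the $\det\gamma^{-\lfloor n/2 \rfloor}$ factor in $\omega_S$ is trivial since $\det \gamma \in F^1 \subset O_F^\times$ in the unramified case, and $\wt\eta$ is trivial on $O_F^\times$). Using $\gamma^i = \xi^i (1+y)^i (1-y)^{-i}$, a direct expansion of $(1-y)^{n-1} \gamma^i = \xi^i (1+y)^i (1-y)^{n-1-i}$ as a polynomial in $y$ gives $(1-y)^{n-1} V = W P^T$, where $P = (\xi^i c_{ij})_{i,j}$ and $c_{ij}$ is the coefficient of $T^j$ in $(1+T)^i (1-T)^{n-1-i}$. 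Hence $\det V / \det W = \det P / \det(1-y)^{n-1}$. Evaluating the matrix $((1+\alpha_j)^i (1-\alpha_j)^{n-1-i})_{j,i}$ at $n$ distinct points $\alpha_1,\dots,\alpha_n$, factoring out $(1-\alpha_j)^{n-1}$ from each row, and comparing with the usual Vandermonde in $\beta_j := (1+\alpha_j)/(1-\alpha_j)$ yields the closed form $\det(c_{ij})_{i,j} = 2^{n(n-1)/2}$; consequently $\det P = (2\xi)^{n(n-1)/2}$. Now $\xi \in F^1 \subset O_F^\times$ in the unramified case, $2 \in O_{F_0}^\times$ since $p \neq 2$, and $\det(1-y) \in O_F^\times$ by strong integrality. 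Therefore $\det V / \det W \in O_F^\times$, on which $\wt\eta$ is trivial, giving $\omega_S(\gamma) = \wt\eta(\det V) = \wt\eta(\det W) = \omega_\fks(y)$.

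The only substantive step is the closed-form evaluation of the Vandermonde-like determinant $\det(c_{ij})$; everything else is formal bookkeeping, with strong integrality chosen precisely to make each factor appearing in $\det V / \det W$ a unit, so that every potentially nontrivial $\wt\eta$-value is killed.
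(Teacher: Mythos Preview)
Your proof is correct. The orbital-integral argument is exactly the paper's: strong integrality is conjugation-invariant, so the indicator functions may be restricted to the strongly integral loci, after which $H'(F_0)$-equivariance of $\fkc_\xi$ together with the bijections of Lemma~\ref{cayley}\eqref{cayley ii} identifies the integrands pointwise.

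For the transfer factor the paper simply cites \cite[Lem.~11.9]{RSZ}, remarking that the argument there (stated for $F/F_0$ ramified and $n$ odd) simplifies in the unramified case. Your proof supplies this argument directly: the Vandermonde manipulation expressing $\det(\gamma^i e)_i$ in terms of $\det(y^i e)_i$, together with the closed-form $\det(c_{ij}) = 2^{n(n-1)/2}$, is precisely the kind of computation that reference contains. The point in both cases is that every factor in the ratio $\det V/\det W$ --- namely $2$, $\xi$, and $\det(1-y)$ --- is a unit in $O_F$ under the unramified and strongly-integral hypotheses, so $\wt\eta$ kills it. Your version has the advantage of being self-contained rather than deferring to an external reference.
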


\begin{proof}
Since we take $y$ and $x$ to be strongly integral, the equalities for the orbital integrals follow from equivariance of the Cayley maps and from Lemma \ref{cayley} (especially part \eqref{cayley ii}).  The equality for the transfer factor is proved as for \cite[Lem.\ 11.9]{RSZ}. (Note that loc.~cit.\ considers the case that $F/F_0$ is ramified and $n$ is odd; when $F/F_0$ is unramified, the proof simplifies and is valid for any $n$.)
\end{proof}

Now we are ready to prove part (\ref{fl imply}) of the theorem.

\begin{proof}[Proof of Theorem \ref{unram variant FL}(\ref{fl imply})]
We show that part (\ref{FL almost self-dual lie}) of Conjecture \ref{FL almost self-dual} implies part (\ref{FL almost self-dual inhomog}); that this also implies (\ref{FL almost self-dual homog}) will follow from Theorem \ref{unram variant FL}(\ref{fl equiv 2}).  We must show that for all matching $\gamma \in S(F_0)_\rs$ and $g \in G_i(F_0)_\rs$,
\begin{align*}
   \omega_S(\gamma) \Orb\bigl(\gamma, (-1)^{n-1}\mathbf{1}_{K'}\bigr)=\begin{cases}0,&  g\in G_0(F_0)_\rs;\\
\Orb(g, \mathbf{1}_{K_1}),& g\in G_1(F_0)_\rs.
\end{cases}
\end{align*}

First assume that $\gamma$ is not integral. Then $\Orb(\gamma, \mathbf{1}_{K'}) = 0$, and we have to show that $\Orb(g,\mathbf{1}_{K_1}) = 0$ if $g \in G_1(F_0)$.  But since $\gamma$ and $g$ have identical characteristic polynomials, $g$ is also not integral, and hence $\Orb(g,\mathbf{1}_{K_1})$ vanishes.

Now assume that $\gamma$ is integral. Since we assume that $q\geq n$, there exist at least $n+1$ elements $\xi_0,\xi_1,\dotsc,\xi_n\in F^1$ with pairwise distinct residues mod $\varpi$ (since the kernel of the norm map $\BF_{q^2}^\times \to \BF_q^\times$ has $q+1$ elements). Hence there exists some $\xi = \xi_i$ such that $\det(\gamma+\xi)\in O_F^\times$, i.e.\ $\gamma$ is $\xi$-strongly integral. Since $\gamma$ and $g$ have the same characteristic polynomial, $g$ is $\xi$-strongly integral too.  Let $y := \fkc_\xi^{-1}(\gamma) \in \fks(F_0)^\circ$ and $x := \fkc_\xi^{-1}(g) \in \fkg_i(F_0)^\circ$.  Then $y$ and $x$ are matching regular semi-simple elements, and by Lemma \ref{cayley orb} and Conjecture \ref{FL almost self-dual}(\ref{FL almost self-dual lie}),
\begin{multline*}
   \omega_S(\gamma)\Orb\bigl(\gamma, (-1)^{n-1}\mathbf{1}_{K'}\bigr) = \omega_\fks(y)\Orb\bigl(y, (-1)^{n-1}\mathbf{1}_{\fkk'}\bigr)\\
	= 
	\begin{cases}
		0,  &  g \in G_0(F_0)_\rs;\\
		\Orb(x, \mathbf{1}_{\fkk_1}) = \Orb(g,\mathbf{1}_{K_1}),  &  g \in G_1(F_0)_\rs,
	\end{cases}
\end{multline*}
as desired.
\end{proof}

\begin{remark}
The idea to reduce the group statement to the Lie algebra statement via the Cayley transform also appears in \cite[Lem.\ 8.4, Lem.\ 11.1, Prop.\ 11.14]{RSZ}, and  in fact our situation is simpler than in loc.~cit.
If we assume that $q\geq n+2$, then we may show the converse to (\ref{fl imply}), i.e.\ the group version also implies the Lie algebra version, cf.\ \cite[Prop.\ 2.4]{M-AFL}.
\end{remark}

It remains to prove Theorem \ref{unram variant FL}(\ref{fl equiv 2}), which we will do by relating functions and orbital integrals on $G'(F_0)$ and $S(F_0)$ as in \cite[\S2.1]{Z14} and \cite[\S5.2]{RSZ}. 
We recall the map $r(g) = g \ov g\i$ from \eqref{Res GL_n -> S}, and we continue to normalize the Haar measures as in \S\ref{s:FL}.  Also recall that since we are in the unramified case, $\wt\eta$ is the natural extension $\wt\eta(x) = (-1)^{v(x)}$ of $\eta$ to $F^\times$.  For $g \in \GL_n(F)$, we write $\wt\eta(g) := \wt\eta(\det g)$.

\begin{lemma}\label{homog to inhomog}
Let $f'\in  C^\infty_c(G')$, and define the function $\wt f'$ on $S(F_0)$ by, for $g \in \GL_n(F)$,
\begin{equation*}
   \wt f'\bigl(r(g)\bigr) := \int_{\GL_{n-1}(F)\times \GL_n(F_0)}f'(h_1\i,h_1\i g h_2) \wt\eta(gh_2)^{n-1}\, dh_1\,dh_2.
\end{equation*}
\begin{altenumerate}
\item\label{all}
$\wt f'\in C^\infty_c(S)$, and every element in $C^\infty_c(S)$ arises in this way.  
\item\label{new}
We have
\[
   \frac{q^n-1}{q-1} \cdot \wt{\mathbf{1}}_{\GL_{n-1}(O_F) \times K_0(\varpi)} = \mathbf{1}_{K'}.
\]
\item\label{old}
For all $\gamma = (\gamma_1, \gamma_2)\in G'(F_0)_\rs$,
\begin{equation*}
   \Orb\bigl(r(\gamma_1\i\gamma_2),\wt f'\bigr) = \wt\eta(\gamma_1^{-1}\gamma_2)^{n-1}\Orb(\gamma,f').
\end{equation*}
\item\label{old 2}
If $f'$ transfers to $(0,f_1)$ for some $f_1\in  C^\infty_c(G_{W_1})$, then there exists a function $\phi_\corr' \in C_c^\infty(G')$ such that for any $\gamma$ matching an element in $G_{W_0}(F_0)_{\rs}$,
\[
   2 \del\bigl(r(\gamma_1\i\gamma_2),\wt f'\bigr) = \wt\eta(\gamma_1^{-1}\gamma_2)^{n-1} \bigl(\del(\gamma,f') +  \Orb(\gamma,\phi'_{\corr})\bigr).
\]
If furthermore the support of $f'$ is contained in the set $\{(\gamma_1,\gamma_2)\in G'(F_0) \mid \lv\det \gamma_1\rv = 1 \}$, then we may take $\phi_\corr' = 0$.
\end{altenumerate}
\end{lemma}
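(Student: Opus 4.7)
The lemma is a descent from the homogeneous orbital integrals on $G'$ to the inhomogeneous ones on $S$ via the surjective map $r\colon \Res_{F/F_0}\GL_n \to S$ from \eqref{Res GL_n -> S}, paralleling \cite[\S2.1]{Z14} and \cite[\S5.2]{RSZ}. The basic mechanism is that the fibers of $r$ are precisely the right $\GL_n(F_0)$-cosets: $r(gh) = gh\,\ov{gh}{}^{-1} = g\ov g^{-1} = r(g)$ for $h \in \GL_n(F_0)$, and Hilbert 90 gives the converse. From this, part (i) is a standard pushforward argument: $\wt f'$ is smooth and compactly supported because the inner integral averages a locally constant compactly supported function over a set that is compact modulo the fiber of $r$, and surjectivity of $f'\mapsto \wt f'$ onto $C_c^\infty(S)$ follows by a partition-of-unity lift of any $\phi \in C_c^\infty(S)$ to a compactly supported element of $C_c^\infty(G')$.

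For (ii), a direct computation. Given $\gamma \in S(O_{F_0})$, Hilbert 90 over $O_F/O_{F_0}$ supplies a lift $g \in \GL_n(O_F)$; the integrand then forces $h_1 \in \GL_{n-1}(O_F)$ (volume $1$) and makes all $\wt\eta$-factors trivial. Since $\GL_{n-1}(O_F) \subset K_0(\varpi)$ (as $\diag(*,1)$ is automatically lower block triangular), the remaining integral collapses to $\vol(g^{-1}K_0(\varpi) \cap \GL_n(F_0))$. For $\gamma \in K'$ one may choose the lift $g$ inside $K_0(\varpi)$, and this volume equals $\vol(\GL_n(O_{F_0}) \cap K_0(\varpi)) = (q-1)/(q^n-1)$; the index is the size of the $\GL_n(\BF_q)$-orbit of the distinguished line $[e_n]$ inside $\BP^{n-1}(\BF_{q^2}) = (\GL_n(O_F)/K_0(\varpi))\bmod\varpi$, which is $|\BP^{n-1}(\BF_q)| = (q^n-1)/(q-1)$. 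For $\gamma \notin K'$ one verifies separately that the integrand vanishes identically.

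For (iii), I would insert the definition of $\wt f'$ into $\Orb(r(g),\wt f')$, use $h^{-1}r(g)h = r(h^{-1}gh)$ for $h \in \GL_{n-1}(F_0)$ together with $\wt\eta(h^{-1}ghh_2) = \wt\eta(g)\wt\eta(h)\wt\eta(h_2)$ and $\wt\eta|_{F_0^\times} = \eta$ to pull out the factor $\wt\eta(\gamma_1^{-1}\gamma_2)^{n-1}$, and then perform a suitable change of variables in $(h_1,h_2)$ to identify the remaining triple integral with the defining integral of $\Orb(\gamma,f')$. The combination $\eta(h)^{n-1}\cdot\eta(h) = \eta(h)^n$ arising from $\wt\eta(h)^{n-1}$ and the measure-factor $\eta(h)$ matches the prescribed weight $\eta(h_2) = \eta(\det h_2')^n\eta(\det h_2'')^{n-1}$ on the homogeneous side.

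Part (iv) is the substantive one. Differentiating the general-$s$ version of the identity in (iii) at $s=0$, the asymmetry between the weight $|\det h_1|_F^s$ on the $G'$-side and $|\det h|_{F_0}^s$ on the $S$-side produces the factor of $2$: since $F/F_0$ is unramified, $|x|_F = |x|_{F_0}^2$ for $x \in F_0^\times$, so after unfolding and the change of variables used in (iii), the cumulative weight becomes $|\det h_1|_F^s \cdot |\det h|_{F_0}^{2s}$, whose $s$-derivative at $0$ has coefficient $2$ on the $F_0$-valuation term. The hypothesis that $f'$ transfers to $(0,f_1)$ forces $\Orb(\gamma,f',0)=0$ for $\gamma$ matching $G_{W_0}(F_0)_\rs$, so no boundary terms obstruct the identity; the residual derivative from $|\det h_1|_F^s$ (a factor $-2v_F(\det h_1)\log q$) is absorbed into an orbital integral of a correction function $\phi_\corr' \in C_c^\infty(G')$ obtained by multiplying $f'$ by the locally constant function $(\gamma_1,\gamma_2)\mapsto 2v_F(\det\gamma_1)\log q$, which is bounded on $\supp f'$. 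Under the support hypothesis $|\det\gamma_1|=1$, this function vanishes on $\supp f'$, so $\phi_\corr' = 0$ suffices. The main obstacle is producing $\phi_\corr'$ as a single $\gamma$-independent element of $C_c^\infty(G')$ with the required support and matching properties; this follows the template of \cite[Lem.~5.8]{RSZ}.
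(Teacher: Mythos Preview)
Your overall approach matches the paper's: parts (i), (iii), (iv) are handled by the same descent/unfolding mechanism as in \cite[Lem.~5.7]{RSZ}, and part (ii) by direct computation after reducing the $h_1$-integral to $\GL_{n-1}(O_F)$ and collapsing to a volume over $\GL_n(F_0)$.

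There is, however, a genuine gap in your treatment of (ii). The assertion ``for $\gamma \in K'$ one may choose the lift $g$ inside $K_0(\varpi)$'' is precisely the nontrivial content, and it does not follow from integral Hilbert~90 for $\GL_n(O_F)$ alone: that gives you $g \in \GL_n(O_F)$ with $g\ov g^{-1} = \gamma$, but says nothing about the congruence condition defining $K_0(\varpi)$. What you need is the equivalence
\[
   r(g) \in K' \iff g\,\GL_n(F_0) \cap K_0(\varpi) \neq \emptyset,
\]
and the forward direction requires an argument. The paper supplies one via lattices: writing $K_0(\varpi)$ as the stabilizer of the chain $\Lambda = O_F^n \subset \Lambda' = O_F^{n-1}\oplus\varpi^{-1}O_F$, the condition $g\ov g^{-1} \in K_0(\varpi)$ gives $\ov g^{-1}\Lambda = g^{-1}\Lambda$ and $\ov g^{-1}\Lambda' = g^{-1}\Lambda'$, so $g^{-1}\Lambda \subset g^{-1}\Lambda'$ is a Galois-stable chain and hence descends to an $O_{F_0}$-lattice chain in $F_0^n$, which some $h \in \GL_n(F_0)$ carries back to the standard one; then $gh \in K_0(\varpi)$. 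Without this step, neither the claimed lift nor the vanishing for $\gamma \in S(O_{F_0}) \smallsetminus K'$ is justified. (The vanishing for $\gamma \notin S(O_{F_0})$ is immediate, since $K_0(\varpi) \subset \GL_n(O_F)$.)

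A minor remark on your index computation: the quotient $\GL_n(O_{F_0})/(\GL_n(O_{F_0})\cap K_0(\varpi))$ lives entirely over $\BF_q$, so there is no need to invoke $\BP^{n-1}(\BF_{q^2})$; the index is simply $|\BP^{n-1}(\BF_q)| = (q^n-1)/(q-1)$, as you say.
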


\begin{proof}
Part (\ref{all}) is clear, and parts (\ref{old}) and (\ref{old 2}) are proved as in \cite[Lem.\ 5.7]{RSZ}.\footnote{Note that in loc.\ cit.\ $n$ is odd, which in the case of our $\wt\eta$ implies that $\wt\eta(\gamma_1^{-1}\gamma_2)^{n-1} = 1$; however the proof for arbitrary $n$ is analogous. The last statement in (\ref{old 2}) is immediate from the specific choice of $\phi_\corr'$ given in (5.13) in loc.\ cit.}  We prove (\ref{new}).  For $g \in \GL_n(F)$, we have
\begin{align}
   &\wt{\mathbf{1}}_{\GL_{n-1}(O_F) \times K_0(\varpi)}\bigl(r(g)\bigr)\notag\\ 
	   &\qquad\qquad\qquad = \int_{\GL_{n-1}(F)\times \GL_n(F_0)} \mathbf{1}_{\GL_{n-1}(O_F) \times K_0(\varpi)}(h_1\i,h_1\i g h_2) \wt\eta(gh_2)^{n-1}\, dh_1\,dh_2 \notag\\
		&\qquad\qquad\qquad= \int_{\GL_{n-1}(O_F)\times \GL_n(F_0)} \mathbf{1}_{K_0(\varpi)} (h_1\i g h_2) \wt\eta(gh_2)^{n-1}\, dh_1\,dh_2\notag\\
		&\qquad\qquad\qquad= \int_{\GL_n(F_0)} \mathbf{1}_{K_0(\varpi)} (g h_2) \wt\eta(gh_2)^{n-1}\, dh_2,\label{last}
\end{align}
where the last equality holds because $\GL_{n-1}(O_F) \subset K_0(\varpi)$.  We claim that
\begin{equation}\label{blah}
   r(g) \in K' \iff g \GL_n(F_0) \cap K_0(\varpi) \neq \emptyset.
\end{equation}
Before establishing this claim, let us show that it implies the conclusion of (\ref{new}).  If $r(g) \notin K'$, then $\wt{\mathbf{1}}_{\GL_{n-1}(O_F) \times K_0(\varpi)}(r(g)) = 0$ because, by (\ref{blah}), the integrand in (\ref{last}) is identically zero.  If $r(g) \in K'$, then by (\ref{blah}) we may assume that $g \in K_0(\varpi)$.  Since $\wt\eta$ is identically one on $K_0(\varpi)$, we conclude from (\ref{last}) that
\begin{multline*}
   \wt{\mathbf{1}}_{\GL_{n-1}(O_F) \times K_0(\varpi)}\bigl(r(g)\bigr) 
	   = \vol\bigl(\GL_n(F_0) \cap K_0(\varpi)\bigr)\\
	   = \frac 1{[\GL_n(O_{F_0}): \GL_n(O_{F_0})\cap K_0(\varpi)]}
		= \frac 1{\frac{q^n-1}{q-1}},
\end{multline*}
where in the last equality we use that the index in question equals $\#\BP_k^{n-1}(k)$ (note that $\GL_n(O_{F_0})$ acts transitively on the lines in $k^n$, and $\GL_n(O_{F_0})\cap K_0(\varpi)$ is the stabilizer of a line).  This proves (\ref{new}).

It remains to establish the equivalence (\ref{blah}).  The reverse implication is trivial.  To prove the forward implication, let $\Lambda := O_F^n$ and $\Lambda' := O_F^{n-1} \oplus \varpi\i O_F$.  Then $K_0(\varpi)$ is the stabilizer in $\GL_n(F)$ of the lattice chain $\Lambda \subset \Lambda'$.  Since $r(g) \in K'$, we have $\ov g\i \Lambda = g\i \Lambda$ and $\ov g\i \Lambda'= g\i \Lambda'$.  Hence these are Galois-stable lattices in $F^n$, so that they come from an $O_{F_0}$-lattice chain in $F_0$. Hence there exists $h \in \GL_n(F_0)$ such that $h \cdot (\Lambda \subset \Lambda') = g\i \cdot (\Lambda \subset \Lambda')$.  Hence $gh \in K_0(\varpi)$, as desired.
\end{proof}

\begin{proof}[Proof of Theorem \ref{unram variant FL}(\ref{fl equiv 2})]
By Lemma \ref{homog to inhomog}(\ref{new})(\ref{old}) and the definition of the transfer factors in \S\ref{trans factor},
\[
   \omega_{G'}(\gamma) \Orb\biggl(\gamma,\frac{q^n-1}{q-1}\cdot\mathbf{1}_{\GL_{n-1}(O_F)\times K_0(\varpi)}\biggr) = \omega_S\bigl(r(\gamma_1\i\gamma_2)\bigr) \Orb\bigl(r(\gamma_1\i\gamma_2), \mathbf{1}_{K'} \bigr)
\]
for all $\gamma = (\gamma_1,\gamma_2) \in G'(F_0)_\rs$.  On the other hand, it is easy to verify that
\[
   \Orb(g,\mathbf{1}_{K_1^\flat \times K_1}) = \Orb(g_1\i g_2,\mathbf{1}_{K_1})
\]
for all $g = (g_1,g_2) \in G_{W_1}(F_0)_\rs$.  Since the maps
\begin{equation}\label{maps}
   \begin{gathered}
		\xymatrix@R=0ex{
		   G'(F_0)_\rs \ar[r]  &  S(F_0)_\rs\\
			(\gamma_1,\gamma_2) \ar@{|->}[r]  &  r(\gamma_1\i \gamma_2)
		}
	\end{gathered}
	\qquad\text{and}\qquad
   \begin{gathered}
		\xymatrix@R=0ex{
		   G_{W_i}(F_0)_\rs \ar[r]  &  G_i(F_0)_\rs\\
			(g_1,g_2) \ar@{|->}[r]  &  g_1\i g_2
		}
	\end{gathered}
\end{equation}
are compatible with the matching relation, the theorem follows.
\end{proof}

Let us finally note that, in analogy with \cite[Lem.\ 2.5]{Z12} in the self-dual case, the transfer relations asserted in the FL Conjecture \ref{FL almost self-dual} are easy to verify in the case that the unitary matching elements are
attached to the split hermitian space.  For any $y \in \RM_n(F)$, define
\[
   \tensor*[^*]{y}{} := \diag(1,\dotsc,1,\varpi\i) \tensor[^t]{y}{} \diag(1,\dotsc,1,\varpi).
\]

\begin{lemma}
\begin{altenumerate}
\item\label{matching 0 homog}
Suppose that $f' \in C_c^\infty(G')$ satisfies $f'(\gamma_1,\gamma_2) = f'(\tensor*[^t]{\ov\gamma}{_1^{-1}},\tensor*[^*]{\ov\gamma}{_2^{-1}})$.  Then for all $\gamma \in G'(F_0)_\rs$ matching an element $g \in G_{W_0}(F_0)_\rs$,
\[
   \Orb(\gamma,f') = 0.
\]
In particular, $\Orb(\gamma,\mathbf{1}_{\GL_{n-1}(O_F) \times K_0(\varpi)}) = 0$ for such $\gamma$.
\item\label{matching 0 inhomog}
Suppose that $f' \in C_c^\infty(S)$ satisfies $f'(\gamma) = f'(\tensor*[^*]{\gamma}{})$.  Then for all $\gamma \in S(F_0)_\rs$ matching an element $g \in G_0(F_0)_\rs$,
\[
   \Orb(\gamma,f') = 0.
\]
In particular, $\Orb(\gamma,\mathbf{1}_{K'}) = 0$ for such $\gamma$.
\item\label{matching 0 lie}
Suppose that $\phi' \in C_c^\infty(\fks)$ satisfies $\phi'(y) = \phi'(\tensor*[^*]{y}{})$.  Then for all $y \in \fks(F_0)_\rs$ matching an element $x \in \fkg_0(F_0)_\rs$,
\[
   \Orb(y,\phi') = 0.
\]
In particular, $\Orb(\gamma,\mathbf{1}_{\fkk'}) = 0$ for such $y$.
\end{altenumerate}\label{matching 0}
\end{lemma}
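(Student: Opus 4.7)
The plan is to prove part (c) (the Lie algebra version) in detail; parts (b) and (a) follow by the same template with minor bookkeeping changes. For (c), the argument has two ingredients: a change-of-variables identity in the orbital integral, and an identification of the matching invariant of $y$ with the sign $\eta(\det h_0)$ for an explicit conjugator $h_0$.

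First I would observe that for $h \in H'(F_0) = \GL_{n-1}(F_0)$, embedded as $\diag(h,1) \in \GL_n(F_0)$, the involution $*$ reduces to ordinary transpose: $\tensor*[^*]{h}{} = \tensor[^t]{h}{}$. Exploiting the symmetry $\phi'(z) = \phi'(\tensor*[^*]{z}{})$ and substituting $h \mapsto \tensor*[^t]{h}{^{-1}}$ in the orbital integral (which preserves Haar measure and $\lv\det h\rv$, and is harmless under $\eta$ since $\eta$ is of order two), I obtain
\[
   \Orb(y,\phi') = \Orb(\tensor*[^*]{y}{}, \phi').
\]
The remaining task is to identify $\Orb(\tensor*[^*]{y}{}, \phi')$ with $-\Orb(y,\phi')$ under the matching hypothesis.

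For this I would invoke Lemma \ref{lemtheta}, which supplies a bijection $\theta\colon \fks(F_0) \isoarrow \fks(F_0)$ taking orbits matching $\fkg_0$ to orbits matching $\wt\fkg_1$. A direct block-matrix computation shows that $\tensor*[^*]{y}{} = \theta^{-1}(\tensor[^t]{\theta(y)}{})$, so via $\theta$ the involution $*$ is intertwined with ordinary transpose. This reduces the required claim to the self-dual analog, namely: for $z \in \fks(F_0)_\rs$ matching an element in the non-split Lie algebra $\wt\fkg_1(F_0)_\rs$, there exists $h_0 \in H'(F_0)$ with $h_0^{-1} z h_0 = \tensor[^t]{z}{}$ and $\eta(\det h_0) = -1$. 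This is the Lie algebra form of \cite[Lem.~2.5]{Z12} and may be proved directly by writing $h_0$ as the change-of-basis matrix between $\{e, ze, \dots, z^{n-1}e\}$ and $\{e, \tensor[^t]{z}{}e, \dots, (\tensor[^t]{z}{})^{n-1}e\}$, and identifying $\det h_0$ with the Gram-matrix determinant computing the matching invariant. Granted this, the transformation property of orbital integrals at $s=0$ yields $\Orb(\tensor*[^*]{y}{},\phi') = \eta(\det h_0)\Orb(y,\phi') = -\Orb(y,\phi')$, completing the proof.

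Part (b) is formally identical with $\gamma \in S(F_0)$ in place of $y$. Part (a) reduces to (b) via Lemma \ref{homog to inhomog}: one checks that if $f'$ enjoys the stated symmetry, then the associated $\wt f'$ on $S(F_0)$ satisfies $\wt f'(\gamma) = \wt f'(\tensor*[^*]{\gamma}{})$, and the conclusion then follows from the compatibility of matching under the maps \eqref{maps}. The ``in particular'' assertions are quick direct verifications: for instance, writing $y = \bigl[\begin{smallmatrix} A & b \\ c & d \end{smallmatrix}\bigr] \in \fkk'$, the condition $b \in \varpi O_F^{n-1}$ (coming from the definition of $\fkk_0(\varpi)$) makes $\varpi^{-1} \tensor[^t]{b}{}$ integral, and one computes $\tensor*[^*]{y}{} = \bigl[\begin{smallmatrix} \tensor[^t]{A}{} & \varpi \tensor[^t]{c}{} \\ \varpi^{-1}\tensor[^t]{b}{} & d \end{smallmatrix}\bigr] \in \fkk'$; analogous arguments handle $\mathbf{1}_{K'}$ and $\mathbf{1}_{\GL_{n-1}(O_F) \times K_0(\varpi)}$. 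The main obstacle is the Lie algebra form of Zhang's lemma cited above, i.e.\ the identification of the matching invariant with $\eta(\det h_0)$, which is the heart of the argument.
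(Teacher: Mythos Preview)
Your proof is correct and follows essentially the same approach as the paper. The paper proves (ii) and (iii) by saying they are ``virtually identical'' to \cite[Lem.~2.5]{Z12}, and then deduces (i) from (ii) via Lemma~\ref{homog to inhomog} exactly as you do. Your explicit use of $\theta$ to intertwine the $*$-involution with ordinary transpose (so that the conjugator $h_0$ from Zhang's lemma for $\theta(y)$ is literally the conjugator for $y$ and $\tensor*[^*]{y}{}$) is a clean and natural way to carry out the ``virtually identical'' adaptation the paper alludes to; the direct rewriting of Zhang's computation with $*$ in place of transpose would produce the same $h_0$ and the same sign.
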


\begin{proof}
The proofs of (\ref{matching 0 inhomog}) and (\ref{matching 0 lie}) are virtually identical to that of \cite[Lem.\ 2.5]{Z12}.  Part (\ref{matching 0 homog}) then follows from (\ref{matching 0 inhomog}) by the easily verified fact that if $f'$ satisfies the hypothesis in (\ref{matching 0 homog}), then $\wt f'$ satisfies the hypothesis in (\ref{matching 0 inhomog}); by the fact that the maps in \eqref{maps} respect the matching relation; and by Lemma \ref{homog to inhomog}(\ref{old}).
\end{proof}

\subsection{On the AT and AFL Conjecture \ref{conjunramified}}\label{results atc afl}
In this subsection we prove results related to Conjecture \ref{conjunramified}.  We retain the notation of the previous subsection.

\begin{theorem}\label{AFLtoAT1} 
The AFL identity in Conjecture \ref{conjunramified}(\ref{conjunramified lie})
holds true for an element $x\in \fkg_0(F_0)_\rs$ if and only if Conjecture \ref{AFLconj}(\ref{AFLconj lie}) holds true for $\theta(x)\in\fkg_1(F_0)_\rs$.
\end{theorem}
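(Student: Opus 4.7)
The plan is to show that both sides, analytic and geometric, of Conjecture \ref{conjunramified}\eqref{conjunramified lie} for $x$ and Conjecture \ref{AFLconj}\eqref{AFLconj lie} for $\theta(x)$ match up under the operation $\theta$.

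For the analytic side, observe that $\theta$ commutes with conjugation by $\diag(h,1)$ for $h\in \GL_{n-1}(F)$, so $y\in\fks(F_0)_\rs$ matches $x\in\fkg_0(F_0)_\rs$ if and only if $\theta(y)\in\fks(F_0)_\rs$ matches $\theta(x)\in\wt\fkg_1(F_0)_\rs$; moreover $\theta(y)$ is regular semi-simple and the intersection $\Delta\cap\Delta_{\theta(x)}$ is artinian if and only if $\wt\Delta\cap\wt\Delta_x$ is. Differentiating the identity $\Orb(y,\mathbf{1}_{\fkk'},s)=\Orb(\theta(y),\mathbf{1}_{\fks(O_{F_0})},s)$ from Lemma \ref{lemtheta}\eqref{lemtheta i} at $s=0$ yields $\del(y,\mathbf{1}_{\fkk'})=\del(\theta(y),\mathbf{1}_{\fks(O_{F_0})})$, which combined with $\omega_\fks(y)=(-1)^{n-1}\omega_\fks(\theta(y))$ gives
\[
   \omega_\fks(y)\del\bigl(y,(-1)^{n-1}\mathbf{1}_{\fkk'}\bigr)=\omega_\fks(\theta(y))\del\bigl(\theta(y),\mathbf{1}_{\fks(O_{F_0})}\bigr),
\]
so the analytic sides of the two AFL identities coincide.

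For the geometric side, the key point is to show $\lInt(x)=\lInt(\theta(x))$. Here one uses that $\ov\BE'=\ov\BE$ and $\ov\CE'=\ov\CE$ as formal $O_F$-modules, so the framing objects $\wt\BX_n=\BX_{n-1}\times\ov\BE'$ and $\BX_n=\BX_{n-1}\times\ov\BE$ coincide as formal $O_F$-modules (only their polarizations differ), and likewise for the canonical lifts. I would project onto the first factor to identify $\wt\Delta\cap\wt\Delta_x \subset \wt\CN_{n-1,n}$ and $\Delta\cap\Delta_{\theta(x)}\subset \CN_{n-1,n}$ with closed subschemes $Z_x$ and $Z_{\theta(x)}$ of $\CN_{n-1}$, characterized as the loci where the underlying quasi-endomorphism $\phi_x$, respectively $\phi_{\theta(x)}$, of the common formal $O_F$-module $\BX_{n-1}\times\ov\BE$ lifts to an endomorphism of the universal deformation $Y\times\ov\CE$. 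Then the goal is to prove the scheme-theoretic equality $Z_x=Z_{\theta(x)}$. Writing $x=\bigl[\begin{smallmatrix}A&b\\c&d\end{smallmatrix}\bigr]$ so that $\theta(x)=\bigl[\begin{smallmatrix}A&\varpi^{-1}b\\c&d\end{smallmatrix}\bigr]$ in the chosen bases, the anti-Rosati constraints for $\fkg_0$ (with polarization $\lambda_{\BX_{n-1}}\times\varpi\lambda_{\ov\BE}$) and for $\wt\fkg_1$ (with polarization $\lambda_{\BX_{n-1}}\times\lambda_{\ov\BE}$) express $b$ and $\varpi^{-1}b$ respectively as integral-matrix multiples of the Rosati adjoint of $c$. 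The lifting condition on the quasi-endomorphism decomposes block by block; the $(1,1)$- and $(2,2)$-block conditions are identical in both cases, and since the polarizations lift over $\CN_{n-1}$, the lifting of $c$ is equivalent to the lifting of its Rosati adjoint, which in turn is equivalent to the lifting of both $b$ and $\varpi^{-1}b$. Hence both joint lifting conditions reduce to the simultaneous lifting of $A$, $c$, and $d$, giving $Z_x=Z_{\theta(x)}$ and therefore $\lInt(x)=\lInt(\theta(x))$.

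The main obstacle is establishing the equality $Z_x=Z_{\theta(x)}$ scheme-theoretically, rather than merely set-theoretically. This requires a careful analysis of how the defining ideals of lifting loci for $O_F$-linear quasi-homomorphisms transform under Rosati duality between the two differently polarized framings. This can be carried out via the deformation-theoretic description of the Kudla--Rapoport-style special cycles $\CZ(v)\subset\CN_{n-1}$ attached to the various off-diagonal quasi-homomorphisms, together with the standard compatibility between such cycles and the Rosati involution induced by the lifted polarizations.
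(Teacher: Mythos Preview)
Your approach is essentially the same as the paper's, and the argument you sketch in the second-to-last paragraph is already complete; the final paragraph's worry is misplaced. The equivalence ``$c$ lifts $\iff$ $\tensor[^t]{c}{}$ lifts'' holds functorially over any base scheme $S$, because dualization of homomorphisms of $p$-divisible groups and composition with the (lifted) polarizations $\lambda_Y$, $\lambda_{\ov\CE}$ are functorial operations. Hence the description of $Z_x$ and $Z_{\theta(x)}$ as the loci where $A,c,d$ simultaneously lift is an equality of functors of points, and therefore of closed formal subschemes---no appeal to special-cycle machinery or ideal-sheaf analysis is needed. The paper records this step simply as ``by inspection'': writing $x=\bigl[\begin{smallmatrix}A&b\\c&d\end{smallmatrix}\bigr]$ with $b=-\varpi\,\tensor[^t]{c}{}$, one has $\theta(x)=\bigl[\begin{smallmatrix}A&-\tensor[^t]{c}{}\\c&d\end{smallmatrix}\bigr]$, and $x$ lifts to an endomorphism of $Y\times\ov\CE$ iff $\theta(x)$ does.
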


\begin{proof} Forgetting the polarizations, the framing objects $\BX_n$ of $\CN_n$ and $\wt\BX_n$ of $\wt\CN_n$ are identical, as are the objects $\ov\CE$ and $\ov\CE'$ used to define the respective embeddings $\delta_\CN\colon \CN_{n-1} \to \CN_n$ and $\wt\delta_\CN\colon \CN_{n-1} \to \wt\CN_n$. Hence for any $x\in\End_{O_F}^\circ(\wt\BX_n) = \End_{O_F}^\circ (\BX_n)$, we have an equality of closed formal subschemes of $\CN_{n-1}$, 
\[
   \wt\Delta\cap\wt\Delta_x=\Delta\cap\Delta_x . 
\]

Now let $x\in \fkg_0(F_0)$. We claim that 
\begin{equation*}
   \Delta\cap\Delta_x =\Delta\cap\Delta_{\theta(x)} .  
\end{equation*}
Indeed, with respect to the product decomposition $\wt\BX_n = \BX_n = \BX_{n-1} \times \ov \BE$, write
\[
   x = 
	\begin{bmatrix}
		 A &  b  \\
      c  & d
   \end{bmatrix},
\]  
where $A\in \End^\circ_{O_F}(\BX_{n-1})$, $b\in \Hom^\circ_{O_F}(\ov \BE,\BX_{n-1})$, $c \in \Hom_{O_F}^\circ(\BX_{n-1}, \ov\BE)$, and $d\in \End^\circ_{O_F}(\ov \BE)$.  Since $x + \Ros_{\lambda_{\wt\BX_n}}(x) = 0$, we have $b = - \varpi \tensor[^t]{c}{}$, where $\tensor[^t]{c}{} := \lambda_{\BX_{n-1}}\i \circ c^\vee \circ \lambda_{\ov\BE}$.  Hence
\[
   \theta(x) =
	\begin{bmatrix}
		A  &  -\tensor[^t]{c}{}\\
		c  &  d
	\end{bmatrix}.
\]
Given a point $Y$ on $\CN_{n-1}$, we see by inspection that $x$ lifts to an endomorphism of $Y \times \ov\CE$ if and only if $\theta(x)$ does, which proves the claim.
  
Of course, it follows from the claim that $\wt\Delta \cap \wt\Delta_x$ is artinian if and only if $\Delta \cap \Delta_{\theta(x)}$ is, and when they are,
$$
   \lInt(x)=\lInt\bigl(\theta(x)\bigr).
$$
On the orbital integral side, it follows from Lemma \ref{lemtheta}(\ref{lemtheta i}) that
\[
   \omega_\fks(y) \del\bigl(y, (-1)^{n-1}\mathbf{1}_{\fkk'}\bigr) = \omega_\fks\bigl(\theta(y)\bigr)\del\bigl(\theta(y), \mathbf{1}_{\fks(O_{F_0})}\bigr),\quad  y\in \fks(F_0)_\rs .
\]
This completes the proof.
\end{proof}

\begin{theorem} Assume $q\geq n$.
Then the AFL identities in Conjecture \ref{conjunramified}(\ref{conjunramified homog})(\ref{conjunramified inhomog}) hold true for all $g\in G_{W_0}(F_0)_\rs$, resp.\ $g\in G_0(F_0)_\rs$, for which the intersection is non-degenerate, provided that the AFL identity in
Conjecture \ref{conjunramified}(\ref{conjunramified lie})
holds true.
\end{theorem}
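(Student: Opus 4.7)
The plan is to prove Conjecture \ref{conjunramified}(\ref{conjunramified homog}) and (\ref{conjunramified inhomog}) in two stages: first establish the inhomogeneous AFL identity from the Lie algebra version (\ref{conjunramified lie}) via the Cayley transform, in parallel with the proof of Theorem \ref{unram variant FL}(\ref{fl imply}); then derive the homogeneous AFL identity from the inhomogeneous one via the passage from $G'$ to $S$ described in Lemma \ref{homog to inhomog}.

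For the inhomogeneous step, fix matching $\gamma \in S(F_0)_\rs$ and $g \in G_0(F_0)_\rs$ with $\wt\Delta \cap (1\times g)\wt\Delta$ artinian. If $\gamma$ (equivalently $g$) is not integral, the orbital integral $\Orb(\gamma, \mathbf{1}_{K'}, s)$ vanishes identically in $s$, and the intersection $\wt\Delta \cap \wt\Delta_g$ is empty, since any quasi-endomorphism lifting to an honest endomorphism of a $p$-divisible group automatically has integral characteristic polynomial; both sides of the asserted identity are therefore zero. Otherwise $\gamma$ is integral, and since $q \geq n$ we can select, exactly as in the proof of Theorem \ref{unram variant FL}(\ref{fl imply}), an element $\xi \in F^1$ with $\det(\gamma + \xi) \in O_F^\times$. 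Setting $y := \fkc_\xi^{-1}(\gamma)$ and $x := \fkc_\xi^{-1}(g)$, Lemma \ref{cayley}(\ref{cayley i}) shows that $y$ and $x$ are matching regular semi-simple strongly integral elements, and Lemma \ref{cayley orb} yields
\[
   \omega_S(\gamma)\del\bigl(\gamma, (-1)^{n-1}\mathbf{1}_{K'}\bigr) = \omega_\fks(y)\del\bigl(y, (-1)^{n-1}\mathbf{1}_{\fkk'}\bigr).
\]

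The key geometric step is the equality $\wt\Delta \cap \wt\Delta_g = \wt\Delta \cap \wt\Delta_x$ of closed formal subschemes of $\wt\CN_{n-1,n}$. Since $g$ is $\xi$-strongly integral, $g + \xi$ is a unit in $O_F[g]$, and the Cayley relation $x = (g-\xi)(g+\xi)\i$ gives $O_F[g] = O_F[x]$ as subrings of $\End_{O_F}^\circ(\wt\BX_n)$. Consequently, for any point $(Y, X) \in \wt\Delta$, the quasi-endomorphism $g$ of $\wt\BX_n$ lifts to an endomorphism of $Y \times \ov\CE'$ if and only if $x$ does, which gives the asserted equality of intersection loci (compare the analogous identification in the proof of Theorem \ref{AFLtoAT1}). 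The artinian hypothesis therefore transfers from the left side to the right, and, since $\wt\CN_{n-1, n}$ is regular and $\wt\Delta$, $\wt\Delta_g$ are regular embeddings of complementary formal dimension, higher Tor sheaves vanish and $\Int(g) = \length(\wt\Delta \cap \wt\Delta_g) = \lInt(x)$. Combined with the orbital identity above and Conjecture \ref{conjunramified}(\ref{conjunramified lie}) applied to the matching pair $(y, x)$, this completes the inhomogeneous case.

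The homogeneous AFL identity then follows by applying Lemma \ref{homog to inhomog} to $f' := (-1)^{n-1}\frac{q^n - 1}{q - 1}\mathbf{1}_{\GL_{n-1}(O_F) \times K_0(\varpi)}$. Part (\ref{new}) of that lemma gives $\wt f' = (-1)^{n-1}\mathbf{1}_{K'}$; the support condition $\lv \det\gamma_1 \rv = 1$ holds on the support of $f'$, so the corrector in part (\ref{old 2}) vanishes; and the factor $\wt\eta(\gamma_1\i \gamma_2)^{n-1}$ precisely accounts for the ratio $\omega_{G'}(\gamma)/\omega_S(r(\gamma_1\i\gamma_2))$ coming from \S\ref{trans factor}. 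The factor of $2$ in Lemma \ref{homog to inhomog}(\ref{old 2}) produces exactly the factor of $2$ on the geometric side of the homogeneous identity, once one uses the equality $\Int(g_1, g_2) = \Int(g_1\i g_2)$ (for $(g_1, g_2) \in G_{W_0}(F_0)_\rs$), which follows from the diagonal $H_0(F_0)$-invariance of $\wt\Delta$. The main technical point in the whole argument is the scheme-theoretic identification $\wt\Delta_g = \wt\Delta_x$: one must verify that the lifting condition defining these loci really does depend only on the $O_F$-subalgebra of $\End_{O_F}^\circ(\wt\BX_n)$ generated by the given element, not merely on the element itself. This amounts to a careful unwinding of the deformation-theoretic definition of $\wt\Delta_{(-)}$ and of the Cayley relation, but it is the only step that is not already either routine or available from earlier in the paper.
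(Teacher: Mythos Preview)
Your argument is essentially the paper's own proof: the Cayley reduction from the inhomogeneous group identity to the Lie algebra identity via $O_F[g]=O_F[x]$, and the passage from homogeneous to inhomogeneous via Lemma~\ref{homog to inhomog}, are exactly what the paper does.

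One small gap in the homogeneous step: to invoke Lemma~\ref{homog to inhomog}(\ref{old 2}) you must first know that $f'=(-1)^{n-1}\tfrac{q^n-1}{q-1}\mathbf{1}_{\GL_{n-1}(O_F)\times K_0(\varpi)}$ transfers to a pair of the form $(0,f_1)$, i.e.\ that $\Orb(\gamma,f')=0$ whenever $\gamma$ matches an element of $G_{W_0}(F_0)_\rs$. You do not verify this, and it is not a consequence of the hypotheses of the theorem. The paper supplies it via Lemma~\ref{matching 0}(\ref{matching 0 homog}), which checks directly that $\mathbf{1}_{\GL_{n-1}(O_F)\times K_0(\varpi)}$ satisfies the symmetry $f'(\gamma_1,\gamma_2)=f'(\tensor*[^t]{\ov\gamma}{_1^{-1}},\tensor*[^*]{\ov\gamma}{_2^{-1}})$. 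Also, your justification that $\Int(g)$ equals the length of $\wt\Delta\cap\wt\Delta_g$ (``regular embeddings of complementary formal dimension, so higher Tor sheaves vanish'') is the right conclusion but not quite the right reason; the paper cites \cite[Prop.~8.10]{RSZ} for this.
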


\begin{proof}
First note that homogeneous AFL identity reduces to the inhomogeneous one via Lemma \ref{homog to inhomog}(\ref{new})(\ref{old 2}), Lemma \ref{matching 0}(\ref{matching 0 homog}), and the easy fact that $\Int(g) = \Int(g_1\i g_2)$ for any $g = (g_1,g_2) \in G_{W_0}(F_0)_\rs$.  Thus we show that the inhomogeneous identity holds.	
The proof again uses Cayley maps
(cf.\ also the proof of \cite[Lem.\ 2.2]{M-AFL}). Let $g\in G_0(F_0)_\rs$ be such that the intersection is non-degenerate. Let $\gamma\in S(F_0)_\rs$ be a matching element. We need to show that
\begin{equation}\label{desired}
   \omega_S(\gamma) \del\bigl(\gamma, (-1)^{n-1}\mathbf{1}_{K'}\bigr)=-\Int(g)\cdot \log q.
\end{equation}

As in the proof of Theorem \ref{unram variant FL}(\ref{fl imply}), we consider cases based on the integrality of $\gamma$. If $\gamma$ is not integral, then the left-hand side of \eqref{desired} is obviously zero. But then $g$ is also not integral, and we claim this forces the intersection $\wt\Delta\cap (1\times g)\wt\Delta$ to be empty, so that the right-hand side of \eqref{desired} is also zero.  Indeed, if instead this intersection were nonempty, then it would contain a $\ov k$-point, and $g$ would stabilize the corresponding Dieudonn\'e module.  Hence the characteristic polynomial of $g$ would lie in $O_{\breve F}[T]$, contrary to the non-integrality of $g$.

Now assume that $\gamma$ is integral. Since $q\geq n$,  $\gamma$ is $\xi$-strongly integral for some $\xi\in F^1$, and hence so is the matching element $g$. Let $y:=\fkc_\xi^{-1}(\gamma) \in \fks(F_0)^\circ$ and $x:= \fkc_\xi\i(g) \in \fkg_0(F_0)^\circ$. Then $y$ and $x$ also match. By Lemma \ref{cayley orb},
$$
 \omega_\fks(y)\del(y, \mathbf{1}_{\fkk'})=\omega_S(\gamma)\del(\gamma, \mathbf{1}_{K'}) .
$$
To complete the proof, since we assume the AFL identity in Conjecture \ref{conjunramified}(\ref{conjunramified lie}), it suffices to show that $\wt\Delta \cap \wt\Delta_x$ is artinian if and only if $\wt\Delta \cap (1\times g)\wt\Delta = \wt\Delta \cap \wt\Delta_g$ is, and when they are, that
\begin{align}\label{idcayint}
   \lInt(x)=\Int(g).
\end{align}
Both of these statements follow from the equality 
of subschemes of $\wt\Delta$, for any strongly integral $x$ and $g=\fkc_\xi(x)$,
\begin{equation}\label{eqn int cayley u}
   \wt\Delta\cap\wt \Delta_x = \wt\Delta\cap\wt \Delta_g.
\end{equation}
Indeed, this implies \eqref{idcayint} because, when $\wt\Delta\cap\wt \Delta_g$ is artinian, 
its length is equal to $\Int(g)$, cf.\ \cite[Prop.\ 8.10]{RSZ}. Now, the two sides \eqref{eqn int cayley u} are the loci of points $Y$ in $\CN_{n-1}$ where, respectively, $x\colon \wt\BX_n \rightarrow \wt\BX_n$ and $g\colon \wt\BX_n \to \wt\BX_n$ lift to a homomorphism $Y \times \ov\CE \rightarrow Y\times \ov\CE$.  But by the Cayley--Hamilton theorem, for strongly integral $x$ we have $O_F[x] = O_F[\fkc_\xi(x)]$ as $O_F$-subalgebras of $\End_{O_F}^\circ(\wt\BX_n)$. Hence $x$ lifts if and only if $g$ does. This establishes \eqref{eqn int cayley u} and completes the proof.
\end{proof}

\begin{corollary}
Let $F_0 = \BQ_p$. Then Conjecture \ref{conjunramified} holds when   $n=2$ or $3$.
\end{corollary}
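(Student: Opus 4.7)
The plan is to deduce the corollary from the preceding results of this subsection, combined with the AFL Conjecture \ref{AFLconj} and the FL Conjecture \ref{FLconj} for $n = 2, 3$ established by the third author in \cite{Z12}. Since $F_0 = \BQ_p$ with $p$ an odd prime, we have $q = p \geq 3 \geq n$ for $n \in \{2, 3\}$, so the hypothesis $q \geq n$ of Theorem \ref{unram variant FL}(c) and of the theorem immediately preceding the corollary is satisfied.

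The Lie algebra forms of both Conjecture \ref{FLconj}(c) and Conjecture \ref{AFLconj}(c) for $n = 2, 3$ follow from the corresponding group versions in \cite{Z12} by the Cayley transform, exactly as in Lemma \ref{cayley orb}. Applying Theorem \ref{unram variant FL}, we obtain Conjecture \ref{FL almost self-dual} in all three versions. Theorem \ref{AFLtoAT1} then upgrades Conjecture \ref{AFLconj}(c) to the Lie algebra AFL identity in Conjecture \ref{conjunramified}(c) for every $x \in \fkg_0(F_0)_\rs$ with $\wt\Delta \cap \wt\Delta_x$ artinian, and the theorem preceding the corollary propagates this to the AFL identities in Conjecture \ref{conjunramified}(a)(b) for all matching $g$ whose Cayley preimage yields a nondegenerate intersection. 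For $n = 2, 3$, the ambient formal scheme $\wt\CN_{n-1,n}$ has absolute formal dimension at most $4$ and the cycles $\wt\Delta$ and $g\wt\Delta$ have complementary relative dimension; an explicit Dieudonn\'e-theoretic analysis as in \cite{Z12} then shows that the intersection is automatically artinian for every regular semisimple element appearing in the statement, so the nondegeneracy hypothesis is automatic.

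Finally, to produce the correction function $f'_\corr$ (respectively $\phi'_\corr$) for an arbitrary transferring function, write $f'_0$ for the explicit test function in the AFL identity just established. By Conjecture \ref{FL almost self-dual} we have just proved, $f'_0$ transfers to the same target pair as $f'$, so the difference $f' - f'_0$ has vanishing ordinary orbital integrals on every regular semisimple orbit matching an element of the relevant unitary group. The density principle \cite[Conj.~5.16, Lem.~5.18]{RSZ} then yields $\phi \in C_c^\infty(G')$ (respectively $C_c^\infty(S)$ or $C_c^\infty(\fks)$) such that $\omega(\gamma) \del(\gamma, f' - f'_0) = \omega(\gamma) \Orb(\gamma, \phi)$ on all matching orbits, and we set $f'_\corr := \phi$. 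The main obstacle is this density principle; for $n = 2, 3$ it can be verified using the low-rank germ expansion and direct lattice-counting arguments available in \cite{Z12,RSZ}, which completes the proof.
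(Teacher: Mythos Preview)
Your overall strategy matches the paper's: establish the AFL identities via the preceding theorems, then handle the correction term for arbitrary transferring functions via the density principle. The paper likewise notes $q \geq 3 \geq n$, asserts non-degeneracy is automatic for $n \leq 3$, and cites \cite[Th.~1.1]{Z12b} and \cite[Rem.~5.17]{RSZ} for the density principle (your citation \cite{Z12} should be \cite{Z12b} here, but the idea is the same).

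There is, however, one genuine gap. You claim that the Lie algebra AFL Conjecture~\ref{AFLconj}(\ref{AFLconj lie}) for $n=2,3$ follows from the group version in \cite{Z12} ``by the Cayley transform, exactly as in Lemma~\ref{cayley orb}.'' The direction \emph{group $\Rightarrow$ Lie algebra} via the Cayley map requires, for a given Lie algebra element $x$, finding $\xi \in F^1$ with $\det(1-x)$-type conditions satisfied; to cover all integral $x$ one needs enough residues of norm-one elements, which amounts to $q \geq n+2$ (cf.\ the paper's own remark just before Theorem~\ref{AFLtoAT1}, and \cite[Prop.~2.4]{M-AFL}). For $F_0 = \BQ_3$ and $n=3$ this fails. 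The paper avoids this by invoking Mihatsch's direct proof of the Lie algebra AFL in \cite{M-AFL}, noting parenthetically that the Cayley deduction from \cite{Z12} works only when $q \geq 5$. So your argument as written does not cover $p=3$, $n=3$; you should cite \cite{M-AFL} instead.
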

\begin{proof}
 Let us first consider the AFL identities in  Conjecture \ref{conjunramified}. In these cases the intersection is automatically non-degenerate.  Furthermore, $q\geq 3\geq n$. Thus by the two preceding theorems, these identities follow from the Lie algebra AFL Conjecture \ref{AFLconj}(\ref{AFLconj lie}) for $n = 2$ and $3$. This is proved in \cite{M-AFL}.
(When $q\geq 5$, the Lie algebra AFL for $n=3$ can also be deduced from the group version proved in \cite{Z12}; see \cite[Lem.\ 2.2]{M-AFL}.)

Now we consider the rest of Conjecture \ref{conjunramified}. By the same proof as for \cite[Lem.\ 5.18]{RSZ}, this follows from the density principle for our weighted orbital integrals, i.e.\ the analog in the unramified case of \cite[Conj.\ 5.16]{RSZ}. This conjectural density principle is known to hold for $n=2$ or $3$, cf.\ \cite[Th.\ 1.1]{Z12b} and \cite[Rem.\ 5.17]{RSZ}. 
\end{proof}

\begin{remark}
The AFL in the self-dual case (Conjecture \ref{AFLconj}) is known for arbitrary $n$ and $F_0 = \BQ_p$, provided that $g$ is minuscule in the sense of \cite{RTZ}, and $p\geq n/2+1$.
If $p \geq n+2$, then we may deduce from this the AFL identity in Conjecture \ref{conjunramified}(\ref{conjunramified inhomog}) in the almost self-dual case for certain $g$, by applying \cite[Th.\ 2.5]{M-AFL} and then passing through the Lie algebras as above.
\end{remark}

\section{On the ATC for $F/F_0$ ramified, $n=2$}\label{even ram proof}

In this section we prove Conjecture \ref{conjram v3} when $n=2$.

\subsection{The groups}\label{the groups}
On the symmetric space $S(F_0) = S_2(F_0)$, we write an element as
$$\gamma=
\begin{bmatrix}
		a &  b \\
         c  &  d
      \end{bmatrix}\in S(F_0).
$$
Then $\gamma$ is regular semi-simple if and only if $bc\neq 0$, in which case we may write $\gamma$ as
\begin{equation}\label{gamma(a,b)}
\begin{aligned}
\gamma=\gamma(a,b) &:=\begin{bmatrix}
		a &  b \\
         (1-\RN a)/\ov b &  -\ov ab/\ov b
      \end{bmatrix}\\
		&\phantom{:}=\begin{bmatrix}
		1&   \\
          &  -b/\ov b
      \end{bmatrix} \begin{bmatrix}
		a &  b \\
        - (1-\RN a)/ b &  \ov a
      \end{bmatrix} \in S(F_0)_\rs,
		\quad a \in F \smallsetminus F^1,\ b \in F^\times.
\end{aligned}
\end{equation}
We define the set of semi-simple but irregular elements
\[
A_{S} :=
	   \biggl\{\, 
		\begin{bmatrix}
		   a  &    \\
           &  d
      \end{bmatrix} 
		\in S(F_0) \biggm|  a,d \in F^1 \,\biggr\}.
\]
Similarly, in the ``Lie algebra'' $\fks(F_0) = \fks_2(F_0)$ we write an element as
\begin{equation}\label{y(a,b,c,d)}
   y=y(a, b, c, d)=\begin{bmatrix}
		   a  & b   \\
        c   &  d
      \end{bmatrix} \in \fks(F_0), \quad a,b,c,d\in \pi F_0.
\end{equation}
The set of semi-simple but irregular elements is 
\begin{align*}
\fka_{\fks} :=
	   \biggl\{\,
		\begin{bmatrix}
		   a  &    \\
		      &  d
		\end{bmatrix}
		\in \fks(F_0) \biggm|  a,d \in \pi F_0 \,\biggr\}.
\end{align*}

On the unitary side, we first look at objects attached to the split hermitian space.  Recall from \S\ref{s:ATCeven} that we have
\[
   W_0=W^\flat_0\oplus F u_0,\quad (u_0,u_0)=-1.
\]
By \eqref{chi decomp formula} the space $W_0^\flat$ is also split, and we choose a basis vector $u^\flat \in W_0^\flat$ with $(u^\flat, u^\flat) = 1$.  This choice determines a special embedding $G_0(F_0) = \U(W_0)(F_0) \inj \GL_2(F)$ as in \S\ref{setup inhomog}, which realizes $G_0$ as the unitary group associated to the diagonal hermitian matrix $\diag(1,-1)$. In this way, the set of irregular elements in $G_0(F_0)$ is 
\begin{equation}\label{eqn U_2 A}
A_{G_0} :=
	   \biggl\{\, 
		\begin{bmatrix}
		   a  &    \\
           &  d
      \end{bmatrix}
		\in \RM_2(F) \biggm| a,d \in F^1  \,\biggr\}.
\end{equation}
These elements are all semi-simple since the group $H_0 = \U(W_0^\flat)$ is anisotropic. Similarly, we embed the Lie algebra $\fkg_0(F_0) \inj \RM_2(F)$, and we denote its set of irregular semi-simple elements by 
\begin{align*}
\fka_{\fkg_0} :=
	   \biggl\{\, 
		\begin{bmatrix}
		   a  &    \\
		      &  d
		\end{bmatrix}
		\in \RM_2(F) \biggm|  a,d \in \pi F_0  \,\biggr\}.
\end{align*}

Now recall the lattice
$\Lambda_0^\nat=\Lambda_0^\flat\oplus O_F u_0$ from \eqref{Lambda^nat}, which is self-dual since $n = 2$. The two $\pi$-modular lattices lying between $\pi\Lambda_0^\nat$ and $\Lambda_0^\nat$ are given by
$$
\Lambda_0^\pm=\pi\Lambda_0^\nat+O_F\, (u_0\pm u^\flat).
$$
They are both stable under elements of $K_0^\flat = F^1$ with reduction $1 \bmod \pi$, and they are permuted by elements with reduction $-1 \bmod \pi$. Now let $\Lambda_0$ be either of them, and let $K_0 \subset G_0(F_0)$ and $\fkk_0 \subset \fkg_0(F_0)$ be the respective stabilizers of $\Lambda_0$. A subtle point is that 
$$
A_{G_0}\cap K_0=  \biggl\{\,  \begin{bmatrix}
		   a  &    \\
           &  d
      \end{bmatrix}\in G_0(F_0)\biggm|   a\equiv 1\bmod \pi   \,\biggr\}
$$
is a subgroup of $A_{G_0}$ of index $2$.  However, we do have 
\begin{equation}\label{A_G_0 intersect}
   A_{G_0}\cap K_0^\flat K_0= A_{G_0}.
\end{equation}
In the Lie algebra $\fkg_0(F_0)$, we have 
\begin{equation}\label{a_g_0 intersect}
   \fka_{\fkg_0} \cap \fkk_0= \biggl\{\, 
	\begin{bmatrix}
	   a  &    \\
	      &  d
	\end{bmatrix}
	\in \fkg_0(F_0) \biggm| a,d \in \pi O_{F_0}  \,\biggr\}.
\end{equation}

We now look at the non-quasi-split unitary group $G_1$. Recall from \eqref{u} that the hermitian space
\[
   W_1=\BV(\BX_2) \cong \BV\bigr(\wt\BX_2\bigl) 
	               = \Hom_{O_F}^\circ\bigl(\ov\BE,\BX_1 \times \ov\BX_1\bigr)
						= \BV(\BX_1)\oplus \BV\bigl(\ov \BX_1\bigr)
\]
has a special vector $u=(0,\id_{\ov \BE})$ with norm $-1$. Note that the rightmost space in the display is canonically $D=D^-\oplus F$ (as an $F_0$-vector space) with hermitian norm given by $(v,v)=-\RN v$ for $v\in D$, and the special vector $u$ corresponds to $1\in D$.
We identify 
$$
   \End_{O_{F_0}}^\circ \bigl(\wt\BX_2\bigr) = \End_{O_{F_0}}^\circ\bigl(\BE^2\bigr)=\RM_2(D),
$$ 
and the $O_F$-action is given by $\pi\mapsto \diag(\pi,-\pi)\in \RM_2(D)$. Then the $O_F$-linear quasi-endomorphism algebra is 
$$
\End_{O_F}^\circ\bigl(\wt\BX_2\bigr)= \biggl\{\,  \begin{bmatrix}
		   a  & b    \\
       c    &  d
      \end{bmatrix} \in \RM_2(D) \biggm|   a,d\in F,\ b,c\in D^-  \,\biggr\} .
$$
The Rosati involution is given by
$$
   \Ros_{\lambda_{\wt\BX_2}}(x)=x^\dagger= \tensor[^t]{\ov x}{},
$$
where $x\mapsto \ov x$ is the entry-wise main involution on $\RM_2(D)$.  We then have the unitary group 
$$
   G_1(F_0) = \bigl\{\, x\in \End_{O_F}^\circ\bigl(\wt\BX_2\bigr) \bigm|  x^\dagger x=1\,\bigr\} ,
$$
and its Lie algebra 
$$
\fkg_1(F_0)=\bigl\{\, x\in \End_{O_F}^\circ\bigl(\wt\BX_2\bigr) \bigm|  x^\dagger + x=0 \,\bigr\} .
$$
They may be explicitly presented as
\begin{equation}\label{G_1 pres}
   G_1(F_0)=\biggl\{\,  \begin{bmatrix}
		   1 &     \\
         &  \alpha
      \end{bmatrix}    \begin{bmatrix}
		   a  & b    \\
       b   &  a
      \end{bmatrix} \in \RM_2(D)  \biggm|  a\in F,\ b\in D^-,\ \RN a+\RN b=1,\ \alpha \in F^1 \,\biggr\}
\end{equation}
and 
\begin{equation}\label{fkg_1 pres}
\fkg_1(F_0)=\biggl\{\, \begin{bmatrix}
		   a  & b    \\
       b  &  d
      \end{bmatrix} \in \RM_2(D)  \biggm|  a,d\in \pi F_0,\ b\in D^-   \,\biggr\} .
\end{equation}
If we fix a basis element $\zeta \in D^-$, then we may also express these presentations in terms of special embeddings into $\RM_2(F)$,
\[
   \begin{bmatrix}
		   1 &     \\
         &  \alpha
   \end{bmatrix}
	\begin{bmatrix}
		   a  & b    \\
       b   &  a
   \end{bmatrix} \in G_1(F_0) \subset \RM_2(D)
	\quad\text{\emph{identifies with}}\quad
	\begin{bmatrix}
		   a  & b \zeta\i    \\
       \ov{\alpha b \zeta}  &  \ov{\alpha a}
   \end{bmatrix} \in \RM_2(F),
\]
and
\[
	\begin{bmatrix}
		 a  &  b    \\
       b  &  d
   \end{bmatrix} \in \fkg_1(F_0) \subset \RM_2(D)
	\quad\text{\emph{identifies with}}\quad
	\begin{bmatrix}
		 a  &  b \zeta\i   \\
       \ov{b \zeta}  &  -d
   \end{bmatrix} \in \RM_2(F).
\]
Similarly to before, the irregular semi-simple elements in $G_1(F_0)$ are given by the diagonal elements
\[
   A_{G_1} :=  \biggl\{\, 
		\begin{bmatrix}
		   a  &    \\
           &  d
      \end{bmatrix}
		\in \RM_2(D) \biggm| a,d \in F^1  \,\biggr\}
	=
	\biggl\{\, 
	\begin{bmatrix}
	   a  &    \\
        &  \ov d
   \end{bmatrix}
	\in \RM_2(F) \biggm| a,d \in F^1  \,\biggr\},
\]
and the irregular semi-simple elements in $\fkg_1(F_0)$ are given by
\[
   \fka_{\fkg_1} := \biggl\{\,
	   \begin{bmatrix}
		   a  &     \\
            &  d
      \end{bmatrix} \in \RM_2(D)  \biggm|  a,d\in \pi F_0  \,\biggr\} 
		= \biggl\{\,
	   \begin{bmatrix}
		   a  &     \\
            &  - d
      \end{bmatrix} \in \RM_2(F)  \biggm|  a,d\in \pi F_0  \,\biggr\}.
\]

\subsection{Orbit matching}\label{orbit matching}
Let us now indicate how regular semi-simple orbits match in terms of the presentations just given.  Recall from \cite[Prop.\ 6.2]{RS} that in the case $n = 2$, two regular semi-simple elements in $\RM_2(F)$ (cf.\ \S\ref{setup inhomog}) are $\GL_1(F)$-conjugate if and only if their diagonal entries are the same and the products of their off-diagonal entries are the same.  In the group setting, it is easy to verify from this that elements $\gamma\in S(F_0)_\rs$ and $g\in G_i(F_0)_{\rs}$ match if and only if, after regarding $g$ as an element of $\GL_2(F)$ via a special embedding, 
$
\det\gamma = \det g
$ and the upper-left entries of these matrices are equal.  Let $S_{\rs,i}$ denote the subset of $S(F_0)_\rs$ of elements matching with elements in $G_i(F_0)_\rs$.  It is easy to see that
\begin{equation}\label{S_rs,i cond}
   \gamma(a,b)\in S_{\rs,i} \iff \eta( \RN a - 1)=(-1)^i.
\end{equation}
Furthermore, in terms of the presentation \eqref{G_1 pres},
$$
   \gamma(a,b)\in S_{\rs, 1} \quad \text{\emph{matches}} \quad g=\begin{bmatrix}
		   1 &     \\
         &  \alpha
      \end{bmatrix}    \begin{bmatrix}
		   a'  & b'    \\
       b'   &  a'
      \end{bmatrix}\in G_1(F_0)_\rs
$$
if and only if $a=a'$ and $-b/\ov b=\det \gamma(a,b)=\det g = \ov\alpha$.

In the Lie algebra setting, we analogously denote by $\fks_{\rs,i}$ the subset of $\fks(F_0)_\rs$ of elements matching with elements in $\fkg_i(F_0)_\rs$. It is easy to see that
\[
   y(a,b,c,d)\in \fks_{\rs,i} \iff bc \neq 0 \text{ and } \eta(bc)=(-1)^i.
\]
In terms of the presentation \eqref{fkg_1 pres},
$$
   y(a,b,c,d) \in \fks_{\rs, 1}  \quad \text{\emph{matches}} \quad x= \begin{bmatrix}
		   a'  & b'    \\
       b'   &  d'
      \end{bmatrix}\in \fkg_1(F_0)_\rs
$$
if and only if  $a=a'$, $d= -d'$, and $bc=-\RN b'$.

\subsection{Harmonic analysis}\label{harm}
The harmonic analysis on $S(F_0)$ is done in Mihatsch's article \cite{M-ATC}. We briefly recall the results in loc.~cit.

The orbital integral for $\gamma = \gamma(a,b) \in S(F_0)_\rs$ and a function $f'\in C_c^\infty(S)$ is given by
$$
\Orb(\gamma,f',s)=\bigintssss_{F_0^\times} f'\biggl(\begin{bmatrix}
		   a  & b/x   \\
      x(1-\RN a)/\ov b &  -\ov ab/\ov b
      \end{bmatrix}\biggr) \eta(x)|x|^s \,dx.
$$
The transfer factor takes the form
\begin{equation}\label{omega_S n=2}
   \omega_S(\gamma)=\wt\eta\bigl(\ov b\bigr) = \wt\eta(b)\i,
\end{equation}
cf.\ \S\ref{trans factor}.  Similarly, the orbital integral for $f_i\in C_c^\infty(G_i)$, $i\in\{0,1\}$, is given by
$$
\Orb(g,f_i)=\int_{H_i(F_0)} f_i(h^{-1}gh ) \, dh.
$$
This is well-defined for all $g$, since $H_i(F_0) = F^1$ is compact. Recall that the Haar measure on $H_0(F_0)$ is chosen to give $K_0^\flat = H_0(F_0)$ volume one.  We similarly assign $H_1(F_0)$ volume one.
We also recall the orbital integrals for functions on the Lie algebras from \S\ref{setup lie}.

\begin{theorem}
\begin{altenumerate}
\item\label{germ i} Let $f'\in C_c^\infty(S)$ transfer to $(f_0,f_1)\in C_c^\infty(G_0)\times C_c^\infty(G_1)$, and let $\gamma_0=\diag(a_0,d_0)\in A_{S}$. If  $f_i=0$ for some $i\in\{0,1\}$, then there is the following germ expansion for $\gamma=\gamma(a,b)\in S_{\rs,i}$ in a neighborhood of $\gamma_0$,
$$
\omega_S(\gamma)\del(\gamma,f')=\frac{1}{2}\Orb\bigl(\diag(a_0,d_0),f_{1-i}\bigr)\log\lv 1-\RN a \rv + C,
$$
where $C$ is a constant depending on 
$\gamma_0$, $f'$, and $i$,
but not on $\gamma$. 
\item\label{germ ii} Let $f' \in C_c^\infty(\fks)$ transfer to $(f_0,f_1)\in C_c^\infty(\fkg_0)\times C_c^\infty(\fkg_1)$, and let $y_0=\diag (a_0,d_0)\in \fka_{\fks}$.  If $f_i=0$ for some $i\in\{0,1\}$, then there is the following germ expansion for $y=y(a, b, c, d) \in \fks(F_0)_{\rs,i}$ in a neighborhood of $y_0$,
$$
\omega_\fks(y)\del(y,f')=\frac{1}{2}\Orb\bigl(\diag(a_0,d_0),f_{1-i}\bigr)\log\lv bc \rv + C,
$$
where $C$ is again a constant which does not depend on $y$.
\end{altenumerate}\label{del germ s}
\end{theorem}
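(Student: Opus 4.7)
The two parts of Theorem \ref{del germ s} are analogous and use a standard Shalika-type germ expansion argument for orbital integrals on the symmetric space $S$ (respectively its ``Lie algebra'' $\fks$) near a semi-simple but irregular point. I would plan the argument for (ii) first, as the formalism is cleanest there; (i) is then handled by the same method, with the explicit integral for $\Orb(\gamma, f', s)$ given at the start of \S\ref{harm} used in place of its Lie algebra analog, and with the singular parameter $|bc|$ replaced by $|1 - \RN a|$.

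The plan is to write out the one-dimensional integral
$$\Orb(y, f', s) = \int_{F_0^\times} f'\biggl(\begin{bmatrix} a & b/x \\ xc & d \end{bmatrix}\biggr) \eta(x)|x|^s \, dx,$$
and to study its behavior as $(b,c) \to (0,0)$ with $(a,d)$ close to $(a_0,d_0)$. The key geometric step is a scale-splitting: choose $T = T(b,c) \in F_0^\times$ with $|T|^2 = |b/c|$, split the integration range at $|x| = |T|$, and substitute $x = Tu$ on the inner piece and $x = Tu^{-1}$ on the outer piece. Both off-diagonal entries in the argument of $f'$ then acquire size $\sqrt{|bc|}$ (up to units in $F$), and each of the two pieces becomes an integral over $O_{F_0}$ in the new variable $u$. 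The factor $|x|^s$ produces $|T|^{\pm s}$, and these combine to give an overall $|bc|^{s/2}$; differentiating at $s=0$ produces the desired $\tfrac{1}{2}\log|bc|$.

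The second main step is to identify the coefficient of $\log|bc|$ with $\Orb(\diag(a_0,d_0), f_{1-i})$. By the matching recipe of \S\ref{orbit matching}, an element $y(a,b,c,d)\in \fks_{\rs, j}$ is characterized by $\eta(bc) = (-1)^j$, and its matching partner in $\fkg_j(F_0)_\rs$ has off-diagonal entry $b'$ with $-\RN b' = bc$ when $j = 1$ (and a parallel relation when $j = 0$). Hence, after substitution, the specialization of the integrand at $b = c = 0$ recovers, up to the transfer factor, the orbital integral of $f_{1-i}$ at $\diag(a_0,d_0) \in \fkg_{1-i}(F_0)$. Under the splitting, the two resulting pieces correspond to the two matching classes ($\fkg_0$ and $\fkg_1$) via the sign of $\eta$ on the scaling parameter, and the hypothesis $f_i = 0$ outright kills one of them, leaving exactly the claimed contribution. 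All remaining terms are locally constant in $(a,b,c,d)$ near $y_0$ by smoothness and compact support of $f'$, and are absorbed into the constant $C$.

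The principal technical obstacle is the careful bookkeeping of the transfer factor $\omega_\fks(y) = \wt\eta(b)$ (up to sign, via \eqref{fks transfer factor ram}) and of the multiplicative character $\eta(x)$ through the change of variables and the splitting: these combine with the scaling by $T$ and with the matching relation to produce an overall factor that agrees exactly with the orbital integral normalization on $\fkg_{1-i}$, so that the coefficient of $\tfrac{1}{2}\log|bc|$ comes out as $\Orb(\diag(a_0,d_0), f_{1-i})$ rather than some twist. Once this is done for (ii), part (i) follows by the same argument applied to the integral representation of $\Orb(\gamma, f', s)$ on $S(F_0)$; alternatively, one can reduce (i) to (ii) via the Cayley transform $\fkc_\xi$ for a suitable $\xi \in F^1$ with $\gamma_0 + \xi$ invertible, as in Lemmas \ref{cayley}--\ref{cayley orb}, under which the singular factor $\log|1-\RN a|$ on $S$ matches, up to continuous terms, the factor $\log|bc|$ on $\fks$.
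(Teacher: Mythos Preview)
Your overall plan---derive a germ expansion of $\Orb(y,f',s)$ near $y_0$ and read off the behavior of the derivative---is the right shape, and is exactly what the paper does. But the specific mechanism you describe has a real gap.

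The problem is your claim that the scale-split at $|x|=|T|$ produces ``two pieces corresponding to the two matching classes $\fkg_0$ and $\fkg_1$,'' and that $f_i=0$ then ``outright kills one of them.'' This is not how the structure works. Once $y\in\fks_{\rs,i}$ is fixed, its matching class is determined by the sign $\eta(bc)=(-1)^i$; the integral over $x$ does not decompose into a $\fkg_0$-part and a $\fkg_1$-part. Likewise, the integrand involves only $f'$, so nothing in it ``recovers the orbital integral of $f_{1-i}$'' by specialization. (There is also a minor issue that $|b/c|$ need not be a square in $|F_0^\times|$, so your $T$ may not exist, but this is not the essential point.)

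What the paper actually uses (citing Mihatsch \cite{M-ATC}) is a two-term germ expansion, in the Lie algebra case of the shape
\[
   \omega_\fks(y)\,\Orb(y,f',s) \;=\; \phi_+(y_0,s)\,|b|^s \;+\; \phi_-(y_0,s)\,\eta(bc)\,|c|^{-s}
\]
for $y$ near $y_0$, with Laurent polynomials $\phi_\pm$ in $q^s$ depending only on $y_0$ and $f'$. These two terms arise from the two ends of the $x$-support (small $|x|$, where $xc$ is small, versus large $|x|$, where $b/x$ is small), not from a midpoint split; in particular there is no single $|bc|^{s/2}$ factor. Evaluating at $s=0$ and comparing with the transfer identity for $y$ ranging over \emph{both} matching classes gives the linear system
\[
   \phi_+(y_0)+\phi_-(y_0)=\Orb\bigl(\diag(a_0,d_0),f_0\bigr),\qquad
   \phi_+(y_0)-\phi_-(y_0)=\Orb\bigl(\diag(a_0,d_0),f_1\bigr).
\]
The hypothesis $f_i=0$ does not kill one of $\phi_\pm$; rather it forces $\phi_+(y_0)=(-1)^i\phi_-(y_0)=\tfrac12\Orb(\diag(a_0,d_0),f_{1-i})$. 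Only then does differentiating the germ expansion at $s=0$ for $y\in\fks_{\rs,i}$ cause the a priori separate contributions $\phi_+(y_0)\log|b|$ and $-(-1)^i\phi_-(y_0)\log|c|\cdot(-1)^i$ to combine into $\tfrac12\Orb(\diag(a_0,d_0),f_{1-i})\log|bc|$, with the remaining $\phi_\pm'(y_0,0)$ terms absorbed into $C$. Without the equality $\phi_+=\pm\phi_-$ there would be a stray $\log|b/c|$ term, so this step is where the hypothesis $f_i=0$ is genuinely used. The group case is identical, with $|1-\RN a|$ playing the role of $|bc|$; your suggestion to reduce (i) to (ii) via the Cayley transform is not needed and would introduce its own bookkeeping.
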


\begin{proof}
(\ref{germ i}) By \cite[Th.\ 3.5, Cor.\ 3.8]{M-ATC} and \eqref{omega_S n=2}, there is a germ expansion for regular semi-simple $\gamma = \gamma(a,b)$ in a neighborhood of $\gamma_0$,
\[
   \omega_S(\gamma)\Orb(\gamma,f',s) = 
	   \phi_+(\gamma_0,s)|b|^s 
		   +\phi_-(\gamma_0,s)\wt\eta\bigl(-\ov b\vphantom{b}^2\bigr) \eta(\RN a-1)\bigl\lv(1-\RN a)/\ov b\bigr\rv^{-s},
\]
where $\phi_\pm(\gamma_0,s)$ are Laurent polynomials in $q^{s}$ depending on $\gamma_0$ and $f'$.  Here $\lv\phantom{a}\rv$ denotes the natural extension of the normalized absolute value on $F_0$ to $F$.  For all $\gamma$ sufficiently near $\gamma_0$, the factor $\wt\eta(-\ov b\vphantom{b}^2) = \wt\eta(-\ov b/b)$ is constant-valued.  Hence, after possibly shrinking the neighborhood around $\gamma_0$ and replacing $\phi_-$ by a constant multiple of itself, we obtain
\begin{equation}\label{eqn germ s}
   \omega_S(\gamma)\Orb(\gamma,f',s) = \phi_+(\gamma_0,s)|b|^s +\phi_-(\gamma_0,s)\eta(\RN a-1)\bigl\lv(1-\RN a)/\ov b\bigr\rv^{-s}.
\end{equation}
Evaluation at $s=0$ yields
\begin{equation}\label{eqn germ s=0}
   \omega_S(\gamma)\Orb(\gamma,f') = \phi_+(\gamma_0)  +\phi_-(\gamma_0)\eta(\RN a-1),
\end{equation}
where $\phi_\pm(\gamma_0):=\phi_\pm(\gamma_0,0)$.

Similarly (and more simply), on the unitary side, the function $g \mapsto \Orb(g,f_i)$ is locally constant (and compactly supported) on $G_i(F_0)$.  In particular, for $g_0:=\diag(a_0, d_0)\in A_{G_i}$, there is a neighborhood of $g_0$ in $G_i(F_0)$ on which for all elements $g$,
$$
\Orb(g,f_i)=\Orb(g_0, f_i).
$$
Since $f'$ transfers to $(f_0,f_1)$, we obtain from this, \eqref{S_rs,i cond}, and \eqref{eqn germ s=0} that
\begin{equation}\label{eqn nil orb}
\begin{aligned}
   \Orb\bigl(\diag(a_0,d_0),f_0\bigr) &=\phi_+(\gamma_0)+\phi_-(\gamma_0),\\
	\Orb\bigl(\diag(a_0,d_0),f_1\bigr) &=\phi_+(\gamma_0)-\phi_-(\gamma_0).
\end{aligned}
\end{equation}

Now assume that $i=1$ in the statement of the theorem. Then by \eqref{eqn nil orb},
\begin{align}\label{eqn germ s u}
   \phi_+(\gamma_0) = \phi_-(\gamma_0) = \frac{1}{2}\Orb\bigl(\diag(a_0,d_0),f_0\bigr).
\end{align}
When $\gamma\in S_{\rs,1}$, i.e.\  $\eta(\RN a-1)=-1$, we may rewrite \eqref{eqn germ s} as
\[
   \omega_S(\gamma)\Orb(\gamma,f',s) =
	   |b|^s \bigl(\phi_+(\gamma_0,s)-\phi_-(\gamma_0,s)\bigr)+|b|^s\phi_-(\gamma_0,s)\bigl(1-|1-\RN a|^{-s}\bigr).
\]
Taking the derivative, we obtain
$$
\omega_S(\gamma)\del(\gamma,f')=C+\phi_-(\gamma_0)\log\lv 1-\RN a \rv,
$$
where $C:=\frac{d}{ds}\big|_{s=0}  (\phi_+(\gamma_0,s)-\phi_-(\gamma_0,s)) $ is a constant. The desired result now follows from \eqref{eqn germ s u}. The case $i=0$ is analogous.

(\ref{germ ii}) The proof is analogous to (\ref{germ i}).  Note that Th.\ 3.5 and Cor.\ 3.8 in \cite{M-ATC} are only stated in the group setting.  In the Lie algebra setting, one analogously proves that for all regular semi-simple $y = y(a,b,c,d)$ in a neighborhood of $y_0$,
\[
   \Orb(y,f',s) = \phi_+(y_0,s)\wt\eta(b)|b|^s + \phi_-(y_0,s)\wt\eta(c)\i |c|^{-s}
\]  
for some Laurent polynomials $\phi_\pm(y_0,s)$ in $q^s$ depending on $y_0$ and $f'$.  Also note that the Lie algebra transfer factor is given by $\omega_\fks(y) = \wt\eta(-b) = \wt\eta\bigl(\ov b\bigr)$.  With these remarks the proof of (\ref{germ i}) carries over in a straightforward way.
\end{proof}

For the specific test function $f_0=\mathbf{1}_{K_0^\flat K_0} \in C_c^\infty(G_0)$, the irregular orbital integrals are as follows. 

\begin{lemma}\label{lem nil orb int u}
The irregular orbital integrals  of $\mathbf{1}_{K_0^\flat K_0}$ are given by
$$
\Orb\bigl(\diag(a,d), \mathbf{1}_{K_0^\flat K_0}\bigr) = 1,\quad a,d\in F^1.
$$
The irregular orbital integrals  of $\mathbf{1}_{\fkk_0}$ are given by
$$
\Orb\bigl(\diag(a,d), \mathbf{1}_{\fkk_0}\bigr) = \begin{cases} 1,& a,d\in \pi O_{F_0},\\
0, &\mbox{otherwise}.
\end{cases}
$$
\end{lemma}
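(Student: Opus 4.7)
The plan is to exploit the fact that the action of $H_0(F_0)$ is trivial on every irregular semi-simple element, so that both orbital integrals collapse to a pointwise evaluation of the test function. The first step is to note that, via our chosen basis for $W_0 = W_0^\flat \oplus Fu_0$, the special embedding $H_0 \inj G_0$ identifies $H_0(F_0) = F^1$ with the diagonal subgroup $\{\diag(h,1) : h \in F^1\}$, and likewise the induced map on Lie algebras lands in the diagonal subspace. In particular, $\diag(h,1)$ commutes with every $\diag(a,d)$, for both the conjugation action on $G_0$ and the adjoint action on $\fkg_0$. Combined with our normalization $\vol(H_0(F_0)) = \vol(K_0^\flat) = 1$, this gives the key identity
\[
   \Orb\bigl(\diag(a,d), f\bigr) = f\bigl(\diag(a,d)\bigr)
\]
for any $f \in C_c^\infty(G_0)$ (resp.\ $C_c^\infty(\fkg_0)$) and any $\diag(a,d) \in A_{G_0}$ (resp.\ $\fka_{\fkg_0}$).

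With this reduction in hand, the group statement is an immediate consequence of \eqref{A_G_0 intersect}: the relation $A_{G_0} \cap K_0^\flat K_0 = A_{G_0}$ means that $\mathbf{1}_{K_0^\flat K_0}$ takes the value $1$ on every diagonal element $\diag(a,d)$ with $a,d \in F^1$, whence $\Orb(\diag(a,d),\mathbf{1}_{K_0^\flat K_0}) = 1$. Similarly, the Lie algebra statement follows from \eqref{a_g_0 intersect}, which identifies $\fka_{\fkg_0} \cap \fkk_0$ explicitly as the set of diagonal matrices with both entries in $\pi O_{F_0}$. Hence $\mathbf{1}_{\fkk_0}(\diag(a,d)) = 1$ exactly when $a,d \in \pi O_{F_0}$ and vanishes otherwise, giving the claimed dichotomy.

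There is no genuine obstacle: the entire content of the lemma is packaged into the two intersection identities \eqref{A_G_0 intersect} and \eqref{a_g_0 intersect} already recorded in \S\ref{the groups}. The only mildly subtle point, should one wish to verify \eqref{A_G_0 intersect} directly rather than cite it, is that $A_{G_0} \cap K_0$ is a proper index-$2$ subgroup of $A_{G_0}$ (the two $\pi$-modular lattices $\Lambda_0^\pm$ force a residue condition $a \equiv d \pmod{\pi}$ on the diagonal entries). This index-$2$ defect is precisely absorbed by the factor $K_0^\flat$: the reduction map $F^1 \to k^\times$ surjects onto $\{\pm 1\}$, so for any $a,d \in F^1$ one can choose $h \in F^1 = K_0^\flat$ with $h \equiv a/d \pmod{\pi}$, yielding a factorization $\diag(a,d) = \diag(h,1)\cdot\diag(h^{-1}a,d)$ with the second factor in $K_0$. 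This is the sole nontrivial ingredient, and it is essentially trivial.
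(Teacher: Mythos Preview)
Your proof is correct and follows the same approach as the paper: the paper's proof simply notes that the conjugation action of $H_0(F_0)$ on diagonal matrices is trivial, invokes the volume normalization $\vol H_0(F_0)=1$, and then cites \eqref{A_G_0 intersect} and \eqref{a_g_0 intersect}. Your version spells out these steps in more detail and adds an optional verification of \eqref{A_G_0 intersect}, but the argument is the same.
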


\begin{proof}
This is immediate from the fact that the conjugation action of $H_0(F_0)$ on diagonal matrices is trivial, from the normalization $\vol H_0(F_0) = 1$, and from \eqref{A_G_0 intersect} and \eqref{a_g_0 intersect}.
\end{proof}

\subsection{Intersection numbers}

We now record the values of the intersection numbers appearing in parts (\ref{conjram v3 inhomog}) and (\ref{conjram v3 lie}) of Conjecture \ref{conjram v3}.  Recall that $v$ denotes the normalized valuation on $F_0$.

\begin{proposition}\label{prop int(g) int(x)}
Let $g= \begin{bmatrix}
		   1 &     \\
         &  \alpha
      \end{bmatrix} 
\begin{bmatrix}
		   a  & b   \\
           b&  a
      \end{bmatrix}
\in G_1(F_0)_\rs$, expressed in the presentation (\ref{G_1 pres}). Then 
\[
   \Int(g)= 2v(\RN b) + 2.
\]
Similarly, for $x= 
\begin{bmatrix}
		   a  & b   \\
           b&  d
      \end{bmatrix}
\in \fkg_1(F_0)_\rs$ expressed in the presentation (\ref{fkg_1 pres}), 
\[
   \lInt(x)=
	\begin{cases}
      2v(\RN b) + 2,  &  a,d\in \pi O_{F_0} \text{ and } b\in O_D \cap D^-;\\
      0,  &  \text{otherwise}.
   \end{cases}
\]
\end{proposition}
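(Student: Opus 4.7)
The plan is to reduce both assertions to intersection computations on the Lubin--Tate formal moduli space $\CM$, exploiting the concrete identifications recorded in Example \ref{CN_2 ATC eg}: we have $\CN_1 = \Spf O_{\breve F}$, $\CN_2 \cong (\CM\amalg \CM)_{\Spf O_{\breve F}}$, and each morphism $\delta_\CN^\pm \colon \CN_1 \to \CN_2^\pm \cong \CM_{\Spf O_{\breve F}}$ identifies with the embedding of the canonical divisor $\CZ = \CZ_{\iota_\BE} \subset \CM$ associated to the fixed $O_{F_0}$-algebra embedding $\iota_\BE \colon F \hookrightarrow D$. Consequently, $\CN_{1,2} \cong \CM_{\Spf O_{\breve F}} \amalg \CM_{\Spf O_{\breve F}}$, and $\Delta$ is (the base change to $\Spf O_{\breve F}$ of) the disjoint union $\CZ \amalg \CZ$, one copy in each component.

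\textbf{First step: action of $g$ on components.} I would begin by analyzing how $g \in \U(\BV(\BX_2))(F_0)$ acts on the decomposition $\CN_2 = \CN_2^+ \amalg \CN_2^-$. The criterion extracted from the proof of Proposition \ref{even n decomp lem} (the reduction modulo $\pi$ of the determinant acting on a $\pi$-modular Dieudonn\'e module) lets one decide whether $g$ preserves both components or swaps them; in the presentation \eqref{G_1 pres}, this is a direct calculation in terms of $a$, $b$, $\alpha$. In either case, the contribution to $\Int(g)$ from each component reduces to a length of the form $\langle \CZ_{\iota_\BE}, \CZ_{\iota'}\rangle_{\CM_{\Spf O_{\breve F}}}$, where $\iota'\colon F \hookrightarrow D$ is the embedding obtained from $\iota_\BE$ by conjugation by an element $d \in D^\times$ canonically determined by $g$ through the identification $\End^\circ_{O_{F_0}}(\BX_2) \cong \RM_2(D)$.

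\textbf{Second step: Gross's formula.} I would then apply Gross's theory of canonical and quasi-canonical liftings (Gross \cite{G}, and in the present ramified setting \cite{KR-alt}). For two embeddings $\iota_1,\iota_2 \colon F \hookrightarrow D$, the intersection length $\langle \CZ_{\iota_1}, \CZ_{\iota_2} \rangle_{\CM_{\Spf O_{\breve F}}}$ is computed in terms of the ``distance'' between them, which may be measured by the $O_D$-valuation of $\iota_2(\pi) - \iota_1(\pi)$. Unwinding the definitions and the explicit form of $g$, I expect that this distance invariant translates into $v(\RN b)$, and that summing the contributions over the two components (or pairs of components if $g$ swaps them) yields $\Int(g) = 2 v(\RN b) + 2$. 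The Lie algebra assertion is handled by the same mechanism: a point of $\Delta \cap \Delta_x$ corresponds to a Dieudonn\'e lattice (of a canonical lift) preserved by $x$, and a direct lattice check using the product decomposition $\BX_2 = \BX_1 \times \ov\BX_1$ shows that such a lattice exists if and only if $a, d \in \pi O_{F_0}$ and $b \in O_D \cap D^-$. Under these integrality hypotheses, one invokes the same Keating--Gross intersection formula, now with the distance measured by $b \in D^-$, to obtain $\lInt(x) = 2v(\RN b) + 2$.

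The main obstacle will be the bookkeeping in the first and second steps: one must trace carefully through the isomorphism $\CN_2 \cong \CM \amalg \CM$ of Example \ref{CN_2} and the explicit decomposition $\BX_2 = \BE \times \BE$ as a formal $O_F$-module, to identify the abstract action of $g$ on $\CN_2^\pm$ with a concrete conjugation action on the embedding $\iota_\BE$, and to verify that the resulting distance invariant really equals $v(\RN b)$. Once these identifications are in place, the intersection formula is essentially already available in \cite{KR-alt} or, in analogous computations, in \cite[\S 8--10]{RSZ}.
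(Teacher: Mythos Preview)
Your proposal is correct and follows essentially the same route as the paper: both reduce to the identification $\CN_2 \cong \CM_{\Spf O_{\breve F}} \amalg \CM_{\Spf O_{\breve F}}$, recognize $\Delta$ as the canonical divisor in each component, and invoke Gross's formula to obtain the length $v(\RN b)+1$ per component. The paper's execution is slightly more direct than your conjugate-embedding framing: rather than interpreting $(1\times g)\Delta$ as a divisor $\CZ_{\iota'}$, it explicitly conjugates $g$ by the isogeny $\phi_0\colon \BX_2 \to \wt\BX_2$ to obtain a $2\times 2$ matrix over $D$, introduces the component-swapping element $\kappa_0 = \bigl[\begin{smallmatrix} & \pi\\ \pi^{-1} & \end{smallmatrix}\bigr]$, checks that exactly one of $\phi_0^{-1}g\phi_0$ and $\phi_0^{-1}g\phi_0\kappa_0$ has all entries integral (governed by $\alpha \bmod \pi$), and then applies Gross's formula to the single relevant entry $a\pm b$; this sidesteps the need to analyze the $G_1(F_0)$-action on $\CM$ abstractly.
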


\begin{proof}
We first consider the group version.  Note that under the parametrization \eqref{G_1 pres}, $a + b$ is a norm one element in $D$, and hence $a \in O_F$ and $b \in O_D \cap D^-$ are both integral.

The coordinates \eqref{G_1 pres} express $g$ naturally as a quasi-endomorphism of $\wt\BX_2$; the quasi-endo\-morphism of $\BX_2$ attached to $g$ is the conjugate $\phi_0\i g \phi_0$, where $\phi_0\colon \BX_2 \to \wt\BX_2$ is the isogeny \eqref{phi_0}.  With respect to the $O_{F_0}$-linear decompositions $\BX_2 = O_F \otimes_{O_{F_0}} \BE = (1 \otimes \BE) \oplus (\pi \otimes \BE)$ and $\wt\BX_2 = \BE^2$, the element $\phi_0 \in \Hom_{O_F}^\circ (\BX_2,\wt\BX_2) \subset \End_{O_{F_0}}^\circ(\BE^2) = \RM_2(D)$ identifies with the matrix
\[
   \begin{bmatrix}
		1  &  \pi\\
		1  &  -\pi
	\end{bmatrix}.
\]
Hence the conjugate $\phi_0\i g \phi_0$ identifies with the matrix
\begin{equation}\label{g conj}
   \begin{bmatrix}
		1  &  \pi\\
		1  &  -\pi
	\end{bmatrix}\i
	\begin{bmatrix}
		a  &  b\\
		\alpha b  &  \alpha a
	\end{bmatrix}
   \begin{bmatrix}
		1  &  \pi\\
		1  &  -\pi
	\end{bmatrix}
	=
   \frac 1 2
	\begin{bmatrix}
		(1 + \alpha)(a+b)  &  \pi(1-\alpha)(a+b)\\
		\pi\i(1-\alpha)(a+b)  &  (1+\alpha)(a+b)
	\end{bmatrix}.
\end{equation}

Now recall from Example \ref{CN_2 ATC eg} that $\CN_{1,2} \cong \CN_2 = \CN_2^+ \amalg \CN_2^-$, where each of the summands $\CN_2^\pm$ identifies with $\CM_{\Spf O_{\breve F}}$, and the cycle $\Delta$ identifies with the canonical divisor attached to the embedding $\iota_\BE\colon F \inj D$ in each copy of $\CM_{\Spf O_{\breve F}}$.  More precisely, under these identifications, the universal object over $\Delta \cap \CN_2^+$ is $\delta_\CN^+(\CE) = (O_F \otimes_{O_{F_0}} \CE, O_F \otimes \rho_\CE)$, and the universal object over $\Delta \cap \CN_2^-$ is $\delta_\CN^-(\CE) = (O_F \otimes_{O_{F_0}} \CE, \kappa_0 \circ(O_F \otimes \rho_\CE))$; here we have suppressed auxiliary structure in the obvious way, and
\[
   \kappa_0 :=
	\begin{bmatrix}
		  &  \pi\\
		\pi\i
	\end{bmatrix}
	\in \Aut_{O_F}^\circ (\BX_2) \subset \RM_2(D)
\]
is a self-quasi-isogeny of $\BX_2$ which lies in the unitary group $G_1(F_0)$ and has Kottwitz invariant $-1$ (so that the action of $\kappa_0$ on $\CN_2$ interchanges the components $\CN_2^\pm$).

Now we compute $\Int(g)$. First note that, as in \cite[Prop.\ 8.10]{RSZ}, there are no higher Tor terms in the calculation.  Thus $\Int(g)$ is simply the length of $\Delta \cap (1 \times g)\Delta$, which in turn is twice the length of $\Delta \cap (1\times g)\Delta \cap \CN_2^+$.  This last intersection is the locus in $\Spf O_{\breve F}$ where $\phi_0\i g \phi_0$ lifts to a homomorphism $\delta_\CN^+(\CE) \to \delta_\CN^+(\CE)$ or $\delta_\CN^-(\CE) \to \delta_\CN^+(\CE)$, or equivalently, where $\phi_0\i g \phi_0$ or $\phi_0\i g \phi_0 \kappa_0$ lifts to a homomorphism $\delta_\CN^+(\CE) \to \delta_\CN^+(\CE)$.  In other words, this is the locus where all the entries of the right-hand side of \eqref{g conj}, or all the entries of
\begin{equation}\label{g conj kappa}
   \phi_0\i g \phi_0 \kappa_0 = \frac 1 2
	\begin{bmatrix}
		(1-\alpha)(a-b)  &  \pi(1 + \alpha)(a-b)\\
		\pi\i(1+\alpha)(a-b)  &   (1-\alpha)(a-b)
	\end{bmatrix},
\end{equation}
lift to endomorphisms of the canonical lifting \CE.  In the group setting, only one of these matrices will have all entries integral to begin with; this is governed by the Kottwitz invariant of $g$, that is, by the residue class of the norm one element $\alpha$ mod $\pi$.  Indeed, suppose that $\alpha \equiv 1 \bmod \pi$.  Then $1+\alpha \in O_F^\times$ since $p \neq 2$.  Since $\RN(a - b) = \RN a + \RN b = 1$, we conclude that the lower left entry in \eqref{g conj kappa} is non-integral.  Continuing to assume $\alpha \equiv 1$, we evidently have that $\pi \mid 1 -\alpha$, and therefore the locus where the entries of \eqref{g conj} lift is the locus where $a + b$ lifts.  By Gross's formula \cite[Th.\ 2.1]{V1}, the length of this locus is $\ell + 1$, where $\ell$ is the nonnegative integer such that $a + b \in (O_F + \pi^\ell O_D)\smallsetminus (O_F + \pi^{\ell + 1}O_D)$;  comp.\ also \cite[Prop.\ 9.1]{RSZ}.  Since $a \in O_F$ and $b \in O_D \cap D^-$, the asserted formula for $\Int(g)$ follows.

If $\alpha \equiv -1 \bmod \pi$, then one similarly finds that the lower left entry of \eqref{g conj} is non-integral, and that the locus where the entries of \eqref{g conj kappa} lift is the locus where $a - b$ lifts, which has the same length as before.  This completes the proof in the group case.

The Lie algebra case is quite similar, and we only briefly outline the differences.  The matrices \eqref{g conj} and \eqref{g conj kappa} are replaced by
\[
   \frac 1 2
   \begin{bmatrix}
		a + d + 2b  &  \pi(a-d)\\
		\pi\i(a-d)  &  a + d + 2b
	\end{bmatrix}
	\quad\text{and}\quad
	\frac 1 2
	\begin{bmatrix}
		a-d  &  \pi(a+d-2b)\\
		\pi\i(a+d-2b)  &  a-d
	\end{bmatrix},
\]
respectively, and $\lInt(x)$ is twice the length of the locus where all the entries of at least one of these matrices lift to endomorphisms of \CE.  If any one of $a$, $b$, or $d$ is non-integral, then it is easy to see (again using that $p \neq 2$) that both of these matrices have a non-integral entry, and hence $\lInt(x) = 0$.  If all of them are integral, then the locus where the entries in the second matrix lift is contained in the locus where the entries in the first matrix lift, and the length of this latter locus is again given by Gross's formula as $v(\RN b) + 1$.
\end{proof}

\subsection{Proof of Conjecture \ref{conjram v3} for $F_0 = \BQ_p$ and $n=2$}\label{proof ram ATC n=2}
The homogeneous group version (\ref{conjram v3 homog}) reduces to the inhomogeneous version (\ref{conjram v3 inhomog}) as in \cite[Lem.\ 5.8]{RSZ} (using Lemma \ref{homog to inhomog}(\ref{all})(\ref{old})(\ref{old 2}), which is valid for arbitrary $n$ and arbitrary $F/F_0$, in place of \cite[Lem.\ 5.7]{RSZ} in the proof). Thus we show that (\ref{conjram v3 inhomog}) holds.  To prove the first assertion in (\ref{conjram v3 inhomog}), fix $\gamma_0=\diag(a_0,d_0) \in A_{S}=A_{G_0}$. It suffices to show that the sum
\begin{equation}\label{the sum}
   2\omega_S(\gamma)\del(\gamma,f')+\Int(g)\log q
\end{equation}
is constant when $\gamma\in S_{\rs,1}$ is in a neighborhood of $\gamma_0$ (and $g \in G_1(F_0)_\rs$ is any match of $\gamma$). It then follows from, for example, \cite[Cor.~3.8]{M-ATC} that \eqref{the sum} is an orbital integral function, which is what we have to show.

We have for $f_0=\mathbf{1}_{K_0^\flat K_0^+}+ \mathbf{1}_{K_0^\flat K_0^-}$ and $g \in G_0(F_0)$,
$$
\Orb(g,f_0)=2\Orb(g,\mathbf{1}_{K_0^\flat K_0^+})=2\Orb(g,\mathbf{1}_{K_0^\flat K_0^-}).
$$
We may therefore replace $f_0$ by $2\cdot \mathbf{1}_{K_0^\flat K_0}$, cf.\ Remark \ref{remark S12}. 
By Theorem \ref{del germ s} (in the case $i=1$) and Lemma \ref{lem nil orb int u}, there exists a neighborhood of $\gamma_0$ on which for all $\gamma = \gamma(a,b) \in S_{\rs,1}$,
$$
2\omega_S(\gamma)\del(\gamma,f')=2\log\lv 1-\RN a\rv +C=-2 v(1-\RN a)\log q+C
$$
for some constant $C$.  Proposition \ref{prop int(g) int(x)} then yields immediately that the sum 
\eqref{the sum}
is a constant for all such $\gamma$, as desired.  The second assertion in (\ref{conjram v3 inhomog}), namely that there exists an $f'$ transferring to $(\mathbf{1}_{K_0^\flat K_0^+}+ \mathbf{1}_{K_0^\flat K_0^-}, 0)$ for which $f_\corr' = 0$, now follows by the same argument as in \cite[Prop.\ 5.14]{RSZ}.

The Lie algebra version \eqref{conjram v3 lie} for $n=2$ follows similarly, by the corresponding statements in Theorem \ref{del germ s}, Lemma \ref{lem nil orb int u}, and  Proposition \ref{prop int(g) int(x)}.\qed

\section{Proofs of Theorems \ref{conjram i=0} and \ref{conjram i=1}}\label{s:ramifiedn=2}

\subsection{Proof of Theorem \ref{conjram i=0}}\label{proof wtCN_2^(0)}
We resume the setup of \S\ref{ATC wtCN_2^(0)}.  Let us recall the groups and their Lie algebras. 
The quasi-split unitary group $G_0$ is associated to the framing object $\wt\BX_2^{(0)}$ of $\wt\CN_2^{(0)}$, cf.\ \eqref{wtBX_2^(0)}. We identify 
\[
   \End_{O_{F_0}}^\circ\bigl(\wt\BX_2^{(0)}\bigr) = \End_{O_{F_0}}^\circ\bigl(\BE^2\bigr)=\RM_2(D),
\] 
and the $O_F$-action is given by $\pi\mapsto \diag(\pi,-\pi)\in \RM_2(D)$. Then the $O_F$-linear quasi-endomorphism algebra is 
$$
\End_{O_F}^\circ\bigl(\wt\BX_2^{(0)}\bigr)= \biggl\{\,  \begin{bmatrix}
		   a  & b    \\
       c    &  d
      \end{bmatrix} \in \RM_2(D) \biggm|   a,d\in F,\ b,c\in D^-  \,\biggr\} .
$$
The Rosati involution is given by 
$$
\Ros_{\lambda_{\wt\BX_2^{(0)}}}(x)=x^\dagger=  \begin{bmatrix}
		   1 &     \\
         &  \ep
      \end{bmatrix}  ^{-1}   \tensor[^t]{\ov x}{}   \begin{bmatrix}
		   1 &     \\
         &  \ep
      \end{bmatrix},
$$
where $x\mapsto \ov x$ is the entry-wise main involution on $\RM_2(D)$.  Recall that $\ep \in O_{F_0}^\times$ is a non-norm. We then have the unitary group 
$$
G_0(F_0)=\bigl\{\, x\in \End_{O_F}^\circ\bigl(\wt\BX_2^{(0)}\bigr) \bigm|  x^\dagger x=1 \,\bigr\},
$$
and its Lie algebra 
$$
\fkg_0(F_0)=\bigl\{\, x\in \End_{O_F}^\circ\bigl(\wt\BX_2^{(0)}\bigr) \bigm|  x^\dagger+ x=0\,\bigr\} .
$$
They may be explicitly presented as
$$G_0(F_0)=\biggl\{ \, \begin{bmatrix}
		   1 &     \\
         &  \alpha
      \end{bmatrix}    \begin{bmatrix}
		   a  & b    \\
          \ep^{-1}  b  &  a
      \end{bmatrix} \in \RM_2(D)  \biggm|  a\in F,\ b\in D^-,\ \RN a+\ep^{-1}\RN b=1,\ \alpha \in F^1\,\biggr\} ,
$$
and $$
\fkg_0(F_0)=\biggl\{\, \begin{bmatrix}
		   a  & b    \\
       \ep^{-1} b  &  d
      \end{bmatrix} \in \RM_2(D)  \biggm|  a,d\in \pi F_0,\ b\in D^-  \, \biggr\} .
$$

The matching of orbits is analogous to \S\ref{orbit matching}: in terms of the parametrization \eqref{gamma(a,b)} for elements in $S(F_0)_\rs$, we have
$$
   \gamma(a,b)\in S_{\rs, 0}  \quad \text{\emph{matches}} \quad g=\begin{bmatrix}
		   1 &     \\
         &  \alpha
      \end{bmatrix}    \begin{bmatrix}
		   a'  & b'    \\
      \ep^{-1} b'   &  a'
      \end{bmatrix}\in G_0(F_0)_\rs
$$
if and only if $a = a'$ and $-b/\ov b=\det\gamma(a,b) = \det g = \ov\alpha$. In the Lie algebras, in terms of the parametrization \eqref{y(a,b,c,d)} for elements in $\fks(F_0)$, we have 
$$
   y(a,b,c,d) \in \fks_{\rs, 0}
	\quad\text{\emph{matches}}\quad 
	x= \begin{bmatrix}
		   a'  & b'    \\
     \ep^{-1}  b'   &  d'
      \end{bmatrix}\in \fkg_0(F_0)_\rs
$$
if and only if  $a=a'$, $d = -d'$, and $bc=-\ep^{-1}\RN b'$.

The unitary group $G_1$ is associated to to the non-split hermitian space
\[
   W_1=W_1^\flat \oplus F u_1,\quad  (u_1,u_1)=-\ep.
\]
By \eqref{chi decomp formula} the space $W_1^\flat$ is split, and we choose a basis vector $u^\flat \in W_1^\flat$ with $(u^\flat, u^\flat) = 1$.  In this way we view $G_1$ and its Lie algebra $\fkg_1$ as attached to the diagonal hermitian form $\diag(1,-\ep)$.
The irregular elements in the $F_0$-points of each then consist of the diagonal matrices contained in each.
  Now recall from \S\ref{ATC wtCN_2^(0)} the self-dual lattice $\Lambda_1 \subset W_1$ and its respective stabilizers $\wt K_1 \subset G_1(F_0)$ and $\wt\fkk_1 \subset \fkg_1(F_0)$.

\begin{lemma}\label{lem nil orb int u 0}
The irregular orbital integrals  of $\mathbf{1}_{\wt K_1}$ are given by
$$
\Orb\bigl(\diag(a,d), \mathbf{1}_{\wt K_1}\bigr)=1,\quad a,d\in F^1.
$$
The irregular orbital integrals  of $\mathbf{1}_{\wt\fkk_1}$ are given by
$$
\Orb\bigl(\diag(a,d), \mathbf{1}_{\wt\fkk_1}\bigr)=\begin{cases} 1,& a,d\in \pi O_{F_0},\\
0, &\mbox{otherwise}.
\end{cases}
$$
\end{lemma}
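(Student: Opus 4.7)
The plan is to mimic verbatim the argument used for Lemma \ref{lem nil orb int u}, transported from $G_0$ to the non-quasi-split group $G_1$ and its associated hermitian space $W_1 = W_1^\flat \oplus Fu_1$ from \S\ref{ATC wtCN_2^(0)}. Three ingredients are needed: (i) that $H_1(F_0)$, viewed as a subgroup of $G_1(F_0)$ under the special embedding coming from the basis $(u^\flat, u_1)$ of $W_1$, acts trivially by conjugation on the diagonal elements of $G_1(F_0)$ and $\fkg_1(F_0)$; (ii) that the Haar measure on $H_1(F_0) = F^1$ is normalized to total mass one (fixed in \S\ref{ATC wtCN_2^(0)}); and (iii) an explicit description of when a diagonal matrix lies in $\wt K_1$ or $\wt\fkk_1$.

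For (i), I would use that the unitary group of the one-dimensional hermitian space $W_1^\flat$ is $F^1$, and that the special embedding realizes $H_1(F_0)$ as the subgroup $\{\diag(h,1) \mid h \in F^1\} \subset G_1(F_0)$. Conjugation of $\diag(a,d)$ by $\diag(h,1)$ returns $\diag(a,d)$ by commutativity of $F$, so the integrand in the definition of $\Orb(\diag(a,d),-)$ is constant in $h$. Together with (ii), each of the two orbital integrals in the lemma collapses to the value of the corresponding characteristic function at $\diag(a,d)$.

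For (iii), since $\wt K_1$ is by definition the stabilizer of the self-dual lattice $\Lambda_1 = \Lambda_1^\flat \oplus O_F u_1$, and a diagonal matrix respects this orthogonal decomposition, $\diag(a,d) \in \wt K_1$ if and only if $a,d \in O_F^\times$. For $a,d \in F^1$ this is automatic, which yields the first asserted equality. The Lie algebra case is parallel: a diagonal element $\diag(a,d)$ of $\fkg_1(F_0)$ (so $a,d \in \pi F_0$) lies in $\wt\fkk_1$ iff $a,d \in O_F \cap \pi F_0 = \pi O_{F_0}$, which yields the case distinction in the second formula.

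I do not anticipate any genuine obstacle; the argument is formal and essentially identical to that of Lemma \ref{lem nil orb int u}. The only points that require momentary care are unpacking from \S\ref{ATC wtCN_2^(0)} both the identification of $H_1(F_0)$ with $\{\diag(h,1)\}$ under the chosen special embedding and the normalization giving $\vol H_1(F_0) = 1$, but both are immediate from the definitions recorded there.
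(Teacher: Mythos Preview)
Your proposal is correct and follows essentially the same approach as the paper: the paper's proof simply says it is the same as that of Lemma~\ref{lem nil orb int u}, noting that $\wt K_1$ contains all the irregular group elements and that $\wt\fkk_1$ meets the irregular Lie algebra elements exactly in $\{\diag(a,d)\mid a,d\in\pi O_{F_0}\}$, which is precisely your points (i)--(iii). One tiny referencing nit: the explicit basis vector $u^\flat$ and the resulting special embedding are chosen in \S\ref{proof wtCN_2^(0)} rather than \S\ref{ATC wtCN_2^(0)}, though the Haar measure normalization is indeed fixed in the latter.
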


\begin{proof}
This is the same as the proof of Lemma \ref{lem nil orb int u}, noting that $\wt K_1$ contains the irregular group elements, and that the intersection of $\wt\fkk_1$ with the irregular Lie algebra elements is the set $\{\diag(a,d) \mid a,d \in \pi O_{F_0}\}$.
\end{proof}

\begin{proposition}\label{prop int(g) int(x) 0}
Let $g= \begin{bmatrix}
		   1 &     \\
         &  \alpha
      \end{bmatrix} 
\begin{bmatrix}
		   a  & b   \\
             \ep^{-1}  b&  a
      \end{bmatrix}
$ be an element in $G_0(F_0)_\rs$. Then 
\[
   \Int(g)=
	\begin{cases}
      v(\RN b) + 1,  &  a\in O_F \text{ and } b\in O_D \cap D^-;\\
      0,  &  \text{otherwise}.
   \end{cases}
\]
Similarly,  for an element $x= 
\begin{bmatrix}
		   a  & b   \\
              \ep^{-1} b&  d
      \end{bmatrix}
$  in $\fkg_0(F_0)_\rs$, 
\[
   \lInt(x)=
	\begin{cases}
      v(\RN b) + 1,  &  a,d\in \pi O_{F_0} \text{ and } b\in O_D \cap D^-;\\
      0,  &  \text{otherwise}.
   \end{cases}
\]
\end{proposition}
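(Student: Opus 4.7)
The proof closely parallels that of Proposition \ref{prop int(g) int(x)}, with two key simplifications. First, no intermediate conjugation by an isogeny $\phi_0$ is needed, since the framing object $\wt\BX_2^{(0)} = \BE \times \ov\BE$ is already a product in the relevant sense. Second, there is no factor of $2$, because $\wt\CN_2^{(0)}$ is connected and $\wt\Delta^{(0)}$ is a single divisor rather than a disjoint union.

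First I would reduce $\Int(g)$ to a lifting length on $\CN_1 = \Spf O_{\breve F}$. Since $\wt\CN_2^{(0)}$ is a regular formal surface and $\wt\Delta^{(0)}, g\wt\Delta^{(0)}$ are Cartier divisors intersecting properly, no higher Tor terms appear, and $\Int(g) = \length(\wt\Delta^{(0)} \cap g\wt\Delta^{(0)})$. Pulling back along the closed embedding $\wt\Delta_\CN^{(0)}$, this equals the length of the locus in $\Spf O_{\breve F}$ where $g\colon \wt\BX_2^{(0)} \to \wt\BX_2^{(0)}$ lifts to an $O_F$-linear endomorphism of $\wt\delta_\CN^{(0)}(\CE) = \CE \times \ov\CE$ (the polarization itself lifts canonically). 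An analogous reduction handles $\lInt(x)$.

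Writing $g = \bigl[\begin{smallmatrix} a & b \\ \alpha \ep^{-1} b & \alpha a \end{smallmatrix}\bigr]$ in the presentation from \S\ref{proof wtCN_2^(0)}, the lift exists if and only if each entry lifts as an $O_F$-linear map of the appropriate components. Since $\alpha \in F^1$ and $\ep \in O_{F_0}^\times$ are units, this reduces to $a \in O_F$ and $b \in O_D \cap D^-$; moreover the relation $\RN a + \ep^{-1} \RN b = 1$ makes these two conditions equivalent. When integrality fails the intersection is empty, giving $\Int(g)=0$. Otherwise, the integral diagonal $F$-entries lift to all of $\Spf O_{\breve F}$ because $\CE$ and $\ov\CE$ carry full $O_F$-actions, so the length is governed by the off-diagonal Gross terms. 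Writing $b = d\zeta$ with $d \in O_F$ and $\zeta \in D^-$ a chosen generator satisfying $\zeta^2 = \ep$, a direct calculation shows $b \in O_F + \pi^\ell O_D$ iff $d \in \pi^\ell O_F$, so the Gross invariant of $b$ equals $v_F(d) = v(\RN b)$ (using $\RN b = -\ep \RN d$). By Gross's formula \cite[Th.~2.1]{V1}, the locus where $b$ lifts as a homomorphism $\ov\CE \to \CE$ has length $v(\RN b)+1$. The $(2,1)$-entry produces the same Gross invariant, and the unitary condition on $g$ together with the canonical lift of the polarization force the two off-diagonal lifting conditions to coincide, yielding $\Int(g) = v(\RN b) + 1$.

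The Lie algebra case is essentially identical: writing $x = \bigl[\begin{smallmatrix} a & b \\ \ep^{-1} b & d \end{smallmatrix}\bigr]$, the integrality conditions combine $a,d \in \pi F_0$ (from $x + x^\dagger = 0$) with $a, d \in O_F$ to give $a, d \in \pi O_{F_0}$, and the same Gross computation yields $\lInt(x) = v(\RN b)+1$ in the integral case. The main technical point throughout, although minor, is verifying that the two off-diagonal lifting conditions coincide; this follows from the (anti)symmetry imposed by the unitary or antihermitian condition on $g$ or $x$, together with uniqueness of the canonical lift of the polarization on $\CE \times \ov\CE$.
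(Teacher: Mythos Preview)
Your proof is correct and follows essentially the same approach as the paper's: reduce $\Int(g)$ to the length of the locus in $\Spf O_{\breve F}$ where the matrix entries of $g$ lift to endomorphisms of $\CE$, then apply Gross's formula to the off-diagonal entry $b$. The paper's proof is terser and does not spell out the Gross invariant computation for $b = d\zeta$. One minor point: your argument that the two off-diagonal lifting conditions coincide via polarization symmetry is more elaborate than necessary; since the $(2,1)$-entry $\alpha\ep^{-1}b$ differs from $b$ by the $O_F^\times$-multiple $\alpha\ep^{-1}$, and multiplication by an element of $O_F^\times$ lifts everywhere on $\Spf O_{\breve F}$, the two lifting loci coincide directly.
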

\begin{proof}
The method of proof is the same as that of Proposition \ref{prop int(g) int(x)}, except this time the details are much simpler.  In the group case, $\Int(g)$ is the length of the locus in $\Spf O_{\breve F}$ where $\bigl[\begin{smallmatrix} a  &  b\\ \alpha \ep\i b &  \alpha a \end{smallmatrix}\bigr]$ lifts from an endomorphism of $\BE \times \ov\BE$ to an endomorphism of $\CE \times \ov\CE$, or in other words, where each of the entries lifts to an endomorphism of $\CE$.  If $a$ or $b$ is non-integral, then this locus is empty and we obtain $\Int(g) = 0$.  If $a$ and $b$ are integral,  then $a,\alpha,\ep\i \in O_F$ lift without constraint, and the length in question is again given by Gross's formula as $v(\RN b) + 1$.  The Lie algebra case is analogous.  We remind the reader that the factor of $2$ in Proposition \ref{prop int(g) int(x)} is due to the fact that there are two copies of $\CM$ involved there.
\end{proof}

Now we complete the proof of the theorem.

\begin{proof}[Proof of Theorem \ref{conjram i=0}] 
We first prove the group version (\ref{conjram i=0 inhomog}); the argument is formally the same as the proof of Conjecture \ref{conjram v3}(\ref{conjram v3 inhomog}) for $n=2$ in \S\ref{proof ram ATC n=2}. To prove the first assertion in (\ref{conjram i=0 inhomog}),
fix $\gamma_0=\diag(a_0,d_0)\in A_S$, which we also regard as an irregular element in $G_1(F_0)$. It again suffices to show that the sum
\begin{equation}\label{sum eq0}
   2\omega_S(\gamma)\del(\gamma,f')+\Int(g)\log q
\end{equation}
is  constant when $\gamma\in S_{\rs,0}$ is in a neighborhood of $\gamma_0$ (and $g \in G_0(F_0)_\rs$ is any match of $\gamma$). 
By Theorem \ref{del germ s} (in the case $i=0$) and Lemma \ref{lem nil orb int u 0}, there exists such a neighborhood on which for all $\gamma = \gamma(a,b) \in S_{\rs,0}$,
$$
2\omega_S(\gamma)\del(\gamma,f')=\log\lv 1-\RN a\rv +C=-v(1-\RN a)\log q+C
$$
for some constant $C$.
Proposition \ref{prop int(g) int(x) 0} then yields that \eqref{sum eq0}
is constant for such $\gamma$, as desired.  The second assertion in (\ref{conjram i=0 inhomog}) again follows by the argument in \cite[Prop.\ 5.14]{RSZ}.

The Lie algebra version (\ref{conjram i=0 lie}) follows similarly, by the corresponding statements in Theorem \ref{del germ s}, Lemma \ref{lem nil orb int u 0}, and  Proposition \ref{prop int(g) int(x) 0}.
\end{proof}

\subsection{Proof of Theorem \ref{conjram i=1}}\label{proof wtCN_2^(1)}
Now we resume the setup of \S\ref{atc wtCN_2^(1)}.  This case is very similar to the $\pi$-modular case in \S\ref{even ram proof}.
The groups are the same as in \S\ref{even ram proof}. 
In particular, the irregular elements in $G_0(F_0)$ are the subset $A_{G_0}$ \eqref{eqn U_2 A}.
Recall that the compact open subgroups $\wt K_0 \subset G_0(F_0)$ and $\wt\fkk_0 \subset \fkg_0(F_0)$ are the respective stabilizers of the self-dual lattice $\Lambda_0 \subset W_0$.  By the same argument as in the proofs of Lemmas \ref{lem nil orb int u} and \ref{lem nil orb int u 0}, we obtain the following.

\begin{lemma}\label{lem nil orb int u 1}
The irregular orbital integrals  of $\mathbf{1}_{\wt K_0}$ are given by
$$
\Orb\bigl(\diag(a,d), \mathbf{1}_{\wt K_0}\bigr)=1,\quad a,d\in F^1.
$$The irregular orbital integrals  of $\mathbf{1}_{\wt\fkk_0}$ are given by
\begin{flalign*}
	\phantom{\qed} & &
	\Orb\bigl(\diag(a,d), \mathbf{1}_{\wt\fkk_0}\bigr)=\begin{cases} 1,& a,d\in \pi O_{F_0},\\
	0, &\text{otherwise}.
	\end{cases}
	& & \qed
\end{flalign*}
\end{lemma}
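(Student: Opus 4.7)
The approach is entirely parallel to the proofs of Lemmas \ref{lem nil orb int u} and \ref{lem nil orb int u 0}. The key point is that, via the special embedding $G_0 \inj \Res_{F/F_0}\GL_2$ attached to the basis $u^\flat, u_0$ of $W_0$, the subgroup $H_0(F_0) = F^1$ is embedded diagonally as $\alpha \mapsto \diag(1,\alpha)$ (it fixes $u_0$). Consequently conjugation by $H_0(F_0)$ preserves every diagonal matrix in $\RM_2(F)$ and acts trivially on the irregular set $A_{G_0}$, so that
\[
   \Orb\bigl(\diag(a,d),\mathbf{1}_{\wt K_0}\bigr) = \vol\bigl(H_0(F_0)\bigr)\cdot \mathbf{1}_{\wt K_0}\bigl(\diag(a,d)\bigr),
\]
and similarly in the Lie algebra setting. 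Since we normalized $\vol H_0(F_0) = 1$ in \S\ref{atc wtCN_2^(1)}, the computation reduces to intersecting the irregular locus with $\wt K_0$, resp.~$\wt\fkk_0$.

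For the group statement, I need to check that $A_{G_0} \subset \wt K_0$. Since $\Lambda_0 = \Lambda_0^\flat\oplus O_F u_0$ splits orthogonally and a diagonal element $\diag(a,d)$ with $a,d\in F^1 \subset O_F^\times$ acts by unit scalars on each summand, it stabilizes $\Lambda_0$. Hence $\Orb(\diag(a,d),\mathbf{1}_{\wt K_0}) = 1$ for all $a,d\in F^1$.

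For the Lie algebra statement, an irregular element of $\fkg_0(F_0)$ is of the form $\diag(a,d)$ with $a,d\in \pi F_0$ (the skew-hermitian condition relative to $\diag(1,-1)$ forces diagonal entries into the trace-zero line $\pi F_0 \subset F$). Such an element stabilizes $\Lambda_0^\flat \oplus O_F u_0$ if and only if $a,d \in O_F$, i.e.\ $a,d \in O_F \cap \pi F_0 = \pi O_{F_0}$; this yields the piecewise formula.

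The only point requiring care, and the small contrast with Lemma \ref{lem nil orb int u}, is the containment $A_{G_0}\subset \wt K_0$: in the $\pi$-modular setting of \S\ref{s:ATCeven} the analogous containment failed (one had only $A_{G_0} \cap K_0 \subsetneq A_{G_0}$, forcing the use of the double coset $K_0^\flat K_0$), whereas here $\wt K_0$ is the stabilizer of a \emph{self-dual} lattice and is large enough to swallow $A_{G_0}$ whole. This is what makes the formula in the group case take the simple uniform value $1$, with no parity/double-coset correction.
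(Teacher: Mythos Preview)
Your proof is correct and follows essentially the same approach as the paper, which simply refers back to Lemmas \ref{lem nil orb int u} and \ref{lem nil orb int u 0}: the triviality of the $H_0(F_0)$-conjugation action on diagonals together with $\vol H_0(F_0)=1$ reduces everything to checking $A_{G_0}\subset\wt K_0$ and $\fka_{\fkg_0}\cap\wt\fkk_0=\{\diag(a,d)\mid a,d\in\pi O_{F_0}\}$. One harmless slip: since $H_0$ fixes the special vector $u_0$ (the \emph{second} basis vector), the embedding is $\alpha\mapsto\diag(\alpha,1)$ rather than $\diag(1,\alpha)$, but this does not affect the argument.
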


\begin{proposition}\label{prop int(g) int(x) 1}
Let $g= \begin{bmatrix}
		   1 &     \\
         &  \alpha
      \end{bmatrix} 
\begin{bmatrix}
		   a  & b   \\
           b&  a
      \end{bmatrix}
\in G_1(F_0)_\rs$, expressed in the presentation (\ref{G_1 pres}). Then 
\[
   \Int(g)= v(\RN b) + 1.
\]
Similarly, for $x= 
\begin{bmatrix}
		   a  & b   \\
           b&  d
      \end{bmatrix}
\in \fkg_1(F_0)_\rs$ expressed in the presentation (\ref{fkg_1 pres}),
\[
   \lInt(x)=
	\begin{cases}
      v(\RN b) + 1,  &  a,d\in \pi O_{F_0} \text{ and } b\in O_D \cap D^-;\\
      0,  &  \text{otherwise}.
\end{cases}
\]
\end{proposition}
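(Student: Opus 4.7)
The plan is to adapt the arguments of the proofs of Propositions \ref{prop int(g) int(x)} and \ref{prop int(g) int(x) 0} to the present setting. First, I would observe that, as in \cite[Prop.~8.10]{RSZ}, there are no higher Tor terms in the intersection, so $\Int(g)$ is simply the length of the artinian scheme $\wt\Delta^{(1)} \cap g\wt\Delta^{(1)}$. Using the closed embedding $\wt\Delta_\CN^{(1)}\colon \CN_1 \cong \Spf O_{\breve F} \inj \wt\CN_2^{(1)}$ made explicit in Example \ref{CN_2 ATC eg}, this intersection identifies with the closed formal subscheme of $\Spf O_{\breve F}$ parametrizing those deformations at which $g\colon \wt\BX_2^{(1)} \to \wt\BX_2^{(1)}$ lifts to an $O_F$-linear self-isogeny of the universal point on $\wt\Delta^{(1)}$; equivalently, working via the identification $\CP_2 \cong (\wt\CN_2^{(1)})_{\Spf O_{\breve F}}$, it is the locus where $g$ lifts as a self-isogeny of $(\CE \times \ov\CE, \iota_\CE \times \iota_{\ov\CE}, -(\lambda_\CE \times \lambda_{\ov\CE}))$.

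Next I would expand $g$ as a matrix on $\wt\BX_2^{(1)} = \BE \oplus \ov\BE$ via the presentation \eqref{G_1 pres}. As in the proof of Proposition \ref{prop int(g) int(x)}, the key preliminary observation is that $a + b \in D$ has reduced norm $\RN a + \RN b = 1$, which forces $a \in O_F$ and $b \in O_D \cap D^-$ automatically. The diagonal matrix entries $a$ and $\alpha a$, lying in $O_F$, then lift trivially through $\iota_\CE$ and $\iota_{\ov\CE}$, respectively. The off-diagonal entries $b$ and $\alpha b$ lie in $O_D \cap D^-$, and since $\alpha \in F^1 \subset O_F^\times$ is invertible and lifts trivially through $\iota_\CE$, these two conditions cut out the same closed subscheme, namely the locus where $b$ lifts. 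Applying Gross's formula \cite[Th.~2.1]{V1} exactly as is done for $a+b$ and $a-b$ in the proof of Proposition \ref{prop int(g) int(x)} then gives length $v(\RN b) + 1$, which is the claimed value of $\Int(g)$.

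The Lie algebra statement follows by the same strategy, with the added subtlety that integrality of the diagonal entries is no longer automatic: the lifting of $x$ requires $a, d \in \pi O_{F_0}$ (equivalently $a, d \in O_F$) together with $b \in O_D \cap D^-$. Failure of any of these three conditions makes the relevant matrix entry non-integral and the intersection empty, giving $\lInt(x) = 0$; if all three hold, Gross's formula once more produces length $v(\RN b) + 1$.

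The only substantive step is the application of Gross's formula. I expect the main care to be in the bookkeeping: verifying that both off-diagonal lifting conditions yield the same closed subscheme of $\Spf O_{\breve F}$, and ensuring that the factor of $2$ present in Proposition \ref{prop int(g) int(x)} (where $\CN_2 = \CN_2^+ \amalg \CN_2^-$ contributes two equal copies) does not reappear here, since $\wt\CN_2^{(1)} \cong \CM_{\Gamma_0(\varpi)}$ is connected.
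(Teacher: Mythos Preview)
Your proposal is correct and follows essentially the same approach as the paper, which simply notes that the argument is virtually identical to that of Proposition \ref{prop int(g) int(x) 0} with the added remark (which you also make) that $a$ and $b$ are automatically integral in the group case. You have accurately spelled out the details: the intersection reduces to the locus where the matrix entries of $g$ (or $x$) lift to endomorphisms of $\CE$, the $O_F$-entries lift without constraint, and Gross's formula applied to $b$ gives $v(\RN b)+1$, with no factor of $2$ since $\wt\CN_2^{(1)}$ is connected.
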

\begin{proof}
The proof is virtually identical to that of Proposition \ref{prop int(g) int(x) 0}.  Note that in the group case, as in Proposition \ref{prop int(g) int(x)}, the entries $a$ and $b$ are automatically integral.
\end{proof}

Now we complete the proof of the theorem.

\begin{proof}[Proof of Theorem \ref{conjram i=1}]
The proof is essentially the same as the proofs of Conjecture \ref{conjram v3}(\ref{conjram v3 inhomog})(\ref{conjram v3 lie}) when $n=2$ and Theorem \ref{conjram i=0}. We first prove the group version (\ref{conjram i=1 inhomog}).
For the first assertion, fix $\gamma_0=\diag(a_0,d_0)\in A_{S}$, which we also regard as an irregular element in $G_0(F_0)$. As before, it suffices to show that the sum
\begin{equation}\label{sum eq}
2\omega_S(\gamma)\del(\gamma,f')+\Int(g)\log q
\end{equation}
is  constant when $\gamma\in S_{\rs,1}$ is in a neighborhood of $\gamma_0$ (and $g$ matches $\gamma$). 
By Theorem \ref{del germ s} (in the case $i = 1$) and Lemma \ref{lem nil orb int u 1}, there exists such a neighborhood on which for all $\gamma = \gamma(a,b) \in S_{\rs,1}$,
$$
2\omega_S(\gamma)\del(\gamma,f')=\log\lv 1-\RN a\rv+C=-v(1-\RN a)\log q+C
$$
for some constant $C$. Proposition \ref{prop int(g) int(x) 1} then yields that \eqref{sum eq}
is constant for such $\gamma$, as desired.  The second assertion in (\ref{conjram i=1 inhomog}) again follows by the argument in \cite[Prop.\ 5.14]{RSZ}.

The Lie algebra version (\ref{conjram i=1 lie}) follows similarly, by the corresponding statements in Theorem \ref{del germ s}, Lemma \ref{lem nil orb int u 1}, and  Proposition \ref{prop int(g) int(x) 1}.
\end{proof}

\end{document}